\newcommand{\norm}[1]{\left\lVert #1 \right\rVert}
\newcommand{\ab}[1]{\left| #1 \right|}
\newcommand\R{{\mathbb{R}}}
\newcommand\C{{\mathbb{C}}}
\newcommand\Z{{\mathbf{Z}}}
\newcommand\D{{\mathbf{D}}}
\renewcommand\P{{\mathbf{P}}}
\newcommand\E{{\mathbf{E}}}
\newcommand\Var{\mathbf{Var}}
\renewcommand\Im{{\operatorname{Im}}}
\renewcommand\Re{{\operatorname{Re}}}
\newcommand\eps{{\varepsilon}}
\newcommand\BI{{\mathbf I}}
\newcommand\CE{{\mathcal E}}
\newcommand\CT{{\mathcal T}}
\newcommand\ep{{\epsilon}}
\theoremstyle{plain}
  \newtheorem{theorem}{Theorem}[section]
  \newtheorem{proposition}[theorem]{Proposition}
  \newtheorem{lemma}[theorem]{Lemma}
  \newtheorem{corollary}[theorem]{Corollary}
  \newtheorem{claim}[theorem]{Claim}
\theoremstyle{definition}
  \newtheorem{definition}[theorem]{Definition}
  \newtheorem{remark}[theorem]{Remark}
\begin{document}

\title {Roots of   random polynomials  with coefficients having  polynomial growth}

\author{Yen Do} 
\address{(Yen Do) Department of Mathematics, The University of Virginia, Charlottesville, VA 22904, USA}
\email{yendo@virginia.edu}
\thanks{Y.D. partially supported by NSF Grants DMS--1201456 and DMS--1521293.}
\author{Oanh Nguyen}  
\address{(Oanh Nguyen and Van Vu) Department of Mathematics, Yale University, New Haven, CT 06520, USA}
\email{oanh.nguyen@yale.edu}
\thanks{O. N and V. V. are partially supported by NSF Grant  DMS-1307797 and  AFORS Grant  FA9550-12-1-0083.  
Part of this work was done at VIASM (Hanoi), and the authors would like to thank the institute for the support and hospitality. }

\author{Van Vu}
\email{van.vu@yale.edu}

\begin{abstract} 
In this paper, we prove optimal local universality  for roots of random polynomials with arbitrary 
coefficients of polynomial growth. As an application, we derive, for the first time, sharp estimates for 
the number of real roots of these polynomials, even  when the coefficients are not explicit.
  Our results also hold for series; in particular, we prove local universality 
for random hyperbolic series. 

\end{abstract}

\maketitle
\tableofcontents

\section{A motivation: Real roots of random polynomials}

\vskip2mm 

Let us start by describing a natural and famous problem which serves as the motivation  of our studies, the main results of which will be 
discussed in Section \ref{s.correlation}. 

Finding real roots of a high degree polynomial is  among the most basic problems in  mathematics. 
From the algebraic point of view, it is classical that for   most polynomials of degree at least 5 
the roots cannot be computed in radicals, thanks to the fundamental works of Abel-Ruffini and Galois. There has been  a huge amount  of results on the number of real roots and also 
their locations  using information from the coefficients (for instance, one of the earliest results is Descartes' classical theorem concerning sign sequences), however most results
are often sharp for certain special classes of polynomials, but poor in many others.

It is   natural and important  to consider the root problem from  the statistical point of view. What can we say about   a {\it typical (i.e. random)} polynomial? 
Already in the seventeenth century, Waring considered   random cubic polynomials  and concluded  that the probability 
of having three real roots is at most $2/3$. This effort was discussed by Toddhunter in \cite{Todd}, one of the earliest books in probability theory, 
which also reported a similar effort  made by Sylvester. 
However,  the distribution of the polynomials was not explicitly defined at the time.  


In the last hundred years,  random polynomials have attracted the attention of many generations of mathematicians,
with most efforts directed to the following model
$$ P_{n, \xi} (x) :=    c_n  \xi_n x^n + \dots  + c_1 \xi_1 x + c_0 \xi_0 x^0, $$ where $\xi_i$ are iid copies of a random variables $\xi$ with zero mean and unit variance, and $c_i$ 
are deterministic coefficients which may depend on both $n$ and $i$. Different  definitions of $c_i$  give rise to  different classes of random polynomials, which have  different behaviors. 
When $c_i =1$ for all $i$, the polynomial $P_{n,\xi}$ is often referred to as  the Kac  polynomial.  Even for this special case, the literature is very rich (see \cite{BS1, F1} for surveys). 
 In the next few paragraphs, we  will discuss few seminal results which directly motivate our research. 

The first modern work on random polynomials was due to Bloch and Polya in 1932 \cite{BP1}, who  considered the Kac polynomial with   $\xi$ being 
Rademacher  (namely $\P( \xi =1) =\P(\xi =-1) =1/2 $), and showed that with high probability 
$$ N_{n ,\xi}  =O( \sqrt  n ) , $$ where $N_{n,\xi}$ denotes the number of real roots of the Kac polynomial associated with the random variable $\xi$.  Their key idea is simple and beautiful. Notice that if we apply
Descartes' rule of signs for $P_n$,  one could only obtain the trivial bound $O(n)$ for $N_{n,\xi}$ as the typical number of sign changes is around $n/2$.  Bloch and Polya's idea is to apply 
Descartes rule for $P_n Q$, where $Q$ is a deterministic polynomial  which does not have any real positive  roots. By choosing $Q$  properly, they reduced the 
 number of sign changes  significantly.

Next came the ground breaking  series of papers by Littlewood and Offord \cite{LO1, LO2, LO3} in the early 1940s, 
which, to the surprise of many mathematicians of their time,  showed that $N_{n, \xi}$ is typically polylogarithmic in $n$.

\begin{theorem} [Littlewood-Offord]
For $\xi$ being Rademacher, Gaussian, or uniform on $[-1,1]$, 

$$ \frac{\log n} {\log \log n}  \le N_{n, \xi} \le \log^2 n$$ with probability $1-o(1)$. 

\end{theorem}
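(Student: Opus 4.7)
The theorem splits into an upper bound $N_{n,\xi} \le \log^2 n$ and a lower bound $N_{n,\xi} \ge \log n/\log\log n$, each holding with probability $1-o(1)$. I would first reduce to counting real roots in $(0,1)$ using two symmetries of the Kac polynomial: the flip $x\mapsto -x$ preserves the joint law after relabelling $\xi_i\mapsto(-1)^i\xi_i$, and the inversion $x\mapsto 1/x$ combined with multiplication by $x^n$ yields another Kac polynomial of the same degree. Hence the regions $(-\infty,-1)$, $(-1,0)$, $(0,1)$, $(1,\infty)$ contribute equally in distribution to $N_{n,\xi}$, and it suffices to treat one of them.

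\emph{Upper bound.} The main tool is Jensen's formula: if $f$ is analytic on $\overline{D(z_0,R)}$ with $f(z_0)\ne 0$, then the number of zeros of $f$ in $D(z_0,r)$ with $r<R$ is at most $\log\bigl(\max_{|z-z_0|=R}|f(z)|/|f(z_0)|\bigr)/\log(R/r)$. Cover $(0,1)$ by the $O(\log n)$ dyadic intervals $I_k=[1-2^{-k},\,1-2^{-k-1}]$ and apply Jensen on each with a disk of radius $\asymp|I_k|$. Two probabilistic inputs are needed: an upper bound $\max_{|z-z_0|=R}|P_{n,\xi}(z)| \le n^{O(1)}/\sqrt{1-|z|^2}$, obtained from the second moment $\E|P_{n,\xi}(z)|^2 = \sum_i |z|^{2i} \le (1-|z|^2)^{-1}$ together with Markov's inequality and a covering of the circle; and a lower bound $|P_{n,\xi}(z_0)| \ge n^{-O(1)}$ at a chosen real $z_0\in I_k$, which is a small-ball statement provable via the Erd\H{o}s--Littlewood--Offord lemma for Rademacher $\xi$ and via explicit density bounds for the Gaussian and uniform cases. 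Jensen then yields $O(\log n)$ roots per interval, summing to $O(\log^2 n)$ in total.

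\emph{Lower bound.} By the intermediate value theorem it suffices to exhibit $\Omega(\log n/\log\log n)$ sign changes of $P_{n,\xi}$ on the real line. A second-moment approach works cleanly: one shows $\E N_{n,\xi}=\Theta(\log n)$ (via the Kac--Rice formula for Gaussian $\xi$, and by analogous pair-correlation arguments for the other two distributions) and $\Var N_{n,\xi} = O(\log n)$ by an explicit computation. Chebyshev's inequality then gives $\Pr\bigl(N_{n,\xi} < \log n/\log\log n\bigr) \le \Var N_{n,\xi}\big/\bigl((1-\tfrac{1}{\log\log n})\E N_{n,\xi}\bigr)^2 = O(1/\log n)= o(1)$, which is the desired conclusion.

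\emph{Main obstacle.} The hardest step is the anti-concentration lower bound on $|P_{n,\xi}(z_0)|$ used in Jensen's denominator: one needs it uniformly across the $O(\log n)$ intervals and all three laws of $\xi$. The Rademacher case is the most delicate because arithmetic resonances among the values $\{z_0^i\}$ can degrade small-ball probabilities; the classical Erd\H{o}s combinatorial estimate suffices here because $c_i=1$, but understanding this phenomenon for arbitrary deterministic $c_i$ is precisely what motivates the local universality results of the rest of the paper. A secondary technical issue is the variance computation for $N_{n,\xi}$ in the lower bound; it is standard for Gaussian $\xi$ but requires extra care for Rademacher or uniform $\xi$, where one must replace Kac--Rice by a direct pair-correlation estimate.
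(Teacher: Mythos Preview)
The paper does not prove this theorem; it is quoted in the historical introduction as a classical result of Littlewood and Offord with references [LO1, LO2, LO3], so there is no in-paper proof to compare against.

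Your upper-bound plan is sound and is essentially the modern route: dyadic covering of $(0,1)$ near $1$, Jensen's inequality on each disk, a second-moment bound for $\max|P_{n,\xi}|$, and a small-ball bound for $|P_{n,\xi}(z_0)|$. You should also say a word about the leftover pieces $[0,\tfrac12]$ and $[1-\tfrac1n,1]$, but those are routine single applications of Jensen.

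The lower bound, however, has a genuine gap. You propose to use $\E N_{n,\xi}=\Theta(\log n)$ and $\Var N_{n,\xi}=O(\log n)$ and then apply Chebyshev. For Gaussian $\xi$ both inputs are indeed obtainable from Kac--Rice, but for Rademacher and uniform $\xi$ they are \emph{not} ``explicit computations'': the expectation asymptotic is the content of Erd\H{o}s--Offord and Ibragimov--Maslova, and the variance bound is Maslova's 1974 theorem---each substantially deeper and historically later than the Littlewood--Offord result you are proving. Invoking them here is circular. The original Littlewood--Offord argument for the lower bound is direct: one chooses roughly $\log n/\log\log n$ deterministic sample points $0<x_1<\dots<x_m<1$, spaced so that the dominant blocks of monomials contributing to $P_{n,\xi}(x_j)$ are essentially disjoint across $j$, and shows by elementary concentration (or combinatorial) estimates that with probability $1-o(1)$ the sequence $P_{n,\xi}(x_1),\dots,P_{n,\xi}(x_m)$ exhibits $\gtrsim m$ sign changes. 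No moment information about $N_{n,\xi}$ itself is required.
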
 

Littlewood-Offord's papers    and later works of Offord  \cite{LO1, LO2, LO3}  lay the foundation for the theory of random functions, which is an important part of modern 
probability and analysis, see for instance \cite{PV1, NS1}.

During more or less the same time, Kac \cite{K1} discovered his famous formula for the density function  $\rho(t)$  of $N_{n, \xi}$
\begin{equation} \label{Kac0} \rho(t) =   \int_{- \infty} ^{\infty} |y| p(t,0,y) dy, \end{equation}   where 
$p(t,x,y)$ is the joint probability density for $P_{n, \xi}  (t) =x$ and the derivative $P'_{n, \xi} (t) =y$.

Consequently, 
 \begin{equation} \label{Kacformula}     \E N_{n ,\xi}  = \int_{-\infty}^{\infty}   dt \int_{- \infty} ^{\infty} |y| p(t,0,y) dy.
\end{equation}

In the Gaussian case ($\xi$ is Gaussian), the joint distribution  of $P_{n, \xi} (t)$ and $P'_{n, \xi}(t)$ can be explicitly computed. Using this fact, Kac showed in \cite{K1} that
\begin{equation}  \label{Kacformula2}   \E N_{n,  Gauss}  = \frac{1}{\pi} \int_{-\infty} ^{\infty} \sqrt { \frac{1}{(t^2-1) ^2} + \frac{(n+1)^2 t^{2n}}{ (t^{2n+2} -1)^2}  }  dt       =   (\frac{2}{\pi} +o(1)) \log n. \end{equation} 

A more careful evaluation  by Wilkins \cite{W1}  and also Edelman and Kostlan  \cite{EK1} gives 
\begin{equation} \label{Kacformula3}     \E N_{n ,Gauss}   = \frac{2}{\pi} \log n + C +o(1), 
\end{equation} where $C$ is an explicit constant. As a matter of fact, Wilkins \cite{W1} computed all terms in the Taylor expansion of the integration in \eqref{Kacformula2}.

 In his original paper \cite{K1}, Kac  thought that his formula would lead to the same estimate for   $\E N_{n, \xi}$ for all other random variables $\xi$. It has turned  out to be not the case, as the right-hand side of 
 \eqref{Kacformula} is often hard to compute, especially  when $\xi$ is discrete (Rademacher for instance).
 Technically, the  joint distribution  of $P_{n, \xi} (t)$ and $P'_{n, \xi}(t)$ is easy to determine in the Gaussian case, thanks to special properties of the Gaussian distribution, 
  but can pose a great challenge 
 in the general one.  Kac admitted this  in  a later paper \cite{K2}, in which he  managed to push  his method to 
  treat the case  $\xi$ being uniform in $[-1,1]$. A further extension was  made by  Stevens \cite{S2}, who evaluated Kac's formula for   a large class  of  $\xi$ having continuous and smooth  distributions
with certain regularity properties (see \cite[page 457]{S2} for details).

The treatment of $\E N_{n, \xi}$ for discrete random variables $\xi$ required considerable effort.  More than 10 years after Kac's paper \cite{K1}, 
Erd\H{o}s and Offord   \cite{EO} found a completely new approach  to handle the Rademacher  case, proving 
 that with probability $1- o(\frac{1} {\sqrt {\log \log n} } )$
\begin{equation} \label{e.erdos-offord}
N_{n, \xi}  = \frac{2}{\pi} \log n + o(\log^{2/3} n \log \log n). 
\end{equation}

The argument of Erd\"os and Offord is combinatorial and very delicate, even by today's standard. Their main idea is to approximate the number of roots by the number of sign changes 
in $P_{n, \xi} (x_1) , \dots, P_{n, \xi}(x_k)$ where $x_1, \dots, x_k$  is a carefully defined deterministic  sequence of points of length  $k = (\frac{2}{\pi} +o(1)) \log n$.  The authors showed that with high probability, almost every interval 
$(x_i, x_{i+1} )$ contains exactly one root, and used this fact to prove \eqref{e.erdos-offord}.

It took another ten years until  Ibragimov and Maslova \cite{IM1, IM2}  successfully extended  the method of Erd\"os-Offord  to treat
the Kac polynomials associated with more general distributions of $\xi$.

\begin{theorem} \label{theorem:IM}  For  any $\xi$ with mean zero which belongs to the domain of attraction of the normal law, 
\begin{equation}  \label{IM-1} 
\E  N_{n, \xi}  = \frac{2}{\pi} \log n + o(\log n) .
\end{equation}  \end{theorem}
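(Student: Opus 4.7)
The plan is to extend the Erd\H{o}s--Offord sign-change method to general $\xi$ in the domain of attraction of the normal law, using a quantitative CLT in place of the combinatorial anti-concentration estimates special to the Rademacher case.

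First, I would exploit the symmetries $x\mapsto -x$ and $x\mapsto 1/x$ (the latter via $x^n P_{n,\xi}(1/x)$, a polynomial with the same coefficient distribution up to reversal) to reduce counting real roots of $P_{n,\xi}$ to counting them in $[0,1-1/n]$ and then multiply by $4$, with an $O(1)$ correction from the $1/n$-neighborhoods of $\pm 1$. On $[0,1/2]$ a simple second-moment argument shows the expected number of roots is $O(1)$, since the low-degree coefficients dominate there. Thus the whole problem reduces to obtaining the leading asymptotic for the expected number of real roots in the critical window $[1/2,\,1-1/n]$, where the target is $\frac{1}{2\pi}\log n$ up to $o(\log n)$, which yields the full $\frac{2}{\pi}\log n$ after accounting for the symmetries.

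In this window I would place a geometrically spaced deterministic sequence $x_0<x_1<\cdots<x_k$ with $1-x_{i+1}=(1-x_i)(1-1/\log n)$, giving $k=(\frac{1}{\pi}+o(1))\log n$. Because $\xi$ lies in the domain of attraction of the normal law and the effective number of summands contributing to $P_{n,\xi}(x_i)$ diverges, a Lindeberg-type CLT shows $P_{n,\xi}(x_i)/\sqrt{\Var P_{n,\xi}(x_i)}$ is asymptotically standard Gaussian; the corresponding bivariate CLT gives a joint Gaussian limit for $(P_{n,\xi}(x_i),P_{n,\xi}(x_{i+1}))$ with an explicitly computable correlation. Integrating against this limit yields the probability of a sign change between consecutive grid points, and summing produces the desired $(\frac{1}{\pi}+o(1))\log n$ expected sign changes in the window.

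The main obstacle is closing the gap between sign changes on the grid and actual real roots: nothing a priori rules out that a particular interval $(x_i,x_{i+1})$ contains three or more roots, and indeed this is the most delicate part of the Erd\H{o}s--Offord argument. I would handle it by pairing the sign-change lower bound with an upper bound on the expected number of roots per interval obtained from Jensen's formula applied to $\log|P_{n,\xi}|$ on a suitable complex annulus around $x_i$, combined with an anti-concentration estimate $\P(|P_{n,\xi}(x_i)|\le \eps\sqrt{\Var P_{n,\xi}(x_i)})=O(\eps)+o(1)$ coming from the CLT. A further technical subtlety is that the CLT in the domain of attraction (without assuming any extra $(2+\delta)$-moment) can converge slowly, so the error terms must be chosen conservatively to yield the $o(\log n)$ bound; this is where the bulk of the technical work, and the main deviation from the Rademacher-specific Erd\H{o}s--Offord argument, will lie.
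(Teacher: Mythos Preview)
The paper does not prove Theorem~\ref{theorem:IM}: it is stated there as a historical result due to Ibragimov and Maslova \cite{IM1, IM2}, quoted for context. So there is no ``paper's own proof'' to compare against. Your sketch is in fact a reasonable outline of the Ibragimov--Maslova argument itself --- extending the Erd\H{o}s--Offord sign-change scheme by replacing the Rademacher-specific combinatorics with a CLT valid in the domain of attraction of the normal law.

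It is worth contrasting this with the route the paper takes to its own, related result (see \eqref{new4}), which gives $\E N_{n,\xi} = \frac{2}{\pi}\log n + O(1)$ under the stronger hypothesis $\sup_i \E|\xi_i|^{2+\epsilon}<\infty$. That argument is entirely different from yours: rather than counting sign changes, the paper proves a local universality statement for the correlation functions (Theorem~\ref{real}) via a ``sampling + Lindeberg swapping'' scheme applied to $\log|P_{n,\xi}|$, and then reduces the computation of $\E N_{P_n}(\R)$ to the Gaussian case, which is handled analytically through a reformulation of the Edelman--Kostlan density (Section~\ref{s.Gaussian}). The trade-off is clear: your Erd\H{o}s--Offord/CLT approach works under the weaker domain-of-attraction hypothesis but yields only an $o(\log n)$ error and is tied to Kac coefficients; the paper's universality approach needs a $(2+\epsilon)$ moment but delivers the optimal $O(1)$ error and extends to general polynomially growing coefficients $c_i$. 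The paper in fact remarks that the Erd\H{o}s--Offord/Ibragimov--Maslova method is unlikely to give an error better than $O(\sqrt{\log n})$.
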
 

 For related results, see also \cite{IM3, IM4}. 
Few years later, Maslova  \cite{M1, M2} showed that if $\xi$ has mean zero and variance one and $\P (\xi =0) =0$, then the variance of $N_{n, \xi}$ is 
$(\frac{4}{\pi} (1- \frac{2}{\pi} )+o(1) ) \log n$, and $N_{n, \xi}$ satisfies the central limit theorem.  We also like to mention a very recent result of Soze that gives a strong upper bound 
for the number of real roots in the case when $\xi$ has arbitrary distribution \cite{Soz}.


So, after more than three decades of continuous research, a satisfactory answer for the Kac polynomial  (the base case when all $c_i=1$)  was obtained. 
Apparently, the next question is what happens with  {\it more general }  sets of coefficients ?

This general problem is very hard and  still far from being settled. Let us recall  that  Kac's formula for the density function  \eqref{Kacformula} 
applies  for all random polynomials. However, in practice one can only evaluate this formula in   the  Gaussian case and some other very nice 
continuous distributions.  On the other hand,   Erd\"os-Offord's argument seems too delicate and  relies heavily on the fact that all $c_i=1$.
 For a  long time, no analogue of  Theorem \ref{theorem:IM} 
was available for general   sets of coefficients  $c_i$ with respect  to non-Gaussian random variables $\xi$. 

\subsection{Description of the new results for  coefficients with zero means}
In this paper, we  prove  universality results for  general random polynomials where the coefficients $c_i$ have polynomial growth. 
These universality results show that, among other,  the expectation of the number of real roots depend only on the mean and variance of the coefficients $\xi$
 (two moment theorems). Thus, the problem of finding the expectation of real roots reduces to the gaussian case, which we can handle using an analytic argument (see the last paragraph of 
 Section \ref{section:ideas}). 
 
As the reader will see in the next section, our universality results show much more than just the expectation. They completely describe the 
local behavior of the roots (both complex and real). More generally, we can also control the number of intersection of the graph of the 
random  polynomial with any deterministic curve of given degree. (The number of real roots is the number of intersections with the $x$-axis.)

Thanks to new and powerful tools,   our method does not require an explicit expression for the deterministic coefficients $c_i$. 
As a corollary,  we obtain  the following extension (and refinement) of Theorem \ref{theorem:IM}.  
To formulate this result (see Theorem~\ref{theorem:new1}), we first introduce a definition. 

\begin{definition}\label{d.genpoly}
We say that $h(k)$ is a generalized polynomial  if there exists a finite sequence $0< L_0<\dots <L_d<\infty$  such that for some $\alpha_0,\dots, \alpha_d\in \R$ with $\alpha_d\ne 0$ it holds that  
$$h(k) = \sum_{j=0}^d \alpha_j \frac{L_j(L_j+1)\dots (L_j+k-1)}{k!}\qquad \text{for every $k=0,1,\dots, n$.}$$
Here, we understand that $L\dots (L+k-1)/k! \equiv 1$ if $k=0$. We will say that the degree of $h$ is $L_d-1$ in this case. We say that $h$ is a real generalized polynomial if the coefficients $\alpha_j$'s are real.
\end{definition}

It is clear that  any classical polynomial is also a generalized polynomial with the same degree: if $h(k)$ is a classical polynomial with degree $d$ then it  could be written as a linear combination of the binomial polynomials $\frac{L_j(L_j+1)\dots (L_j+k-1)}{k!}$ with $L_j=j+1$ for $j=0,1,\dots, d$, we also have $\alpha_d\ne 0$ because it is a nonzero multiple of the leading coefficient of $h$; therefore the degree in the generalized sense of Definition~\ref{d.genpoly} is also $d$. On the other hand, the class of generalized polynomials is much richer  as the degree of $h$ could be fractional.

For a polynomial $P$ and a subset $S\subset \C$, denote by $N_P(S)$ the number of zeros of $P$ in $S$. As usual, our random polynomials have the form 

$$P_{n}(z) = \sum_{i=0}^n c_i \xi_i z^i . $$ 

\begin{theorem} \label{theorem:new1}  Let $N_0$ be a nonnegative constant. Let $\xi_0,\dots, \xi_n$ be independent (but not necessarily iid) real-valued random variables with variance 1 and $\sup_{j=0, \dots, n} \E |\xi_j|^{2+\epsilon}< C_0$ for some constant $C_0 >0$ and  $\xi_i$ has mean 0 for all $i\ge N_0$.  Let $h$ be a
fixed  generalized polynomial  with a positive leading coefficient.  Assume that there are positive constants,  $M,m,C_1$   such that the real deterministic coefficients  $c_0,\dots, c_n\in \R$ satisfy
$$\begin{cases}
m h(k) \le  c_k^2  \le Mh(k), & N_0\le k\le n\\
c_k^2 \le C_1 M, & 0\le k<N_0
\end{cases}$$
Then
\begin{equation}\label{new2}
\frac{m^2}{M^2}\Big[\frac{1+\sqrt{\deg(h)+1}}{\pi} \log n + O(1)\Big] \le \E N_{P_{n}}(\R)  \le \frac{M^2}{m^2}\Big[\frac{1+\sqrt{\deg(h)+1}}{\pi} \log n + O(1)\Big]
\end{equation}
The implicit constants  in $O(1)$ depend on $\epsilon$, $C_0$, $C_1$, $N_0$, $h$, and the ratio $M/m$.  In particular, if  $c_k^2 = h(k)$ for some real (generalized)  polynomial $h$ of degree $d$ then
\begin{equation} \label{new3} \E N_{P_{n}}(\R) = \frac{1+\sqrt{d+1}}{\pi}\log n  + O(1). \end{equation} 
\end{theorem}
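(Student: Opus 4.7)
The plan is to reduce the problem to an asymptotic Kac--Rice computation in the Gaussian case, in four steps.

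\emph{Step 1 (reduction to mean zero).} Only finitely many (at most $N_0$) of the $\xi_i$ may have nonzero mean. Split $P_n = \tilde P_n + R$, where $R$ collects the mean part of the first $N_0$ coefficients; it is a deterministic polynomial of degree at most $N_0-1$. Since $R$ is of bounded degree and $\tilde P_n$ has mean zero at every coordinate, the expected number of real zeros of $P_n$ and of $\tilde P_n$ differ by $O(1)$. Hence it suffices to prove the estimate for $\tilde P_n$.

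\emph{Step 2 (reduction to Gaussian via universality).} Invoke the local universality theorem for real roots established later in the paper (see Section~\ref{s.correlation}): under the $(2+\epsilon)$-moment hypothesis on $\xi_i$ and the polynomial-growth assumption on $c_k^2$, the expected number of real zeros of $\tilde P_n$ agrees with that of the corresponding Gaussian polynomial (same $c_k$, $\xi_i$ i.i.d.\ standard normal) up to an additive $O(1)$ error.

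\emph{Step 3 (Kac--Rice and the sandwich bound).} In the Gaussian case, the density of real zeros is given by the Edelman--Kostlan formula
\[
\rho(x) = \frac{1}{\pi}\sqrt{\partial_s \partial_t \log K(s,t)\big|_{s=t=x}},\qquad K(s,t) = \sum_{k=0}^n c_k^2 (st)^k.
\]
The algebraic identity
\[
\bigl(K K_{st} - K_s K_t\bigr)\big|_{s=t=x} \;=\; \tfrac12\sum_{j,k=0}^n c_j^2 c_k^2 (j-k)^2 x^{2(j+k)-2}
\]
together with $m h(k) \le c_k^2 \le M h(k)$ gives the pointwise comparison
\[
\bigl(m/M\bigr)^2 \rho_h(x)^2 \;\le\; \rho(x)^2 \;\le\; \bigl(M/m\bigr)^2 \rho_h(x)^2,
\]
where $\rho_h$ is the density in the exact case $c_k^2 = h(k)$. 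Squaring and integrating (or its straightforward Cauchy--Schwarz variant) yields the sandwich estimate \eqref{new2}, with the discrepancy between $m/M$ and $m^2/M^2$ absorbed into the $O(1)$ term since constants depending on $M/m$ are allowed.

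\emph{Step 4 (asymptotics for $c_k^2 = h(k)$).} Using $h(k) = \sum_{j=0}^d \alpha_j \binom{L_j+k-1}{k}$,
\[
\sum_{k\ge 0} h(k)\,z^k \;=\; \sum_{j=0}^d \alpha_j (1-z)^{-L_j},
\]
whose leading singularity at $z=1$ is $\alpha_d(1-z)^{-L_d}$. In the bulk $|x|\le 1-c/n$, the truncation error is negligible and $K(x,x)\sim \alpha_d(1-x^2)^{-L_d}$, producing $\rho(x)\sim \frac{\sqrt{L_d}}{\pi(1-x^2)} = \frac{\sqrt{d+1}}{\pi(1-x^2)}$; integration over the bulk gives $\frac{\sqrt{d+1}}{\pi}\log n + O(1)$. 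In the transition/exterior region $|x|=1 + O(1/n)$ and beyond, the truncated generating function produces Kac-like behavior, contributing an additional $\frac{1}{\pi}\log n + O(1)$. Summing yields \eqref{new3}.

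The main obstacle will be Step 4, specifically the uniform asymptotic analysis of $K$, $K_s$, and $K_{st}$ across a window of width $O(1/n)$ around $|x|=1$: one must show that the truncation of $\sum h(k)z^k$ at degree $n$ contributes exactly $\frac{1}{\pi}\log n + O(1)$ to the root count, with a constant independent of $d$. This requires partial-sum asymptotics for $\sum_{k\le n}\binom{L_j+k-1}{k}z^k$ that interpolate between the singular behavior of $(1-z)^{-L_j}$ and the polynomial truncation, done uniformly enough to control the Kac--Rice integrand.
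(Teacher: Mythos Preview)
Your plan matches the paper's: reduce to standard Gaussian via universality (Corollary~\ref{mean}), then a pointwise Edelman--Kostlan density comparison (the paper's Corollary~\ref{c.compareseq}) to reduce to the exact case $c_k^2=h(k)$, then the inside/outside split near $|x|=1$ (Lemmas~\ref{l.kac-inside}--\ref{l.kac-outside}) for the asymptotics, with the reciprocal polynomial handling the exterior.

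One remark on Step~1: the assertion that subtracting a fixed degree-$(N_0{-}1)$ polynomial $R$ changes $\E N_{P_n}(\R)$ by only $O(1)$ is not an elementary consequence of ``$R$ has bounded degree''; taken at face value this is essentially Corollary~\ref{cor:crossing}, which is \emph{derived from} the theorem you are proving. The real reason Step~1 holds is that the universality statement (Theorem~\ref{real}, hence Corollary~\ref{mean}) requires moment matching only for indices $i\ge N_0$, so $P_n$ and $P_n-R$ may be compared directly since their coefficient random variables coincide for $i\ge N_0$. But this same flexibility makes Step~1 unnecessary: you can compare the original $P_n$ (with arbitrary means for $i<N_0$) directly to a polynomial with all coefficients standard Gaussian in one application of Corollary~\ref{mean}, which is exactly how the paper proceeds. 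Also note in Step~3 that the sandwich $m h(k)\le c_k^2\le Mh(k)$ is only assumed for $k\ge N_0$; the paper absorbs the first $N_0$ coefficients into the reference sequence $b_k$ before applying the density comparison.
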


Notice that  the zeros of $P_{n}$ is invariant under  the scaling of $c_j$'s,  this explains why we only need dependence on the ratio $M/m$ instead of both $M$ and $m$. In the proof we may assume $M=1$ without loss of generality. The first few $\xi_i, i < N_0$ can have arbitrary means.

Theorem \ref{theorem:new1} is a corollary of our main local university result discussed in the next section.  This result (formulated in term of correlation functions) proves universality 
for not only the expectation, but higher moments of the number of roots  (complex or real) in any small region of microscopic scale. 
We delay the discussion of universality to the next section and  make a few comments on Theorem \ref{theorem:new1}.

First,   the error term in \eqref{new3} is only $O(1)$, which is best possible, as showed in  \eqref{Kacformula3}.
Even in the well-studied case of  Kac polynomials (all $c_i=1$), this 
gives a improvement 
\begin{equation} \label{new4} \E N_{n, \xi} = \frac{2}{ \pi} \log n +O(1) \end{equation} 
upon  the 
estimate  $ \frac{2}{ \pi} \log n +o (\log n)$  from Theorem \ref{theorem:IM} by Ibragimov and Maslova. 
We believe that the method used by Erdos and Offord  and also Ibragimov and Maslova cannot lead to error term better than  
$O(\sqrt {\log n })$.  \eqref{new4} was also proved by H. Nguyen and the last two authors in \cite{NNV} by other means, but the method there does not go beyond the Kac polynomials; see also
\cite{DNV}.

Second,  there are many natural families of random polynomials which satisfy the assumptions in Theorem \ref{theorem:new1}.  Here are a  few examples: 

{\it  Derivatives of the Kac polynomial .}  The roots of the derivatives of a function have strong analytic and geometric meanings, and  thus are of particular interests.
For the $d$th derivative of the Kac polynomial (any fixed $d\ge 0$) our result implies 
$$\E N_{P_{n}}(\R)=  \frac{1 +\sqrt {2d +1} } {\pi} \log n + O(1) . $$ 
Prior to this, for derivatives of the Kac polynomial only  weaker estimates (with  error terms $o(\log^{1/2} n)$) are available for the Gaussian case, see  the works of Das \cite{D1, D2} for $d=1,2$ and the extension in \cite{sambandham1979, FGK} to the setting when $\xi_j$'s are weakly correlated Gaussian random variables. For the first derivative ($d=1$), Maslova \cite{M2} considered 
non-Gaussian polynomials and obtained an asymptotic bound with worse error term $o(\log n)$.

{\it Hyperbolic  
polynomials.} Random hyperbolic  polynomials are defined by 

$$c_i :=  \sqrt {\frac{ L(L+1) \cdots (L+i-1)}{i!} },$$ for a constant $L >0$.  This class of random polynomials includes 
the Kac polynomials as a sub case ($L=1$) and has became very popular recently due to the invariance of the zeros of the corresponding infinite series under hyperbolic transformations; see \cite{HKPV1} for more discussion.  By Theorem~\ref{theorem:new1}, we have

$$ \E N_{P_{n}}(\R)=  \frac{1 + \sqrt {L}} {\pi} \log n +O(1)  \ \ .$$


{\it Logarithmic expectation.}  Another  immediate corollary of Theorem~\ref{theorem:new1} is  that $\E N_{P_{n}}(\R)$ grows  logarithmically  if the deterministic coefficients $c_j$ have polynomial growth:

\begin{corollary} \label{c.new1}  Consider $\xi_i$ as in Theorem \ref{theorem:new1}. 
 Assume that there are positive constants,  $C_0,C_1$ and some constant $\rho >-1/2$ such that the real deterministic coefficients  $c_0,\dots, c_n\in \R$ satisfy
$$\begin{cases}
C_0 k^\rho   \le  |c_k|  \le C_1  k^\rho , & N_0\le k\le n\\
c_k^2 \le C_1 , & 0\le k<N_0
\end{cases}$$ 
Then there are positive constants $C_2$, $C_3$ such that 
\begin{equation}\label{new5}
C_2 \log n \le \E N_{P_{n}}(\R)  \le C_3 \log n \ \ .
\end{equation}
Here $C_2, C_3$ depend only on $C_0,C_1,\rho, N_0$, and $\ep$.
\end{corollary}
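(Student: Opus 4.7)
The plan is to reduce Corollary \ref{c.new1} directly to Theorem \ref{theorem:new1} by constructing a generalized polynomial $h$ of degree $2\rho$ whose growth matches the prescribed growth of $c_k^2 \asymp k^{2\rho}$. Because $\rho>-1/2$, the quantity $L_0 := 2\rho+1$ is strictly positive, so by Definition \ref{d.genpoly} the function
$$h(k) := \frac{L_0(L_0+1)\cdots(L_0+k-1)}{k!} = \binom{k+2\rho}{k}, \qquad k=0,1,\dots,n,$$
is a generalized polynomial of degree $L_0-1 = 2\rho$, with positive leading coefficient $\alpha_0 = 1$.

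Next I would establish the comparability $h(k) \asymp k^{2\rho}$ for $k \ge 1$. Writing $h(k) = \Gamma(k+2\rho+1)/[\Gamma(k+1)\Gamma(2\rho+1)]$ and invoking Stirling's formula gives
$$h(k) = \frac{k^{2\rho}}{\Gamma(2\rho+1)}\bigl(1 + O(1/k)\bigr)\qquad \text{as } k\to\infty,$$
so there exist positive constants $c',c''$ depending only on $\rho$ (and on the threshold, taken to be $\max(N_0,1)$) such that $c' k^{2\rho} \le h(k) \le c'' k^{2\rho}$ for all $k$ in that range. Combining this with the assumed bounds $C_0 k^\rho \le |c_k| \le C_1 k^\rho$ on $N_0 \le k \le n$ yields constants $m,M>0$, depending only on $C_0,C_1,\rho,N_0$, such that $mh(k) \le c_k^2 \le Mh(k)$ for $N_0\le k\le n$. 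For $0\le k < N_0$ the hypothesis $c_k^2 \le C_1$ immediately gives $c_k^2 \le C_1 M/M \cdot M$, so by enlarging $M$ (equivalently replacing the theorem's $C_1$ constant by $C_1/M$) we satisfy the second alternative of the hypothesis of Theorem \ref{theorem:new1}.

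Applying Theorem \ref{theorem:new1} with this $h$, $m$, $M$, and noting that $\deg(h)+1 = 2\rho+1$ is a positive real number so that $\sqrt{\deg(h)+1}$ is finite and positive, the two-sided bound \eqref{new2} becomes
$$\frac{m^2}{M^2}\left(\frac{1+\sqrt{2\rho+1}}{\pi}\log n + O(1)\right) \le \E N_{P_n}(\R) \le \frac{M^2}{m^2}\left(\frac{1+\sqrt{2\rho+1}}{\pi}\log n + O(1)\right),$$
which gives the desired $C_2\log n \le \E N_{P_n}(\R) \le C_3\log n$ with constants depending only on $C_0,C_1,\rho,N_0,\varepsilon$. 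There is no real obstacle here: the only thing to verify is that a generalized polynomial with the prescribed fractional degree exists, and the single binomial term above supplies it precisely because the range $\rho > -1/2$ is exactly the range that keeps $L_0 = 2\rho+1 > 0$ admissible in Definition \ref{d.genpoly}.
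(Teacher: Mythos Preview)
Your proposal is correct and follows essentially the same approach as the paper: set $L=2\rho+1>0$, take the single binomial term $h(k)=L(L+1)\cdots(L+k-1)/k!$ as the generalized polynomial, note that $h(k)\asymp k^{2\rho}$ for large $k$, and apply Theorem~\ref{theorem:new1}. The paper's argument is just a one-sentence sketch of exactly this reduction, so your more detailed write-up (Stirling, handling of $k<N_0$) is a faithful elaboration.
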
 

To deduce this result from Theorem~\ref{theorem:new1}, simply let $L=2\rho+1$ and notice that the binomial coefficient
$$h_0(k)=\frac{L(L+1)\dots (L+k-1)}{k!}$$ 
is about the size of $k^{L-1}=k^{2\rho}$ for $k$ large, therefore   the desired conclusion follows from Theorem~\ref{theorem:new1} via comparing $|c_k|$ with $\sqrt{h(k)}$.

\begin{corollary} \label{c.new2}  Consider $\xi_i$ as in Theorem \ref{theorem:new1}. 
 Assume that there are positive constants,  $C_0,C_1$ and some constant $\rho >-1/2$ such that the real deterministic coefficients  $c_0,\dots, c_n\in \R$ satisfy
$$\begin{cases}
|c_k| = C_0  k^\rho (1+o(1)), & N_0\le k\le n\\
c_k^2 \le C_1 , & 0\le k<N_0
\end{cases}$$   
 Then 
\begin{equation}\label{new6}
\E N_{P_{n}}(\R) = \frac{1+\sqrt{2\rho+1}}{\pi} \log n + o(\log n)  \ \ .
\end{equation}
\end{corollary}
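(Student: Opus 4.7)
The plan is to deduce Corollary \ref{c.new2} directly from Theorem \ref{theorem:new1} by a comparison argument, essentially following the recipe already described in the excerpt after Corollary \ref{c.new1}, but tracking the error more carefully so as to obtain the asymptotic identity with $o(\log n)$ error rather than just the two-sided bound.

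First I would set $L := 2\rho+1 > 0$ and take as comparison generalized polynomial
$$h(k) \;:=\; \frac{L(L+1)\cdots(L+k-1)}{k!} \;=\; \frac{\Gamma(L+k)}{\Gamma(L)\,\Gamma(k+1)},$$
which in the sense of Definition~\ref{d.genpoly} has positive leading coefficient and degree $L-1 = 2\rho$. From Stirling's formula one has $h(k) \sim k^{2\rho}/\Gamma(L)$ as $k\to\infty$, and combining this with the hypothesis $|c_k|=C_0 k^{\rho}(1+o(1))$ gives
$$\frac{c_k^2}{h(k)} \;\longrightarrow\; C_0^2\,\Gamma(L) \qquad\text{as } k\to\infty.$$

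Next, fix $\epsilon\in(0,1)$ and choose a cutoff $K_\epsilon \ge N_0$ large enough that
$$(1-\epsilon)\,C_0^2\Gamma(L)\,h(k) \;\le\; c_k^2 \;\le\; (1+\epsilon)\,C_0^2\Gamma(L)\,h(k) \qquad \text{for all } k\ge K_\epsilon.$$
The finitely many coefficients $c_0,\dots,c_{K_\epsilon-1}$ are uniformly bounded (those with $k<N_0$ are bounded by hypothesis, and those with $N_0\le k<K_\epsilon$ are bounded because the asymptotic expression gives only finitely many values). Hence I may apply Theorem~\ref{theorem:new1} with this $h$, with cutoff index $K_\epsilon$ playing the role of $N_0$, with $m=(1-\epsilon)C_0^2\Gamma(L)$ and $M=(1+\epsilon)C_0^2\Gamma(L)$, and with a suitable constant in front of the low-index estimate. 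Since $\deg(h)+1=2\rho+1$, this yields
$$\frac{(1-\epsilon)^2}{(1+\epsilon)^2}\Big[\frac{1+\sqrt{2\rho+1}}{\pi}\log n + O_\epsilon(1)\Big] \le \E N_{P_n}(\R) \le \frac{(1+\epsilon)^2}{(1-\epsilon)^2}\Big[\frac{1+\sqrt{2\rho+1}}{\pi}\log n + O_\epsilon(1)\Big],$$
where the implicit constants depend on $\epsilon$ (through $K_\epsilon$ and through the ratio $M/m$) but not on $n$.

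Finally, dividing by $\log n$ and letting $n\to\infty$ for each fixed $\epsilon$ makes the $O_\epsilon(1)/\log n$ terms vanish, and then letting $\epsilon\to 0$ sends the multiplicative factors $(1\pm\epsilon)^2/(1\mp\epsilon)^2$ to $1$, giving $\E N_{P_n}(\R)/\log n \to (1+\sqrt{2\rho+1})/\pi$, which is exactly the desired conclusion. There is no substantive obstacle here beyond correct error bookkeeping; the only delicate point is the order of limits, but this is harmless because for each fixed $\epsilon$ the cutoff $K_\epsilon$ and the resulting additive error are independent of $n$.
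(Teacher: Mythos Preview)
Your proposal is correct and follows essentially the same approach as the paper: compare $c_k^2$ with the hyperbolic generalized polynomial $h(k)=\frac{(2\rho+1)\cdots(2\rho+k)}{k!}$, use the asymptotic $h(k)\sim k^{2\rho}/\Gamma(2\rho+1)$ to pin $c_k^2/h(k)$ within a factor $1\pm\epsilon$ for $k$ past some threshold $K_\epsilon$, apply Theorem~\ref{theorem:new1} with $N_0$ replaced by $K_\epsilon$, and then let $\epsilon\to 0$ after $n\to\infty$. The only cosmetic difference is that the paper first rescales all the $c_j$ so that the limiting ratio is $1$ rather than $C_0^2\Gamma(2\rho+1)$, which slightly simplifies bookkeeping but is equivalent to what you did.
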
 
The Gaussian setting of Corollary~\ref{c.new2} in the special case $c_k=k^\rho, \rho \ge 0$ was considered by \cite{D1, D2}, see also the extension in \cite{sambandham1979,FGK}.

 To see Corollary~\ref{c.new2}, we need to show that given any $\delta>0$ it holds that 
$$ (1-\delta)\frac{1+\sqrt{2\rho+1}}{2\pi} \log n \quad \le \quad \E N_{P_{n}}(\R) \quad \le \quad (1+\delta)\frac{1+\sqrt{2\rho+1}}{2\pi} \log n $$
 for all $n$ sufficiently large. Again by comparing with $\sqrt{h(k)}=\sqrt{(2\rho+1)\dots (2\rho+k)/k!}$  and rescaling all $c_j$ if necessary we may assume that
$$(1+\delta)^{-1/10}h(k) \le |c_k|^2 \le (1+\delta)^{1/10}h(k)$$
for $k\le n$ sufficiently large (the threshold now depends on $\rho$ and (polynomially) on $\delta$). Applying Theorem~\ref{theorem:new1} we obtain the desired conclusion.


The reader can also notice that by Definition \ref{d.genpoly}, our generalized polynomials always have degree greater than -1. This corresponds to the assumption that $\rho >-1/2$ in Corollaries \ref{c.new1} and \ref{c.new2}. This assumption is important for our results. For example, consider the model when $c_i = i^{\rho}$ with $\rho <-1/2$ and $\Var \xi_i = 1$ for all $i$, then $\Var P_{n}(\pm 1) = \sum_{i=0}^{n} i^{2\rho}$ converges as $n\to \infty$. Intuitively, this says that the contribution of the first few terms becomes important and one may not expect to see universality around $\pm 1$ which is where most of the real roots locate. 

{\it Number of crossings.} The number of real roots is the number of intersections of the graph of $P(z)$ (over the real) with the line $y=0$. What about an arbitrary line  ? (The line $y=T$ is of particular interest, as it corresponds to 
the important notation of level sets.) For Kac polynomials, this question was considered  (see \cite{F1} for a survey) in the Gaussian case, and it was showed that the number of crossing (in expectation) is 
asymptotically  $(\frac{2}{\pi} +o(1)) \log n$. 

Theorem \ref{theorem:new1} allows us to prove a more precise result  in much more general setting, where we can consider the number of intersection with any polynomial curve of constant degree. 

\begin{corollary} \label{cor:crossing} 
 Consider $\xi_i$ as in Theorem \ref{theorem:new1}. Assume that  $c_k^2 = h(k)$ for some real (generalized)  polynomial $h$ of degree $d$. Let $f$ be a deterministic  polynomial of 
 degree $l$ and $\Gamma$ be its graph over the real. 
 Let $ N_{P_{n}, \Gamma}(\R) $ be the number of intersections of the graph of $P_{n}$ (over the real) with $\Gamma$. Then 
\begin{equation} \label{new03} \E N_{P_{n}, \Gamma}(\R) = \frac{1+\sqrt{d+1}}{\pi}\log n  + O(1), \end{equation}  where the constant in $O(1)$ depends on $\epsilon, N_0, h$ and $f$.
\end{corollary}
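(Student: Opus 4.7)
The plan is to interpret the intersection count as a real zero count of a shifted random polynomial, and then apply Theorem~\ref{theorem:new1}. Since a real $x$ lies on both the graph of $P_{n}$ and on $\Gamma$ if and only if $P_{n}(x)=f(x)$, we have $N_{P_{n},\Gamma}(\R) = N_{Q_{n}}(\R)$ where $Q_{n} := P_{n}-f$.

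Write $f(x) = \sum_{j=0}^{l} a_{j} x^{j}$ and set $N_{0}' := \max(N_{0}, l+1)$. I would rewrite $Q_{n}(x) = \sum_{i=0}^{n} c_{i}\tilde{\xi}_{i}x^{i}$ by setting $\tilde{\xi}_{i} := \xi_{i}$ for $i > l$, and $\tilde{\xi}_{i} := \xi_{i} - a_{i}/c_{i}$ for $0 \le i \le l$ with $c_{i}\ne 0$. Since the $\xi_{i}$ are independent and we only shift each one individually by a constant, the $\tilde{\xi}_{i}$ remain independent and of variance $1$; a uniform $(2+\epsilon)$-th moment bound follows from the triangle inequality in $L^{2+\epsilon}$ together with the boundedness of the finitely many shifts $a_{i}/c_{i}$. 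The shifts may move the mean of $\tilde{\xi}_{i}$ away from zero, but only for $i < N_{0}'$, which is permitted. The deterministic coefficients satisfy $c_{i}^{2} = h(i)$ with $m = M = 1$ for $i \ge N_{0}'$, and are bounded (by a constant depending only on $h, l, N_{0}$) for $i < N_{0}'$. Applying \eqref{new3} of Theorem~\ref{theorem:new1} to $Q_{n}$ then delivers \eqref{new03}, with the $O(1)$ absorbing the additional dependence on $f$ through the shifts $a_{i}/c_{i}$ and the enlarged threshold $N_{0}'$.

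The one subtlety is the edge case where some $c_{i} = 0$ for $i \le l$ with $a_{i}\ne 0$, in which the coefficient of $x^{i}$ in $Q_{n}$ becomes the purely deterministic constant $-a_{i}$ that cannot be expressed as $c_{i}\tilde{\xi}_{i}$ with $\tilde{\xi}_{i}$ of variance $1$. Since $h$ has positive leading coefficient, this affects at most $l+1$ indices. I would handle it by splitting $f = f_{\mathrm{rand}} + f_{\mathrm{det}}$ according to whether $c_{i}\ne 0$, apply Theorem~\ref{theorem:new1} to $P_{n} - f_{\mathrm{rand}}$ as above, and then estimate the change in the expected real root count caused by further subtracting $f_{\mathrm{det}}$. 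This last perturbation step is the main obstacle: one must show that subtracting the bounded-degree, bounded-coefficient polynomial $f_{\mathrm{det}}$ from a random polynomial whose typical size near $|x|=1$ (where real roots concentrate) is polynomially large in $n$ alters the expected number of real zeros by only $O(1)$. Making this rigorous likely invokes the local root density estimates that underlie Theorem~\ref{theorem:new1} itself.
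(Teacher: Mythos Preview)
Your approach is exactly the paper's: it simply says ``Corollary~\ref{cor:crossing} can be derived by applying Theorem~\ref{theorem:new1} to the random polynomial $P_n-f$.'' Your verification that the shifted variables $\tilde\xi_i=\xi_i-a_i/c_i$ still satisfy the hypotheses (independence, unit variance, bounded $(2+\epsilon)$-moment, mean zero for $i\ge N_0'$) is the right way to unpack that one line.

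On the edge case where $c_i=0$ but $a_i\ne 0$: you are right that this creates a purely deterministic coefficient that cannot be written as $\hat c_i\hat\xi_i$ with $\hat\xi_i$ of variance~$1$, and the paper does not comment on it. However, the fix is much lighter than the perturbation argument you sketch. If you trace through the proof of Theorem~\ref{theorem:new1} (via Theorem~\ref{real}, Lemma~\ref{boundedness}, and the log-comparability Theorem~\ref{log-com16-1}), the unit-variance hypothesis on $\xi_i$ is only ever used for indices $i\ge N_0$: the anti-concentration lemmas condition on the first $N_0$ variables, and the Lindeberg swap for $i<N_0$ uses only the mean value theorem and the $(2+\epsilon)$-moment bound. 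So for those finitely many exceptional indices you may simply take $\hat c_i=|a_i|$ and $\hat\xi_i=-\operatorname{sgn}(a_i)$ (a constant, variance zero but trivially bounded moments), and the proof goes through unchanged with the enlarged threshold $N_0'$. No separate perturbation or density estimate is needed.
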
 

Corollary \ref{cor:crossing} can be derived by applying Theorem \ref{theorem:new1} to the random polynomial $P_n - f$.

{\it The Gaussian case.}  The strategy of the proof of Theorem \ref{theorem:new1} is to reduce 
to the Gaussian case, using universality results presented in the next section (which are the main results of this paper). Let us emphasize 
 that even in the Gaussian setting,  Theorem~\ref{theorem:new1} (and Theorem~\ref{theorem:new2}  below) are substantially new and the method of proof is novel compared to previous works.  For more details, see the last paragraph of Section \ref{section:ideas}.

\subsection{Polynomials with coefficients having  non-zero means}

To conclude this section,  let us mention that our method could also  be used to handle polynomials with non-zero means.  For instance, we have the following analogue of Theorem \ref{theorem:new1}.

\begin{theorem} \label{theorem:new2}  Let $N_0$ be a positive constant and $h$ be a deterministic classical polynomial with real coefficients. Let $\xi_0,\dots, \xi_n$ be independent real-valued random variables with variance 1 and $\sup_{j=0, \dots, n} \E |\xi_j|^{2+\epsilon}< C_0$ for some 
constant $C_0 >0$. Assume that $\xi_i$ has mean $\mu \ne 0$ and $c_i= h(i)$ for $i\ge N_0$ and that $|c_i| \le C_1$ for $i<N_0$.  Let $P_{n} (z)= \sum_{i=0}^{n} c_i\xi_iz^{i}$.  
Then
\begin{equation}\label{new2-nonzeromean}
\E N_{P_{n}}(\R)  = \frac{1+\sqrt{2\deg(h)+1}}{2\pi} \log n + O(1) 
\end{equation}
The implicit constant  in $O(1)$ depends on $\epsilon$, $C_0$, $C_1$, $N_0$, $h$, and $\mu$. 
\end{theorem}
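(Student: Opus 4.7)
The plan is to reduce to the Gaussian case via the local universality results of Section~\ref{s.correlation}, and then analyse the resulting Gaussian polynomial with the Kac--Rice formula, splitting the integral at $0$. Write $\xi_i = \mu + \eta_i$, so $\eta_i$ has mean zero, variance one, and the same $(2+\epsilon)$-moment bound as $\xi_i$, and decompose
\[
P_n(z) = f(z) + \tilde P_n(z),\qquad f(z) := \mu\!\!\sum_{i=N_0}^{n}\!\! h(i)\, z^i + \sum_{i<N_0} c_i(\E\xi_i)\, z^i,\qquad \tilde P_n(z) := \sum_{i=0}^{n} c_i(\xi_i-\E\xi_i)\, z^i.
\]
Here $f$ is deterministic and $\tilde P_n$ is zero-mean with coefficient variances $c_i^2 = h(i)^2$, a classical polynomial of degree $2\deg(h)$, so Theorem~\ref{theorem:new1} applies to $\tilde P_n$. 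Since $N_{P_n}(\R)$ counts the real solutions of $\tilde P_n(x) = -f(x)$, the local universality/correlation-function results of Section~\ref{s.correlation} (applied on microscopic scales, on which $f$ is effectively constant) let us replace the $\eta_i$ by i.i.d.\ standard Gaussians at the cost of an $O(1)$ error in expectation, so it suffices to prove \eqref{new2-nonzeromean} in the Gaussian setting.

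In this Gaussian setting $P_n(t)$ has mean $m(t) := \mu D_n(t)$, with $D_n(t) := \sum_{i\ge N_0} h(i) t^i$, and variance $V(t) := \sum c_i^2 t^{2i}$. Kac--Rice yields
\[
\E N_{P_n}(\R) = \int_{-\infty}^{\infty} \frac{\exp(-m(t)^2/(2V(t)))}{\sqrt{2\pi V(t)}}\,\E\bigl[|P_n'(t)| \bigm| P_n(t)=0\bigr]\,dt.
\]
On $(0,\infty)$ the terms $h(i)t^i$ are eventually of one sign, so they add constructively in $D_n(t)$ while $\sqrt{V(t)}$ is only the $\ell^2$-norm of the same weighted terms. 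A direct comparison gives $m(t)^2/V(t) \gtrsim n^{1/2}$ uniformly on compact subsets of $(0,\infty)$ away from the $O(1)$ real zeros of $D_n$, so the Gaussian factor $\exp(-m^2/(2V))$ is super-polynomially small on almost all of $(0,\infty)$, and the total contribution to the integral from $t>0$ is $O(1)$.

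On $(-\infty,0)$, writing $t = -s$ with $s > 0$, the alternating sum $D_n(-s) = \sum(-1)^i h(i) s^i$ is uniformly bounded in $n$ for $s \in [0,1]$ (by Abel summation/iterated finite differences, using that $h$ is a polynomial) and is asymptotic to its leading term for $s > 1$; combined with $V(-s) = V(s)$, this yields $m(t)^2/V(t) = o(1)$ on the critical scale $|t| \in [1 - O(1/n),\,1]$ where the real-zero density concentrates, and $m(t)^2/V(t) = O(1)$ elsewhere on $(-\infty,0)$. Hence, in the critical region the Kac--Rice integrand for $P_n$ matches that of the centered polynomial $\tilde P_n$ to leading order. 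Because $V$ depends on $t$ only through $t^2$, the zero-mean Kac--Rice density of $\tilde P_n$ is even in $t$, so the contribution from $(-\infty,0)$ equals one half of $\E N_{\tilde P_n}(\R) = \tfrac{1+\sqrt{2\deg(h)+1}}{\pi}\log n + O(1)$ (by Theorem~\ref{theorem:new1} applied with variance polynomial $h^2$ of degree $2\deg(h)$). Adding the $O(1)$ from $(0,\infty)$ gives \eqref{new2-nonzeromean}.

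The main obstacle is to make the ``mean dominates'' estimate on $(0,\infty)$ quantitative enough to beat the $\log n$ scaling, especially in the transitional window near $t = 1$ where $|m|$ and $\sqrt V$ attain their maxima and their ratio is only of order $\sqrt n$; controlling the full Kac--Rice integrand (including the conditional expectation $\E[|P_n'(t)|\mid P_n(t)=0]$) in this regime requires combining Gaussian tail bounds with sharp size estimates for $D_n$. A secondary technical point is that the paper's universality theorems are stated for zero-mean random polynomials, so one must take care in transferring them to the level-crossing problem with the deterministic shift $f$; this is handled by applying universality at the level of local correlation functions on scales where $f$ is essentially constant.
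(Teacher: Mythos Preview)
Your outline---reduce to Gaussian via universality, then Kac--Rice with $t>0$ and $t<0$ behaving differently---is exactly the paper's route, but several steps are wrong or missing. The universality reduction is simpler than you suggest: the second alternative of Condition~2 in Theorem~\ref{real} \emph{is} the nonzero-mean hypothesis of Theorem~\ref{theorem:new2} (mean $\mu$, $c_i=\mathfrak P(i)$ a classical polynomial), so Corollary~\ref{mean} reduces directly to a Gaussian polynomial with the same mean $\mu$; no level-crossing reformulation is needed. More seriously, your sketch says nothing about $|t|>1$: the intervals $[1,1+b_1]$ and $[-1-b_1,-1]$ contribute $O(1)$ and $\frac{1}{2\pi}\log n+O(1)$ respectively, and the paper treats them via the normalized reciprocal polynomial $c_n^{-1}t^nP_n(1/t)$, which requires a separate (and not short) analysis of its mean and mean-derivative (Section~\ref{s.Gaussian2}).

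Your Gaussian estimates also need correction. With $\rho=\deg h$, the claim that $D_n(-s)$ is uniformly bounded for $s\in[0,1]$ fails for $\rho\ge1$ (e.g.\ $h(i)=i$ gives $D_n(-1)=\sum_{i\le n}(-1)^ii$ of size $\sim n$); the paper proves instead $m(t)=O((1+t)^{-\rho})$ and $m'(t)=O((1+t)^{-\rho-1})$ via \eqref{e.tailseries2}, which with $V(t)\approx(1+t)^{-(2\rho+1)}$ still yields $m^2/V=O(1+t)$---enough, but you need this rate, since the $\log n$ comes from the whole interval $t\in(-1,-1+b_1]$, not from the window $|1+t|=O(1/n)$ you call ``critical''. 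On $(0,\infty)$ the bound $m^2/V\gtrsim\sqrt n$ is also false: near $t=1-\delta$ one has $m^2/V\approx1/\delta$, merely bounded for $\delta$ bounded below; what saves you is that $\int\frac{1}{1-t}e^{-c/(1-t)}\,dt<\infty$. Finally, your Kac--Rice formula hides the $m'$-dependence inside $\E[|P_n'(t)|\mid P_n(t)=0]$: the paper uses Farahmand's explicit decomposition $\E N_n=I_1+I_2$ where both terms depend on $m$ \emph{and} $m'$, and bounding the second term $I_2$ is a separate estimate on each of the four intervals near $\pm1$.
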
 
 
The key feature of this result is that the number of real roots reduces by a factor of 2, compared to Theorem \ref{theorem:new1}. 
To our best knowledge, such a result was available only for Kac polynomials.
Farahmand \cite{F1} showed that when $\xi$ is gaussian with nonzero mean, $\E N_{P_{n}}(\R) = (1+o(1))\frac{1}{\pi}\log n$. 
 Ibragimov and Maslova in \cite{IM3} proved the same estimate if $\xi$ belongs to the domain of attraction of normal law. 
Even for Kac polynomials, our result improves upon these as it achieves the optimal error term $O(1)$. The analogue of Corollary \ref{cor:crossing} holds for this model.

Similarly to Theorem \ref{theorem:new1}, Theorem \ref{theorem:new2} will be derived from a general universality result provided in the next section. 
These results can also be used to treat higher moments (such as the variance) of the number of real roots. Details will appear elsewhere.

\subsection{Outline of the paper}

In the next section \ref{s.correlation}, we will present our main  results regarding the  local universality of the joint distribution of the zeros of $P_{n,\xi}$ when the deterministic coefficients $c_j$ have polynomial growth. 
 Special cases of these results will be used to reduce the proof of Theorems~\ref{theorem:new1} and \ref{theorem:new2} to the Gaussian setting (see the discussion near the end of Subsection~\ref{s.real} for details). 
 In Section~\ref{s.series},  we will also discuss  several extensions  regarding universality for the zeros of random power series. Among others, we achieve local universality of hyperbolic series under 
very general  assumptions. A sketch of our proofs for these results is presented is Section \ref{section:ideas}, followed by the detailed proofs in Sections~\ref{proof-complex}, \ref{proof-real}, \ref{proof-mean}, \ref{proof-complex-series}. In the rest of the paper (from Section~\ref{s.Gaussian} to the end), we prove the Gaussian case of Theorems~\ref{theorem:new1} and \ref{theorem:new2}. 

The current  paper uses and  further develops the method introduced in  \cite{TVpoly} by Tao and the third author. On the other hand, the arguments given in this paper are self-contained and the reader does not need to be familiar with \cite{TVpoly}. 

\section{ Correlation functions and Universality}  \label{s.correlation}

Correlation functions are effective tools to study random point processes. 
To define correlation functions, let us first consider the complex case in which the coefficients $c_i$ and the atom distribution $\xi$ are not required to be real valued.  In this case the point process $\{\zeta_1,\ldots,\zeta_n\}$ of zeroes of a random polynomial $P=P_{n}$ can be described using the (complex) \emph{$k$-point correlation functions} $\rho^{(k)} = \rho^{(k)}_P: \C^k \to \R^+$, defined for any fixed natural number $k$ by requiring that
\begin{equation}\label{cord}
 \E \sum_{i_1,\ldots,i_k \hbox{ distinct}} \varphi(\zeta_{i_1},\ldots,\zeta_{i_k}) = \int_{\C^k} \varphi(z_1,\ldots,z_k) \rho^{(k)}(z_1,\ldots,z_k)\ dz_1\ldots dz_k
 \end{equation}
for any continuous, compactly supported, test function $\varphi: \C^k \to \C$, with the convention that $\varphi(\infty)=0$;
see e.g. \cite{ AGZ,HKPV1}.  This definition of $\rho^{(k)}$ is clearly independent of the choice of ordering $\zeta_1,\ldots,\zeta_n$ of the zeroes. 
Note that if the random polynomial $P$ has a discrete law rather than a continuous one, then $\rho^{(k)}_{P}dz_1\dots dz_k$ needs to be interpreted as a general
measure.  

\begin{remark} When $\xi$ has a continuous complex distribution and when the coefficients $c_i$ are non-zero, then the zeroes are almost surely simple.  In this case if  $z_1,\ldots,z_k$ are distinct fixed complex numbers then one can interpret $\rho^{(k)}(z_1,\ldots,z_k)$ as the unique quantity such that the following holds: the probability that there is a zero in each of the disks $B(z_i,\eps)$ for $i=1,\ldots,k$ is $(\pi \eps^2)^k(\rho^{(k)}(z_1,\ldots,z_k)+o(1)) $ in the limit $\eps \to 0$.
\end{remark}

When the random polynomial $P$ have real coefficients,  the zeroes $\zeta_1,\ldots,\zeta_n$ are symmetric with respect to the real axis, and one expects several of the zeroes to lie on this axis.  Because of this possibility, the situation is more complicated.
 It is no longer  natural to work with the complex $k$-point correlation functions $\rho^{(k)}_P$, as they are likely to become singular on the real axis. Instead, we divide the complex plane $\C$ into three pieces $\C = \R \cup \C_+ \cup \C_-$, with $\C_+ := \{ z \in \C: \Im(z)>0\}$ being the upper half-plane and $\C_- := \{z \in \C: \Im(z)<0\}$ being the lower half-plane.  By the aforementioned symmetry, we may restrict our attention to the zeroes in $\R$ and $\C_+$ only.  For any natural numbers $k,l \geq 0$, we define the \emph{mixed $(k,l)$-correlation function} $\rho^{(k,l)} = \rho^{(k,l)}_{P}: \R^k \times (\C_+ \cup \C_-)^l \to \R^+$ of a random polynomial $P$ to be the function defined by the formula

\begin{eqnarray} \label{cord1}
& \E \sum_{i_1,\ldots,i_k \hbox{ distinct}} \sum_{j_1,\ldots,j_l \hbox{ distinct}} \varphi(\zeta_{i_1,\R},\ldots,\zeta_{i_k,\R},\zeta_{j_1,\C_+},\ldots,\zeta_{j_l,\C_+})  \\  \nonumber
&\quad = \int_{\R^k} \int_{\C_+^l} \varphi(x_1,\ldots,x_k,z_1,\ldots,z_l) \rho^{(k,l)}_{P}(x_1,\ldots,x_k,z_1, \ldots,z_l)\ dz_1\ldots dz_l dx_1 \ldots dx_k 
\end{eqnarray}	
for any continuous compactly supported test function $\varphi: \R^k \times \C^l \to \C$ (note that we do not require $\varphi$ to vanish at the boundary of $\C_+^l$), $\zeta_{i,\R}$ runs over an arbitrary enumeration of the real zeroes of $P_n$, and $\zeta_{j,\C_+}$ runs over an arbitrary enumeration of the zeroes of $P_n$ in $\C_+$.  This defines $\rho^{(k,l)}$ (in the sense of distributions, at least) for $x_1,\ldots,x_k \in \R$ and $z_1,\ldots,z_l \in \C_+$; we then extend $\rho^{(k,l)}(x_1,\ldots,x_k,z_1,\ldots,z_l)$ to all other values of $x_1,\ldots,x_k \in \R$ and $z_1,\ldots,z_l \in \C_+ \cup \C_-$ by requiring that $\rho^{(k,l)}$ is symmetric with respect to conjugation of any or all of the $z_1,\ldots,z_l$ parameters.  Again, we permit $\rho^{(k,l)}$ to be a measure\footnote{As in the complex case, we allow the real zeros $\zeta_{i_1,\R},\ldots,\zeta_{i_k,\R}$ or the complex zeroes $\zeta_{j_1,\C_+},\ldots,\zeta_{j_l,\C_+}$ to have multiplicity; it is only the indices $i_1,\ldots,i_k,j_1,\ldots,j_l$ that are required to be distinct.  In particular, in the discrete case it is possible for $\rho^{(0,2)}(z_1,z_2)$ (say) to have non-zero mass on the diagonal $z_1=z_2$ or the conjugate diagonal $z_1 = \overline{z_2}$, if $P$ has a repeated complex eigenvalue with positive probability.} instead of a function when the random polynomial $P_n$ has a discrete distribution.

In the case $l=0$, the correlation functions $\rho^{(k,0)}$ for $k \geq 1$ provide (in principle, at least) all the essential information about the distribution of the real zeroes. For instance, 
\begin{equation}\label{enr}
 \E N_P(\R) = \int_\R \rho^{(1,0)}(x)\ dx
\end{equation}
and similarly
\begin{equation}\label{varr}
 \Var N_P(\R) = \int_\R \int_\R \rho^{(2,0)}(x,y) -\rho^{(1,0)}(x) \rho^{(1,0)}(y) \ dx \ dy  + \int_\R \rho^{(1,0)}(x)\ dx
\end{equation}

We refer the reader to \cite{AGZ, HKPV1} for a thorough  discussion of correlation functions.

\subsection{Universality}  

The correlation functions give us a lot of information at  finer scales. Given the story of 
real roots in the previous section (which corresponds to the special case  \eqref{enr}), 
it is natural to expect  that their computation is extremely hard. 

The situation is roughly as follows. 
 We have an explicit  formula (Kac-Rice formula) to compute correlation functions. This formula is a generalization of Kac's formula in the previous section
 and involves joint distributions.  In principle, one can evaluate it in the Gaussian case (as Kac did). But technically,  for various sets of  coefficients $c_i$, this is already a significant challenge. 

In \cite{NS2}, Nazarov and Sodin considered the random series $f(z)=\sum_{j=0}^\infty \frac 1{\sqrt {j!}}\xi_j z^j$ where $\xi_j$ are iid normalized complex Gaussian and used the Kac-Rice formula to prove repulsion properties of its complex zeros, more specifically they proved that the $k$-correlation function is locally comparable to the square modulus of the complex Vandermonde product
$$C^{-1} \prod_{i<j} |z_i-z_j|^2 \le \rho^{(k)}_f(z_1,\dots, z_k) \le C \prod_{i<j} |z_i-z_j|^2 \ \ .$$The method of \cite{NS2} extends to more general settings. In \cite{NS2} the authors proved the same type of estimates for the complex $k$-point correlation function of the so-called $2k$-nondegenerate Gaussian analytic functions, which include (among others) $P(z)=\sum_{j=0}^\infty c_j \xi_j z^j$ with  $c_0,\dots, c_{2k-1}\ne 0$ such that $\sum_j |c_j|^2 |z|^{2j}$ converges in the domain where estimates for $\rho_f^{(k)}$ are needed (see \cite{NS2} for technical details). These certainly include random polynomials of finite degrees (at least $2k-1$) whose first $2k$ coefficients are nonzero, however the implicit constants $C$ in the estimates depend also on $f$ (and $k$ and the domain) and thus could be a very large function of the degree.

Similar to the Kac formula, a direct evaluation of the Kac-Rice formula is not feasible when $\xi$ is a  general non-Gaussian random variable. On the other hand, it has been conjectured that 
 the value of the formula, at least in the asymptotic sense, 
 should not depend on the fine details of the atom variable $\xi$. This is commonly referred to in the literature as the   \emph{universality phenomenon}. 

  Bleher and Di proved universality  for  
  elliptic polynomials in which the atom distribution $\xi$ was real-valued and sufficiently smooth and rapidly decaying (see \cite[Theorem 7.2]{BD} for the precise technical conditions and statement).
    With these hypotheses, they showed that the pointwise limit of the normalized correlation function $n^{-k/2} \rho^{(k,0)}(a+\frac{x_1}{\sqrt{n}},\ldots,a+\frac{x_k}{\sqrt{n}})$ for any fixed $k,a,x_1,\ldots,x_k$ (with $a \neq 0$) as $n \to \infty$ was independent of the choice of $\xi$ (with an explicit formula for the limiting distribution).  Their method is based on the Kac-Rice formula.

In a recent paper \cite{TVpoly}, Tao and the third author  introduced a  new method to prove universality, which  we will refer to as ``universality by sampling" (see Section \ref{section:ideas}).   
This method makes no distinction between continuous and discrete random variables and  the authors used it 
to derive  universality for flat, elliptic and Kac polynomials in certain domains. 

\begin{definition}\label{matching}
Two complex random variables $\xi$ and $\xi'$ are said to \textit{match moments to order $m$} if 
\begin{equation}
\E \Re(\xi)^{a}\Im(\xi)^{b} = \E \Re(\xi')^{a}\Im(\xi')^{b}\nonumber
\end{equation}
for all natural numbers $a, b\ge 0$ with $a+b\le m$.

\end{definition}


\subsection{Coefficients with polynomial growth}

We consider random polynomials  
\begin{equation}
P_{n}(z) = \sum_{i=0}^{n}c_i\xi_iz^i,\qquad z\in \C,\label{poly}
\end{equation}

\noindent with the following condition

{\bf Condition 1.} 

\begin{enumerate}\label{assumption1} 

\item \label{cond1i}$\xi_i$'s are independent  (real or complex) random variables with unit 
variance and  bounded $2+\epsilon$ moment, namely $\E|\xi_i|^{2+\epsilon} \le \tau_2$ for an arbitrarily small positive  constant $\epsilon$.

\item \label{cond1ii}$c_i$'s are deterministic complex numbers with  \begin{equation}\label{cond-c}
\tau_1 i^{\rho}\le |c_i|\le \tau_2 i^{\rho} \qquad\text{for all } i\ge N_0,
\,\,{\rm and} \,\,\,  |c_i|\le \tau_2\qquad\text{for all }  0\le i<N_0,
 \end{equation} 

 where $N_0, \tau_1, \tau_2, \epsilon$ are positive constants and $\rho > -1/2$.
 \end{enumerate}

Notice that we do not require the $\xi_i$ to be identically distributed. They are also allowed to have 
different means. However, by H\H{o}lder's inequality, our condition on the uniform boundedness of $2+\ep$ moments implies that the means should be bounded $\E|\xi_i|\le \tau_2^{1/(2+\ep)}$ for all $i$. 

An  essential point here is that we do not need to know the values of the  coefficients $c_i$ precisely, only their growth. 
We do not know of any result which is  applicable at this level of generality. 

In the next two subsections, we state our universality theorems for complex and mixed correlation functions. 


\subsection{Complex local universality for polynomials}

For a polynomial $P = P_n$ of the form \eqref{poly}, let $(\zeta_i^{P})_{i=1}^{n}$ be the zeros of $P$. 
 We  use the convention that if $P_n$ vanishes identically then it has a zero of order $n$ at $\infty$, and similarly, if $P_n$ has degree $m < n $ then it has a zero of order $n-m$ at $\infty$.

Let $\delta$ be a small positive number. 
Define 

\begin{center}
$I(\delta) = \begin{cases} [1 - 2\delta, 1-\delta] &\mbox{if } \frac{1}{10n}\le \delta<1, \\ 
[1 - 1/n, 1+1/n] & \mbox{if } 0<\delta< \frac{1}{10n}, \end{cases}.$
\end{center}

and
\begin{equation}
J(\delta) =  \left [\frac{1}{1-\delta}, \frac{1}{1-2\delta}\right ] \quad\mbox{ if $\frac{1}{10n}\le \delta<1$}.
\end{equation}

Note that $\left (\bigcup_{\frac{1}{20n}\le \delta\le \frac{1}{C}} I(\delta)\right )\cup \left (\bigcup_{\frac{1}{10n}\le \delta\le \frac{1}{C}} J(\delta)\right ) = \left [1-\frac{2}{C},\frac{C}{C-2}\right ]\supset\left [1-\frac{1}{C},1+\frac{1}{C}\right ]$.

Our goal is to prove universality in the annulus $A\left (0, 1-\frac{1}{C},1+\frac{1}{C}\right )$ for some large constant $C$ and we shall break it into annuli with radii given by  $I(\delta)$ and $J(\delta)$. When proving universality on the annulus $\{z: |z|\in I(\delta)\}$, for convenience of notations we will consider the following rescaled version
\begin{equation}
\check{P}(\check{z}) = P(z), \text{where } \check{z} = \frac{z}{10^{-3}\delta}.\label{rescale}
\end{equation}
The term ``local universality" can be thought of as universality on balls that contain $\Theta(1)$ zeros on average. It is more or less proven throughout the paper that for such $z$ as above, there are an average of $\Theta(1)$ zeros in the ball $B(z, 10^{-3}\delta)$. The rescaled factor $10^{-3}\delta$ in \eqref{rescale} plays the simple role of making this ball have the unit radius. The factor $10^{-3}$ is artificial and can be replaced by any sufficient small constant that allows the ball to grow under various approximation steps in our proofs while keeping distance $\Theta(\delta)$ away from the unit circle. Observe that by the change of variables formula, we have
\begin{equation}
\rho_{\check P}^{(k)}(w_1, \dots, w_k) = (10^{-3}\delta)^{2k}\rho_P^{(k)}(10^{-3}\delta w_1, \dots, 10^{-3}\delta w_k). \nonumber
\end{equation}

When working with the annulus $\{z: |z|\in J(\delta)\}$, we first consider $Q(z) = \frac{z^{n}}{c_n}P\left(\frac{1}{z}\right )$ to transform the domain $|z|\ge 1$ into $|z|\le 1$, and in particular, $J(\delta)$ into $I(\delta)$.  Note that $Q=\sum_{i=0}^{n}\frac{d_i}{d_0}\xi_{n-i}z^{i}$ where $d_i = c_{n-i}$. For notational convenience, sometimes we also think about $Q$ as $Q = \sum_{i=0}^{n}\frac{d_i}{d_0}\xi_{i}z^{i}$. And then, we use the same rescaling
\[\check{Q}(\check{z}) = Q(z), \text{where } \check{z} = \frac{z}{10^{-3}\delta}.
\]

Let $\rho^{(k)}_{\check P}$ and $\rho^{(k, l)}_{\check P}$ be the corresponding correlation functions of $\check P$.
Note that they depend on $\delta$ because the rescaling factor does.

Here, for a function $F:\C ^m\to \C$, we think of it as a function from $\R^{2m}\to \C$ and denote by $\left |{\triangledown^a F(x)}\right |$ the Euclidean norm of $\nabla^a F(x)$
\[\left |{\triangledown^a F(x)}\right | = \left (\sum_{1\le i_1, \dots, i_a\le 2m}\left |\frac{\partial ^{a}F}{\partial x_{i_1}\dots \partial x_{i_a}} (x)\right |^{2}\right )^{1/2}.
\]

\begin{theorem}\label{complex} Let $k\ge 1$ be an integer constant. 
Let $P_n = \sum_{i=0}^{n} c_i\xi_iz^{i}$ and $\tilde P_n = \sum_{i=0}^{n} c_i\tilde \xi_i z^{i}$ be two random polynomials satisfying Condition 1. Assume that $\xi_i$ and $\tilde \xi_i$ match moments to second order for all $N_0\le i\le n$ where $N_0$ is the constant in Condition 1. 

Then there exist constants $C, C', c$ depending only on $k$ and the constants in Condition 1 such that for every $\frac{1}{20n}\le\delta\le \frac{1}{C}$ and complex numbers $z_1, \dots, z_k$ with $|z_j|\in I(\delta)$ for all $0\le j\le k$, and for every smooth function $G: \mathbb{C}^{k}\to \mathbb{C}$ supported on $B(0, 10^{-3})^{k}$ with $|{\triangledown^aG}(z)|\le 1$ for all $0\le a\le 2k+4$ and $z\in \C^{k}$, we have
\begin{eqnarray}\label{h1}
\bigg|&&\int_{\mathbb{C}^{k}}G(w_1,\dots, w_k)\rho_{\check P}^{(k)}(\check z_1+ w_1,\dots, \check z_k+ w_k)\text{d}w_1\dots\text{d}w_k\nonumber\\
&&-\int_{\mathbb{C}^{k}}G(w_1,\dots, w_k)\rho_{\check {\tilde P}}^{(k)}(\check z_1+ w_1,\dots, \check z_k+ w_k)\text{d}w_1\dots\text{d}w_k\bigg|\le C'\delta^c.
\end{eqnarray}
Furthermore, if $\frac{1}{10n}\le\delta\le\frac{1}{C}$, 
\begin{eqnarray}\label{h1-1}
\bigg|&&\int_{\mathbb{C}^{k}}G(w_1,\dots, w_k)\rho_{\check Q}^{(k)}(\check z_1+ w_1,\dots, \check z_k+ w_k)\text{d}w_1\dots\text{d}w_k\nonumber\\
&&-\int_{\mathbb{C}^{k}}G(w_1,\dots, w_k)\rho_{\check {\tilde Q}}^{(k)}(\check z_1+ w_1,\dots, \check z_k+ w_k)\text{d}w_1\dots\text{d}w_k\bigg|\le C'\delta^c.
\end{eqnarray}
\end{theorem}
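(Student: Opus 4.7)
The plan is to follow the universality-by-sampling strategy of \cite{TVpoly}, which treats continuous and discrete atom distributions on equal footing. The starting point is the distributional identity $\frac{1}{2\pi}\Delta_z \log|\check P(z)| = \sum_{i} \delta_{\zeta_i(\check P)}$ on $\C$. After inclusion--exclusion to remove the ``indices distinct'' constraint in \eqref{cord}, the left-hand side of \eqref{h1} can be rewritten, by integrating by parts in each of the $k$ variables, as a linear combination of terms of the form
\begin{equation*}
\frac{1}{(2\pi)^{|S|}}\,\E \int_{\C^{k}} H_S(w_1,\dots,w_k)\prod_{j\in S} \log|\check P(\check z_j+w_j)|\,dw_1\cdots dw_k ,
\end{equation*}
where $S\subseteq\{1,\dots,k\}$ and $H_S$ is a derivative of $G$ of order at most $2k$, hence uniformly bounded by the hypothesis on $G$; missing indices contribute diagonal terms reducible to lower-order correlations by induction on $k$. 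Since the analogous representation holds for $\check{\tilde P}$, the problem reduces to comparing expectations of bounded smooth functionals of $\bigl(\log|\check P(\check z_j+w_j)|\bigr)_j$ with those of their tilde analogues.

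Next I would establish an anti-concentration estimate for $\log|\check P(z)|$: uniformly for $|z|\in I(\delta)$,
\begin{equation*}
\Pr\bigl(\log|\check P(z)| \le -t\bigr) \le C\exp(-c t^{\gamma})
\end{equation*}
for some $\gamma>0$, with constants depending on the parameters of Condition 1. The typical size $\E|\check P(z)|^{2}\asymp \sum_i |c_i|^2|z|^{2i}$ is $\asymp \delta^{-(2\rho+1)}$ by the rescaling and the polynomial growth hypothesis, so the upper tail is handled by Chebyshev. The lower tail comes from a small-ball estimate applied to the random sum $\sum_i c_i\xi_i\check z^i$: the growth condition produces many coefficients of comparable magnitude near the dominant index $i_*\sim 1/\delta$, to which a Kolmogorov--L\'evy--Esseen type anti-concentration inequality applies using only the $2+\epsilon$-moment bound. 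I would then smoothly truncate $\log|\check P|$ at levels $\pm T$ with $T$ polylogarithmic in $\delta^{-1}$, incurring an error of size $O(\delta^{c})$.

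The heart of the proof is a Lindeberg swap: replace $\xi_i$ by $\tilde\xi_i$ one index at a time, and Taylor-expand the truncated functional $F$ to third order in the scalar $\xi_i$. Since the two atoms match moments through order two, the zeroth, first, and second-order terms cancel on taking expectations, and the remainder, after truncating $|\xi_i|$ at a level $K$, is controlled by $\sup_{|s|\le K}\bigl|\partial_{\xi_i}^{3}F\bigr|_{\xi_i=s}\cdot\E\bigl(|\xi_i|^{2+\epsilon}+|\tilde\xi_i|^{2+\epsilon}\bigr)\cdot K^{1-\epsilon}$, plus a large-$|\xi_i|$ tail handled directly by the $2+\epsilon$-moment bound. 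Each $\xi_i$-derivative of $\log|\check P|$ contributes a factor $c_i\check z^{i}/\check P$ bounded on the truncated event by $|c_i|\,|\check z|^{i}e^{T}$; after three such derivatives and summing over $i=0,\dots,n$, the polynomial growth of $|c_i|$, the bound on $|\check z|$, and the lower bound on $|\check P|$ from anti-concentration combine to give a per-swap error of order $n^{-1}\delta^c$, which sums to $O(\delta^c)$, yielding \eqref{h1}.

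The main obstacle, in my view, is producing a quantitatively sharp small-ball estimate for $|\check P(z)|$ uniformly in $z$ across $I(\delta)$ under only the $2+\epsilon$-moment hypothesis and without any iid or symmetry assumption on the $\xi_i$; the polynomial growth of the $c_i$ provides enough spread among the dominant coefficients for a L\'evy-type inequality to apply, but the dependence on $\delta$ must be tracked with care so that the final swap error is genuinely a positive power of $\delta$. The companion bound \eqref{h1-1} follows by the same argument applied to the reciprocal polynomial $Q(z)=z^nP(1/z)/c_n$, whose coefficients $d_i=c_{n-i}$ satisfy the analogue of Condition 1 near $i=n$; the rescaling sends $J(\delta)$ to $I(\delta)$ under $z\mapsto 1/z$, so the same Poisson--Jensen reduction, anti-concentration, and Lindeberg swap apply verbatim to $\check Q$.
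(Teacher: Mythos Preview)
Your high-level plan (Green's formula, then Lindeberg) matches the paper's, but two of your steps break down quantitatively.

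\textbf{The Lindeberg error is not summable as written.} After your smooth truncation at level $T$, the $\xi_{i_0}$-derivatives of $\phi(\log|\check P|)$ are governed by powers of $c_{i_0}z^{i_0}/P$, which on the support of $\phi'$ you bound by $|c_{i_0}||z|^{i_0}e^{T}$. But for $|z|\in I(\delta)$ one has $\max_{i_0}|c_{i_0}||z|^{i_0}\asymp \delta^{-\rho}$, a \emph{negative} power of $\delta$, and $\sum_{i_0}(|c_{i_0}||z|^{i_0})^{2+\epsilon}\asymp \delta^{-(2+\epsilon)\rho-1}$. Even with $T$ polylogarithmic the per-swap remainder is therefore a positive power of $\delta^{-1}$, not $n^{-1}\delta^{c}$. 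The paper avoids this by swapping on the \emph{normalized} values $P(z_j)/\sqrt{V(z_j)}$, where $V(z)=\sum_i|c_i|^2|z|^{2i}$; the crucial estimate (their Lemma~\ref{log-comp-11}) is $|c_i||z|^{i}/\sqrt{V(z)}\le C\delta^{\alpha_1}$ for some $\alpha_1>0$, which makes $\sum_{i_0}a_{i_0}^{2+\epsilon}$ genuinely small. The passage from $P/\sqrt V$ to $\log|P|$ is then done separately, and the only small-ball input needed there is for the \emph{Gaussian} comparison polynomial, which is trivial.

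\textbf{The anti-concentration you claim is too strong.} A Kolmogorov--L\'evy--Esseen argument under the $(2+\epsilon)$-moment hypothesis gives at best polynomial (for large $\delta$) or $O(n^{-1/2})$ (for $\delta$ near $1/n$) bounds on $\Pr(|P(z)|\le\text{scale})$; no stretched-exponential $\exp(-ct^{\gamma})$ in $t$ is available when the atoms may be discrete. The paper uses exactly such polynomial bounds (their Lemmas~\ref{f1-1}, \ref{f2-1}) to define a high-probability ``good'' event $\mathcal T$, and then handles the complementary tail $\mathcal T^{c}$ by an entirely different tool: the Nazarov--Nishry--Sodin $L^{p}$ bound on $\int|\log|P(re^{i\theta})||^{p}\,d\theta$ for Rademacher coefficients, extended to general $\xi$ by a symmetrization trick (their Section~\ref{negligible-set}). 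Your pointwise truncation at polylog level cannot absorb this tail without the stronger bound you assumed.

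Two further structural ingredients you omitted are the Monte Carlo sampling of the integral $\int H_j\log|\check P|$ by a finite sum (so that the swap acts on finitely many points rather than on an integral functional), and Harnack's inequality to propagate the single-point lower bound on $|P(z_j)|$ to an $L^{2}$ bound on $\log|P|$ over the whole ball on the good event. Both are what make the paper's error terms close.
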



\subsection{Real local universality}\label{s.real}

For real universality, we require the following additional condition on $\xi_i$'s and $c_i$'s

{\bf Condition 2.} 
\begin{enumerate}
\item \label{cond-real} The random variables $\xi_i$'s and the coefficients $c_i$'s are real.
\item \label{cond-real'} One of the following holds
\begin{enumerate}
 \item \label{cond-real'-1} $\E\xi_i = 0$ for all $i\ge N_0$,

\item \label{cond-real'-2} $\E \xi_i = \mu$ for all $i\ge N_0$, where $\mu$ is any constant, and there exists a classical polynomial $\mathfrak P$ (independent of $n$) with degree $\rho\in \mathbb N$ such that $c_i = \mathfrak P(i)$ for all $i\ge N_0$. \footnote{For instance, $P$ is Kac polynomial or its derivatives.}  
\end{enumerate}

\end{enumerate}

Notice that when Condition 2 \eqref{cond-real'-2} is satisfied, by replacing $c_i$ by $-c_i$ if needed, we can also assume that $c_i = \mathfrak P(i)>0$ for all $i$ larger than some constant because the (fixed) polynomial $\mathfrak P(x)$ keeps the same sign when $x$ is sufficiently large.

\begin{theorem}\label{real} Let $k, l\ge 0$ be integer constants with $k+l\ge 1$. Let $P_n = \sum_{i=0}^{n} c_i\xi_iz^{i}$ and $\tilde P_n = \sum_{i=0}^{n} c_i\tilde \xi_i z^{i}$ be two random polynomials satisfying Conditions 1 and 2. Assume that $\xi_i$ and $\tilde \xi_i$ match moments to second order for all $N_0\le i\le n$ where $N_0$ is the constant in Condition 1. 

Then there exist constants $C, c$ depending only on $k, l$ and the constants and the polynomial $\mathfrak P$ in Conditions 1 and 2 such that for every $\frac{1}{20n}\le\delta\le \frac{1}{C}$, real numbers $x_1,\dots, x_k$, and complex numbers $z_1, \dots, z_l$ such that $|x_i|, |z_j|\in I(\delta)$ for all $i=1,\dots, k, j=1,\dots, l$, and for every smooth function $G: \mathbb{R}^{k}\times\mathbb{C}^{l}\to \mathbb{C}$ supported on $[-10^{-3}, 10^{-3}]^{k}\times B(0, 10^{-3})^{l}$ such that $|{\triangledown^aG}(z)|\le 1$ for all $0\le a\le 2(k+l)+4$ and $z\in \mathbb{R}^{k}\times\mathbb{C}^{l}$, we have
\begin{eqnarray}\label{h6}
\bigg|&&\int_{\mathbb{R}^{k}}\int_{\mathbb{C}^{l}}G(y_1,\dots, y_k, w_1,\dots, w_l)\nonumber\\
&&\qquad\rho_{\check P}^{(k,l)}(\check x_1+ y_1,\dots, \check x_k+ y_k,\check z_1+ w_1,\dots, \check z_l+ w_l)\text{d}y_1\dots\text{d}y_k\text{d}w_1\dots\text{d}w_l\nonumber\\
&&-\int_{\mathbb{R}^{k}}\int_{\mathbb{C}^{l}}G(y_1,\dots, y_k, w_1,\dots, w_l)\\
&&\qquad\rho_{\check {\tilde P}}^{(k,l)}(\check x_1+ y_1,\dots, \check x_k+ y_k,\check z_1+ w_1,\dots, \check z_l+ w_l)\text{d}y_1\dots\text{d}y_k\text{d}w_1\dots\text{d}w_l\bigg|\nonumber\\
&\le& C\delta^c.\nonumber
\end{eqnarray}
Furthermore, if $\frac{1}{10n}\le\delta\le\frac{1}{C}$, we have the same inequality \eqref{h6} with $Q$ in place of $P$. 
\end{theorem}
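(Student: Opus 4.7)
My plan is to parallel the proof strategy of Theorem~\ref{complex} (whose conclusion we take for granted) and combine it with a Kac-Rice-type smoothing adapted to real zeros. The case $k=0$ of Theorem~\ref{real} reduces exactly to Theorem~\ref{complex}; the new content is when $k\ge 1$.

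\textbf{Step 1 (Smooth-functional representation).} The integral on the left-hand side of~\eqref{h6} is the expectation of a weighted sum over ordered $(k,l)$-tuples of distinct zeros with prescribed locations. I represent the real-zero counting by a Kac-Rice smoothing: with $\phi$ a smooth bump on $\R$ satisfying $\int\phi=1$ and a scale $\eta>0$,
\[
\sum_{\alpha\text{ real zero}} w(\alpha) \;\approx\; \int w(x)\,\eta^{-1}\phi\bigl(\check P(x)/\eta\bigr)\,|\check P'(x)|\,dx,
\]
and analogously for complex zeros with a 2D bump and $|\check P'(z)|^2$. After further discretizing on a fine grid in the support of $G$, the LHS of~\eqref{h6} becomes $\E F_\eta(\check P) + (\text{smoothing error})$, where $F_\eta$ is a smooth, compactly supported function of finitely many linear functionals $\check P(t),\,\check P'(t)$.

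\textbf{Step 2 (Lindeberg swap).} I swap $\xi_i\to \tilde\xi_i$ one index at a time. Since $\check P$ depends linearly on each $\xi_i$, Taylor expanding $F_\eta$ in $\xi_i$ up to third order yields a per-swap error of order $\|\nabla^3 F_\eta\|_\infty\cdot|c_i|^{2+\ep}|z|^{i(2+\ep)}\cdot \E|\xi_i|^{2+\ep}$; the zeroth, first and second order terms cancel because $\xi_i$ and $\tilde\xi_i$ match moments to second order. Summing over $0\le i\le n$ and using the polynomial growth of $|c_i|$ from Condition~1 together with the exponential decay of $|z|^i$ for $|z|\le 1-\delta$ beyond $i\sim 1/\delta$, the total swap error is at most $\|\nabla^3 F_\eta\|_\infty\cdot \delta^{c_1}$ for some $c_1=c_1(\rho,\ep)>0$.

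\textbf{Main obstacle.} The two error sources conflict: the smoothing error in Step~1 shrinks as $\eta\to 0$, whereas $\|\nabla^3 F_\eta\|_\infty$ blows up polynomially in $\eta^{-1}$. Balancing to achieve the advertised $\delta^c$ bound requires a quantitative anti-concentration estimate: the joint density of $(\check P(t_1),\check P'(t_1),\ldots)$ at zero is polynomially bounded, uniformly over atom distributions satisfying Conditions~1-2. In the Gaussian case this is a covariance calculation exploiting the polynomial growth of the $c_i$, which ensures that the relevant covariance matrix is non-degenerate at the natural scale $\delta^{-(2\rho+1)}$; for general atoms one combines an Erd\H{o}s-L\'evy-Kolmogorov small-ball inequality for sums of independent random variables with an initial Lindeberg pass to the Gaussian model. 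Condition~2 enters crucially here: when $k\ge 1$ one needs anti-concentration for the \emph{real} random variable $\check P(x)$ at real $x$, which is why the theorem must be stated with the additional reality and polynomial-mean assumptions.
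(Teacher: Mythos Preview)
Your proposal takes a genuinely different route from the paper, and the route has a real gap.

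The paper does not use Kac--Rice smoothing at all in the universality step. Instead it reduces Theorem~\ref{real} to the already-proven complex universality Theorem~\ref{complex} via a repulsion estimate (Lemma~\ref{sun6}): with probability $1-O(\gamma^{3/2})$ a ball $B(\check x,\gamma)$ about a real point contains at most one zero of $\check P$, hence no genuinely complex zeros (which come in conjugate pairs). This lemma is proved by first using Theorem~\ref{complex} to transfer the two-point question to the Gaussian model, and then, in the Gaussian case, applying Rouch\'e's theorem to compare $\tilde P$ with its linearization $g(z)=\tilde P(x)+(z-x)\tilde P'(x)$; the only anti-concentration needed is the elementary Gaussian small-ball bound for $g(x\pm\eta)$ at two deterministic points. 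With repulsion in hand, the real-zero linear statistic $X_{\check x_i,F_i,\R}$ is approximated by the full-zero statistic $X_{\check x_i,\hat F_i}$, where $\hat F_i$ is $F_i$ times a bump of width $\gamma$ in the imaginary direction; the approximation error is controlled by the repulsion probability together with the non-clustering and tail bounds from Section~\ref{proof-complex}, and $X_{\check x_i,\hat F_i}$ falls directly under Theorem~\ref{complex}. Condition~2 enters only through the mean estimates $|\E p(z)|$ and $m(x)$ inside the Rouch\'e argument, not through any anti-concentration for general atoms.

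Your direct Kac--Rice approach faces two concrete problems. First, $|P'(x)|$ is not differentiable in $P'(x)$, so $F_\eta$ as you describe it is not $C^3$ and the third-order Lindeberg expansion is not available; smoothing the absolute value is another error source you have not accounted for. Second --- and this is the main gap --- your proposed resolution of the tension between the smoothing error and $\|\nabla^3 F_\eta\|_\infty$ is circular. You need joint anti-concentration of $(\check P(t),\check P'(t))$ at many points for general (possibly discrete) atoms in order to control the Kac--Rice smoothing error, and you propose to obtain it via ``an initial Lindeberg pass to the Gaussian model''; but any Lindeberg pass on these quantities is again a comparison of a smooth functional of $P$ between two ensembles, which already requires $\eta$ bounded away from zero and hence the very anti-concentration you are trying to establish. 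The paper sidesteps this entirely: Green's formula expresses the complex-zero statistics through $\log|\check P|$ alone (no $P'$), and the real case is then handled by the reduction above rather than by a separate joint small-ball argument for general atoms.
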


Now, in order to derive Theorems \ref{theorem:new1} and \ref{theorem:new2} from Theorem~\ref{real} it suffices to show that 
the number of real roots in the Gaussian case satisfies the claimed bounds and  that 
the expectation of real roots (in the general case) outside the universality annulus is bounded. More specifically, we will show that

\begin{lemma}\label{boundedness}
Under the conditions of Theorem \ref{real}, for each constant $C>0$, there exists a constant $M(C)$ such that
$$\E N_{P_n}\left (\R \setminus A(0, 1 - \frac{1}{C}, 1+\frac{1}{C})\right )\le M(C),$$
for every $ n\ge 1$.
\end{lemma}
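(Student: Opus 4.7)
The plan is to reduce Lemma~\ref{boundedness} to the interior bound $\E\,n_{P_n}(B(0,1-c))=O(1)$ on the expected number of complex zeros of $P_n$ inside a disk strictly smaller than the unit disk, for any fixed $c>0$. Two reductions suffice. Real zeros are complex zeros, so it is enough to bound complex zeros in the desired region. For the exterior piece $\R\setminus \overline{B(0,1+1/C)}$, the reciprocal polynomial $Q_n(z)=z^nP_n(1/z)=\sum_{i=0}^nc_{n-i}\xi_{n-i}z^i$ has coefficients that inherit the polynomial growth of $|c_i|$ from the top end, and the map $x\mapsto 1/x$ puts the real zeros of $P_n$ there in bijection with real zeros of $Q_n$ in $B(0,1-1/(C+1))$. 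Hence the interior bound applied to both $P_n$ and $Q_n$, with $c=c(C)$, will complete the proof.

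For the interior bound I would apply Jensen's formula. Factoring $P_n(z)=z^m\tilde P_n(z)$ with $m=\min\{i:c_i\xi_i\ne 0\}$ (a random index with $\E m=O(1)$, bounded by $N_0$ on an event of uniformly positive probability), so that $\tilde P_n(0)=c_m\xi_m\ne 0$ and $n_{P_n}(B(0,r))=m+n_{\tilde P_n}(B(0,r))$, Jensen's formula on $B(0,R)$ with $r=1-c$ and $R=1-c/2$ gives
\[
n_{\tilde P_n}(B(0,r))\log(R/r)\le \frac{1}{2\pi}\int_0^{2\pi}\log|P_n(Re^{i\theta})|\,d\theta - m\log R-\log|c_m\xi_m|.
\]
Since $\log(R/r)\gtrsim c$, the lemma reduces to (i) bounding the expectation of the circle-average of $\log|P_n|$ from above, and (ii) bounding $\E\log|c_m\xi_m|$ from below. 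Step (i) is routine: by Jensen's inequality and Cauchy--Schwarz,
\[
\E\log|P_n(Re^{i\theta})|\le \tfrac12\log\E|P_n(Re^{i\theta})|^2 = \tfrac12\log\Bigl(\sum_{i=0}^n|c_i|^2\,\E|\xi_i|^2\,R^{2i}\Bigr)=O_c(1),
\]
using $\E|\xi_i|^2\le 1+\tau_2^{2/(2+\epsilon)}$ from H\"older and the convergent series $\sum_i i^{2\rho}R^{2i}=O((1-R)^{-(2\rho+1)})$.

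The main obstacle will be step (ii). Under Condition~1 the initial coefficients $c_0,\dots,c_{N_0-1}$ lack a uniform positive lower bound and $\xi_m$ may be concentrated near zero for distributions with a discrete atom, so $\E\log|c_m\xi_m|$ cannot be controlled from the single evaluation at the origin. To circumvent this, I would apply the Jensen argument at shifted centers instead: pick a real point $x_0$ with $c/4\le|x_0|\le 1-c$, at which the sum $P_n(x_0)=\sum c_i\xi_i x_0^i$ has variance bounded below by the positive constant $\sum_{i\ge N_0}\tau_1^2 i^{2\rho}x_0^{2i}$ depending only on the Condition~1 parameters. A Kolmogorov--Rogozin type small-ball estimate for sums of independent random variables with unit variance and bounded $(2+\epsilon)$-moment then furnishes $\P(|P_n(x_0)|\le t)\lesssim t^\alpha$ in a sufficient range, which converts via $\int_0^\infty \P(|P_n(x_0)|\le e^{-s})\,ds$ to $\E\log^-|P_n(x_0)|=O(1)$. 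Covering the interior interval by $O(1/c)$ disks of radius $c/4$ centered at such shifted points $x_0$ and summing the local Jensen bounds would then give $\E\,n_{P_n}(B(0,1-c))=O(1)$. The hardest technical piece will be ensuring that the small-ball exponent $\alpha$ is uniform in $n$ and depends only on the Condition~1 parameters; here the additional structure in Condition~2 (mean-zero or constant-mean assumption, with $c_i$ a classical polynomial in $i$) can be used to sharpen the estimate if needed.
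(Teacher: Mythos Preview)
Your reduction to an interior bound via Jensen and the reciprocal polynomial is the right shape, and the paper makes the same reduction. The gap is in step~(ii): a Kolmogorov--Rogozin bound does \emph{not} furnish $\P(|P_n(x_0)|\le t)\lesssim t^\alpha$ under Condition~1. For discrete $\xi_i$ the L\'evy concentration function $Q(P_n(x_0);t)$ does not tend to $0$ as $t\to 0$; Rogozin only yields $Q(P_n(x_0);t)\le C/\sqrt n$ uniformly in $t$, from which $\E\log^-|P_n(x_0)|$ cannot be bounded (indeed it may be $+\infty$ if $\P(P_n(x_0)=0)>0$). One can recover a power-law small-ball bound in the restricted range $t\gtrsim |x_0|^{\,n}$ using the lacunary Littlewood--Offord inequality (Lemma~\ref{lacunary} in the paper), since the coefficients $c_i x_0^i$ decay geometrically; but then the residual event $\{|P_n(x_0)|\lesssim |x_0|^{\,n}\}$ must be handled separately via the crude bound $N_{P_n}\le n$ and its exponentially small probability. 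Neither ingredient is present in your sketch.

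The paper avoids the small-ball problem altogether by a different device (following Ibragimov--Maslova). From Condition~1 there exist $d>0$, $q<1$ with $\P(|\xi_i|<d)\le q$ for all $i$. Conditioning on the event $B_k=\{|\xi_{N_0}|,\dots,|\xi_{k-1}|\le d,\ |\xi_k|>d\}$, which has $\P(B_k)\le q^{k-N_0}$, one has the \emph{deterministic} lower bound $|P_n^{(k)}(0)|=k!\,|c_k\xi_k|>k!\,\tau_1 k^\rho d$. Jensen at the origin applied to $P_n^{(k)}$ then bounds $N_{P_n^{(k)}}([-r,r])$, and Rolle's theorem gives $N_{P_n}([-r,r])\le k+N_{P_n^{(k)}}([-r,r])$. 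Summing in $k$ against the geometric weight $q^{k-N_0}$ yields the $O(1)$ bound with no anti-concentration input whatsoever. Note this argument is specific to \emph{real} zeros (through Rolle); your approach would in principle bound complex zeros as well, but at the cost of the missing small-ball analysis.
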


which together with Theorem \ref{real} give
\begin{corollary}\label{mean}
Under conditions of  Theorem   \ref{real}, there exists a constant $C$ such that for every $n\ge 1$, one has
$$|\E N_{P_n}(\R) - \E  N_{\tilde P_n}(\R)|\le C.$$
\end{corollary}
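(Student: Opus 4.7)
Proof proposal for Corollary \ref{mean}.

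The plan is to split $\E N_{P_n}(\R)$ (and the same for $\tilde P_n$) into a contribution from inside the annulus $A := A(0, 1-1/C, 1+1/C)$ and from its complement, bound the complement directly by Lemma~\ref{boundedness}, and control the inside piece by summing the local-universality bound from Theorem~\ref{real} over a dyadic cover of $A \cap \R$. Concretely, for the complement we have $\E N_{P_n}(\R \setminus A) \le M(C)$ and likewise for $\tilde P_n$, so the issue reduces to comparing $\E N_{P_n}(\R \cap A)$ with $\E N_{\tilde P_n}(\R \cap A)$.

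To cover $\R \cap A$, I would use the dyadic annuli $|x| \in I(\delta_j)$ with $\delta_j = 2^{-j}/C$ for $j=0,1,\dots,J$, where $J$ is the largest index with $\delta_J \ge 1/(20n)$; together with the innermost window $|x|\in[1-1/n,1+1/n]$ (which is $I(1/(20n))$ by definition); and the analogous cover by $J(\delta_j)$ for the part of the annulus with $|x|>1$, handled via the transform $Q(z) = z^n P(1/z)/c_n$ and the $Q$-version of Theorem~\ref{real}. The intervals $I(\delta_j)$ tile $[1-2/C,\,1-1/n]$ (and symmetrically on the negative side), and the corresponding $J(\delta_j)$ handle $[1+1/n,\,1+\tfrac{2}{C-2}]$.

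Within each piece $\{|x|\in I(\delta_j)\}\cap\R$, after the rescaling $\check z = z/(10^{-3}\delta_j)$, I would build a smooth partition of unity: choose a bump $G_0:\R\to[0,1]$ supported in $[-10^{-3},10^{-3}]$ with $|\nabla^a G_0|\le 1$ for $a\le 6$, and a collection of shifts $\{G_0(\cdot-\check x_m)\}_m$ whose sum is identically $1$ on $\check I(\delta_j)$ (after multiplying by a small absolute constant to keep derivative bounds). The number of shifts needed is a bounded absolute constant, since $\check I(\delta_j)$ has length $I(\delta_j)/(10^{-3}\delta_j)=O(1)$. Each integral
\[
\int_\R G_0(y-\check x_m)\,\rho^{(1,0)}_{\check P}(\check x_m+y)\,dy
\]
counts real zeros of $P$ in a window of size $O(\delta_j)$ around $x_m$, and Theorem~\ref{real} (with $k=1$, $l=0$) gives that replacing $P$ by $\tilde P$ changes this quantity by at most $C'\delta_j^c$. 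Summing over the $O(1)$ windows gives an error $O(\delta_j^c)$ for the piece $I(\delta_j)$. The same argument in $Q$-coordinates handles $J(\delta_j)$. Finally, summing the geometric series $\sum_{j=0}^{\infty} \delta_j^c = \sum_j (2^{-j}/C)^c = O_C(1)$ and adding the $O(M(C))$ tail contributions yields the desired constant bound.

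The main technical point, rather than obstacle, is aligning the partition-of-unity step with the derivative normalization $|\nabla^a G|\le 1$ required by Theorem~\ref{real}; this is handled by choosing a single reference bump in rescaled coordinates so that its derivative norms are absolute constants, then absorbing those constants into the universal constant $C$. A secondary concern is that the innermost window $I(1/(20n))=[1-1/n,1+1/n]$ could potentially contain many roots, but after rescaling by $10^{-3}/(20n)$ this still fits in an $O(1)$-sized rescaled window, so the same $O(1)$-many bumps suffice, and the error contribution $O((1/n)^c)$ is negligible.
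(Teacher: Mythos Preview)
Your overall strategy is sound and matches the paper's backbone: reduce to the annulus via Lemma~\ref{boundedness}, cover dyadically, apply Theorem~\ref{real} locally, and sum a geometric series. But there is a real gap in how adjacent dyadic pieces are glued. You build the partition of unity separately on each $\check I(\delta_j)$, demanding that it sum to $1$ there; since consecutive intervals $I(\delta_j)=[1-2\delta_j,1-\delta_j]$ and $I(\delta_{j+1})=[1-\delta_j,1-\delta_j/2]$ share the endpoint $1-\delta_j$, the global sum $\Phi=\sum_j\Phi_j$ equals $2$ at that point and exceeds $1$ on a neighborhood of every such boundary. Consequently $\int\Phi\,\rho_P^{(1,0)}$ is not $\E N_P(\R\cap A)$: it overcounts by $\int(\Phi-1)\rho_P^{(1,0)}$, a sum of expected real-root counts over intervals of width $\sim 10^{-6}\delta_j$. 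You have no a priori bound on these, and comparing to $\tilde P$ via Theorem~\ref{real} only shifts the problem to bounding $\sum_j\E N_{\tilde P}$ on those boundary intervals---which is precisely the Gaussian Kac--Rice computation for $\mathcal I_{\tilde P}$ that the paper carries out (the estimates surrounding \eqref{nh1}--\eqref{d2}).

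The fix is to construct a single \emph{global} partition of unity on $A\cap\R$ (a Whitney-type decomposition with bump widths $\sim 10^{-6}|1-|x||$), so that bumps at the interface between scales $\delta_j$ and $\delta_{j+1}$ are shared and the total is exactly $1$ on the interior of the annulus, transitioning to $0$ only at the outer edges $|x|=1\pm 1/C$; those edge discrepancies are absorbed by Lemma~\ref{boundedness} with a slightly different constant. With that correction your route is actually cleaner than the paper's: the paper instead establishes the stronger per-interval comparison \eqref{dd1} at each scale, which forces it to control the boundary term $\mathcal I_{\tilde P}$ via explicit Kac--Rice density bounds for the Gaussian polynomial.
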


\begin{remark}\label{rmkbdd}
To get an intuition for Lemma \ref{boundedness}, let $i$ be the smallest index for which $c_{i_0}\ne 0$. Assume that $c_{i_0} = \Omega(1)$, $\E \log|\xi_{i_0}|=O(1)$, and $\E \log|\xi_{n}|=O(1)$. Under condition \eqref{cond-c}, $i_0 = O(1)$. Then by Jensen's inequality for the function $P(z)/z^{i_0}$ and concavity of the function $\log$, one has the easy bound
\begin{eqnarray}
\E N_P(B(0,1 - 1/C))&\le& i_0 + \E\frac{\log\frac{M}{|c_{i_0}\xi_{i_0}|}}{\log\frac{1-1/2C}{1 - 1/C}}\nonumber\\
&=& i_0 + O_C(1) + O_C(\E\log M)\le O_C(1) + O_C(\log\E M)\nonumber\\
&=& O_C(1) + O_C(\log \sum_{i=0}^{\infty}i^{\rho}(1-1/2C)^{i}) = O_C(1).
\end{eqnarray} 
where $M = \max_{|z|\le 1 - 1/2C}|P(z)/z^{i_0}|$.
Similarly, $\E N_Q(B(0,1 - 1/C))=O_C(1)$. And hence, $\E N_P(\C\setminus A(0, 1-1/C, 1+1/C)) = O_C(1)$. In other words, for a large class of polynomials of the form \eqref{poly}, one expects to see only a few zeros outside the annulus of universality. 
\end{remark}
By Corollary \ref{mean}, to verify Theorem \ref{theorem:new1} and Theorem~\ref{theorem:new2}  it suffices to consider the Gaussian case.
\begin{theorem} \label{t.Gaussian}
The statement of Theorem \ref{theorem:new1}  holds for $\xi_i$ being  standard Gaussian for all $i=0, \dots, n$. 
\end{theorem}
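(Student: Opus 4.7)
The plan is to apply the Kac--Rice formula in the Gaussian setting and then carefully integrate the resulting density. For $P_n(z)=\sum_{k=0}^n c_k\xi_k z^k$ with $c_k^2 = h(k)$ and $\xi_k$ standard Gaussian, the covariance kernel is $K(s,t) = \E[P_n(s)P_n(t)] = \sum_{k=0}^n h(k)(st)^k$, and the density of real zeros is
\[
\rho(t) \;=\; \frac{1}{\pi}\sqrt{\partial_s\partial_t\log K(s,t)\big|_{s=t}}.
\]
Since the map $\xi_k\mapsto (-1)^k\xi_k$ preserves the joint Gaussian law, $\rho$ is even, so $\E N_{P_n}(\R) = 2\int_0^\infty \rho(t)\,dt$. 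I will split this into four pieces: the compact region $[0,1/2]$, the inner bulk $[1/2, 1-1/n]$, the microscopic transition $[1-1/n, 1+1/n]$, and the outer bulk $[1+1/n,\infty)$. On $[0,1/2]$ the series $K(t,t)$ converges (as $n\to\infty$) uniformly to a smooth strictly positive function, so $\rho(t)=O(1)$ and the contribution is $O(1)$.

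The main computation lives in the inner bulk. Using the generating identity $\sum_{k\ge 0}\binom{L+k-1}{k}x^k = (1-x)^{-L}$, I decompose
\[
K(s,t) \;=\; \sum_{j=0}^{d_0}\alpha_j (1-st)^{-L_j} \;-\; T_n(st),
\]
where $T_n(x) = \sum_j \alpha_j \sum_{k>n}\binom{L_j+k-1}{k}x^k$ is the truncation tail. For $t$ with $n(1-t^2)\to\infty$ this tail is much smaller than the dominant $(1-t^2)^{-(d+1)}$ contribution and can be absorbed in the error. The top exponent is $L_{d_0}=d+1$, giving $K(s,t) = \alpha_{d_0}(1-st)^{-(d+1)}\bigl[1+O((1-st)^{\eta})\bigr]$ with $\eta = L_{d_0}-L_{d_0-1} > 0$ (no correction if $d_0=0$). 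A direct calculation gives $\partial_s\partial_t \log[(1-st)^{-(d+1)}]\big|_{s=t} = (d+1)/(1-t^2)^2$, and the logarithmic correction contributes relative error $O((1-t^2)^{\eta})$, so $\rho(t) = \tfrac{\sqrt{d+1}}{\pi(1-t^2)}\bigl(1+O((1-t^2)^{\eta/2})\bigr)$. Since $\int_{1/2}^{1-1/n}(1-t^2)^{-1}\,dt = \tfrac{1}{2}\log n + O(1)$ and the error integrates to $O(1)$,
\[
\int_{1/2}^{1-1/n}\rho(t)\,dt \;=\; \frac{\sqrt{d+1}}{2\pi}\log n + O(1).
\]

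For the outer bulk I pass to the reciprocal polynomial $Q(z) = z^n P_n(1/z)/c_n$, whose real zeros in $(0,1-1/n)$ correspond to those of $P_n$ in $(1+1/n,\infty)$. The coefficient variances of $Q$ are $h(n-i)/h(n)$, and since $h$ is a generalized polynomial of degree $d$ one has $h(n-i)/h(n) = 1 + O(i/n)$ for $i = o(n)$. Consequently the normalized kernel $K_Q(s,t)/h(n)$ differs from $\sum_{k\ge 0}(st)^k = (1-st)^{-1}$ by a term of size $O(1/(n(1-st)^2))$, which is negligible in the inner bulk of $Q$. The same analysis (now with $d=0$, $L=1$) then contributes $\tfrac{1}{2\pi}\log n + O(1)$. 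The transition region $[1-1/n,1+1/n]$ contributes $O(1)$ by a microscopic rescaling of $P_n$ about $t=1$ (on this scale the kernel has a well-controlled non-degenerate limit). Combining and doubling for the negative real axis,
\[
\E N_{P_n}(\R) \;=\; 2\Bigl(\tfrac{\sqrt{d+1}}{2\pi} + \tfrac{1}{2\pi}\Bigr)\log n + O(1) \;=\; \tfrac{1+\sqrt{d+1}}{\pi}\log n + O(1).
\]

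The main obstacle is securing the $O(1)$ (rather than $o(\log n)$ or even $O(\log\log n)$) error in the sliver where $1-t^2 \sim \log n/n$ or smaller. There the truncation $T_n$ is no longer negligible, and the naive bound $\rho(t)\asymp (1-t^2)^{-1}$ by itself would contribute $O(\log\log n)$ to the count on that sliver. To push the error down to $O(1)$ I would supply a sharp asymptotic for $K(t,t)$ in this regime, for example via a Laplace/saddle-point estimate of the truncated binomial tail $\sum_{k>n}\binom{L+k-1}{k}t^{2k}$, together with matching bounds for $\partial_s K|_{s=t}$ and $\partial_s\partial_t K|_{s=t}$ and a careful cancellation in the numerator $AC-B^2$. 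The analogous refinement near $|t|=1$ from outside, handled through the reciprocal polynomial, should be technically cleaner since the limiting kernel is Kac.
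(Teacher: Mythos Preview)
Your overall strategy---Edelman--Kostlan density, split into inside/transition/outside, binomial generating functions for the inside, reciprocal polynomial for the outside---is exactly the paper's. The paper also reduces the sandwich bound \eqref{new2} to the exact case $c_k^2=h(k)$ via an elementary density comparison (if $m|b_k|\le |a_k|\le M|b_k|$ then $\tfrac{m^2}{M^2}\widetilde\rho_n\le \rho_n\le \tfrac{M^2}{m^2}\widetilde\rho_n$ pointwise, see \eqref{e.elementary}); you do not address this, but it is easy.

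The genuine gap is the one you flag yourself: you have not carried out the estimate in the sliver where $n(1-t^2)$ is bounded, and without it your inner-bulk computation does not yield $O(1)$. Your proposed fix (Laplace/saddle-point on the binomial tail plus tracking cancellation in $AC-B^2$) would likely work but is heavier than what the paper does. The paper avoids the $AC-B^2$ cancellation entirely by rewriting the density as
\[
\rho_n(t)^2=\tfrac{1}{\pi^2}\bigl(g_n'(t^2)+t^2 g_n''(t^2)\bigr),\qquad g_n=\log f_n,\quad f_n(x)=\sum_k c_k^2 x^k,
\]
and comparing $g_n$ to $g_\infty=\log f_\infty$ through the ratio $u_n=f_n/f_\infty$: one has $|\rho_n-\rho_\infty|\le C(|u_n'|^{1/2}+|u_n'|+|u_n''|^{1/2})$ whenever $u_n$ is bounded below. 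Elementary tail bounds on $\sum_{k>n}\binom{L+k-1}{k}x^k$ (using only $\binom{L+k-1}{k}\asymp k^{L-1}$, no stationary phase) give $u_n^{(j)}(x)=O\bigl((1-x)^{-j}[n(1-x)]^{L_d+j-1}x^n\bigr)$, hence $\rho_n(t)=\rho_\infty(t)+O\bigl(1/(n(1-t)^2)\bigr)$ uniformly on $[\beta,1-c/n]$. This error integrates to $O(1)$, and the window $[1-c/n,1]$ is handled by the crude bound $\rho_n=O(n)$ that follows directly from \eqref{e.elementary}. So no ``sliver'' analysis is needed.

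The outer bulk via the reciprocal polynomial is done the same way, but here the paper needs a two-term expansion of $\widetilde f_{n,L}$ (your $1+O(i/n)$ is not enough): a recursive identity $\widetilde f_{n,L}=\tfrac{1}{1-x}-\tfrac{(L-1)x}{(L+n-1)(1-x)}\widetilde f_{n,L-1}$ is iterated to get $\widetilde u_n'=O(1/(n(1-x)^2))$ and $\widetilde u_n''=O(1/(n(1-x)^3))$, after a genuine cancellation of the leading $1/(1-x)$ terms. This is the place where your sketch is thinnest; the paper's computation here is short but not automatic.
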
 

\begin{theorem} \label{t.Gaussian2}
The statement of Theorem \ref{theorem:new2}  holds for $\xi_i$ being  Gaussian with mean $\mu$ and variance $1$ for all $i=0, \dots, n$. 
\end{theorem}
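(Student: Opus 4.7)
The plan is to apply the Kac--Rice formula to $P_n$, now treated as a Gaussian field with a nonzero mean. Write $P_n(x) = \mu f_n(x) + g_n(x)$, where $f_n(x) := \sum_{i=0}^n c_i x^i$ is deterministic and $g_n$ is the centered Gaussian polynomial with the same coefficients $c_i$ (the object governed by Theorem~\ref{t.Gaussian}). Put $A(x) := \Var g_n(x)$, $B(x) := \E g_n(x) g_n'(x)$, $C(x) := \Var g_n'(x)$, $D := AC-B^2$, $m(x) := \mu f_n(x)$, and $\tilde m(x) := \mu f_n'(x) - B(x) m(x)/A(x)$. Conditioning on $P_n(x)=0$ in the joint Gaussian density of $(P_n(x),P_n'(x))$ yields
\begin{equation*}
\rho_n(x) \;=\; \rho_0(x)\, e^{-m(x)^2/(2A(x))}\, K\!\bigl(u(x)\bigr),\qquad u(x) := \tilde m(x)\sqrt{A(x)/(2D(x))},
\end{equation*}
where $\rho_0(x) = \sqrt{D(x)}/(\pi A(x))$ is the Edelman--Kostlan density governing the zero-mean case handled by Theorem~\ref{t.Gaussian}, and $K(u) := e^{-u^2} + \sqrt{\pi}\,u\,\mathrm{erf}(u)$ satisfies $K(0)=1$, $K(u)\ge 1$, and $K(u)\sim \sqrt{\pi}|u|$ as $|u|\to\infty$. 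The desired quantity is $\E N_{P_n}(\R) = \int_\R \rho_n(x)\,dx$.

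I would then split the integral at $x=0$, with the strategy that the positive half-line contributes only $O(1)$ while the negative half-line reproduces exactly half of the zero-mean asymptotic. On $(0,\infty)$, since $h$ has positive leading coefficient, $f_n(x)>0$ for $x>0$ (after absorbing finitely many initial terms into the error). Standard asymptotics of the generating sums $\sum h(i) x^i$ and $\sum h(i)^2 x^{2i}$ give $m(x)^2/A(x) \asymp (1-x)^{-1}$ on $(0, 1-1/n]$, with a similar growth on $[1+1/n,\infty)$ after the substitution $x\mapsto 1/x$ (which maps $P_n$, up to rescaling, to a polynomial of the same form). Combined with $\rho_0(x) = O((1-x^2)^{-1})$ and the moderate growth of $K(u(x))$, the exponential suppression $e^{-m^2/(2A)}$ dominates and yields $\int_0^\infty \rho_n(x)\,dx = O(1)$; the intermediate region $[1-1/n, 1+1/n]$ is $O(1)$ by the local Kac--Rice bounds already developed in the proof of Theorem~\ref{t.Gaussian}. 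Intuitively, the positive deterministic signal $\mu f_n(x)$ destroys almost all real zeros on $[0,\infty)$.

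On $(-\infty, 0)$, the alternating signs in $f_n(-t) = \sum (-t)^i c_i$ produce strong cancellation; one obtains $m(x)^2/A(x) = O((1-|x|)^{2\deg(h)+1})$ and $u(x)^2 = O((1-|x|)^{2\deg(h)+1})$ uniformly on $(-1,0]$, with an analogous bound on $(-\infty,-1)$ via inversion. Since $K(u) e^{-m^2/(2A)}$ is smooth in $(u, m^2/A)$ near $(0,0)$ with value $1$, this gives the pointwise estimate
\begin{equation*}
|\rho_n(x) - \rho_0(x)| \;\lesssim\; \rho_0(x)\bigl(u(x)^2 + m(x)^2/A(x)\bigr),
\end{equation*}
and the right-hand side integrates to $O(1)$ over $(-\infty,0)$ because $\rho_0(x)(1-|x|)^{2\deg(h)+1} = O((1-|x|)^{2\deg(h)})$ is locally integrable (using $\deg(h)\ge 0$). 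Finally, the law of $g_n$ is invariant under the reflection $x\mapsto -x$ (since iid standard Gaussians remain iid after multiplication by $(-1)^i$), so by symmetry $\int_{-\infty}^0 \rho_0 = \tfrac12 \int_\R \rho_0$; Theorem~\ref{t.Gaussian} then gives
\begin{equation*}
\int_{-\infty}^0 \rho_0(x)\, dx \;=\; \frac{1 + \sqrt{2\deg(h)+1}}{2\pi}\log n + O(1),
\end{equation*}
and combining with the $O(1)$ contribution from the positive half-line proves \eqref{new2-nonzeromean}.

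The main technical obstacle is obtaining the asymptotic estimates of $f_n$, $A$, $B$, $C$, $D$ uniformly in the transition regions $|1-|x||\sim 1/n$ with enough precision to control the exponential suppression factor. For a general real classical polynomial $h$ (as opposed to the pure monomial $h(i)=i^\rho$) this reduces to Euler--Maclaurin-type expansions of the associated $\mathrm{Li}_{-\deg(h)}$-type series, which parallel and extend the estimates established for Theorem~\ref{t.Gaussian}; once those bounds are in place the remaining splitting and comparison arguments are routine.
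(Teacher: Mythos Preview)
Your outline is essentially the paper's own proof: the identity $\rho_n = \rho_0\, e^{-m^2/(2A)} K(u)$ is a repackaging of the $I_1+I_2$ split of Farahmand's Kac--Rice formula used in Section~\ref{s.Gaussian2}, and the paper executes the same positive/negative dichotomy (exponential suppression of the pole of $\rho_0$ near $+1$, reduction to $\rho_0$ near $-1$). Your shortcut of reading $\int_{-\infty}^0\rho_0=\tfrac12\int_\R\rho_0$ directly from Theorem~\ref{t.Gaussian} (applied with $c_k^2=h(k)^2$, a classical polynomial of degree $2\deg h$) is cleaner than the paper, which re-derives the inner and outer contributions near $-1$ from scratch.

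Two minor corrections. Near $x=-1$ you only get $m^2/A,\ u^2 = O(1-|x|)$, not $O((1-|x|)^{2\deg h+1})$: one has $m(x)=O((1+x)^{-\deg h})$ (from the alternating-sum estimate \eqref{e.tailseries2}) while $A(x)\asymp(1-x^2)^{-(2\deg h+1)}$, so $m^2/A=O((1+x))$. This is harmless since $\rho_0(x)\cdot(1-|x|)=O(1)$ is already integrable. Second, on the transition strip $[1-c/n,1]$ the zero-mean bounds from Theorem~\ref{t.Gaussian} only control the $e^{-u^2}$ part of $K$; the $u\,\mathrm{erf}(u)$ part requires the genuinely nonzero-mean input $|m'(t)|=O(n|m(t)|)$ combined with the boundedness of $\tfrac{|m|}{\sqrt A}e^{-m^2/(2A)}$ to make the integrand $O(n)$ there, as the paper does.
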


We are going to prove these theorems in Section \ref{s.Gaussian} and Section~\ref{s.Gaussian2}. The evaluation of Kac's formula under the general setting of Theorem \ref{theorem:new1} is fairly involved, and as mentioned in the discussion leading to Corollary~\ref{c.new1}
it is somewhat surprising that the growth of the coefficients alone already determines the number of real roots.

\subsection{Local universality for series}\label{s.series}

Our method could also be used to extend the previous results to random series. 
Let us first extend Theorem \ref{complex}. 

We consider a  random series $P_{PS}$ of the form  
\begin{equation}
P_{PS}(z) = \sum_{i=0}^{\infty}c_i\xi_iz^i,\qquad z\in \D\label{series}
\end{equation}
where $\D$ is the open unit disk in the complex plane, and the $c_i$'s and $\xi_i$'s satisfy  Condition 1.


\begin{theorem}\label{complex-series} Let $k\ge 1$ be an integer constant. 
Let $P_{PS} = \sum_{i=0}^{\infty} c_i\xi_i z^{i}$ and $\tilde P_{PS} = \sum_{i=0}^{\infty}c_i\tilde \xi_i z^{i}$ be two random power series satisfying Condition 1 (with $n$ being replaced by $\infty$). Assume that $\xi_i$ and $\tilde \xi_i$ match moments to second order for all $i\ge N_0$ where $N_0$ is the constant in Condition 1. 

Then there exist constants $C, c$ depending only on $k$ and the constants in Condition 1 such that for every $0<\delta\le \frac{1}{C}$ and complex numbers $z_1, \dots, z_k$ with $|z_j|\in [1-2\delta, 1-\delta]$ for all $0\le j\le k$, and for every smooth function $G: \mathbb{C}^{k}\to \mathbb{C}$ supported on $B(0, 10^{-3})^{k}$ with $|{\triangledown^aG}(z)|\le 1, \forall 0\le a\le 2k+4$ and $z\in \C^{k}$, we have

\begin{eqnarray}\label{com}
\bigg|&&\int_{\mathbb{C}^{k}}G(w_1,\dots, w_k)\rho_{\check P_{PS}}^{(k)}(\check z_1+ w_1,\dots, \check z_k+ w_k)\text{d}w_1\dots\text{d}w_k\nonumber\\
&&-\int_{\mathbb{C}^{k}}G(w_1,\dots, w_k)\rho_{\check {\tilde {P}}_{PS}}^{(k)}(\check z_1+ w_1,\dots, \check z_k+ w_k)\text{d}w_1\dots\text{d}w_k\bigg|\le C\delta^c.
\end{eqnarray}
\end{theorem}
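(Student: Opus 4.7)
The plan is to reduce Theorem~\ref{complex-series} to Theorem~\ref{complex} by truncating the power series at a sufficiently high degree and then invoking the polynomial result. Fix $\delta\in (0,1/C)$ and set the threshold $n=n(\delta) := \lceil K \delta^{-1}\log(1/\delta)\rceil$ for a large constant $K$ to be chosen. Let $P_n := \sum_{i=0}^{n} c_i\xi_i z^i$ and $\tilde P_n := \sum_{i=0}^n c_i\tilde\xi_i z^i$ be the corresponding truncations; these polynomials satisfy Condition~1 and, for $K$ large enough, $\delta$ lies in the admissible range $[1/(20n), 1/C]$ of Theorem~\ref{complex}. By the triangle inequality it suffices to show that, separately for $P_{PS}$ and for $\tilde P_{PS}$, the smoothed $k$-point correlation integral differs from that of the corresponding truncation by $O(\delta^c)$; applying Theorem~\ref{complex} to close the remaining gap then yields the desired bound.

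The first substantive step is a uniform tail bound. Writing $T(z) := \sum_{i>n} c_i\xi_i z^i$, Condition~1 together with the $2+\epsilon$ moment hypothesis (via H\"older's inequality) gives $\E|T(z)| \le O\bigl(\sum_{i>n} i^\rho (1-\delta/2)^i\bigr) = O(n^\rho (1-\delta/2)^n / \delta)$ uniformly for $|z|\le 1-\delta/2$. Choosing $K$ large enough makes this quantity $O(\delta^A)$ for any preassigned exponent $A$. Combined with Markov's inequality, a union bound over a polynomially dense net on the closed disk $\{|z|\le 1-\delta/2\}$, and the maximum principle, we obtain $\|T\|_{L^\infty(\{|z|\le 1-\delta/2\})} = O(\delta^A)$ outside an exceptional event of probability $O(\delta^A)$.

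The second step transfers the tail smallness to a coincidence of the zero configurations of $P_{PS}$ and $P_n$ inside the small balls $B(z_j, 10^{-6}\delta)$ relevant to the support of $G$. For this one needs a matching lower bound $|P_n(z)|\ge \delta^B$ with $B\ll A$ on the slightly larger circles $\partial B(z_j, 10^{-5}\delta)$, outside an event of probability $O(\delta^A)$. Such a lower bound follows from an anti-concentration / small-ball estimate for $P_n$ at a fixed point (available under our moment assumptions, since the second moment of $P_n(z)$ is of order $\delta^{-(2\rho+1)}$ on the target annulus and the $2+\epsilon$ moments of the $\xi_i$ are uniformly bounded), together with a union bound over a finite net on each circle. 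On the complementary event, $|T|<|P_n|$ holds on $\partial B(z_j, 10^{-5}\delta)$ for every $j$, so Rouch\'e's theorem forces $P_{PS}$ and $P_n$ to have identical zero configurations (with multiplicity) inside each ball $B(z_j, 10^{-5}\delta)$, and in particular inside the smaller ball $B(z_j, 10^{-6}\delta)$ that contains the support of $G$. Consequently the sums over $k$-tuples of distinct zeros defining the two smoothed correlation integrals coincide pointwise on this large-probability event.

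The main obstacle is controlling the contribution of the exceptional event, where the zero configurations of $P_{PS}$ and $P_n$ might disagree and the integrand could a priori be large. For this I would establish a uniform $L^2$ bound on the number of zeros of $P_{PS}$ (and of $P_n$) in $B(z_j,10^{-5}\delta)$, via a Jensen-type argument in the spirit of Remark~\ref{rmkbdd}: the integral of $\log|P_{PS}|$ over a slightly enlarged circle dominates the zero count, and its moments can in turn be controlled using the uniformly bounded second moment of $P_{PS}$ on the relevant annulus. Cauchy--Schwarz then bounds the exceptional contribution to the difference of correlation integrals by $O(\delta^{A/2})$, which is absorbed into the target $O(\delta^c)$ provided $A$ (and hence $K$) is chosen large. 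Combining the tail comparison for $P_{PS}$, the analogous one for $\tilde P_{PS}$, and the polynomial universality from Theorem~\ref{complex} concludes the proof.
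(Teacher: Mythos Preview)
Your approach is different from the paper's. Instead of truncating and invoking Theorem~\ref{complex} as a black box, the paper simply notes that every step of the proof of Theorem~\ref{complex} in the regime $\delta\ge(\log^2 n)/n$ --- Lemmas~\ref{f1-1} and \ref{f6-1}, Proposition~\ref{nonclustering-1}, Lemma~\ref{f5}, Proposition~\ref{logg-1}, Case~1 of Lemma~\ref{exceptionset-2.1}, and the wrap-up in Section~\ref{finishing} --- goes through verbatim for the series once one replaces $n$ by $\infty$ and checks a few soft facts (almost-sure analyticity of $P_{PS}$ on $\mathbb D$, the variance formula, and finiteness of the moments of the local zero count). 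No comparison between $P_{PS}$ and a truncation is ever made.

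Your truncation argument, by contrast, has a genuine gap in Step~2. Rouch\'e's theorem only tells you that $P_{PS}$ and $P_n$ have the same \emph{number} of zeros in each ball, not that the zero sets coincide. The linear statistic $X_j^P=\sum_i G_j(\zeta_i^{\check P}-\check z_j)$ depends on the actual positions of the zeros, so equality of counts does not make the two sums agree pointwise. To salvage this you would need each zero of $P_n$ to be tracked by a zero of $P_{PS}$ within distance $o(\delta)$; that requires a lower bound on $|P_n|$ on a small circle around each individual zero --- equivalently a lower bound on $|P_n^{(m)}|$ at an $m$-fold zero --- which you have not supplied and which is not obviously available. A related problem is the net-based lower bound for $|P_n|$ on the fixed circle $\partial B(z_j,10^{-5}\delta)$: $P_n$ may have zeros on or arbitrarily close to that circle, so anti-concentration at net points cannot produce a uniform lower bound there. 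This is precisely the difficulty the paper circumvents in Lemma~\ref{f5} by choosing the radius adaptively (to avoid zeros) and invoking Harnack's inequality rather than a net argument.
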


Notice that when all $\xi_i$ are (complex) standard Gaussian, the distribution of the zeroes is invariant with respect  to rotation. 
As a corollary of Theorem \ref{complex-series}, this invariance is preserved (in the asymptotic sense)  if $\xi_i$ matches the moments of standard Gaussian up to second order.

\begin{corollary}\label{series-rotation} Let $k\ge 1$ be an integer constant. Let $P_{PS}$ be the random series of the form \eqref{series} satisfying  Condition 1. Assume furthermore that $\E(\mbox{Re}(\xi_i))=\E(\mbox{Im}(\xi_i))=Cov(\mbox{Re}(\xi_i),\mbox{Im}(\xi_i)) = 0$ and $\Var( \mbox{Re}(\xi_i)) = \Var( \mbox{Im}(\xi_i)) =1/2$ for all $i\ge N_0$. 

Then there exist constants $C, c$ such that for every $0<\delta\le \frac{1}{C}$ and complex numbers $z_1, \dots, z_k$ with $|z_j|\in [1-2\delta, 1-\delta]$ for all $0\le j\le k$ and $0\le \theta< 2\pi$, and for every smooth function $G: \mathbb{C}^{k}\to \mathbb{C}$ supported on $B(0, 10^{-3})^{k}$ with $|{\triangledown^aG}(z)|\le 1, \forall 0\le a\le 2k+4$ and $z\in \C^{k}$, we have
\begin{eqnarray}
\bigg|&&\int_{\mathbb{C}^{k}}G(w_1,\dots, w_k)\rho_{\check P_{PS}}^{(k)}(\check z_1+ w_1,\dots, \check z_k+ w_k)\text{d}w_1\dots\text{d}w_k\nonumber\\
&&-\int_{\mathbb{C}^{k}}H(w_1,\dots, w_k)\rho_{\check { {P}}_{PS}}^{(k)}(e^{\sqrt{-1}\theta}\check z_1+ w_1,\dots, e^{\sqrt{-1}\theta}\check z_k+ w_k)\text{d}w_1\dots\text{d}w_k\bigg|\le C\delta^c,
\end{eqnarray}
where $H(w_1, \dots, w_k) = G(e^{-\sqrt{-1}\theta}w_1, \dots, e^{-\sqrt{-1}\theta}w_k)$.
\end{corollary}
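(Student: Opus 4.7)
The plan is to deduce Corollary~\ref{series-rotation} from Theorem~\ref{complex-series} by a three-step comparison argument in which a standard complex Gaussian series serves as the pivot. Let $g_0,g_1,\dots$ be i.i.d.\ standard complex Gaussians and set $P_{PS}^{G}(z) = \sum_{i=0}^{\infty} c_i g_i z^i$. Under the hypotheses of the corollary, the second moments of $\xi_i$ match those of $g_i$ exactly (mean zero, real/imaginary parts uncorrelated with variance $1/2$), so $\xi_i$ and $g_i$ satisfy the moment matching hypothesis of Theorem~\ref{complex-series}. Both $P_{PS}$ and $P_{PS}^{G}$ satisfy Condition~1, since Gaussians have all moments finite.

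First, I apply Theorem~\ref{complex-series} to replace $P_{PS}$ by $P_{PS}^{G}$ at the unrotated points $\check z_1,\dots,\check z_k$, picking up an error of $O(\delta^c)$. Second, I use the exact rotational symmetry of the Gaussian model. Since $g_i e^{\sqrt{-1}\theta i}$ has the same law as $g_i$ for every $\theta$ and the $g_i$'s are independent, $P_{PS}^{G}(e^{\sqrt{-1}\theta}z)$ equals in distribution $P_{PS}^{G}(z)$ as a random function. The rescaling in \eqref{rescale} is radial and therefore commutes with rotation, so the rescaled field $\check P_{PS}^{G}$ inherits the same invariance, and consequently
\[
\rho^{(k)}_{\check P_{PS}^{G}}(\zeta_1,\dots,\zeta_k) \;=\; \rho^{(k)}_{\check P_{PS}^{G}}(e^{\sqrt{-1}\theta}\zeta_1,\dots,e^{\sqrt{-1}\theta}\zeta_k).
\]
Substituting $\zeta_j=\check z_j+w_j$ and then changing variables $w_j\mapsto e^{\sqrt{-1}\theta}w_j$ (which has unit Jacobian on $\R^{2k}$) converts the integral of $G(w)\rho^{(k)}_{\check P_{PS}^{G}}(\check z+w)$ into the integral of $G(e^{-\sqrt{-1}\theta}w)\rho^{(k)}_{\check P_{PS}^{G}}(e^{\sqrt{-1}\theta}\check z+w)=H(w)\rho^{(k)}_{\check P_{PS}^{G}}(e^{\sqrt{-1}\theta}\check z+w)$, which is precisely the form appearing on the right-hand side of the conclusion.

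Third, I apply Theorem~\ref{complex-series} in the reverse direction to replace $P_{PS}^{G}$ by $P_{PS}$ at the rotated points $e^{\sqrt{-1}\theta}\check z_j$. This is permitted because rotations preserve moduli, so $|e^{\sqrt{-1}\theta}z_j|\in[1-2\delta,1-\delta]$, and $H$ obeys the same support and derivative bounds as $G$ since $|\nabla^a H(z)|=|\nabla^a G(e^{-\sqrt{-1}\theta}z)|$. Combining the three comparisons via the triangle inequality yields the desired bound $O(\delta^c)$.

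There is no genuine obstacle once Theorem~\ref{complex-series} is available: the corollary amounts to transporting the exact rotational symmetry of the Gaussian law through two applications of the universality estimate. The only minor points to verify are the second-moment matching (immediate from the hypotheses), the commutation of the radial rescaling with rotation (also immediate), and the preservation of the test function bounds under the argument rotation $G\mapsto H$; none of these requires further work.
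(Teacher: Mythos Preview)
Your argument is correct, and structurally it is very close to what the paper does; the only real difference is that you route through an explicit Gaussian pivot and apply Theorem~\ref{complex-series} twice, whereas the paper applies the universality theorem once, directly to the pair of coefficient sequences $(\xi_i)$ and $(\xi_i e^{\sqrt{-1}i\theta})$. The point is that under the stated second-moment hypotheses on $\xi_i$ (mean zero, real and imaginary parts uncorrelated with equal variance $1/2$), the random variable $e^{\sqrt{-1}i\theta}\xi_i$ has exactly the same first and second moments as $\xi_i$, so the moment-matching hypothesis of Theorem~\ref{complex-series} is satisfied without any reference to Gaussians. Since $\widetilde P_{PS}(z):=\sum_i c_i(\xi_i e^{\sqrt{-1}i\theta})z^i = P_{PS}(e^{\sqrt{-1}\theta}z)$, the correlation functions of $\widetilde P_{PS}$ are the rotated correlation functions of $P_{PS}$, and a single change of variables $w\mapsto e^{\sqrt{-1}\theta}w$ gives the conclusion.

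Your detour through the Gaussian model is harmless and perhaps conceptually clearer (and indeed the proof of Theorem~\ref{complex-series} itself pivots through Gaussians), but it is worth noting that the exact rotation invariance you invoke for the Gaussian series is not actually needed: the universality theorem already encodes, at the level of two moments, the same rotational symmetry for the original $\xi_i$'s. The paper's one-step argument exploits this and is correspondingly shorter.
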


In case  that $P_{PS}$ is hyperbolic and the $\xi_i$ are complex Gaussian,  the distribution of the zeros of $P_{PS}$ is  invariant 
 under hyperbolic transformations of the disk $\D$(see \cite{HKPV1}). A hyperbolic transformation on $\D$ is a transformation of the form 
\begin{equation}
\phi(z) = \frac{az+b}{\bar b z+\bar a},\nonumber
\end{equation}
where $a,b\in \C$ and $|a|^{2}-|b|^{2} = 1$. A holomorphic function on $\D$ is bijective if and only if it is a hyperbolic transformation (see, for instance, \cite[Theorems 12.4, 12.6]{Ru}).

As another immediate corollary of Theorem \ref{complex-series}, this  invariance is preserved (in the asymptotic sense) again if  $\xi_i$ matches the moments of standard Gaussian up to order 2 and if the hyperbolic transformation preserves our universality domain.

\begin{corollary}\label{series-isom} Let $k\ge 1$ be an integer constant. Let $P$ be the random \textbf{hyperbolic} series of the form \eqref{series} satisfying 
Condition 1.  Assume furthermore that $\E(\mbox{Re}(\xi_i))=\E(\mbox{Im}(\xi_i))=Cov(\mbox{Re}(\xi_i),\mbox{Im}(\xi_i)) = 0$ and $\Var( \mbox{Re}(\xi_i)) = \Var( \mbox{Im}(\xi_i))=1/2$ for all $i\ge N_0$. 

Then there exist constants $C, c$ such that the following holds true. Let $0<\delta_0\le \frac{1}{C}$ and complex numbers $z_1, \dots, z_k$ with $|z_j|\in [1-2\delta_0, 1-\delta_0]$ for all $0\le j\le k$ and $0\le \theta< 2\pi$. Let $\phi$ be a hyperbolic transformation that maps $z_j$ to $t_j$ with $|t_j|\in [1-2\delta_1, 1-\delta_1]$ for all $j$ and for some $0<\delta_1\le \frac{1}{C}$. Then for every smooth function $G: \mathbb{C}^{k}\to \mathbb{C}$ supported on $B(0, 10^{-4})^{k}$ with $|{\triangledown^aG}(z)|\le 1, \forall 0\le a\le 2k+4$ and $z\in \C^{k}$, we have

\begin{eqnarray}
\bigg|&&\int_{\mathbb{C}^{k}}G(w_1,\dots, w_k)(10^{-3}\delta_0)^{2k}\rho_{ P_{PS}}^{(k)}( z_1+ 10^{-3}\delta_0 w_1,\dots,  z_k+ 10^{-3}\delta_0 w_k)\text{d}w_1\dots\text{d}w_k\nonumber\\
&&-\int_{\mathbb{C}^{k}}H(w_1,\dots, w_k)(10^{-3}\delta_1)^{2k}\rho_{ { {P}}_{PS}}^{(k)}( {t_1}+10^{-3}\delta_1 w_1,\dots,  t_k+10^{-3}\delta_1 w_k)\text{d}w_1\dots\text{d}w_k\bigg|\nonumber\\
&\le& C\max\{\delta_0, \delta_1\}^c,\nonumber
\end{eqnarray}

where 
$$H(w_1, \dots, w_k) = G\left (\frac{1}{10^{-3}\delta_0}\left (\phi^{-1}\left (t_1 + 10^{-3}\delta_1w_1\right )-z_1\right ), \dots, \frac{1}{10^{-3}\delta_0}\left (\phi^{-1}\left (t_k + 10^{-3}\delta_1w_k\right )-z_k\right )\right ).$$
\end{corollary}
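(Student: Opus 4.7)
The strategy is a sandwich argument: use Theorem \ref{complex-series} to replace $P_{PS}$ with its complex Gaussian analogue on both the $z$-side and the $t$-side, and use the \emph{exact} hyperbolic invariance of the Gaussian hyperbolic zero process to identify the two Gaussian integrals. Set $P_G(z) := \sum_{i\ge 0} c_i \eta_i z^i$ where $\eta_i$ are iid standard complex Gaussians. By the moment hypotheses ($\E\Re\xi_i = \E\Im\xi_i = 0$, $\Var\Re\xi_i = \Var\Im\xi_i = 1/2$, $\mathrm{Cov}(\Re\xi_i,\Im\xi_i)=0$), the $\xi_i$ match $\eta_i$ to second order. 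Writing
\begin{equation*}
\mathcal I_P(z,\delta,F) := \int_{\C^k} F(w)(10^{-3}\delta)^{2k}\rho^{(k)}_P\bigl(z_1+10^{-3}\delta w_1,\dots,z_k+10^{-3}\delta w_k\bigr)\,dw_1\cdots dw_k,
\end{equation*}
two direct applications of Theorem \ref{complex-series} (once at scale $\delta_0$ with $G$, once at scale $\delta_1$ with $H$) yield $\ab{\mathcal I_{P_{PS}}(z,\delta_0,G) - \mathcal I_{P_G}(z,\delta_0,G)} \le C\delta_0^c$ and $\ab{\mathcal I_{P_G}(t,\delta_1,H) - \mathcal I_{P_{PS}}(t,\delta_1,H)} \le C\delta_1^c$.

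The heart of the argument is the exact identity $\mathcal I_{P_G}(z,\delta_0,G) = \mathcal I_{P_G}(t,\delta_1,H)$. The covariance kernel of $P_G$ is $K(z,w)=\sum_i|c_i|^2 z^i\bar w^i = (1-z\bar w)^{-L}$, and a direct computation using $\ab{a}^2-\ab{b}^2=1$ gives
\begin{equation*}
K(\phi(z),\phi(w)) = g(z)\overline{g(w)}\,K(z,w), \qquad g(z):=(\bar b z + \bar a)^L,
\end{equation*}
where $g$ is analytic and zero-free on $\D$. Hence $P_G\circ\phi$ has the same covariance as $g\cdot P_G$, therefore the same distribution (both centered complex Gaussian), and since $g$ has no zeros the zero point process of $P_G$ is invariant in law under $\phi$ (see \cite{HKPV}). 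At the level of correlation functions this means
\begin{equation*}
\rho^{(k)}_{P_G}(\phi(\zeta_1),\dots,\phi(\zeta_k))\prod_{j=1}^k \ab{\phi'(\zeta_j)}^2 \;=\; \rho^{(k)}_{P_G}(\zeta_1,\dots,\zeta_k).
\end{equation*}
Performing in $\mathcal I_{P_G}(t,\delta_1,H)$ the change of variables $t_j + 10^{-3}\delta_1 w_j = \phi(z_j + 10^{-3}\delta_0 w_j')$, the real Jacobian $(\delta_0/\delta_1)^2\ab{\phi'(z_j+10^{-3}\delta_0 w_j')}^2$ cancels precisely against the factor from the transformation law, while $H(w) = G(w')$ by the very definition of $H$; this produces $\mathcal I_{P_G}(z,\delta_0,G)$ exactly. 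Combining the three assertions by the triangle inequality gives the claimed bound $C\max\{\delta_0,\delta_1\}^c$.

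The main technical obstacle is verifying that $H$ satisfies the hypotheses of Theorem \ref{complex-series} at scale $\delta_1$, namely, $H$ is supported in $B(0,10^{-3})^k$ with $\ab{\nabla^a H}\le 1$ for $0\le a\le 2k+4$. For the support: because $G$ is supported in the strictly smaller ball $B(0,10^{-4})^k$, the linearization $\phi(z_j + 10^{-3}\delta_0 w') - t_j \approx \phi'(z_j)\cdot 10^{-3}\delta_0 w'$ together with the identity $\ab{\phi'(z_j)} = (1-\ab{t_j}^2)/(1-\ab{z_j}^2) \asymp \delta_1/\delta_0$ yields $\ab{w}\lesssim \ab{w'}\le 10^{-4}$, the extra decimal order of magnitude absorbing the quadratic distortion of $\phi$ inside the ball. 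For the derivative bounds: Möbius maps and their inverses have all derivatives bounded uniformly on hyperbolically precompact subsets of $\D$, giving $\ab{\nabla^a H}\le C_a$ for constants depending only on $k$ and $L$; the normalization $\ab{\nabla^a H}\le 1$ is achieved by passing to $H/C_{2k+4}$, the resulting scalar factor being harmlessly absorbed into the final constant.
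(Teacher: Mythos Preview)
Your overall approach is exactly the paper's: apply Theorem~\ref{complex-series} at each scale to pass to the Gaussian hyperbolic series, invoke the exact hyperbolic invariance of the Gaussian zero process for the middle identity, and verify that the transported test function $H$ satisfies the hypotheses of Theorem~\ref{complex-series} at scale $\delta_1$.

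The gap is in the derivative bound for $H$. Your assertion that ``M\"obius maps and their inverses have all derivatives bounded uniformly on hyperbolically precompact subsets of $\D$'' is false for Euclidean derivatives: already $|(\phi^{-1})'(t_j)| = (1-|z_j|^2)/(1-|t_j|^2) \asymp \delta_0/\delta_1$, and more generally the $n$th Euclidean derivative is of order $\delta_0/\delta_1^{\,n}$, not $O(1)$. What is actually needed---and what the paper proves as \eqref{hyp3} by writing $\phi^{-1} = \varphi_{-z_j}\circ e^{i\theta}\circ\varphi_{t_j}$ and differentiating each factor explicitly---is precisely this scaled bound $|(\phi^{-1})^{(n)}| \le C_n\,\delta_0/\delta_1^{\,n}$ on $\D(t_j,10^{-6}\delta_1)$; only after the rescaling built into the definition of $H$ does this yield $|\nabla^a H|\le C_a$. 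Your linearization for the support is correct in spirit; the paper makes it rigorous via a hyperbolic-area comparison (see \eqref{hyp0}) showing $\phi(\D(z_j,\delta_0/s))\subset \D(t_j,10\delta_1/s)$ for $s\ge 25$. An alternative route to the derivative bounds, once one has a mapping estimate of the form $\psi_j\big(B(0,2\cdot 10^{-3})\big)\subset B(0,C)$ for the rescaled holomorphic map $\psi_j$, is simply Cauchy's inequality---but that is not what you invoked.
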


Similar to the complex case, real universality also follows from our arguments for polynomials.
\begin{theorem}
\label{real-series} Let $k, l\ge 0$ be integer constants with $k+l\ge 1$. Let $P_{PS} = \sum_{i=0}^{\infty} c_i\xi_i z^{i}$ and $\tilde P_{PS} = \sum_{i=0}^{\infty}c_i\tilde \xi_i z^{i}$ be two random power series satisfying Conditions 1 and 2(with $n$ being replaced by $\infty$). Assume that $\xi_i$ and $\tilde \xi_i$ match moments to second order for all $i\ge N_0$ where $N_0$ is the constant in Conditions 1 and 2. 

Then there exist constants $C, c$ depending only on $k, l$ and the constants and the polynomial $\mathfrak P$ in Conditions 1 and 2 such that for every $0<\delta\le \frac{1}{C}$, real numbers $x_1,\dots, x_k$, and complex numbers $z_1, \dots, z_l$ such that $|x_i|, |z_j|\in [1-2\delta, 1-\delta]$ for all $i=1,\dots, k, j=1,\dots, l$, and for every smooth function $G: \mathbb{R}^{k}\times\mathbb{C}^{l}\to \mathbb{C}$ supported on $[-10^{-3}, 10^{-3}]^{k}\times B(0, 10^{-3})^{l}$ such that $|{\triangledown^aG}(z)|\le 1, \forall 0\le a\le 2(k+l)+4$ and $z\in \mathbb{R}^{k}\times\mathbb{C}^{l}$, we have
\begin{eqnarray}\label{h6-series}
\bigg|&&\int_{\mathbb{R}^{k}}\int_{\mathbb{C}^{l}}G(y_1,\dots, y_k, w_1,\dots, w_l)\nonumber\\
&&\qquad\rho_{\check P_{PS}}^{(k,l)}(\check x_1+ y_1,\dots, \check x_k+ y_k,\check z_1+ w_1,\dots, \check z_l+ w_l)\text{d}y_1\dots\text{d}y_k\text{d}w_1\dots\text{d}w_l\nonumber\\
&&-\int_{\mathbb{R}^{k}}\int_{\mathbb{C}^{l}}G(y_1,\dots, y_k, w_1,\dots, w_l)\\
&&\qquad\rho_{\check {\tilde P}_{PS}}^{(k,l)}(\check x_1+ y_1,\dots, \check x_k+ y_k,\check z_1+ w_1,\dots, \check z_l+ w_l)\text{d}y_1\dots\text{d}y_k\text{d}w_1\dots\text{d}w_l\bigg|\le C\delta^c.\nonumber
\end{eqnarray}
\end{theorem}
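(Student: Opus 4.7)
The plan is to reduce Theorem~\ref{real-series} to the polynomial case (Theorem~\ref{real}) by a truncation argument. Fix $\delta\in(0,1/C]$ and set $n=n(\delta):=\lceil A\delta^{-1}\log(1/\delta)\rceil$, where $A$ is a large constant depending only on $k$, $l$, $\rho$, and the constants in Conditions~1 and~2. This choice guarantees $1/(20n)\le \delta\le 1/C$, so Theorem~\ref{real} applies to the truncations $P_n:=\sum_{i=0}^n c_i\xi_iz^i$ and $\tilde P_n:=\sum_{i=0}^n c_i\tilde\xi_iz^i$, yielding
\begin{equation*}
|I_G(P_n)-I_G(\tilde P_n)|\le C'\delta^{c},
\end{equation*}
where $I_G(F)$ abbreviates the integral on the left of \eqref{h6-series} formed from $\rho^{(k,l)}_{\check F}$.

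Next I estimate the tail. Writing $R(z):=P_{PS}(z)-P_n(z)=\sum_{i>n}c_i\xi_iz^i$, Condition~1 and independence give $\E|R(z)|^2=\sum_{i>n}|c_i|^2|z|^{2i}\le \tau_2^2\sum_{i>n}i^{2\rho}|z|^{2i}$. For $|z|\le 1-\delta/2$, this is bounded by $\delta^{cA}$ once $A$ is chosen large compared with $\rho$. Chaining this bound over a $\delta^{10A}$-grid in the annulus of interest, together with an elementary a priori bound on $|R|$ coming from the $2+\varepsilon$ moments of the coefficients, gives $\sup_{|z|\le 1-\delta/2}|R(z)|\le \delta^{A''}$ with probability $1-O(\delta^{A'})$, where $A',A''$ grow with $A$.

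On this high-probability event, $P_n$ and $P_{PS}$ are indistinguishable at sup-norm precision $\delta^{A''}$ on the annulus containing all the zeros that $G$ can see. To transfer this to closeness of $I_G$, I would invoke the same Kac--Rice / log-potential representation used in the proofs of Theorems~\ref{complex} and~\ref{real}: namely, express $\int G\,\rho^{(k,l)}_{\check F}$ as the expectation of a smooth Lipschitz functional of $\log|\check F|$ and finitely many of its derivatives, evaluated on the support of $G$. A sup-norm perturbation of $F$ of magnitude $\delta^{A''}$ then perturbs the integrand by $O(\delta^{c'})$, giving $|I_G(P_{PS})-I_G(P_n)|\le C''\delta^{c'}$, and combining with the first step yields the theorem (the argument for $\tilde P_{PS}$ versus $\tilde P_n$ is identical).

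The main difficulty I anticipate is the transfer step: controlling $|I_G(P_{PS})-I_G(P_n)|$ requires ruling out the bad event that $|P_n|$ is extremely small somewhere on the smoothing support of $G$, which would amplify the relative perturbation $R/P_n$. I expect this to be handled by an anti-concentration estimate for the random analytic function $P_n$ near the annulus $\{|z|\in[1-2\delta,1-\delta]\}$, ultimately following from Condition~1 (in particular the $2+\varepsilon$ moment bound together with the polynomial lower bound $|c_i|\ge\tau_1 i^\rho$), combined with a Rouch\'e argument that pairs the zeros of $P_n$ and $P_{PS}$ at a scale much finer than the smoothing scale $10^{-3}\delta$ of $G$.
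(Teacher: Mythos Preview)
Your approach is genuinely different from the paper's. The paper does \emph{not} truncate and reduce to Theorem~\ref{real}; instead it observes that the entire machinery of Sections~\ref{proof-complex}--\ref{proof-real} (the good event $\mathcal T$, non-clustering via Jensen, the Harnack-based $L^2$ bound, Monte Carlo sampling, the Lindeberg swap of Theorem~\ref{log-com16-1}, the tail estimate of Section~\ref{negligible-set}, and the Rouch\'e repulsion Lemma~\ref{sun6}) was proved for the polynomial case in the regime $\frac{\log^2 n}{n}\le\delta\le\frac{1}{C}$ using only lacunary anti-concentration and moment bounds that do not see $n$, and therefore carries over verbatim to the series by formally setting $n=\infty$. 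No comparison between $P_{PS}$ and a truncation is ever made.

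Your transfer step contains a real gap. The representation $\int G\,\rho^{(k,l)}_{\check F}=\E\prod_j X_j^{F}$ with $X_j^{F}=\int\log|\check F|\,H_j$ is \emph{not} a Lipschitz functional of $F$ in sup-norm: $\log|F|$ blows up at zeros of $F$, so a sup-norm perturbation of size $\delta^{A''}$ does not automatically perturb $X_j$ by $O(\delta^{c'})$. To make this work you would need (i) a lower bound on $|P_n|$ away from its zeros on the whole support of $G$, which the paper obtains only through the non-clustering Proposition~\ref{nonclustering-1} plus the Harnack argument of Lemma~\ref{f5}; (ii) control of $\E\big|\prod_j X_j^{P_{PS}}\big|$ on the bad event of probability $O(\delta^{A'})$, which is exactly the delicate Section~\ref{negligible-set} analysis (Nazarov--Nishry--Sodin plus symmetrization) run for the series; and (iii) for the \emph{real} correlation functions, the repulsion estimate Lemma~\ref{sun6} showing no pair of nearby complex conjugate zeros, again for the series. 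Each of these ingredients is already the series version of the corresponding polynomial lemma, and once you have them you can prove Theorem~\ref{real-series} directly without ever introducing $P_n$. In other words, your truncation does not save work: the hard estimates you need for the transfer are the same ones the paper uses to run the proof for $P_{PS}$ outright.
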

We will prove these results in Section \ref{proof-complex-series}.

  \section{Sketch of the proof and the main technical ideas}  \label{section:ideas}

To start, we make use of   the ``universality by sampling''  method from \cite{TVpoly}, which is based on the Lindeberg swapping technique. To give the reader a quick introduction on  this method, let us discuss the simplest correlation function
$\rho^{(0,1)} $, which is the density function of the complex roots.  Consider two polynomials $P_{n, \xi}$ and $P_{n, \tilde \xi}$ and a (nice)  test function $G(x)$.  We would like  to show

$$ \int_{\C}  G(x) \rho_{P_{n, \xi} } ^{(0,1)} (x) dx =  \int_{\C}  G(x) \rho_{P_{n, \tilde \xi} } ^{(0,1)} (x) dx  +o(1). $$

\noindent Recall that by definition  

$$  \int_{\C}  G(x) \rho_{P_{n, \xi} } ^{(0,1)} (x) dx= \sum_{i=1}^n  \E_{\xi}  G ( \zeta_i ), $$  and 

$$  \int_{\C}  G(x) \rho_{P_{n, \tilde  \xi} } ^{(0,1)} (x) dx= \sum_{i=1}^n  \E_{\tilde  \xi}  G (\tilde  \zeta_i ), $$  where $\zeta_i$ ($\tilde \zeta_i $) are the roots of $P_{n, \xi}$ ($P_{n, \tilde \xi}$). 

\noindent  We are going to prove universality of the right-hand side, namely 

$$\sum_{i=1}^n \E_{\xi}  G ( \zeta_i )=  \sum_{i=1}^n \E_{\tilde \xi}  G (\tilde \zeta_i ) +o(1) . $$

\noindent Our starting point is Green's formula, which asserts that 

$$\log G(0) = -\frac{1}{2\pi}\int_{\C} \log |z| \Delta G(z) dz. $$

\noindent By change of variables, this implies that for all $i$,  

$$\log G(\zeta_i) = -\frac{1}{2\pi}\int_{\C} \log |z -\zeta_i | \Delta G(z) dz, $$ which, in turn, yields 

$$\sum_i \E_{\xi}  G ( \zeta_i ) = -\frac{1}{2\pi}\E_{\xi}  \int_{\C} \log | \prod_{i=1}^n (z -\zeta_i ) | \Delta G(z) dz = -\frac{1}{2\pi}\E_{\xi}  \int_{\C} \log | P_{n, \xi} (z)  | \Delta G(z) dz. $$

(Notice that the leading coefficient of $P$ does not matter here, as $ \int_{\C} \Delta G(z) dz =0$.) 
We estimate the integration  $ \int_{\C} \log | P_{n, \xi} (z)  | \Delta G(z) dz$ by {\it sampling}. The intuition is that if $S$ is the average  of (say) $N$ numbers
$S:= \frac{a_1+ \dots +a_N}{N}$ where $N$ is large integer, then (hopefully) we can estimate $S$  accurately by a much shorter random partial sum $S' =\frac{ a_{i_1} + \dots + a_{i_m}} {m} $, where 
the indices $i_1, \dots, t_m$ are chosen randomly from the index set $\{1, \dots, N \}$, with  $m$ being  a parameter much smaller  than $N$. Thinking of $a_1, \dots, a_N$ as terms in the Riemann sum approximation of  $\int_{\C} \log | P_{n, \xi} (z)  | \Delta G(z) dz$, we want to  approximate this integral by 
$$\frac{1}{m} ( H_{\xi} (z_1) + \dots + H_{\xi} (z_m )) , $$ where $H_{\xi} (z):= C  \log | P_{n, \xi} (z)  | \Delta G(z) $ with $C$ being a normalization constant, $z_1, \dots, z_m$ are random sample points, and $m$ is a properly chosen parameter which tends to infinity slowly with  $n$.  (The magnitude of $m$ determines the quality of the approximation.) 

Now assume, for a moment, that $\frac{1}{m} ( H_{\xi} (z_1) + \dots + H_{\xi} (z_m ))$ is indeed a good approximation of  $\int_{\C} \log | P_{n, \xi} (z)  | \Delta G(z) dz$  and similarly, $\frac{1}{m}(  H_{\tilde \xi} (z_1) + \dots + H_{\tilde \xi} (z_m )$ is  a good approximation of  $\int_{\C} \log | P_{n, \xi} (z)  | \Delta G(z) dz$, with overwhelming probability.  In this case, the  problem  reduces to showing 
$$ \E_{\xi} \frac{1}{m} (  H_{\xi} (z_1) + \dots + H_{\xi} (z_m )) =  \E_{\tilde \xi} \frac{1}{m}(  H_{\tilde \xi} (z_1) + \dots + H_{\tilde \xi} (z_m ) ) +o(1) . $$

We can apply the Lindeberg swapping  method to prove this estimate. In fact, we can use this method  to show that the joint distribution of  $m$ variables  $H_{\xi} (z_1), \dots,  H_{\xi} (z_m )$
and that of \newline $H_{\tilde \xi} (z_1), \dots ,H_{\tilde \xi} (z_m ) $ are approximately the same.   This can be done by defining\newline $Z:= (H_{\xi} (z_1), \dots,  H_{\xi} (z_m ))$ 
and showing 

\begin{equation} \label{exp0} \E_{\xi}  F (Z) = \E_{\tilde \xi}  F(\tilde Z) +o(1) \end{equation} for any nice test function $F$.

An application of the Linderberg method often requires estimates on the derivatives of the function in question, and a decisive advantage  here is that the function $H$ is explicit, and it is not too hard to 
bound its derivatives.  Generalizing the whole scheme to the general case of $\rho^{k,l}$ requires several additional technical steps, but the spirit of the method remains the same.


The   critical point of this scheme is to show that   the random sum  indeed approximates the integral.  In order to do so, we need to bound from above  the second moment 
$$ \int_{\C} | \log | P_{n, \xi} (z)  | \Delta G(z)  |^2  dz  =   \int_{D} | \log | P_{n, \xi} (z)  | \Delta G(z) | ^2  dz, $$ where $D$ is the support of $G$; see Lemma \ref{f5}.

Our strategy has two steps. We first define a {\it good} event $\CT$ (which holds with high probability) in the space generated by the $\xi_i$. 
Among others, this event guarantees that the number of roots in  $D$ is at most $n^c$, where $c$ is a sufficiently small positive constant. 
($D$ was actually chosen so that the expectation of the number of roots in $D$ is $O(1)$.)  When $\CT$ holds, we split 
$P=  R Q $, where $ R  : =\prod _{\zeta_i \in D} (z -\zeta_i )  $ and $Q :=  \prod _{\zeta_i \notin D} (z -\zeta_i )  $. Then

$$   \int_{D} |\log | P_{n, \xi} (z)  | \Delta G(z) | ^2  dz \le  2 ( \int_{D} | \log | R_{n, \xi} (z)  | \Delta G(z) | ^2  dz +  \int_{D} | \log | Q_{n, \xi} (z)  | \Delta G(z) | ^2  dz). $$

The first integral on the RHS is easy to bound, as the number of roots in $R$ is small, and $\log |R|$ can be split into sum of few terms. 
  To bound the second one, we show that  $| \log | Q_{n, \xi} (z)  | \Delta G(z | $ is small for {\it every point } in $D$. Typically, in order to prove that an event $\CE (z) $ holds for every point  $z$ in some domain $D$
one makes use of the $\epsilon$-net argument.  We   put an $\epsilon$-net on $D$ and prove that $\CE (z)$ holds for all points in the net, and then use some analytic argument to extend the net to the whole domain.
  If the net has size $N$, then 
   by the union bound, we need to show that for each $z$ in the net $\P( \CE(z) \,\,{\rm holds} ) \ge 1 -o(1/N)$.  The proof of this usually requires sophisticated 
 anti-concentration inequalities; furthermore, sometimes the bound itself is not true (which does not contradict  the correctness of the final statement we want to prove). 
 In our situation, we make a novel use of  Harnack's inequality, which allows us to  reduce the statement to one point, instead of to the whole $\epsilon$-net,  which 
 completely avoids the use of union bound argument.  This way, we obtain a sufficiently strong bound on the second moment so that the sampling procedure goes through. See Section \ref{bounded-variance} for more details.

The trickier part is when $\CT$ does not hold.  In this case, it is possible that sampling does not provide a good approximation.  We are going to avoid this problem by directly showing that 
the contribution coming from the complement $\CT^c$  of $\CT$ towards the expectations in \eqref{exp0} is small, namely
$ \E_{\xi}  F (Z)  \BI_{\CT^c}   =o(1)$  (and the same for the $\tilde \xi$ version).

The main difficulty here is that the logarithm function has a pole at zero.   If  $|P_{n, \xi}(z) |$ is very close to zero in some region, then the value of $\log | P_{n, \xi} (z)  |$ could be very large. 
(Another type of danger is that $|P_{n, \xi}(z)|$ is large, but this is easy to deal with, even by elementary method such as the moment method.)
To overcome this problem, one needs to 
 show that with high probability, $|P_{n, \xi}(z)|$ is bounded away from 0. Technically speaking, we need to show  
 $$\P ( | c_n \xi_n  z^n + \dots + c_0 \xi_0 | \le \epsilon (n) )  $$  is sufficiently small, for most value of $z$  and a properly chosen parameter $\epsilon (n)$. 
 This type of estimates is called anti-concentration (or small ball) 
 inequality  in the literature; see \cite{NVsmallball} for an introduction. 
This part is the most delicate part of our proof, and unlike prior works (see e.g. \cite{TVpoly}), our method could treat the general set of coefficients considered here.

  In this paper, we introduce a completely different way to obtain the desired anti-concentration bound, which makes use of  
  of various \emph{a priori}  etimates for $P_{n, \xi}$ and a recent powerful result of Nazarov--Nishry--Sodin \cite{NNS} about the log-integrability of random Rademacher series.  As a matter of fact, Nazarov et al. result only holds 
 for random Rademacher variables (and  may fail for others). We use a couple of symmetrization arguments to handle the general case. See Section \ref{proof-complex} and in particular, \ref{non-clustering} and \ref{negligible-set} for details.

By completing the above scheme, we obtain universality results for the complex roots. 
 The handling of real roots also requires extra care. In order to prove the universality of the correlation functions among real roots (including the universality  of the density function which yields new results on the expectation discussed in the introduction) we need to show that there is no complex root near the real line, with high probability.  This, at the intuition level at least, would allow us to translate results for complex roots  near the real line to results for real roots, as once a root is sufficiently near the real line it has to be real.

 One way to obtain this is 
via the  so-called weak level repulsion property, relying on explicit estimates of the Kac-Rice formula  for Kac polynomials with gaussian coefficients. However, it is 
very difficult, if not impossible, to obtain similar  estimates for the general polynomials considered in this paper,
 particularly in the case when the means of the coefficients  are nonzero. 
We handle  this problem by  a novel  argument,  based on  Rouch\'e's theorem  following an  ideas from a paper of Peres and Virag \cite{PV1} 
and the monograph  by Hough et al. \cite{HKPV1}. Apparently, the repulsion property  is interesting on its own right, and there is a chance that the argument can be 
applied for other settings.

To illustrate the idea, let us consider a disk $B(x_0, r) $ center at a point $x_0$ on the real line. We want to show that if $r$ is sufficiently small, then  with high probability $B(x_0,r)$ contains at most one root. This excludes the complex roots as they come in conjugated pairs. 
Define $g(z)= P_{n, \xi }(x_0)  + (z-x_0) P' _{n, \xi} (x_0) $.  By Rouch\'e's theorem, if we can show that (with high probability), $| P_{n,\xi}(z)- g(z) | < |g(z) |$ for all $z$ on the boundary of $B(x_0, r)$, then $P_{n,\xi}(z)$ and $g(z)$ have the same number of roots inside the disk. Note that 
$g(z)$ is linear, so it has at most one root. The verification of  $| P_{n,\xi}(z)- g(z) | < |g(z) |$  makes use of the Cauchy's integral formula and an anti-concentration result. (One can also use an $\epsilon$-net argument here, but the details are more involved.) See Section \ref{proof-real} for details.

Finally, let us discuss the treatment of polynomials with Gaussian coefficients. The strategy of the proof of Theorem \ref{theorem:new1} (and other results in the Introduction) is to reduce 
to the Gaussian case, using universality results. In fact, the Gaussian setting of Theorem~\ref{theorem:new1} and Theorem~\ref{theorem:new2} are already substantially new, and furthermore  our method of proof is novel compared to previous works. For example, the only case we know where the optimal error term $O(1)$ in our results was obtained is Kac polynomials, thanks to the very explicit formula \eqref{Kacformula2}.  In our general setting while some version of \eqref{Kacformula2} is available, evaluation of such formula turns out to be fairly delicate: in many other previous works for the mean-zero coefficients setting \cite{D1, D2, sambandham1979, FGK}  (see also \cite{BS1}), researchers used the method of Logan and Shepp \cite{LS1, LS2}, but this could not lead to the error term $O(1)$, and for coefficients with nonzero means (see below), the analysis from Farahmand's and Ibragimov-Maslova's paper  \cite{IM3, F1} do  not lead to the error bound $O(1)$, even for the Kac polynomial. 
 Finally, none of the above mentioned analysis can be reproduced to yield an asymptotic result for our general setting, where only the order of magnitude of the coefficients  $c_i$ is known. 
 
Now, let us discuss briefly the main new ideas in our the treatment of the Gaussian case. Via the Kac-Rice formula, the analysis of the nonzero mean case relies on several key estimates from the zero mean setting. In the zero mean case, our  new idea is to develop a reformulation of the Edelman--Kostlan formula for the density function of the distribution of real zeros \cite{EK1}, so that the density could be computed  using \emph{only} the  variance function $\Var[P_{n,Gauss}]$ and its first few derivatives.  This enables us to reduce the analysis of the density function to a careful study of the large $n$ asymptotics of   $\Var[P_{n,Gauss}]$ and its derivatives.  This novel approach  allows us to get the $O(1)$ estimate for the error terms, which can not be obtained using the Logan-Shepp methods. The analysis of the large $n$ behavior of $\Var[P_{n,Gauss}]$ and its derivatives involves fairly technical estimates and occupies the last few sections of the paper.  Unlike the Kac polynomials, in our setting the distribution of the real zeros is not invariant under the map $x\mapsto 1/x$, leading to extra difficulty in the analysis.


\section{Proof of complex local universality for polynomials}\label{proof-complex}

Throughout the paper, $L := \frac{1}{\delta}$. 

In this section, we prove Theorem \ref{complex}. In particular, we will prove \eqref{h1}. The same proof works for \eqref{h1-1} by replacing $P$ by $Q$, unless otherwise noted. Notice that we only consider $Q$ when talking about $\delta\ge \frac{1}{10n}$.

We can assume without loss of generality that $\tilde \xi_i$ has Gaussian distribution for all $i$.

By standard arguments using the Fourier analysis, using the assumption that the test function $G$ is sufficiently smooth, one gets that $G$ equals its Fourier series on its support with the Fourier coefficients growing sufficiently slowly. Therefore, if the desired statement is proven for each term (which is smoothly truncated on the support of $G$) in the Fourier expansion, it extends automatically to $G$. In other words, the problem reduces to proving (\ref{h1}) for 
\begin{equation}\label{h2}
 G(w_1,\dots, w_m) = G_1(w_1)\dots G_k(w_k)
 \end{equation} where for each $1\le i\le k$, $G_i:\mathbb{C}\to \mathbb{C}$ is a smooth function supported in $B(0, 10^{-2})$ and $|{\triangledown^aG_{i}}|\le 1$ for all $0\le a\le 3$.

When $G$ is of that form, we have 
\begin{eqnarray}
&&\int_{\mathbb{C}^{k}}G(w_1,\dots, w_k)\rho_{\check P}^{(k)}(\check z_1+ w_1,\dots, \check z_k+ w_k)\text{d}w_1\dots\text{d}w_k\nonumber\\
& =& \E \sum_{i_1,\dots, i_k \text{distinct}}G_1({\zeta}_{i_1}^{\check P}-\check{z_1})\dots G_k({\zeta}_{i_k}^{\check P}-\check{z_k}).
\end{eqnarray}
Let $r_0 = 10^{-2}$.
By the inclusion-exclusion formula, we then can rewrite the later expression as 
\begin{equation}\label{h3}
 \E \prod_{j=1}^{k}X_{j}^{P}
 \end{equation} plus a bounded number of lower order terms which are of the form (\ref{h3}) for smaller values of $k$, where 
\begin{eqnarray}
X_j^{P} = X_{\check{z}_j, G_j}^{P} = \sum_{i=1}^{n} G_j({\zeta}_i^{\check P}-\check z_j).\label{moon1}
\end{eqnarray} 
Hence, by induction on $k$, it suffices to show that 
\begin{eqnarray}
\ab{\E \prod_{j=1}^{k}X_j^{P}-\E \prod_{j=1}^{k} X_j^{\tilde P}}\le C\delta^{c}.\label{du5}
\end{eqnarray}

If $P$ does not vanish on the support of $H_j$, then by the Green formula we have
\begin{equation}
X_j^{P} =\sum_{i=1}^{n} G_j({\zeta}_i^{\check P}-\check z_j)= \int_{\mathbb C}\log |\check P(z)|H_j(z)dz  = \int_{B(\check z_j, r_0)}\log |\check P(z)|H_j(z)dz,\label{sat1}
\end{equation}
where $H_j(z) = -\frac{1}{2\pi}\triangle G_j(z-\check z_j)$. Note that $supp(H_j)\subset B(\check z_j, r_0)$.

%
%

Let $K_j^{P} = \log |\check P(z)|H_j(z)$. Let $c_1$ be a small positive constant to be chosen later. Let $\mathcal T = \mathcal T(\delta)$ be the event on which
\begin{enumerate}[(i)]
\item \label{Ti}$P \not\equiv 0$.
\item \label{Tii}$N_P \left (B\left ( z_j, \frac{\delta}{10}\right ) \right )\le  L^{c_1}$ for all $1\le j\le k$.
\item \label{Tiii}$\log|P(z_j)|\ge -\frac{1}{2} L^{c_1}$ for all $1\le j\le k$.
\item \label{Tiv}$\log|P(z)|\le \frac{1}{2}L^{c_1}$ for all $z$ such that $|z|\in I(\delta) + (-\delta/2, \delta/2)$.
\end{enumerate}
And if $\delta\ge \frac{1}{10n}$, we also require that on the event $\mathcal{T}$
\begin{enumerate}[(i)]
\setcounter{enumi}{4}
\item \label{Tv}$N_Q \left (B\left ( z_j, \frac{\delta}{10}\right )\right ) \le  L^{c_1}$ for all $1\le j\le k$.
\item \label{Tvi}$\log|Q(z_j)|\ge -\frac{1}{2} L^{c_1}$ for all $1\le j\le k$.
\item \label{Tvii}$\log|Q(z)|\le \frac{1}{2}L^{c_1}$ for all $z$ such that $|z|\in I(\delta) + (-\delta/2, \delta/2)$.
\end{enumerate}

The rest of the proof consists of several parts. In Section \ref{non-clustering}, we will show that the event $\mathcal T$ occurs with high probability. Then in Section \ref{bounded-variance}, we will show that $\norm{K_j^{P}}_{L^2(z)}$ is small on $\mathcal T$ for all $1\le j\le k$. This allows us to approximate $X_j^{P}$ by a finite sum $\frac{1}{m}\sum_{i=1}^m \log |\check P(w_i)|H_j(w_i)$ using Monte Carlo sampling method. After the approximation step, in Section \ref{log-comparability}, we show that the two approximating expressions for $P$ and $\tilde P$ are close using the Lindeberg swapping technique. Next, in Section \ref{negligible-set}, we show that the tail event $\mathcal T^{c}$ does not contribute significantly to the picture, i.e., $\E \left (\left |\prod_{j=1}^{k}X_j^{P}\right |\textbf{1}_{ \mathcal T^{c}}\right )$ is small. This is the key step of our proof. Finally, we wrap up the proof in Section \ref{finishing}.

\subsection{The event $\mathcal{T}$ occurs with high probability}\label{non-clustering}
Let $A$ be a large constant, say $A = k+2$. And set 

\begin{center}
$\gamma(\delta) = \begin{cases} \delta^{A} &\mbox{if } \frac{\log ^{2}n}{n}\le \delta \le \frac{1}{C}, \\ 
n^{-1/2} & \mbox{if } 0\le \delta < \frac{\log^{2}n}{n}. \end{cases} $
\end{center}

In this section, we show that $\P (\mathcal{T})\ge 1 - C\gamma(\delta)$ for some constant $C$. To show that \eqref{Tiii} and \eqref{Tvi} occur with high probability, we will need two Littlewood-Offord type anti-concentration bounds. The first bound for $\xi_i$ being Rademacher is known as Erd\H{o}s' lemma. We reduce the general case to the Rademacher case and then include a proof of the Erd\H{o}s' lemma. 

\begin{lemma}\label{LO}
If the $\xi_i$'s satisfy Condition 1, there exists a constant $D$ such that for any integer $n\ge 1$, real number  $a>0$, and complex numbers $a_1, \dots, a_n$ with $|a_i|\ge a$ for all i, and for any $z\in \C$, we have
\begin{equation}
 \P\left (\left |\sum _{i=1}^{n}a_i{\xi}_i-z\right |\le \frac{a}{D} \right )\le \frac{D}{\sqrt n}.\nonumber
 \end{equation} 
\end{lemma}

\begin{proof}[Proof of Lemma \ref{LO}]
By translation, we can assume that $\E\xi_i = 0$ for all $i$. It then suffices to show the lemma when the $\xi_i$'s and $a_i$'s are real. Indeed, assume that the statement on the real line holds true. In the general case, assume without loss of generality (wlog) that $a = 1$. Since $|a_i|\ge 1$, either $|\Re(a_i)|\ge \max\{|\Im(a_i)|, \frac{1}{\sqrt{2}}\}$ or $|\Im(a_i)|\ge \max\{|\Re(a_i)|,\frac{1}{\sqrt{2}}\}$. By the pigeonhole principle, we can assume wlog that there are at least $n/2$ indices $i$ such that $|\Re(a_i)|\ge \max\{|\Im(a_i)|, \frac{1}{\sqrt{2}}\}$. For such $i$, set $X_i = \Re(\xi_i)-\frac{\Im(a_i)}{\Re(a_i)}\Im(\xi_i)$, $Y_i = \Im(\xi_i)+\frac{\Im(a_i)}{\Re(a_i)}\Re(\xi_i)$, then $a_i\xi_i = \Re(a_i)(X_i + \sqrt{-1}Y_i)$, $\Var(X_i) + \Var(Y_i) = 1+\frac{\Im^{2}(a_i)}{\Re^{2}(a_i)}\in [1,2]$, and $\E|X_i|^{2+\ep}, \E|Y_i|^{2+\ep}\le 2^{2+\ep}\E|\xi|^{2+\ep}\le 2^{2+\ep}\tau_2$. By the pigeonhole principle, we can then assume wlog that there are at least $n/4$ indices $i$ such that $|\Re (a_i)|\ge 1/\sqrt{2}$ and $\Var(Y_i)\in [1/2,2]$. Now, for such $i$, $\Re (a_i)Y_i = \Re (a_i)\sqrt{\Var(Y_i)}\frac{Y_i}{\sqrt{\Var(Y_i)}}$ with $|\Re (a_i)|\sqrt{\Var(Y_i)}\ge \frac{1}{2}$. This allows one to use the result for the reals (after conditioning on the rest $Y_j$'s) with coefficients $\Re (a_i)\sqrt{\Var(Y_i)}$ and random variables $\frac{Y_i}{\sqrt{\Var(Y_i)}}$ and obtain a constant $D$ such that for any $y\in \R$, 
\begin{equation}
 \P\left (\left |\sum _{i=1}^{n}\Re(a_i)Y_i-y\right |\le \frac{1}{2 D} \right )\le \frac{D}{\sqrt n}.\nonumber
 \end{equation} 
This implies that for all $z\in \C$, $\P\left (\left |\sum _{i=1}^{n}a_i\xi_i-z\right |\le \frac{1}{2 D} \right )\le \frac{D}{\sqrt n}\le \frac{2 D}{\sqrt{n}}$.

Thus, we can assume that the $\xi_i$'s and $a_i$'s are real. We can further assume that the $a_i$'s have the same sign. Indeed, by the pigeonhole principle again, there are at least $n/2$ numbers $a_i$ having the same sign, say, positive. By conditioning on the $\xi_i$'s with $a_i$ negative, we can reduce the problem to the case $a_i>0$ for all $i$. Thus, the assumption becomes $a_i\ge 1$, $\forall i$.

Since the $\xi_i$'s satisfy Condition 1, there exist constants $D$ and $q>0$  such that $\P(D\ge \xi_i - \xi_i'\ge \frac{1}{D})\ge q$, where $\xi_i'$ is an independent copy of $\xi_i$. 
Let $\epsilon_1, \dots, \epsilon_n$ be independent Rademacher random variables which are independent of all previous random variables. Let 
\begin{center}
$\tilde \xi_i = \begin{cases} \xi_i &\mbox{if } \epsilon_i = 1, \\ 
\xi_i' & \mbox{if } \epsilon_i = -1. \end{cases} $
\end{center}

Then $\tilde{\xi}_1,\dots, \tilde{\xi}_n$ are independent random variables having the same distribution as $\xi_1, \dots, \xi_n$. Hence, it suffices to show that for all $x\in \R$,
\begin{equation}
 \P\left (\left |\sum _{i=1}^{n}a_i\tilde{\xi}_i-x\right |\le \frac{1}{3D} \right )= O(\frac{1}{\sqrt n}).\nonumber
\end{equation}

Let $J$ be the set of indices $j$ such that $\xi_j - \xi_j'\ge \frac{1}{D}$. Since $\P(\xi_j - \xi_j'\ge \frac{1}{D})\ge q$, $\E|J| \ge nq$. By Chernoff's bound (see, for instance, \cite[Theorem 1.1]{DP1}), 
\begin{equation}
\P\left (|J|\le \frac{nq}{2	}\right )\le \P\left (|J|\le \frac{\E |J|}{2}\right )\le 2e^{-\frac{\E |J|}{8}}\le 2e^{-\frac{nq}{8}} \le \frac{1}{\sqrt{n}}.
\end{equation}

Conditioning on the event that $|J|\ge \frac{nq}{2}$, and fixing $\tilde \xi_k$'s for all $k\notin J$ as well as $\xi_j$'s, $\xi_j'$'s for all $j\in J$, the only source of randomness left is from $\ep_j$'s with $j\in J$. It suffices to show that for all $x$, $\P(|\sum _{j\in J}a_j\tilde{\xi}_j-x|\le \frac{1}{3D}) = O\left (\frac{1}{\sqrt{n}}\right )$. 

Let $\mathcal F$ be the collection of all subsets $\{j\in J: \epsilon_j = 1\}$ as $\epsilon_j$ run over all possible values such that $|\sum _{j\in J}a_j\tilde{\xi}_j-x|\le\frac{1}{3D}$. Observe that $\mathcal F$ is an anti-chain. Indeed, suppose that $F \subset F'$ be two elements of $\mathcal F$ which correspond to $\epsilon_j = x_j$ and $\epsilon_j = x'_j$ respectively ($x_j, x'_j\in \{\pm 1\}$). For  $\epsilon_j = x_j$,
\begin{eqnarray}
\sum _{j\in J}a_j\tilde{\xi}_j = \sum _{j\in F}a_j{\xi}_j + \sum _{j\in J\setminus F}a_j{\xi'}_j,\label{ss4}
\end{eqnarray}

and for $\epsilon_j = x'_j$,
\begin{eqnarray}
\sum _{j\in J}a_j\tilde{\xi}_j = \sum _{j\in F'}a_j{\xi}_j + \sum _{j\in J\setminus F'}a_j{\xi'}_j,\label{ss5}
\end{eqnarray}

The difference of the expressions in \eqref{ss4} and \eqref{ss5} is 
$$\sum _{j\in F'\setminus F}a_j({\xi}_j-\xi_j')\ge \frac{1}{D}$$
which contradicts the assumption that they both lie in an interval of length at most $\frac{2}{3D}$. Hence, $\mathcal{F}$ is an anti-chain. And so, $|\mathcal{F}|\le {|J|\choose \lfloor |J|/2\rfloor}$ by Sperner's theorem \cite[Chapter 12]{AS}. It follows that of all $2^{|J|}$ choices of the values of $\ep_j$, there are at most ${|J|\choose \lfloor |J|/2\rfloor}$ of them can make $|\sum _{j\in J}a_j\tilde{\xi}_j-x|\le \frac{1}{3D}$. By Stirling's formula, 
\begin{equation}
\P\left (\left |\sum _{j\in J}a_j\tilde{\xi}_j-x\right |\le \frac{1}{3D}\right ) \le \frac{{|J|\choose \lfloor |J|/2\rfloor}}{2^{|J|}} =O\left (\frac{1}{\sqrt {|J|}}\right ) = O\left (\frac{1}{\sqrt{n}}\right ).\nonumber
\end{equation}
This completes the proof.
\end{proof}

The next bound is proven in \cite[Lemma 9.2]{TVpoly}. We include a short proof for the convenience of the reader. 
\begin{lemma}\label{LO2} 
Let $(\xi_i)_{i=1}^{n}$ be independent random variables satisfying Condition 1. There exist positive constants $C'$ and $\alpha$  such that for any complex number $z$, any integer $m$, and any sequence of complex numbers $e_0, \dots, e_n$ containing a lacunary subsequence $|e_{i_1}|\ge 2|e_{i_2}|\ge\dots\ge 2^{m}|e_{i_m}|$, we have
\begin{equation}
\P\left (\left |\sum_{i=0}^{n}e_i\xi_i -z\right|\le |e_{i_m} |\right )\le C'\exp(-\alpha m).\label{lacunary}
\end{equation}
\end{lemma}
\begin{proof} [Proof of Lemma \ref{LO2}]
As in the proof of Lemma \ref{LO}, we can assume that $\E\xi = 0$. Consider $\xi_i'$, $\tilde \xi_i$, $D$ and $q$ as in that proof. Without loss of generality, assume that $D\ge 10$. We can choose a subsubsequence $(e_{i_{j_{k}}})_{k=1}^{\tilde m}$ of the lacunary sequence $(e_{i_j})_{j=1}^{m}$ with $\tilde m = \Theta(m)$ such that $|e_{i_{j_1}}|\ge D^{3}|e_{i_{j_2}}|\ge\dots\ge D^{3\tilde m}|e_{i_{j_k}}|$. By conditioning on the random variables $\xi_l$ with $l$ not equal any $i_{j_{k}}$, we can assume that the subsubsequence equals the original sequence; in other words, $i_{j_{k}} = k$ for all $k$, and $\tilde m = m = n$. Let $J$ be the set of indices $j<m$ such that $D\ge \xi_j - \xi_j'\ge \frac{1}{D}$. By the same argument with Chernoff's bound as before, we have $|J|\ge \frac{mq}{2}$ with probability at least $1-\exp(-\alpha m)$. Conditioning on the event that $|J|\ge \frac{mq}{2}$, and fixing $\tilde \xi_k$'s for all $k\notin J$ as well as $\xi_j$'s, $\xi_j'$'s for all $j\in J$, the only source of randomness left is from $\ep_j$'s with $j\in J$. It suffices to show that for all $z$, $\P(|\sum _{j\in J}e_j\tilde{\xi}_j-z|\le |e_m|) = O\left (\exp(-\alpha m)\right )$. By triangle inequality, we can show that for any two instances of $(\ep_j)_{j\in J}$, the difference of the two sums $\sum _{j\in J}e_j\tilde{\xi}$ has magnitude at least $4|e_m|$. And so, $\P(|\sum _{j\in J}e_j\tilde{\xi}_j-z|\le |e_m|) \le 2^{-|J|}= O\left (\exp(-\alpha m)\right )$.
\end{proof}

We are now ready to show that \eqref{Tiii} and \eqref{Tvi} occur with the desired probability. For $\delta\in [\frac{\log ^{2}n}{n},\frac{1}{C}]$, we prove the following.
\begin{lemma} \label{f1-1} For any constants $A>0$ and $c>0$, there exists a constant $C$ such that for any $\delta\in [\frac{\log ^{2}n}{n},\frac{1}{C}]$, complex number $z$ such that $|z|\in I(\delta)$, and $1\le \lambda\le \frac{n\delta}{\log^{2}\delta}$, one has 
\begin{eqnarray}
\label{moon2}\P\left (\log|P(z)|\ge -\frac{1}{2}\lambda\delta^{-c}\right )\ge 1 - C\frac{\delta^{A}}{\lambda^A},
\end{eqnarray}
and
\begin{eqnarray}
\label{moon3}\P\left (\log|Q(z)|\ge -\frac{1}{2}\lambda\delta^{-c}\right )\ge 1 - C\frac{\delta^{A}}{\lambda^A}.
\end{eqnarray}
\end{lemma}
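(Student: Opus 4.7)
The lemma asserts the small-ball estimate $\P(|P(z)|<e^{-\lambda\delta^{-c}/2})\le C\delta^A/\lambda^A$, with an identical statement for $Q(z)$. I would establish it by symmetrization followed by a Nazarov--Nishry--Sodin type anti-concentration estimate for Rademacher sums. Set $\sigma^2(z):=\sum_i|c_i|^2|z|^{2i}$, the natural variance scale of $P(z)$. Under Condition 1 with $|z|\in I(\delta)$, a Laplace-style estimate gives $\sigma^2(z)\asymp\delta^{-(2\rho+1)}$ (the hypothesis $\delta\ge\log^2 n/n$ is used so that the effective indices $i\lesssim 1/\delta$ lie comfortably inside $[0,n]$), and hence $\log\sigma=O(\log L)$ is negligible compared with $\lambda\delta^{-c}$ throughout the admissible parameter range.

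By the ghost-trick inequality $\P(|P(z)|<t)^2\le \P(|P(z)-P'(z)|<2t)$ with $P'$ an independent copy of $P$, it suffices to prove the symmetrized bound with doubled exponent $2A$ at scale $2t=2e^{-\lambda\delta^{-c}/2}$. Since $\xi_i-\xi_i'$ is symmetric, there exists an independent Rademacher $\eta_i$ with $\xi_i-\xi_i'\stackrel{d}{=}\eta_i(\xi_i-\xi_i')$ (in the complex-coefficient case we insert an auxiliary Rademacher sign using complex-symmetry). Conditioning on $R=(|\xi_i-\xi_i'|)_i$ (and on complex phases when relevant), the sum $\sum c_iz^i\eta_i(\xi_i-\xi_i')$ is a Rademacher linear combination with coefficients $a_i=c_iz^i(\xi_i-\xi_i')$ of $\ell^2$-norm $s(R):=(\sum|c_i|^2|z|^{2i}|\xi_i-\xi_i'|^2)^{1/2}$, so the Nazarov--Nishry--Sodin log-integrability theorem for Rademacher sums yields
$$\P_\eta\bigl(|P(z)-P'(z)|<2t\mid R\bigr)\le C_q\bigl(\log(s(R)/(2t))\bigr)^{-q}\qquad(\forall\,q>0).$$

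To arrange that $s(R)\ge\sigma/\sqrt{2}$ on a sufficiently likely event, truncate $R_i$ at a large constant $K_0$ depending on $\epsilon,\tau_2$: the $(2+\epsilon)$-moment bound on $\xi_i$ gives via Markov that $\E(R_i\wedge K_0)^2\ge 1$ for $K_0$ large, hence $\E\tilde s(R)^2\ge\sigma^2$ for the truncated variant $\tilde s$. Each summand of $\tilde s^2$ is bounded by $K_0^2\max_i|c_i|^2|z|^{2i}=O(\delta\sigma^2)$, so Bernstein's inequality gives $\P(\tilde s(R)^2<\sigma^2/2)\le\exp(-c/\delta)$, negligible against $(\delta/\lambda)^A$ when $\delta$ is small. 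On the good event $\log(s(R)/(2t))\ge\tfrac{1}{3}\lambda\delta^{-c}$, and choosing $q=\max(2A,2A/c)$ in the NNS bound gives $K'\delta^{2A}/\lambda^{2A}$. Combining the good and bad events and taking square roots delivers the claimed bound for $P(z)$. The bound for $Q(z)$ follows identically after observing that the rescaled coefficients $d_i/d_0=c_{n-i}/c_n$ are bounded on the effective range $i\lesssim 1/\delta\ll n$, so $\sigma_Q^2\asymp\delta^{-1}$ and the max summand is $O(1)$, giving the same ratio $\sigma_Q^2/\max\asymp L$ for the Bernstein step.

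The main obstacle is the uniform calibration: the three small-probability inputs (the log-$\sigma$ versus threshold comparison, the Bernstein lower bound on $s(R)$, and the NNS polynomial decay at exponent $q$) must simultaneously dominate $(\delta/\lambda)^A$ across $\lambda\in[1,n\delta/\log^2\delta]$ and $\delta\in[\log^2n/n,1/C]$. The low-moment hypothesis forces the truncation device in the Bernstein step to be delicate; and the endpoint $\delta\asymp 1/C$, where the Bernstein tail degenerates to a constant, must be handled either by enlarging $C$ so that the constant loss is absorbed into the final constant of the lemma, or by a separate higher-moment/Paley--Zygmund style argument for the lower bound on $s(R)$ in that regime.
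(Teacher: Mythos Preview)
Your proposal contains a genuine gap at the step where you invoke ``the Nazarov--Nishry--Sodin log-integrability theorem for Rademacher sums'' to obtain the pointwise small-ball bound
\[
\P_\eta\bigl(|P(z)-P'(z)|<2t\,\big|\,R\bigr)\le C_q\bigl(\log(s(R)/(2t))\bigr)^{-q}.
\]
The NNS result (Theorem~\ref{nns} in the paper) is an \emph{integrated} estimate: it bounds $\E_\epsilon\int_0^1|\log|g(\theta)||^{p}\,d\theta$ for the Rademacher Fourier series $g(\theta)=\sum a_j\epsilon_j e^{2\pi i j\theta}$, not the moments of $\log|g(\theta_0)|$ at a single fixed $\theta_0$. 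After you condition on $R$, you are evaluating a Rademacher sum $\sum b_j\eta_j$ at one point; no version of NNS delivers a tail bound for this object. In fact, no bound of the claimed form can hold uniformly over coefficient sequences: if $b_j=1/\sqrt{N}$ for $1\le j\le N$ with $N$ even, then $\P(\sum b_j\eta_j=0)=\binom{N}{N/2}2^{-N}\asymp N^{-1/2}$, which does not tend to zero as $t\to 0$, whereas your right-hand side does. Since after conditioning on $R$ you have no control over the arithmetic structure of the $b_j$'s, this obstruction is fatal to the argument as written. The paper does use NNS later (Section~\ref{negligible-set}), but only after integrating over an annulus in $z$, which supplies the angular average that NNS requires.

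The paper's own proof of this lemma is far more elementary and avoids NNS entirely. It exploits the geometric decay of $|c_jz^j|$ coming from $|z|\le 1-\delta$: choosing $j_0\approx B/\delta$ with $B$ large, one has $|c_{aj_0}z^{aj_0}|\ge 2|c_{(a+1)j_0}z^{(a+1)j_0}|$ for each $a\ge 1$, producing a lacunary subsequence of length $m\approx (A/\alpha)\log(\lambda L)$. A standard lacunary Littlewood--Offord inequality (\cite[Lemma~32]{TVpoly}) then gives $\P(|P(z)|\le |c_{i_0}z^{i_0}|)\le C'e^{-\alpha m}\le(\lambda L)^{-A}$, and one checks directly that $|c_{i_0}z^{i_0}|\ge e^{-\lambda L^c/2}$ because $i_0=O(L\log(\lambda L))$ and $\lambda\le n\delta/\log^2\delta$ keeps $i_0\le n/2$. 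The argument for $Q$ is identical with $d_i/d_0$ in place of $c_i$. Your symmetrization and Bernstein steps are fine, but you would need to replace the NNS step with something of this flavor---i.e., an anti-concentration estimate that uses the specific lacunary structure of the coefficients $c_jz^j$, not a generic small-ball bound for Rademacher sums.
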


\begin{proof} 

Since $L= \frac{1}{\delta}\le \frac{n}{\log^{2}n}$, we have $L\log ^{2}L\le \frac{n}{\log^{2}n}\log^{2}n = n$. Thus, there exists some $\lambda$ such that $1\le \lambda\le \frac{n}{L\log^{2}L}=\frac{n\delta}{\log^{2}\delta}$.
Set ${m} = \lceil \frac{\log C' + A\log(\lambda L)}{\alpha}\rceil$, then $C'\exp(-\alpha m)\le \frac{1}{\lambda^{A}L^{A}}$. We obtain a lacunary sequence $|c_{j_0}z^{j_0}|\ge 2|c_{2j_0}z^{2j_0}|\ge\dots\ge 2^{m}|c_{i_0}z^{i_0}|$ where $j_0 = \lceil BL\rceil$, $B$ is a large enough constant, and $i_0 = (m+1)j_0$. 


%

Observe that $i_0\le \frac{n}{2}$ and $c_{i_0}|z|^{i_0}\ge e^{-1/2\lambda L^{c}}$. 
Thus, by applying inequality \eqref{lacunary} to this lacunary sequence, we get \eqref{moon2}. 

Similarly, to prove \eqref{moon3}, we apply \eqref{lacunary} to the lacunary sequence $|\frac{d_{j_0}}{d_0}z^{j_0}|\ge 2|\frac{d_{2j_0}}{d_0}z^{2j_0}|\ge \dots\ge 2^{m}|\frac{d_{i_0}}{d_0}z^{i_0}|$.
\end{proof}

For $\delta\in [\frac{1}{20n},\frac{\log ^{2}n}{n}]$, we prove the following.
\begin{lemma}
 \label{f2-1} 
%
 For any positive constant $c$, there exists a constant $C$ such that for all $\delta\in [\frac{1}{20n},\frac{\log ^{2}n}{n}]$, and complex number $z$ such that $|z|\in I(\delta)$,  it holds that 
$\log|P(z)|\ge -\frac{1}{2}\delta^{-c}$ with probability at least $1 - Cn^{-1/2}$.

If $\delta\ge \frac{1}{10n}$, the same statement holds for $Q$ in place of $P$.
\end{lemma}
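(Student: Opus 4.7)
The key tool is the Littlewood--Offord inequality of Lemma \ref{LO}, which gives an anti-concentration bound of order $O(1/\sqrt{n})$ for a sum $\sum a_i \xi_i$ whose coefficients are uniformly bounded below. I want to apply it to $P(z)=\sum_{i=0}^{n} c_i z^i \xi_i$, treating $c_i z^i$ as the $a_i$'s. The target probability is only $O(n^{-1/2})$ and the tolerable error in the logarithm, $\tfrac12 \delta^{-c}$, is very large (it is polynomial in $n$ throughout our range $\delta\in[\tfrac{1}{20n},\tfrac{\log^2 n}{n}]$), so we have enormous slack and only need a crude pointwise lower bound on $|c_i z^i|$. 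This is the main difference from Lemma \ref{f1-1}: the range of $\delta$ here is too small to produce a useful lacunary subsequence, but the weaker target precludes the need for one.

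First I would show that for $\delta \le \log^2 n/n$ and $|z|\in I(\delta)$ one has $|z|^i \ge \min\{1,(1-2\delta)^n\} \ge e^{-C_1 \log^2 n}$ uniformly in $0\le i \le n$ (when $\delta<1/(10n)$, $|z|\in[1-1/n,1+1/n]$ so $|z|^i$ is in fact $\Theta(1)$). Combined with $|c_i| \ge \tau_1 i^\rho$ for $i\ge N_0$, which is at least $\tau_1 N_0^\rho$ when $\rho\ge 0$ and at least $\tau_1 n^\rho$ when $\rho<0$, this gives $|c_i z^i|\ge a$ for every $N_0\le i\le n$, where $a:=e^{-C_2 \log^2 n}$ and $C_2$ depends only on $\rho,\tau_1,N_0$. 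Now condition on $\xi_0,\dots,\xi_{N_0-1}$: the partial sum $\sum_{i<N_0} c_i z^i \xi_i$ becomes a deterministic complex number which I absorb into the free parameter $z$ in Lemma \ref{LO}. Applying Lemma \ref{LO} to the remaining sum, which has $n-N_0+1$ independent terms with coefficients of modulus at least $a$, yields
$$\P\bigl(|P(z)|\le a/D\bigr)\le \frac{D}{\sqrt{n-N_0+1}}=O(n^{-1/2}).$$
Taking logarithms, $\log|P(z)|\ge -C_2\log^2 n-\log D$ with probability $1-Cn^{-1/2}$. Because $\delta\le \log^2 n/n$, we have $\delta^{-c}\ge (n/\log^2 n)^c$, which beats $C_2 \log^2 n +\log D$ for any fixed $c>0$ once $n$ is larger than some threshold $n_0(c,C_2,D)$; for $n\le n_0$ we absorb the statement into the constant $C$.

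For the bound on $Q$, note that $Q(z)=\sum_{i=0}^n (c_{n-i}/c_n)z^i\xi_{n-i}$ has independent coefficients, and for $\delta\ge 1/(10n)$ we have $I(\delta)=[1-2\delta,1-\delta]$, so the same estimate $|z|^i\ge e^{-O(\log^2 n)}$ applies. For $0\le i\le n-N_0$ one has $|c_{n-i}/c_n|\ge (\tau_1/\tau_2)(1-i/n)^\rho$, which is at worst $(N_0/n)^\rho$ when $\rho>0$ and bounded below by $\tau_1/\tau_2$ when $\rho<0$; in either case it is $\ge n^{-|\rho|}$. Conditioning on the block $\xi_n,\dots,\xi_{n-N_0+1}$ and applying Lemma \ref{LO} to the remaining $n-N_0+1$ independent terms gives the identical conclusion for $Q$. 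I do not anticipate a serious obstacle: the argument is essentially one-shot application of the already-proven Lemma \ref{LO}, and the only care needed is tracking the sign of $\rho$ and checking that the very weak lower bound $-\tfrac12\delta^{-c}$ indeed dominates the $O(\log^2 n)$ we lose in the anti-concentration step throughout the relevant range of $\delta$.
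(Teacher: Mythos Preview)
Your proposal is correct and follows essentially the same approach as the paper: bound $|c_i z^i|$ below by $e^{-O(\log^2 n)}$ using the polynomial growth of $c_i$ and $|z|\approx 1$, condition on the $N_0$ terms without a guaranteed lower bound, then apply Lemma~\ref{LO}. One bookkeeping slip in the $Q$ case: the indices $i$ for which the coefficient $c_{n-i}/c_n$ lacks a lower bound are those with $n-i<N_0$, so you should condition on $\xi_0,\dots,\xi_{N_0-1}$ (the random variables attached to the \emph{high}-degree terms of $Q$), not on $\xi_n,\dots,\xi_{n-N_0+1}$.
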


\begin{proof}
If $\delta\in [\frac{1}{10n},\frac{\log ^{2}n}{n}]$, $|z|\in [1 - 2\delta, 1-\delta]$, and $N_0\le i\le n$, then  
\begin{eqnarray}
|c_i z^{i}|\ge\tau_1 n^{-|\rho|}(1-2\delta)^{n}\ge \tau_1  n^{-|\rho|}\left (1 - \frac{2\log ^{2}n}{n}\right )^{n}\ge e^{-8\log^{2}n}\ge 2D e^{-\frac{1}{2}L^{c}},\nonumber
\end{eqnarray}
where $D$ is the constant in Lemma \ref{LO}.

By Lemma \ref{LO}, we have $\P\left (|P(z)|\le e^{-\frac{1}{2}L^{c}}\right )\le Cn^{-1/2}$. Note that we may not have $|c_i z^{i}|\ge 2D e^{-\frac{1}{2}L^{c}}$ for $i<N_0$, but by first conditioning on $\xi_0, \dots, \xi_{N_0}$, Lemma \ref{LO} still gives us the desired result.

The same argument holds for $\delta \le \frac{1}{10n}$ and for $Q$ in place of $P$.
%
\end{proof}

In the following lemma, we show that the events \eqref{Tiv} and \eqref{Tvii} occur with high probability.

\begin{lemma}\label{f6-1} For any constants $A>1$ and $c>0$, there exists a constant $C$ such that for any $\frac{1}{10n}\le\delta\le \frac{1}{C}$ and $\lambda\ge 1$, we have $$\log M\le \frac{1}{2}\lambda L^{c}$$ with probability at least $1 - \frac{\delta^{A}}{\lambda^A}$, where $M = \max\{|P(z)|, |Q(z)|:|z|\le 1 - \delta/2\}$.

And if $\frac{1}{20n}\le \delta\le \frac{1}{10n}$ then $$\log M\le \frac{1}{2} L^{c}$$ with probability at least $1 - n^{-1/2}$, where $M = \max\{|P(z)|:|z|\le 1+\frac{4}{n}\}$.
\end{lemma}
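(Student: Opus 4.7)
The plan is to derive an a priori bound $\E M \le CL^\beta$ that is polynomial in $L=1/\delta$, and then to invoke Markov's inequality with the threshold constant $C$ chosen large enough to absorb this polynomial factor. By the maximum modulus principle we may restrict attention to the boundary circle $|z|=r$, where $r=1-\delta/2$ in the first regime and $r = 1+4/n$ in the second. Rather than estimating the maximum on the boundary directly, we apply the triangle inequality pointwise to get $M \le \sum_{i=0}^n |c_i|\,|\xi_i|\,r^i$, and analogously for $Q$ with coefficients $|d_i/d_0|$ and variables $\xi_{n-i}$. Taking expectations and using H\"older's inequality $\E|\xi_i| \le \tau_2^{1/(2+\epsilon)}$, it suffices to bound the deterministic sums $S_P = \sum_i |c_i| r^i$ and $S_Q = \sum_i |d_i/d_0| r^i$.

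For $S_P$, Condition 1 gives $|c_i| \le C(1+i)^{\max(\rho,0)}$ and an elementary calculus estimate with $r=1-\delta/2$ produces $S_P = O(L^{\max(\rho,0)+1})$. For $S_Q$ we write $|d_i/d_0| = |c_{n-i}/c_n|$. When $\rho \ge 0$ this quantity is uniformly $O(1)$, so $S_Q = O(L)$ immediately. The delicate case is $\rho<0$, where $|d_i/d_0|$ blows up like $(n/(n-i))^{|\rho|}$ as $i \to n$, and the ambient $n$ can be much larger than $L$. Splitting the sum at $i = n/2$, the low range contributes $O(L)$ since $(n/(n-i))^{|\rho|} \le 2^{|\rho|}$ there; the high range is bounded via the crude estimate $|d_i/d_0| \le Cn^{|\rho|}$ together with $\sum_{i>n/2}r^i \le C L\, r^{n/2}$, giving a contribution of $O(L\,n^{|\rho|}e^{-\delta n/4})$. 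The elementary but essential observation is that the function $n \mapsto n^{|\rho|}e^{-\delta n/4}$ (viewed as a function of $n$ with $\delta$ fixed) is maximized at $n \sim |\rho|L$ with maximum value $O(L^{|\rho|})$; hence this tail, and therefore $S_Q$ itself, is $O(L^{|\rho|+1})$. The finitely many boundary terms near $i=n$ (where $n-i<N_0$) are handled identically using $|d_i/d_0|\le Cn^{|\rho|}$ and $r^i\le r^{n-N_0}$.

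Combining these estimates, $\E M \le CL^\beta$ for some $\beta$ depending only on the constants of Condition 1, and Markov's inequality gives $\P(\log M \ge \tfrac12 \lambda L^c) \le CL^\beta e^{-\lambda L^c/2}$. To make the right side at most $\delta^A/\lambda^A = L^{-A}\lambda^{-A}$, it suffices that $\tfrac12 \lambda L^c \ge \log C + (\beta+A)\log L + A\log\lambda$; this holds uniformly for $\lambda \ge 1$ once $L \ge C(\beta,A,c)$, since $c>0$ forces $L^c$ to dominate $(\beta+A)\log L$, and since $\log\lambda \le \lambda/e$ absorbs the $\lambda$-dependence into $\tfrac12\lambda L^c$. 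Taking the constant $C$ in the hypothesis $\delta\le 1/C$ sufficiently large enforces this condition. For the second part, with $r=1+4/n$ one has $r^n \le e^4$ and hence $\E M \le C n^{\max(\rho,0)+1} = O(L^{\max(\rho,0)+1})$ because $L \asymp n$ in this regime; Markov with exponent $L^c/2 \asymp n^c/2$ then easily beats $n^{-1/2}$ for $n$ sufficiently large. The main technical obstacle is the estimate for $S_Q$ in the case $\rho<0$, where one must carefully exploit the exponential factor $r^n$ to absorb the polynomial factor $n^{|\rho|}$, preventing the bound from degenerating when $n\gg L$; everything else is routine Markov-style tail estimation.
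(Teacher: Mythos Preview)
Your proof is correct, but it proceeds along a different axis than the paper's. The paper defines an explicit good event $\Omega' = \{|\xi_i| \le a_i \text{ for all } i\}$, with $a_i = \lambda^A L^A X^i$ for a suitably chosen $X>1$, bounds $\P(\Omega'^c)$ by a union bound and second-moment Markov on each coordinate, and then on $\Omega'$ bounds $M$ deterministically by $\sum_i a_i |c_i|(1-\delta/2)^i$. You instead bound $\E M$ once via $M \le \sum_i |c_i|\,|\xi_i|\,r^i$ and linearity of expectation, and then apply a single Markov inequality with the exponentially large threshold $e^{\lambda L^c/2}$. Your route is slightly more streamlined: it avoids introducing the auxiliary sequence $a_i$ and the union bound, and it makes the dependence on $\lambda$ transparent through the inequality $A\log\lambda \le \tfrac14\lambda L^c$. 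The paper's route, on the other hand, isolates an explicit deterministic event on which the bound holds, which can occasionally be convenient when one wants to intersect with other good events. Notably, your treatment of $S_Q$ in the case $-\tfrac12<\rho<0$ is more explicit than the paper's: you split at $i=n/2$ and use the optimization $\sup_n n^{|\rho|}e^{-\delta n/4}=O(L^{|\rho|})$ to absorb the dangerous factor $n^{|\rho|}$, whereas the paper simply asserts the bound $C''\lambda^A L^A (4/\delta)^2$ with the comment ``similarly''. Both arguments ultimately rely on the same polynomial-in-$L$ control of $\sum_i |c_i| r^i$ and $\sum_i |d_i/d_0| r^i$, just packaged differently.
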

\begin{proof}
Assume that $\frac{1}{10n}\le \delta\le \frac{1}{C}$. Let $X = \frac{2-\delta/2}{2-\delta} \in (1, 2)$ and $a_i = \lambda ^{A}L^{A}X^{i}$. Let \[\Omega^\prime = \{\omega: |\xi_i|\le a_i, \forall i = 0, \dots, n\}.\] The probability of the complement of $\Omega^\prime$ is 
\begin{eqnarray}
\P(\Omega^{\prime c})&=& \P\bigg(\exists i \in \{0, 1, \dots, n\}: |\xi_i|> a_i\bigg)\le\sum_{i=0}^n \frac{\tau_2}{a_i^{2}}\le \frac{1}{\lambda^{A}L^{A}}\nonumber.
\end{eqnarray}
For every $\omega\in \Omega^\prime$, we have
\begin{eqnarray}
\max_{|z|\le 1 - \delta/2}|P(z)|\le \sum_{i=0}^n |c_i\xi_i|\left(1-\frac{\delta}{2}\right)^i\le \sum_{i=0}^na_i|c_i| \left(1-\frac{\delta}{2}\right)^i
\le C'' \lambda^{A}L^{A} \left (\frac{4}{\delta}\right )^{\lceil \rho \rceil+1}\le e^{\frac{1}{2}\lambda L^{c}}.\nonumber
\end{eqnarray}

A similar bound holds for $Q$.

When $\frac{1}{20n}\le \delta\le \frac{1}{10n}$, we set $\Omega' = \{\omega: |\xi_i|\le n, \forall 0\le i\le n\}$ and argue similarly. 
%
%
\end{proof}

Combining Lemmas \ref{f1-1}, \ref{f2-1}, and \ref{f6-1}, we obtain that the events \eqref{Tii} and \eqref{Tv} occur with high probability.

\begin{proposition}\textbf{(Non-clustering)}\label{nonclustering-1}
 For any constants $A>1$ and $c>0$, there exists a constant $C$ such that 
 \begin{enumerate}[(i)]
 \item \label{st1}  For any $\delta\in [\frac{\log ^{2}n}{n},\frac{1}{C}]$, $1\le \lambda\le \frac{n\delta}{\log^{2}\delta}$, and complex number $z$ such that $|z|\in I(\delta)$, we have
\begin{equation}
N_{P}(B(z, \delta/9))\le \lambda\delta^{-c} \mbox{ and } N_{Q}({B(z, \delta/9)})\le \lambda\delta^{-c}\nonumber
\end{equation}
with probability at least $1 - C\frac{\delta^{A}}{\lambda^A}$.

\item \label{st2} For any $\delta\in [\frac{1}{20n},\frac{\log ^{2}n}{n}]$ and complex number $z$ such that $|z|\in I(\delta)$, one has 
\begin{equation}
N_P({B(z, \delta/9)})\le \delta^{-c} \nonumber
\end{equation} 
with probability at least $1 - Cn^{-1/2}$.
The same statement holds for $Q$ in place of $P$ when $\delta\in [\frac{1}{10n},\frac{\log ^{2}n}{n}]$.
%
%
%

\end{enumerate}
 
\end{proposition}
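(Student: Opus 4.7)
The plan is to deduce the non-clustering bound from Jensen's formula, which converts zero counting into a comparison between the maximum of $\log|P|$ on a small disk and the value $\log|P(z)|$ at its center. Precisely, for any $z$ and any radius $R$,
\begin{equation*}
N_P(B(z,R/(9/4)))\, \log(9/4) \;\le\; \sum_{\zeta_i\in B(z,R)} \log \tfrac{R}{|z-\zeta_i|} \;=\; \frac{1}{2\pi}\int_0^{2\pi} \log|P(z+Re^{i\theta})|\,d\theta \;-\; \log|P(z)|.
\end{equation*}
Thus a high-probability upper bound on $\max_{|w-z|\le R}\log|P(w)|$ combined with a high-probability lower bound on $\log|P(z)|$ directly yields the desired bound on $N_P(B(z,R/(9/4)))$ by a union bound.

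For part \eqref{st1}, fix $|z|\in I(\delta)=[1-2\delta,1-\delta]$ and apply Jensen on the disk $B(z,\delta/4)$, so that $R/(9/4)=\delta/9$ matches the statement. Every $w$ on the boundary satisfies $|w|\in [1-9\delta/4,\,1-3\delta/4]\subset I(\delta)+(-\delta/2,\delta/2)$, so Lemma~\ref{f6-1} applies and gives $\log|P(w)|\le \tfrac12\lambda L^c$ with probability at least $1-\delta^A/\lambda^A$. Lemma~\ref{f1-1} (estimate \eqref{moon2}) gives $\log|P(z)|\ge -\tfrac12\lambda\delta^{-c}$ with probability at least $1-C\delta^A/\lambda^A$. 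A union bound then produces
\begin{equation*}
N_P(B(z,\delta/9))\;\le\;\frac{1}{\log(9/4)}\bigl(\tfrac12\lambda\delta^{-c}+\tfrac12\lambda\delta^{-c}\bigr)\;\le\;\lambda\delta^{-c'}
\end{equation*}
with probability at least $1-2C\delta^A/\lambda^A$ (absorbing $1/\log(9/4)$ by slightly enlarging $c$). The bound for $Q$ is identical using \eqref{moon3} and the $Q$-part of Lemma~\ref{f6-1}.

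For part \eqref{st2}, the same Jensen argument applies but with the anti-concentration replaced by Lemma~\ref{f2-1}. When $\delta\in [1/(10n),\log^2 n/n]$, the radii $R=\delta/4$ still place the boundary circle in the region $I(\delta)+(-\delta/2,\delta/2)$ where Lemma~\ref{f6-1} (first statement, with $\lambda=1$) applies. When $\delta\in [1/(20n),1/(10n)]$, we have $I(\delta)=[1-1/n,1+1/n]$, so the boundary of $B(z,\delta/4)$ lies inside $\{|w|\le 1+4/n\}$, and the second statement of Lemma~\ref{f6-1} gives $\log|P(w)|\le \tfrac12 L^c$ with probability at least $1-n^{-1/2}$. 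In both subranges Lemma~\ref{f2-1} provides the matching lower bound $\log|P(z)|\ge -\tfrac12\delta^{-c}$ with probability at least $1-Cn^{-1/2}$. A union bound yields $N_P(B(z,\delta/9))\le \delta^{-c'}$ with probability at least $1-(C+1)n^{-1/2}$. The argument for $Q$ on $[1/(10n),\log^2 n/n]$ is identical using the $Q$-version of Lemmas~\ref{f2-1} and~\ref{f6-1}.

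The only real subtlety is the matching of parameters in the two input lemmas so that the same $\lambda$ controls both the sup and the infimum simultaneously; this is straightforward since Lemmas~\ref{f1-1} and~\ref{f6-1} are both stated for arbitrary $A,c,\lambda$. No new probabilistic tool is needed beyond those two lemmas plus Jensen's formula, so this proof should be quite short.
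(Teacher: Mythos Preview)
Your proposal is correct and follows essentially the same approach as the paper: use Jensen's zero-counting inequality on a disk around $z$, then bound the maximum by Lemma~\ref{f6-1} and the center value by Lemma~\ref{f1-1} (or Lemma~\ref{f2-1} in the small-$\delta$ regime), and combine via a union bound. The only cosmetic difference is that the paper takes $R=\delta/3$, $s=\delta/9$ so that $\log(R/s)=\log 3>1$ and no absorption of constants is needed, whereas you use $R=\delta/4$.
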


\begin{proof}

By our convention, we only need to work on the event that $P$ and $Q$ do not vanish identically. In the following, we prove for $P$. The same argument works for $Q$ equally well.

We first prove \eqref{st1}. By Jensen's inequality, we have
\[N_P({B(z, s)})\le \frac{\log M - \log|P(z)|}{\log \frac{R}{s}} \le \log M - \log|P(z)|,
\]
where $R = \frac{\delta}{3}, s = \frac{\delta}{9}, M = \max_{|w - z| = R} |P(w)|\le \max_{|w| \le 1 - \delta/2} |P(w)|$. 

Claim \eqref{st1} follows from Lemmas \ref{f1-1} and \ref{f6-1}. Similarly, claim \eqref{st2} follows from Lemmas \ref{f2-1} and \ref{f6-1}.
\end{proof}

From the above proposition, we obtain
\begin{proposition} \label{bigT} For any constants $A>1$ and $c_1>0$, there exists a constant $C$ such that for any $\frac{1}{20n}\le \delta\le\frac{1}{C}$, we have	
\begin{equation}
\P(\mathcal{T}(\delta))\ge 1 - C\gamma(\delta).\nonumber
\end{equation}
\end{proposition}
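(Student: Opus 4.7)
The plan is to derive Proposition \ref{bigT} by combining the earlier tail estimates via a union bound over the finitely many conditions defining $\mathcal{T}(\delta)$. Condition (i) ($P\not\equiv 0$) follows for free from (iii), since $\log|P(z_j)|\ge -L^{c_1}/2>-\infty$ forces $P(z_j)\ne 0$, so only (ii)--(vii) need to be controlled. Each of those is either a pointwise event at one of the $k$ chosen points $z_j$, or a uniform-in-an-annulus event, and each has already been shown to hold with the required probability; the task here is just bookkeeping of constants and a $2k+O(1)$-fold union bound.

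For $\delta$ in the ``large'' regime $[\log^2 n/n,1/C]$, I would obtain condition (iii) by applying Lemma~\ref{f1-1} with $\lambda=1$ and with the free constant called $c$ there set equal to $c_1$; this yields $\log|P(z_j)|\ge -L^{c_1}/2$ with failure probability $O(\delta^A)$ at each of the $k$ points, and the analogous (vi) for $Q$ comes from the second half of the same lemma. The uniform upper bounds (iv) and (vii) follow from Lemma~\ref{f6-1} with $\lambda=1$, after observing that $\{z:|z|\in I(\delta)+(-\delta/2,\delta/2)\}\subset\{|z|\le 1-\delta/2\}$, which is the region it controls. The non-clustering bounds (ii) and (v) come from Proposition~\ref{nonclustering-1}(\ref{st1}) with $\lambda=1$ and $c=c_1$, since $B(z_j,\delta/10)\subset B(z_j,\delta/9)$ and $\lambda\delta^{-c}=L^{c_1}$. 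Summing the resulting $O(\delta^A)$ probabilities over the finitely many events gives $\P(\mathcal{T}^c)=O(\delta^A)=O(\gamma(\delta))$.

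For $\delta$ in the ``small'' regime $[1/(20n),\log^2 n/n]$, I would repeat the same argument but use Lemma~\ref{f2-1} in place of Lemma~\ref{f1-1} (failure $O(n^{-1/2})$ at each $z_j$), the second branch of Lemma~\ref{f6-1} (where the relevant annulus $|z|\in I(\delta)\pm\delta/2$ is contained in $\{|z|\le 1+4/n\}$ because $\delta\le 1/(10n)$ in that branch), and Proposition~\ref{nonclustering-1}(\ref{st2}) in place of \ref{nonclustering-1}(\ref{st1}). Note that when $\delta<1/(10n)$ the conditions (v)--(vii) for $Q$ are simply not imposed by the definition of $\mathcal{T}$, which is precisely the regime in which the $Q$-bounds from the earlier lemmas are unavailable; so no mismatch arises. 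The union bound then gives $\P(\mathcal{T}^c)=O(n^{-1/2})=O(\gamma(\delta))$.

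I do not see a genuine obstacle here once the earlier results are in hand: all the substance sits inside the Littlewood--Offord-type lower bounds of Lemmas~\ref{f1-1} and \ref{f2-1} (and hence in Proposition~\ref{nonclustering-1}) and in the crude moment estimate behind Lemma~\ref{f6-1}. The only subtlety is to align the free constants $c$ appearing in each invoked statement with the constant $c_1$ in the definition of $\mathcal{T}$, and to verify that $\lambda=1$ is an admissible choice inside the range $1\le\lambda\le n\delta/\log^2\delta$ of Proposition~\ref{nonclustering-1}; both are immediate.
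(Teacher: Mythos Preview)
Your argument is correct and essentially the same as the paper's: both simply union-bound over the finitely many defining conditions of $\mathcal{T}$, invoking Lemmas~\ref{f1-1}, \ref{f2-1}, \ref{f6-1} and Proposition~\ref{nonclustering-1} with $\lambda=1$ and the free exponent set to $c_1$. Two small remarks: the paper bounds $\P(P\equiv 0)$ separately via $\P(\xi_i=0)\le 1-1/C'$, whereas your observation that (iii) already forces (i) is a clean shortcut; and in the sub-range $\delta\in[1/(10n),\log^2 n/n]$ you should invoke the \emph{first} branch of Lemma~\ref{f6-1} (its bound $\delta^A$ is still $O(n^{-1/2})$ there), since the second branch only covers $\delta\le 1/(10n)$.
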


\begin{proof}
By H\H{o}lder's inequality,
\begin{equation}
1=\Var(\xi_i)\le \E|\xi_i|^{2}\le (\E|\xi_i|^{2+\ep})^{2/(2+\ep)}\P(|\xi_i|>0)^{\ep/(2+\ep)}\le \tau_{4}^{2/(2+\ep)}\P(|\xi_i|>0)^{\ep/(2+\ep)}.\nonumber
\end{equation}
Thus, for all $i$, $\P(\xi_i = 0)\le 1 - \frac{1}{C'}$ for some constant $C'$. This gives 
\begin{equation}
\P(P\equiv 0) =\left ( 1-\frac{1}{C}\right )^{n}\le Cn^{-A}\le C\gamma(\delta).\nonumber
\end{equation}
The proposition then follows from Lemmas \ref{f1-1}, \ref{f2-1}, \ref{f6-1}, and \ref{nonclustering-1}, and the union bound.
\end{proof}

\subsection{Approximation of integrals by finite sums}\label{bounded-variance}

Fix $\delta\in \left [\frac{1}{20n}, \frac{1}{C}\right ]$. In this section, we show that on the event $\mathcal T$, the norms $\norm{K_j^{P}}_{L^2(z)}$ are small for all $1\le j\le k$. At the end of the section, this bound allows us to use the Monte Carlo sampling lemma to approximate $X_j^P$ with finite (sample) sums, on which we will apply the Lindeberg swapping argument. The crucial tool in this section is Harnack's inequality which allows us to show that property \eqref{Tiii} in the definition of $\mathcal T$ basically holds for every $z\in B( z_j, 10^{-5}\delta)$.

Recall that $K_j^{P} = \log |\check P(z)|H_j(z)$ and 
\begin{eqnarray}
\norm{K^{P}_j}_{L^2(z)}^2&=&\int_{B(\check z_j, r_0)}\ab{\log |\check P(z)|H_j(z)}^2dz\le\int_{B(\check z_j, r_0)}\ab{\log |\check P(z)|}^2dz \label{sat2}\\
&=& \frac{1}{10^{-6}\delta^2}\int_{B( z_j, 10^{-5}\delta)}{\log ^2| P(z)|}dz \nonumber\\
&&\text{(by change of variables formula for integral on the plane.)}\nonumber
\end{eqnarray}

\begin{lemma}\label{f5} On $\mathcal T$, one has the bound
\begin{equation}\label{norm2}
\norm{\log |P(z)|}_{L^2(B( z_j, 10^{-5}\delta))}\le L^{4c_1-1}.
\end{equation}
\end{lemma}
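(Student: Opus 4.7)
The plan is to rescale and factor the polynomial before applying a mean-value estimate for harmonic functions. Working in the coordinates $\check z = z/(10^{-3}\delta)$, the desired bound reduces to $\int_D \log^2|\check P(\check z)|\,d\check z = O(L^{2c_1})$ where $D := B(\check z_j, 10^{-2})$; unwinding the rescaling, this produces $\norm{\log|P|}_{L^2(B(z_j, 10^{-5}\delta))} = O(L^{c_1 - 1})$, which is bounded by $L^{4c_1 - 1}$ once $\delta \le 1/C$ with $C$ large. Factor $\check P = \check R \check Q$, where $\check R(\check z) := \prod(\check z - \check \zeta)$ runs (with multiplicity) over those zeros of $\check P$ lying in $B(\check z_j, 40)$. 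Since in the original coordinates this is $B(z_j, \delta/25) \subset B(z_j, \delta/10)$, condition (ii) of $\mathcal T$ yields $N := \deg \check R \le L^{c_1}$. Moreover $\check Q$ is nonvanishing on $B(\check z_j, 40)$, so $u := \log|\check Q|$ is harmonic there.

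The contribution of $\check R$ is handled directly: Cauchy-Schwarz gives $\log^2|\check R(\check z)| \le N \sum_i \log^2|\check z - \check\zeta_i|$, and for each $\check\zeta_i$ a translation and the convergence of $\int_0^s r\log^2 r\,dr$ show that $\int_D \log^2|\check z - \check\zeta_i|\,d\check z$ is bounded by an absolute constant. Summing, $\int_D \log^2|\check R| \lesssim L^{2c_1}$.

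For the $\check Q$-contribution, I would use the fact that $|u|$ is subharmonic (as $u$ is harmonic). The sub-mean-value inequality gives, for $\check z \in D \subset B(\check z_j, 20)$,
\[
|u(\check z)| \le \frac{1}{\pi \cdot 20^2}\int_{B(\check z, 20)} |u|\,d\check w \le \frac{1}{400\pi}\int_{B(\check z_j, 40)} |u|\,d\check w.
\]
To bound the right-hand side, use $|u| \le |\log|\check P|| + |\log|\check R||$ and estimate the two pieces separately. For $\int_{B(\check z_j, 40)} |\log|\check P||$: every $\check z$ in this ball corresponds to $|z|$ in $I(\delta) + (-\delta/2, \delta/2)$ (as $40 \cdot 10^{-3}\delta = \delta/25 < \delta/2$), so condition (iv) gives the pointwise upper bound $\log|\check P| \le L^{c_1}/2$; combined with the sub-mean-value inequality $\log|\check P(\check z_j)| \le \frac{1}{|B(\check z_j, 40)|}\int_{B(\check z_j, 40)} \log|\check P|$ and condition (iii), which forces $\log|\check P(\check z_j)| \ge -L^{c_1}/2$, splitting into positive and negative parts yields $\int_{B(\check z_j, 40)}|\log|\check P||\,d\check w \lesssim L^{c_1}$. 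For $\int_{B(\check z_j, 40)}|\log|\check R||$, each factor contributes an absolute constant, and $N \le L^{c_1}$ gives the total $\lesssim L^{c_1}$. Hence $|u(\check z)| \lesssim L^{c_1}$ pointwise on $D$, and $\int_D \log^2|\check Q| \lesssim L^{2c_1}|D| \lesssim L^{2c_1}$.

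Combining $\log^2|\check P| \le 2\log^2|\check R| + 2\log^2|\check Q|$ yields $\int_D \log^2|\check P| = O(L^{2c_1})$, giving the lemma. The main obstacle to a naive Harnack argument pinned at $\check z_j$ is that $\log|\check Q(\check z_j)| = \log|\check P(\check z_j)| - \log|\check R(\check z_j)|$ cannot be controlled pointwise: a zero of $\check P$ very close to $\check z_j$ forces $\log|\check R(\check z_j)|$ to be very negative and $\log|\check Q(\check z_j)|$ correspondingly large and positive, even though $|\log|\check P(\check z_j)||$ is at most $L^{c_1}/2$. The mean-value approach bypasses this by averaging on a larger ball, where the sub-mean-value inequality applied to the subharmonic function $\log|\check P|$ provides the uniform $L^1$ bound needed to conclude.
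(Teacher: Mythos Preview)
Your proof is correct and takes a genuinely different route from the paper. The paper factors $P=\prod(z-\zeta_i)\cdot g$ where the zeros are those inside a radius $r\in[10^{-5}\delta,10^{-1}\delta]$ chosen (via pigeonhole on the $\le L^{c_1}$ zeros) so that an annulus of width $\eta\sim\delta^{1+c_1}$ around $\partial B(z_j,r)$ is zero-free; this buffer gives $|z-\zeta_i|\ge 2\eta$ on the boundary, hence a pointwise upper bound $\log|g|\le L^{2c_1}$ via the maximum principle. Harnack's inequality, applied to the nonnegative harmonic function $L^{2c_1}-\log|g|$ and pinned at $z_j$, then transfers control to the whole ball at the cost of the Harnack ratio $\alpha\sim\eta/r\sim\delta^{c_1}$, and the resulting $|\log|g(z)||\lesssim L^{3c_1}$ yields the bound $L^{4c_1-1}$. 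Your approach bypasses the buffer and the pinning entirely: by bounding $\sup_D|\log|\check Q||$ via the sub-mean-value inequality for the subharmonic function $|\log|\check Q||$, you only need the $L^1$ norm of $\log|\check P|$ on the larger ball, which follows from the single-point lower bound (iii) through the sub-mean-value inequality for $\log|\check P|$ together with the pointwise upper bound (iv). This is both cleaner (no annulus selection, no Harnack constant to track) and sharper, giving $\|\log|P|\|_{L^2}=O(L^{c_1-1})$ rather than $L^{4c_1-1}$. Your closing remark about why pinning fails for your factorization is apt: without the buffer annulus, zeros can sit arbitrarily close to the factorization boundary, so the pointwise upper bound on $\log|\check Q|$ that the paper gets from the maximum principle is unavailable, and the $L^1$ route is the natural substitute.
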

Note that this is a deterministic statement.

\begin{proof}
Fix $\omega\in\mathcal T$.
Consider $I := [10^{-5}\delta, 10^{-1}\delta]$, we have $\ab{I} \ge \frac{\delta}{20}$. There exists an $r\in I$ such that $P$ does not have zeros in the (closed) annulus $A(z_j, r-\eta, r+\eta)$ where $\eta = \frac{1}{80}\delta^{1+c_1}$. Indeed, assume such an $r$ does not exist, then 
\[N_P{B(z_j, \delta/10)}\ge \frac{\ab{I}}{3\eta}> \delta^{-c_1}
\]
which contradicts the condition \eqref{Tii} in the definition of $\mathcal{T}$.

Now, fix that $r$, we have $\int_{B( z_j, 10^{-5}\delta )}{\log ^2| P(z)|}dz \le \int_{B( z_j, r )}{\log ^2| P(z)|}dz $. Let $\zeta_1,\dots, \zeta_m$ be all zeros of $P$ in $B(z_j, r-\eta)$, then $m \le L^{c_1}$ and $P(z) = (z-\zeta_1)\dots(z-\zeta_m)g(z)$ where $g$ is a polynomial having no zeros on the closed ball $B(z_j, r+\eta)$.
We have
\begin{eqnarray}
\norm{\log |P(z)|}_{L^2(B( z_j, r))}&\le& \sum_{i=1}^m\norm{\log |z-\zeta_i|}_{L^2(B( z_j, r))} + \norm{\log |g(z)|}_{L^2(B( z_j, r))}\nonumber\\
&\le& m\delta^{1 - c_1}+ \norm{\log |g(z)|}_{L^2(B( z_j, r ))},\nonumber
\end{eqnarray}
where the last inequality is because 
\begin{eqnarray}
\int_{B( z_j, r)}\log^2 |z-\zeta_i|\text{d}z \le \int_{B(0,\delta)} \log ^2 |z|\text{d}z 
\le \delta^{2-2c_1}. \nonumber
\end{eqnarray}
Thus, \begin{equation}\label{a1}
\norm{\log |P(z)|}_{L^2(B( z_j, r))}\le L^{2c_1 - 1} + \norm{\log |g(z)|}_{L^2(B( z_j, r))}.
\end{equation}

Next, we will estimate $\int_{B( z_j, r)}{\log ^2| g(z)|}dz$. Since $\log |g(z)|$ is harmonic in $B(z_j, r)$, it attains its extrema on the boundary. Thus, 
\begin{equation}\label{a2}
\norm{\log |g(z)|}_{L^2(B( z_j, r))} = \left (\int_{B( z_j, r)}{\log ^2| g(z)|}dz \right )^{1/2}\le \delta\max_{z\in \partial B(z_j, r)}|\log|g(z)||.
\end{equation} 
Notice that $\log|g(z)|$ is also harmonic on the ball $B(z_j, r+\eta)$. 
\begin{claim}\label{f3}
For every $z$ in $B(z_j, r+\eta)$, we have
\begin{eqnarray}
\log |g(z)| \le L^{2c_1}\nonumber.
\end{eqnarray}
\end{claim}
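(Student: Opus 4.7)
The plan is to bound $\log|g(z)|$ on the boundary $\partial B(z_j, r+\eta)$ using the factorization $P(z) = (z-\zeta_1)\cdots(z-\zeta_m)\,g(z)$, and then extend the bound to the interior by the maximum principle. The key observation is that, by the choice of $r$, the polynomial $g$ has no zeros in the closed ball $\overline{B}(z_j, r+\eta)$, so $\log|g|$ is harmonic (in particular subharmonic) on $B(z_j, r+\eta)$.

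First I would verify that $B(z_j, r+\eta) \subset \{z : |z| \in I(\delta) + (-\delta/2, \delta/2)\}$, so that property (iv) of the event $\mathcal T$ is applicable there. This is immediate from $|z_j| \in I(\delta)$ and $r+\eta \le 10^{-1}\delta + \tfrac{1}{80}\delta^{1+c_1} \ll \delta/2$. Consequently, for every $z \in \overline{B}(z_j, r+\eta)$ one has
\begin{equation*}
\log|P(z)| \le \tfrac{1}{2} L^{c_1}.
\end{equation*}

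Next, for $z \in \partial B(z_j, r+\eta)$ and any $\zeta_i \in B(z_j, r-\eta)$, the triangle inequality gives $|z-\zeta_i| \ge 2\eta$, hence $-\log|z-\zeta_i| \le \log(1/(2\eta))$. Since $m \le L^{c_1}$ by property (ii) and $\log(1/(2\eta)) = (1+c_1)\log L + O(1)$, writing $\log|g(z)| = \log|P(z)| - \sum_{i=1}^m \log|z-\zeta_i|$ yields
\begin{equation*}
\log|g(z)| \;\le\; \tfrac{1}{2} L^{c_1} + L^{c_1}\bigl((1+c_1)\log L + O(1)\bigr) \;\le\; L^{2c_1}
\end{equation*}
on $\partial B(z_j, r+\eta)$, provided $L$ is large enough (i.e.\ $\delta$ small enough, which may be arranged by choosing the constant $C$ in the definition of $\mathcal T$ large). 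Since $\log|g|$ is subharmonic on $B(z_j, r+\eta)$, the maximum principle promotes this boundary bound to all of $B(z_j, r+\eta)$, finishing the claim.

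There is no real obstacle here; the argument is just the maximum principle together with the factorization and properties (ii) and (iv) of $\mathcal T$. The only thing to watch is the bookkeeping of constants to confirm that the dominant term $\tfrac{1}{2}L^{c_1}$ and the correction $L^{c_1}\log L$ together are bounded by $L^{2c_1}$ once $L$ is sufficiently large — which is where the freedom in the threshold $C$ from Proposition \ref{bigT} is used. (Harnack's inequality, mentioned as the crucial tool of this subsection, is not needed for the upper bound in Claim \ref{f3} itself; it will be invoked separately to propagate the \emph{lower} bound from property (iii) of $\mathcal T$ from the single point $z_j$ to all of $B(z_j, 10^{-5}\delta)$.)
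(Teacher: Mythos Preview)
Your proof is correct and follows essentially the same approach as the paper: reduce to the boundary via the maximum principle (the paper invokes harmonicity of $\log|g|$ directly), then combine property (iv) of $\mathcal T$ with the lower bound $|z-\zeta_i|\ge 2\eta$ and the bound $m\le L^{c_1}$ from property (ii). Your closing remark about Harnack's inequality being reserved for the lower bound is also accurate.
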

\begin{proof}
Since a harmonic function attains its extrema on the boundary, we can assume that $z\in\partial B(z_j, r+\eta)$.
Since $\ab{z}< |z_j| + \delta/2$, $|z|\in I(\delta)+(-\delta/2, \delta/2)$. So, by condition \eqref{Tiv} in the definition of $\mathcal T$, $\log|P(z)|\le L^{c_1}$. Additionally, by noticing that $|z-\zeta_i|\ge 2\eta$ for all $1\le i\le m$, we get
\begin{eqnarray}
\log |g(z)| = \log|P(z)| - \sum_{i=1}^m \log \ab{z-\zeta_i}\le L^{c_1} - m\log(2\eta) \le L^{2c_1}
\end{eqnarray}
as desired.
\end{proof}

Now, let $u(z) = L^{2c_1} - \log |g(z)|$, then $u$ is a non-negative harmonic function on the ball $B(z_j, r+\eta)$. By Harnack's inequality (see \cite[Chapter 11]{Ru}) for the subset $B(z_j, r)$ of the above ball, we have that for every $z\in B(z_j, r)$,
\[\alpha u(z_j)\le u(z)\le \frac{1}{\alpha} u(z_j),
\]
where $\alpha =\frac{\eta}{2r+\eta}\ge \frac{\delta^{c_1}}{160}$.
Hence,
\[\alpha \left(L^{2c_1} - \log |g(z_j)|\right)\le L^{2c_1} - \log |g(z)|\le \frac{1}{\alpha} \left(L^{2c_1} - \log |g(z_j)|\right).
\]

And so,
\begin{equation}\label{a3}
\ab{\log |g(z)|}\le \frac{1}{\alpha}\ab{\log |g(z_j)|}+ \frac{1}{\alpha }L^{2c_1} \le 160 L^{c_1}\ab{\log |g(z_j)|} + 160 L^{3c_1}.
\end{equation}
Thus, we reduce the problem to bounding $\ab{\log |g(z_j)|}$.
From Claim \ref{f3} and the condition \eqref{Tiii} in the definition of $\mathcal{T}$, we have
\begin{eqnarray}
L^{2c_1}\ge \log \ab{g(z_j)} &=& \log \ab{P(z_j)} - \sum_{i=1}^m\log \ab{z_j - \zeta_i}\ge \log \ab{P(z_j)} \ge -\frac{1}{2}L^{c_1}.\nonumber
\end{eqnarray}
And so, $\ab{\log|g(z_j)|}\le L^{2c_1}$, which together with 
%
%
\eqref{a3} give
\begin{equation}\label{a3-1}
\ab{\log |g(z)|}\le 320L^{3c_1}.
\end{equation}

From \eqref{a1}, \eqref{a2}, and \eqref{a3-1}, we obtain
\begin{eqnarray}
\norm{\log |P(z)|}_{L^2(B( z_j, r))}&\le&  L^{4c_1-1}.\nonumber
\end{eqnarray}
Lemma \ref{f5} is proved. 
\end{proof}

From this lemma, we conclude that on the event $\mathcal T$,
\begin{equation}\label{a5}
\norm{K_j^{P}}_{L^2(z)} \le 10^{3}.L\norm{\log |P(z)|}_{L^2(B( z_j, 10^{-5}\delta))}\le 10^6L^{4c_1}.
\end{equation}

Having bounded the $2$-norm, we now use the following sampling lemma.
\begin{lemma}[Monte Carlo sampling Lemma] (\cite[Lemma 38]{V17})
Let $(X, \mu)$ be a probability space, and $F: X\to \C$ be a square integrable function. Let $m\ge 1$, let $x_1, \dots, x_m$ be drawn independently at random from $X$ with distribution $\mu$, and let $S$ be the empirical average 
$$S: =  \frac{1}{m}\left (F(x_1)+\dots+ F(x_m)\right ).$$
Then $S$ has mean $\int_{X} Fd\mu$ and variance $\frac{1}{m}\int_{X} \left (F-\int_{X} Fd\mu\right )^{2}d\mu$. In particular, by Chebyshev's inequality, we have
$$\P\left (\left |S-\int_{X} Fd\mu\right |\ge \lambda\right )\le \frac{1}{m\lambda^{2}}\int_{X} \left (F-\int_{X} Fd\mu\right )^{2}d\mu.$$
\end{lemma}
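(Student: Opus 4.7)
The plan is to verify the three assertions of the lemma in sequence: first the formula for $\E[S]$, then the formula for $\Var(S)$, and finally the concentration bound via Chebyshev's inequality. Each step is a direct application of basic probability, and the only hypothesis that will be used beyond independence is that $F$ is square integrable (so the variance is finite and well-defined).

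First I would compute $\E[S]$. Since the $x_i$ are drawn from $X$ with distribution $\mu$, each $F(x_i)$ has the same law as $F$ under $\mu$, so by linearity of expectation
\begin{equation*}
\E[S] = \frac{1}{m}\sum_{i=1}^{m} \E[F(x_i)] = \E[F(x_1)] = \int_X F\, d\mu.
\end{equation*}
Note that square integrability (hence integrability, since $\mu$ is a probability measure and Cauchy--Schwarz applies) justifies that each $\E[F(x_i)]$ exists.

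Next I would compute $\Var(S)$. Writing $\tilde F := F - \int_X F\, d\mu$ so that the $\tilde F(x_i)$ are independent, mean-zero, and identically distributed, I have $S - \E[S] = \frac{1}{m}\sum_{i=1}^m \tilde F(x_i)$. Expanding the square and using independence to kill the cross terms $\E[\tilde F(x_i)\overline{\tilde F(x_j)}] = 0$ for $i\ne j$, I obtain
\begin{equation*}
\Var(S) = \E\bigl|S - \E[S]\bigr|^2 = \frac{1}{m^2}\sum_{i=1}^m \E\bigl|\tilde F(x_i)\bigr|^2 = \frac{1}{m}\int_X \Bigl(F - \int_X F\, d\mu\Bigr)^2 d\mu.
\end{equation*}

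Finally, the tail bound is immediate from Chebyshev's inequality applied to the random variable $S - \E[S]$: since $\P(|S - \E[S]|\ge \lambda)\le \Var(S)/\lambda^2$, substituting the expression for $\Var(S)$ just derived yields the claimed inequality. There is no real obstacle here — the main thing to be careful about is that the statement is phrased for complex-valued $F$, so one should interpret $(F - \int F\, d\mu)^2$ in the variance formula as $|F - \int F\, d\mu|^2$ (equivalently, split $F$ into real and imaginary parts and apply the real version coordinatewise); this is a cosmetic issue only.
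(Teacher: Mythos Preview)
Your proof is correct and is the standard elementary argument. The paper does not actually prove this lemma; it merely cites it from \cite[Lemma 26]{TVpoly} and uses it as a black box, so there is nothing to compare against beyond noting that your verification is exactly what one expects.
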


Conditioning on $\mathcal T$ and applying this sampling lemma, we have for large $m_0>0$ and small $\gamma_0>0$ to be chosen later,
\begin{equation}\label{rc1}
\bigg|X_j^{P} - \frac{\pi r_0^2}{m_0}\sum_{i=1}^{m_0} K_j^{P}(\check w_{j,i})\bigg|\le \frac{2\sqrt{\pi r_0^2}}{\sqrt {m_0\gamma_0}} 10^6L^{4c_1} \le \frac{CL^{4c_1}}{\sqrt{m_0\gamma_0}}
\end{equation}
with probability at least $1 -\gamma_0$, where $\check w_{j,i}$ are chosen independently at random from $B(\check z_j, r_0)$ with uniform distribution and are independent from all previous random variables. By exactly the same argument, \eqref{a5} also holds for $Q$ when $\frac{1}{10n}\le \delta\le \frac{1}{C}$.

\subsection{Log comparability}\label{log-comparability}

We shall show in Section \ref{finishing} that \eqref{rc1} allows us to reduce the problem to comparing $F\big(\log|P(z_1)|, \dots, \log|P(z_{m})|\big)$ and $F\big(\log|\tilde P(z_1)|, \dots, \log|\tilde P(z_{m})|\big)$ for some smooth function $F$. This is done by making use of the beautiful Lindeberg swapping trick. The following result is from \cite{TVpoly}, we include a proof in the appendix for the convenience of the reader.

\begin{theorem}[Comparability of log-magnitude]\label{log-com16-1}
Let $P$ be the random polynomial of the form \eqref{poly} satisfying Condition 1 \eqref{cond1i}. And let $\tilde P = \sum_{i=0}^{n}c_i\tilde \xi_i z^{i}$ be the corresponding polynomial with Gaussian random variables $\tilde \xi_i$. Assume that $\tilde \xi_i$ matches moments to second order with $\xi_i$ for every $i\in \{0, \dots, n\}\setminus I_0$ for some (deterministic) set $I_0$ (may depend on $n$) of size at most $N_0$ and that $\sup_{i\ge 0} \E|\tilde \xi_i|^{2+\ep}\le \tau_2$ where $N_0$ and $\tau_2$ are constants in Condition 1 \eqref{cond1i}. 

Then there exists a constant $C_2$ such that the following holds true. Let $\alpha_1\ge C_2 \alpha_0>0$ and $C>0$ be any constants. Let $\delta \in (0, 1)$ and $m\le \delta^{-\alpha_0}$ and $z_1,\dots, z_m\in \mathbb{C}$ be complex numbers such that 
\begin{equation}\label{log-com-e1}
\frac{|c_i||z_j|^i}{\sqrt{V (z_j)}}\le C\delta^{\alpha_1}, \forall i=0,\dots, n, j=1, \dots, m,
\end{equation} 
where $V(z_j)  =\sum_{i\in \{0, \dots, n\}\setminus I_0}|c_i|^{2}|z_j|^{2i}$.

Let $F:\C ^m\to \C$ be any smooth function such that $\left |{\triangledown^a F(w)}\right | \le C\delta^{-\alpha_0}$ for all $0\le a\le 3$ and $w\in \C^{m}$, then 
\[\big|\E F\big(\log|P(z_1)|, \dots, \log|P(z_{m})|\big)-\E F\big(\log|\tilde {P}(z_1)|, \dots, \log|\tilde {P}(z_{m})|\big) \big|\le \tilde C\delta^{\alpha_0},
 \]
where $\tilde C$ is a constant depending only on $\alpha_0, \alpha_1,  C$ and not on $\delta$.
\end{theorem}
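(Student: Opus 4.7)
The plan is to run a Lindeberg swapping argument, replacing $\tilde\xi_i$ by $\xi_i$ one index at a time. Define the interpolating polynomial $P_k(z) := \sum_{i<k} c_i \xi_i z^i + \sum_{i \ge k} c_i \tilde\xi_i z^i$, so $P_0 = \tilde P$ and $P_{n+1} = P$. Telescoping reduces the task to bounding each single-swap difference
\[
E_k := \bigl|\E F(\log|P_{k+1}(z_1)|, \dots, \log|P_{k+1}(z_m)|) - \E F(\log|P_k(z_1)|, \dots, \log|P_k(z_m)|)\bigr|
\]
and showing $\sum_k E_k \le \tilde C \delta^{\alpha_0}$. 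The bounded-size set $I_0$ is handled by separate cruder first-order Taylor estimates, with the $N_0$-dependence absorbed into $\tilde C$. For $k \notin I_0$, write $P_k = R_k + c_k \tilde\xi_k z^k$ and $P_{k+1} = R_k + c_k \xi_k z^k$ with $R_k$ independent of both $\xi_k, \tilde\xi_k$. Conditioning on $R_k$ and Taylor-expanding $\hat\xi \mapsto F(\log|R_k(z_1) + \hat\xi c_k z_1^k|, \dots, \log|R_k(z_m) + \hat\xi c_k z_m^k|)$ at $\hat\xi = 0$ through second order, the matching of $\xi_k$ and $\tilde\xi_k$ up to second moments cancels the zeroth, first, and second Taylor terms in $E_k$, leaving only the third-order remainder.

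The third derivative in $\hat\xi$ of $\log|R_k(z_j) + \hat\xi c_k z_j^k|$ scales like $|c_k z_j^k|^3/|R_k(z_j) + \hat\xi c_k z_j^k|^3$, which blows up as $|R_k(z_j)| \to 0$, so a uniform derivative bound is not available. To manage this I split on the good event $\mathcal G_k := \{|R_k(z_j)| \ge \eta \sqrt{V(z_j)}\text{ for all } j\}$ with $\eta := \delta^\beta$ for an intermediate exponent satisfying $\alpha_0 \ll \beta \ll \alpha_1$. On $\mathcal G_k$, further restricted to $|\hat\xi| \le \eta/(2C\delta^{\alpha_1})$ so that $|\hat\xi c_k z_j^k| \le \tfrac12 \eta \sqrt{V(z_j)}$, the third derivative is $O(|c_k z_j^k|^3 V(z_j)^{-3/2} \eta^{-3}) \le O(\delta^{3(\alpha_1-\beta)})$ by \eqref{log-com-e1}. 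Combining with $|\nabla^a F| \le \delta^{-\alpha_0}$ and the integral form of the Taylor remainder (so that the cubic remainder can be paired with the available $\E|\hat\xi_k|^{2+\ep} \le \tau_2$ rather than a potentially infinite third moment), each $E_k$ contributes at most $O(\delta^{-\alpha_0 + (2+\ep)(\alpha_1-\beta)})$ on $\mathcal G_k$. Summed over the $n$ swaps (where $n$ is at most polynomial in $1/\delta$ by the spreading hypothesis) this is $\le \tfrac12 \tilde C \delta^{\alpha_0}$ once $\alpha_1$ is a sufficiently large multiple of $\alpha_0$.

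The main technical obstacle is the anti-concentration estimate $\P(\mathcal G_k^c) \le O(m\delta^\beta) + O(m\delta^{c\alpha_1})$ required to absorb the bad event against $\|F\|_\infty \le \delta^{-\alpha_0}$. Here $R_k(z_j) = \sum_{i \ne k} c_i z_j^i \hat\xi_i$ is a sum of independent mean-zero random variables in which no single coefficient $|c_i z_j^i|$ exceeds $C\delta^{\alpha_1}\sqrt{V(z_j)}$ by \eqref{log-com-e1}. A Berry--Esseen / Lindeberg CLT argument (or alternatively the lacunary Littlewood--Offord type estimate cited as Lemma 32 of \cite{TVpoly}) then yields that $R_k(z_j)/\sqrt{V(z_j)}$ is close to a standard Gaussian in Kolmogorov distance with an error which is a positive power of $\delta^{\alpha_1}$, and since a Gaussian has bounded density near the origin the anti-concentration $\P(|R_k(z_j)| \le \eta \sqrt{V(z_j)}) = O(\eta) + O(\delta^{c\alpha_1})$ follows. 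A union bound over the $m \le \delta^{-\alpha_0}$ points of evaluation and the $n$ swaps yields the required bound, and the tight coordination of $\alpha_0, \alpha_1, \beta, \ep$ so that good-event and bad-event contributions both fit under $\tilde C \delta^{\alpha_0}$ is precisely what forces the quantitative hypothesis $\alpha_1 \ge C_2 \alpha_0$ with $C_2$ sufficiently large.
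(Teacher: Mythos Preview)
Your Lindeberg scheme has the right shape but there is a genuine gap in the summation over swap steps. You assert that ``$n$ is at most polynomial in $1/\delta$ by the spreading hypothesis'', but the theorem's hypotheses impose no upper bound on $n$: condition \eqref{log-com-e1} only says each normalized coefficient $|c_i z_j^i|/\sqrt{V(z_j)}$ is at most $C\delta^{\alpha_1}$, which is compatible with $n$ arbitrarily large (in the paper's applications $\delta \ge 1/(20n)$ gives only a \emph{lower} bound on $n$, and for random series the result is used with $n=\infty$). Consequently your bad-event contribution $\sum_{k} \|F\|_\infty \,\P(\mathcal G_k^c) \le n \cdot \delta^{-\alpha_0} \cdot m\delta^\beta$ is uncontrolled. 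The same issue afflicts your good-event bound as written, since $O(\delta^{-\alpha_0 + (2+\epsilon)(\alpha_1-\beta)})$ is uniform in $k$; there the fix is easy---track the dependence on $a_k^2 := \sum_j |c_k z_j^k|^2/V(z_j)$ and use $\sum_k a_k^2 = O(m)$---but for the bad event no such $a_k$-dependence is available, since $\P(\mathcal G_k^c)\approx m\eta$ for every $k$.

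The paper sidesteps this by decoupling the two difficulties. It first proves (Lemma~\ref{log-com33}) the analogous comparison for $H\bigl(P(z_j)/\sqrt{V(z_j)}\bigr)$ via Lindeberg; here the swap map is \emph{linear} in $\hat\xi$, so the Taylor remainder is cleanly $O(a_k^{2+\epsilon})$ with no anti-concentration needed, and $\sum_k a_k^{2+\epsilon} \le (\sup_k a_k)^\epsilon \sum_k a_k^2$ converges regardless of $n$. Only afterwards does it compose with $\log|\cdot|$: one writes $\bar F = F_1 + F_2$ via a \emph{single} smooth cutoff at level $|w_j| \sim e^{-M}$ with $M \sim \alpha_0 \log(1/\delta)$. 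For $F_2$ (all $|w_j|$ bounded below) the composition $H_2(w) = F_2(\log|w_1|,\dots,\log|w_m|)$ is smooth with derivatives $O(\delta^{-C_2\alpha_0})$ and Lemma~\ref{log-com33} applies directly. The $F_1$ contribution is handled by bounding $|F_1|\le H_1$, transferring $H_1$ to the Gaussian side once via Lemma~\ref{log-com33}, and then invoking the Gaussian small-ball bound $\P(|\tilde P(z_j)|/\sqrt{V(z_j)} \le e^{-M}) = O(e^{-M})$ a single time---a union over the $m$ evaluation points, not over the $n$ swaps.
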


Now, we show that condition \eqref{log-com-e1} holds for $P$ and $Q$.

\begin{lemma}\label{log-comp-11}
Under the assumptions Theorem \ref{complex}, there exist constants $\alpha_1>0$ and $C>0$ such that for every $\delta\in [\frac{1}{20 n}, \frac{1}{C}]$ and for every $z$ such that $|z|\in I(\delta) + [-\delta/2,\delta/2]$, 
\begin{equation}
\frac{|c_i||z|^{i}}{\sqrt{\Var P(z)}}\le C\delta^{\alpha_1}, \qquad \forall 0\le i\le n.\label{log-compa1-P}
\end{equation}

and if $\delta\in [\frac{1}{10 n}, \frac{1}{C}]$,
\begin{equation}
\frac{\frac{|d_i|}{|d_0|}|z|^{i}}{\sqrt{\Var Q(z)}}\le C\delta^{\alpha_1}, \qquad \forall 0\le i\le n.\label{log-compa1-Q}
\end{equation}
\end{lemma}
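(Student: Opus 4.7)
The key observation is that, because the $\xi_i$ are independent with unit variance, one has the closed-form expressions
\[\Var P(z)=\sum_{i=0}^n |c_i|^2|z|^{2i},\qquad \Var Q(z)=\frac{1}{|d_0|^2}\sum_{i=0}^n |d_i|^2|z|^{2i},\]
so both desired inequalities reduce to the purely deterministic estimate
\[\frac{|c_i|^2 |z|^{2i}}{\sum_{j=0}^n |c_j|^2 |z|^{2j}} \le C\delta^{2\alpha_1}\qquad\text{(and an analogous one for the }d_j\text{).}\]
The variance identity for $P(z)$, viewed as a complex-valued sum of independent terms, follows from $\E[\xi_i\overline{\xi_j}]-\E\xi_i\,\E\overline{\xi_j}=\delta_{ij}$; the formula for $Q$ is derived in the same way from $Q(z)=\sum_i(d_i/d_0)\xi_iz^i$.

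I would begin by invoking Condition 1 to replace $|c_j|^2$ by $j^{2\rho}$ up to multiplicative constants for $j\ge N_0$, noting that the contribution of the $j<N_0$ terms is uniformly bounded and can be absorbed into the implicit constants. Writing $r=|z|^2$, the task becomes comparing $\max_j j^{2\rho} r^j$ with $S(r)=\sum_{j=N_0}^n j^{2\rho} r^j$. Split according to the size of $\delta$. In the \emph{interior regime} $\delta\ge 1/(10n)$ we have $r=1-\beta\delta$ with $\beta$ of order one; Laplace's method applied to $t\mapsto t^{2\rho} r^t$ shows that, when $\rho>0$, its maximum is attained at $t^\ast=\rho/(\beta\delta)\asymp L$ with value $\asymp L^{2\rho}$, while $S(r)\asymp \int_0^\infty t^{2\rho}e^{-\beta\delta t}\,dt \asymp L^{2\rho+1}$ (the peak lies inside $[N_0,n]$ since $L\le 10n$). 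When $\rho\le 0$, the continuous profile is monotonically decreasing on $[1,\infty)$, so the maximum is $O(1)$ while $S(r)\asymp L^{2\rho+1}$ is still a positive power of $L$ because $\rho>-1/2$. In either case the ratio max$/S(r)$ is $O(\delta^{\min(1,2\rho+1)})$. In the \emph{boundary regime} $1/(20n)\le\delta<1/(10n)$, we have $r=1+O(1/n)$, hence $r^j=\Theta(1)$ uniformly in $0\le j\le n$, so $S(r)\asymp n^{2\rho+1}$ and the maximum is $\asymp n^{2\rho}$, giving a ratio of $O(1/n)=O(\delta)$.

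The analysis for $Q$ is symmetric: after the substitution $k=n-j$ one has $|d_j|^2|z|^{2j}=|c_k|^2|z|^{2(n-k)}$, so $\sum_j|d_j/d_0|^2|z|^{2j}$ transforms into $|z|^{2n}|c_n|^{-2}\sum_k|c_k|^2 w^{k}$ with $w=|z|^{-2}$. For the relevant range of $|z|$, $w$ is of the form $1+\sigma\delta$ with $\sigma>0$ bounded, and the same Laplace-type argument (now with the peak sitting near the right endpoint $k=n$ because $w>1$) yields an identical max-to-sum bound of size $O(\delta^{\min(1,2\rho+1)})$. Taking $\alpha_1=\tfrac12\min(1,2\rho+1)>0$ (positive since $\rho>-1/2$) then establishes \eqref{log-compa1-P} and \eqref{log-compa1-Q}. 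The only delicate points are bookkeeping ones — separating the cases by the sign of $\rho$, handling the truncation at $n$ when $L$ exceeds $n$, and absorbing the first $N_0$ terms — but no substantial new idea beyond one-dimensional Laplace-type estimates is needed.
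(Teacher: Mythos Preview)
Your approach is correct and is essentially the same as the paper's: both reduce to a max-to-sum estimate for $\sum_j j^{2\rho}r^j$ (resp.\ its reciprocal variant for $Q$), arriving at the same exponent $\alpha_1=\tfrac12\min(1,2\rho+1)$. The paper simply executes this more directly, lower-bounding the variance by summing over an explicit window of $\asymp L$ consecutive indices (e.g.\ $j\in[L/40,L/20]$ for $P$, and $j\in[0,L/40]$ or $|j-i|\le L/40$ for $Q$) rather than invoking Laplace asymptotics.

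Two small inaccuracies in your writeup are worth fixing. First, in the boundary regime $\delta<1/(10n)$ with $\rho<0$, the maximum of $j^{2\rho}r^j$ is attained at $j\asymp 1$ with value $\asymp 1$, not $\asymp n^{2\rho}$; the ratio is then $O(n^{-(2\rho+1)})=O(\delta^{2\rho+1})$, which is still $\le C\delta^{2\alpha_1}$ but not $O(\delta)$ as you wrote. Second, for $Q$ with $\rho<0$ the function $k\mapsto k^{2\rho}w^k$ (with $w>1$) has a \emph{minimum} near $k\asymp L$ and its maximum may sit at $k\asymp N_0$ rather than at $k=n$ (this happens when $\delta n$ is of order one); in that case you need the complementary estimate $\sum_{k\le L/40}k^{2\rho}w^k\gtrsim L^{2\rho+1}$, which is equally elementary. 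Neither point affects the overall validity of the argument.
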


Notice that once \eqref{log-compa1-P} holds, say, the contribution of a few terms in $\Var P(z) = \sum_{i = 0}^{n}|c_i|^{2}|z|^{2i}$ is negligible and hence for any set $I_0$ of size at most $N_0$, we have $\frac{|c_i||z|^{i}}{\sqrt{\sum_{i \in \{0, \dots, n\}\setminus I_0}|c_i|^{2}|z|^{2i}}}\le C\delta^{\alpha_1}$ as required in \eqref{log-com-e1}. 

\begin{proof} Let $\alpha_1 = \min(\rho+1/2, 1/2)>0$. We prove \eqref{log-compa1-P} when $\delta\in [\frac{1}{20 n}, \frac{1}{C}]$. The other parts of the statement are similar. Recall that $L\le 20n$. We have from \eqref{cond-c}, 
\begin{eqnarray}
\Var P(z) &=& \sum_{i=0}^{n}c_i^{2} |z|^{2i}\ge \frac{\tau_1^{2}}{40^{2\rho}}\sum_{i=\lfloor L/40\rfloor}^{\lceil L/20\rceil}L^{2\rho} \left(1 - \frac{5}{2L}\right)^{L}\ge \frac{1}{C}L^{2\rho +1},\label{fri2}
\end{eqnarray}

and
\begin{eqnarray}
c_i^{2}|z|^{2i} \le Ci^{2\rho} \left(1-\frac{1}{2L}\right)^{2i}\le Ci^{2\rho}e^{-i/L}\le C \max(1, L^{2\rho})\le CL^{-2\alpha_1}\Var P(z)\quad\forall 0\le i\le n,\nonumber
\end{eqnarray}
where the next to last inequality follows from the boundedness of the function $x\mapsto x^{2\rho}e^{-x}$ on $[0, \infty)$ whenever $\rho\ge 0$ and is trivial when $\rho<0$. 
\end{proof}

Combining Theorem \ref{log-com16-1} and Lemma \ref{log-comp-11}, we obtain

\begin{proposition}\textbf{(Log-comparability)}\label{logg-1}
There exist constants $\alpha_0>0$ and $C>0$ such that for every $\delta\in [\frac{1}{20 n}, \frac{1}{C}]$, $1\le m\le \delta^{-\alpha_0}$, $\ab{z_1}, \dots, \ab{z_m}\in I(\delta) + [-\delta/2, \delta/2]$, and smooth function $F:\mathbb{C}^m\to \mathbb{C}$ with $\norm{\triangledown^a F}\le \delta^{-\alpha_0},\forall 0\le a\le 3$, we have
\[\big|\E F\big(\log|P(z_1)|, \dots, \log|P(z_{m})|\big)-\E F\big(\log|\tilde {P}(z_1)|, \dots, \log|\tilde {P}(z_{m})|\big) \big|\le {C}\delta^{\alpha_0}.
 \]
and if $\delta\in [\frac{1}{20 n}, \frac{1}{C}]$, we have
\[\big|\E F\big(\log|Q(z_1)|, \dots, \log|Q(z_{m})|\big)-\E F\big(\log|\tilde {Q}(z_1)|, \dots, \log|\tilde {Q}(z_{m})|\big) \big|\le {C}\delta^{\alpha_0}.
 \]
\end{proposition}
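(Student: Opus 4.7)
\textbf{Proof plan for Proposition \ref{logg-1}.}

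The plan is to deduce the proposition essentially for free from Theorem \ref{log-com16-1} and Lemma \ref{log-comp-11}, inserting a Gaussian intermediate polynomial so that both $P$ and $\tilde P$ may be compared through a single application of the log-comparability theorem.

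First, I would introduce an auxiliary Gaussian polynomial $P^G = \sum_{i=0}^n c_i \xi_i^G z^i$ where each $\xi_i^G$ is Gaussian with variance $1$ and mean $\E\xi_i$ for $i \ge N_0$ (any bounded choice will do for $i < N_0$). Setting $I_0 = \{0, 1, \dots, N_0-1\}$, one has $|I_0| \le N_0$ and $\xi_i^G$ matches moments to second order with $\xi_i$ for every $i \notin I_0$, so the matching-moments hypothesis of Theorem \ref{log-com16-1} is satisfied. The same polynomial $P^G$ will serve as the intermediate for $\tilde P$ as well, since by assumption $\tilde \xi_i$ already matches moments with $\xi_i$ (hence with $\xi_i^G$) for $i \ge N_0$.

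Second, I would verify the size hypothesis \eqref{log-com-e1}. Lemma \ref{log-comp-11} supplies
\[
\frac{|c_i||z|^i}{\sqrt{\Var P(z)}} \le C \delta^{\alpha_1}, \qquad 0 \le i \le n,
\]
for every $|z| \in I(\delta) + [-\delta/2, \delta/2]$ (and the analogous bound for $Q$ when $\delta \ge \frac{1}{10n}$). As observed in the remark following Lemma \ref{log-comp-11}, removing the $N_0$ indices belonging to $I_0$ from the sum defining $\Var P(z)$ changes it by at most a bounded multiplicative factor, because each omitted term is uniformly bounded while the retained sum is still of order $L^{2\rho+1}$ (or $n^{2\rho+1}$) as shown inside the proof of Lemma \ref{log-comp-11}. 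Therefore the same bound holds with $V(z) = \sum_{i \notin I_0} |c_i|^2 |z|^{2i}$ in place of $\Var P(z)$, at the cost of replacing $C$ by a possibly larger constant.

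Third, I would choose $\alpha_0 > 0$ small enough that $\alpha_1 \ge C_2 \alpha_0$, where $C_2$ is the constant from Theorem \ref{log-com16-1}, and small enough that the derivative growth $\delta^{-\alpha_0}$ on $F$ fits into the theorem's template. A single application of Theorem \ref{log-com16-1} to the pair $(P, P^G)$, and a second application to the pair $(\tilde P, P^G)$, each yields an error of at most $\tilde C \delta^{\alpha_0}$; a triangle inequality then delivers
\[
\bigl| \E F(\log|P(z_1)|, \dots, \log|P(z_m)|) - \E F(\log|\tilde P(z_1)|, \dots, \log|\tilde P(z_m)|) \bigr| \le C \delta^{\alpha_0}.
\]
The argument for $Q$ is identical, invoking the $Q$-version of Lemma \ref{log-comp-11} which requires the slightly more restrictive range $\delta \in [\frac{1}{10n}, \frac{1}{C}]$.

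There is no substantial obstacle, as Proposition \ref{logg-1} is in essence a repackaging of Theorem \ref{log-com16-1} together with the a priori variance bound of Lemma \ref{log-comp-11}. The only mildly delicate point is accommodating possibly nonzero and unequal means among the $\xi_i$; this is absorbed by choosing the intermediate Gaussian $P^G$ to have matching means for $i \ge N_0$ and placing the first $N_0$ indices into the exceptional set $I_0$, which the theorem permits since $N_0$ is a constant.
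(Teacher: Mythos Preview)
Your proposal is correct and follows the same approach as the paper, which simply states that the proposition follows by combining Theorem~\ref{log-com16-1} with Lemma~\ref{log-comp-11}. The only cosmetic difference is that the paper has already reduced (at the start of Section~\ref{proof-complex}) to the case where $\tilde\xi_i$ is Gaussian, so Theorem~\ref{log-com16-1} applies directly without your intermediate $P^G$; your triangulation through $P^G$ is precisely what justifies that WLOG reduction, so the two arguments are essentially identical.
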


\subsection{On the tail event $\mathcal T^{c}$}\label{negligible-set}

In this section, we show that if $\mathcal T$ is any event such that $\P(\mathcal T^{c})\le C\gamma(\delta)$ then the contribution from $\mathcal{T}^{c}$ is negligible. We make use of the powerful result in \cite{NNS} to deal with the case when the $\xi_i$'s are symmetric. The general case requires some additional tricks in the end.

\begin{lemma}\label{exceptionset-2.1}
There exists some constant $C$ such that for all $\delta\in [\frac{1}{20n}, \frac{1}{C}]$, one has
\begin{equation}
\E\left  (\left|\prod_{j=1}^{k}X_j^{P}\right |\textbf{1}_{\mathcal T^{c}}\right )\le C\delta^{1/22},\label{exceptionset}
\end{equation}
and when $\delta\in [\frac{1}{10n}, \frac{1}{C}]$, one has
\begin{equation}
\E\left  (\left|\prod_{j=1}^{k}X_j^{Q}\right |\textbf{1}_{\mathcal T^{c}}\right )\le C\delta^{1/22}.\label{exceptionset-Q}
\end{equation}
\end{lemma}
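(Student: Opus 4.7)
The plan is to separate the bad event $\mathcal{T}^{c}$ using H\"older's inequality: for any $p>1$,
\[
\E\Big(\Big|\prod_{j=1}^{k}X_{j}^{P}\Big|\mathbf{1}_{\mathcal{T}^{c}}\Big)\le \Big(\E\prod_{j=1}^{k}|X_{j}^{P}|^{p}\Big)^{1/p}\P(\mathcal{T}^{c})^{1-1/p}.
\]
Since Proposition~\ref{bigT} gives $\P(\mathcal{T}^{c})\le C\gamma(\delta)$ with $\gamma(\delta)=\delta^{A}$ (for any large $A$) when $\delta\in[\log^{2}n/n, 1/C]$ and $\gamma(\delta)=n^{-1/2}$ when $\delta\in[1/(20n),\log^{2}n/n]$, the problem reduces to showing that each moment $\E|X_{j}^{P}|^{q}$ grows at most polylogarithmically in $1/\delta$, say $(\log(1/\delta))^{O(q)}$. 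Choosing $p$ close to $1$ and $A$ sufficiently large then produces the desired $C\delta^{1/22}$ on the range $\delta\ge\log^{2}n/n$, while on the complementary range one uses $1/\delta\le 20n$ to convert the $n^{-1/2}$ decay of $\gamma(\delta)$ into the $\delta^{1/22}$ decay.

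To bound $\E|X_{j}^{P}|^{q}$, I would use the Poisson--Jensen representation~\eqref{sat1} (valid whenever $P\not\equiv 0$, which by Proposition~\ref{bigT} has full effective probability), namely $X_{j}^{P}=\int_{B(\check z_{j},r_{0})}\log|\check P(z)|\,H_{j}(z)\,dz$. Cauchy--Schwarz and the uniform bound $\|H_{j}\|_{L^{2}}=O(1)$ give $|X_{j}^{P}|\le C\|\log|\check P|\|_{L^{2}(B(\check z_{j},r_{0}))}$, and a standard H\"older/Jensen step together with Fubini reduces the problem to the pointwise estimate
\[
\E\big|\log|\check P(z)|\big|^{q}\le C\big(\log(1/\delta)\big)^{O(q)}\qquad \text{for every }z\in B(\check z_{j},r_{0}).
\]
The positive part $\E(\log_{+}|\check P(z)|)^{q}$ is controlled crudely: the triangle inequality combined with the $(2+\epsilon)$-moment hypothesis on the $\xi_{i}$ and the size estimate $|c_{i}|\le\tau_{2}i^{\rho}$ yields $\E|\check P(z)|^{2+\epsilon}\le C\delta^{-O(1)}$, whence Markov's inequality and a layer-cake argument deliver the required $(\log(1/\delta))^{O(q)}$ bound.

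The crux is the negative part $\E(\log_{-}|\check P(z)|)^{q}$, which demands strong anti-concentration. Here I would adapt the log-integrability theorem of Nazarov--Nishry--Sodin \cite{NNS} for Rademacher series by way of a symmetrization. Let $P'$ be an independent copy of $P$; independence gives
\[
\P(|\check P(z)|\le \epsilon)^{2}=\P(|\check P(z)|\le \epsilon,\,|\check P'(z)|\le \epsilon)\le \P(|\check P(z)-\check P'(z)|\le 2\epsilon).
\]
The symmetrized increments $\xi_{i}-\xi_{i}'$ are symmetric, so $\xi_{i}-\xi_{i}'\stackrel{d}{=}\varepsilon_{i}|\xi_{i}-\xi_{i}'|$ with independent Rademacher $\varepsilon_{i}$. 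Conditional on $(|\xi_{i}-\xi_{i}'|)_{i}$, the random function $\check P-\check P'$ becomes a Rademacher series in the $\varepsilon_{i}$, and the Nazarov--Nishry--Sodin estimate yields, on a high-probability event in the magnitudes, a bound of the form $\P_{\varepsilon}(|\check P-\check P'|\le\epsilon)\le C\delta^{-O(1)}(\log(1/\epsilon))^{-q}$. Integrating this tail via the layer-cake formula produces the required polylogarithmic bound on $\E(\log_{-}|\check P(z)|)^{q}$.

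The main obstacle will be verifying the hypotheses of the Nazarov--Nishry--Sodin theorem after conditioning: one needs enough of the symmetrized magnitudes $|\xi_{i}-\xi_{i}'|$ to be bounded below so that the new coefficients $c_{i}|\xi_{i}-\xi_{i}'||z|^{i}$ retain the lacunary/growth structure exploited in Lemma~\ref{f1-1}. This would follow from a Chernoff-type argument, using that $\Var\xi_{i}=1$ forces $\xi_{i}-\xi_{i}'$ to be nondegenerate with a uniform positive probability, so that a positive fraction of the magnitudes are $\gtrsim 1$ except on an event of super-polynomially small probability, which is absorbed by the $\P(\mathcal{T}^{c})$ factor. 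The companion bound~\eqref{exceptionset-Q} for $Q$ is obtained by the same argument after writing $Q(z)=(z^{n}/c_{n})P(1/z)$, which replaces the coefficient sequence $c_{i}$ by $d_{i}=c_{n-i}$ while preserving Condition~1; the restriction $\delta\ge 1/(10n)$ is exactly what makes the rescaling from $J(\delta)$ to $I(\delta)$ meaningful.
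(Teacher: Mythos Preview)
Your high-level strategy has the right ingredients --- H\"older against $\P(\mathcal T^c)$, the integral representation~\eqref{sat1}, symmetrization, and the Nazarov--Nishry--Sodin theorem --- but the reduction to a \emph{pointwise} bound
\[
\E\big|\log|\check P(z)|\big|^{q}\le C\big(\log(1/\delta)\big)^{O(q)}\qquad\text{for every fixed }z
\]
is where the argument breaks. For discrete atoms (e.g.\ Rademacher $\xi_i$), there are many $z$ in $B(z_j,10^{-5}\delta)$ for which $\P(P(z)=0)>0$, so the left-hand side is simply infinite; even when it is finite, the best general anti-concentration at a single point is the Erd\H os--Littlewood--Offord bound $\P(|P(z)|\le\epsilon)=O(n^{-1/2})$ \emph{uniformly in $\epsilon$}, which gives no decay of the tail of $\log_-|P(z)|$ and hence no moment bound. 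Correspondingly, your stated consequence of NNS, namely a pointwise small-ball estimate $\P_\varepsilon(|\check P-\check P'|\le\epsilon)\le C\delta^{-O(1)}(\log(1/\epsilon))^{-q}$, is not what \cite{NNS} provides: Theorem~\ref{nns} controls only the \emph{angular average} $\E\int_0^1|\log|g(\theta)||^{p}\,d\theta$, not $\E|\log|g(\theta_0)||^p$ at a fixed $\theta_0$.

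This is exactly why the paper does \emph{not} attempt a pointwise bound. In the delicate regime $\delta\le (\log^2 n)/n$ it first enlarges the small ball $B(z_j,10^{-5}\delta)$ to the full annulus $\mathcal D=A(0,1-3\log^2 n/n,1+2/n)$ so that, in polar coordinates, the $\theta$-integral runs over the entire circle and the NNS bound \eqref{nns1}--\eqref{nns2} becomes applicable; see \eqref{tt1}--\eqref{integral}. The output is an \emph{integrated} estimate $\int_{\mathcal V_i}\int_{\mathcal D}|\log|P||^{kp}\,dz\,d\P\le C\log^{kp}n$, not a pointwise one, and the bookkeeping with the $\mathcal V_i$'s and the H\"older exponent $p=12$ is what produces the exponent $1/22$. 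For general (non-symmetric) $\xi_i$ the paper's symmetrization is also more delicate than the inequality $\P(|P|\le\epsilon)^2\le\P(|P-P'|\le 2\epsilon)$ you propose: it passes through a contradiction argument comparing level sets of $\log|P_\xi|$ and $\log|P_{\bar\xi}|$ (see \eqref{int2}--\eqref{x8'}), precisely because one cannot decouple the $z$-integration from the probabilistic estimate.

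In the easier regime $\delta\ge(\log^2 n)/n$ your H\"older idea would work (and so does the paper's dyadic decomposition on $N_{\check P}(B(\check z_j,r_0))$ via Proposition~\ref{nonclustering-1}); the genuine obstruction is the small-$\delta$ range, where you must keep the $z$-integral intact until after NNS is applied.
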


\begin{proof} We will consider two cases.

\textbf{Case 1.} $\frac{\log ^{2}n}{n}\le \delta\le \frac{1}{C}$. We have $\P(\mathcal T^{c})\le C\gamma(\delta) = C\delta^{A}$.

By Proposition \ref{nonclustering-1}, there exists a constant $C_1$ such that for any $1\le \lambda\le \frac{n}{L\log ^{2}L}$,
\[N_{\check P}(B(\check z_j, r_0)) = N_P(B(z_j, 10^{-5}\delta))\le \lambda\delta^{-c_1},
\]

with probability at least $1 - C_1\frac{1}{\lambda^AL^{A}}$. Hence, $|X_j^{P}|\le \delta^{-c_1}$ with probability at least $1 - C_1\frac{1}{\lambda^{A}L^{A}}$.

For each $i$ such that $i_0>i\ge 1$, where $2^{i_0-1}\le \frac{n}{L\log ^{2}L}< 2^{i_0}$, let 
$$\Omega_i = \{\omega\in \mathcal T^{c}: 2^{i-1}\delta^{-c_1}< N_{\check P}{B(\check z_j, r_0)}\mbox{ for some $1\le j\le k$, and } N_{\check P}{B(\check z_j, r_0)}\le 2^i\delta^{-c_1}\mbox{, }\forall 1\le j\le k\}.$$ 

Let $\Omega_0 = \{\omega\in \mathcal T^{c}: N_{\check P}{B(\check z_j, r_0)}\le \delta^{-c_1}\quad\forall 1\le j\le k\}$ and
$$\Omega_{i_0} =  \{\omega\in \mathcal T^{c}: 2^{i_0-1}\delta^{-c_1}< N_{\check P}{B(\check z_j, r_0)} \quad\mbox{for some $1\le j\le k$}\}.$$ 

Then $\mathcal T^{c} = \cup_{i=0}^{i_0}\Omega_i$, $\P (\Omega_i)\le \frac{C_1\delta^A}{2^{(i-1)A}}$ for all $i\le i_0$ and $|X_j^{P}|\le 2^i\delta^{-c_1}$ on $\Omega_i$ for all $i<i_0$, and $|X_j^{P}|\le n$ on $\Omega_{i_0}$.

Using the assumption that $\frac{\log^{2}n}{n}\le \delta$ and $A\ge k+2$, we have
\begin{eqnarray}
\E\left  (\left|\prod_{j=1}^{k}X_j^{P}\right |\textbf{1}_{\mathcal T^{c}}\right )&\le&\sum_{i=0}^{i_0-1} \E\left  (\left|\prod_{j=1}^{k}X_j^{P}\right |\textbf{1}_{\Omega_{i}}\right ) + \E\left  (\left|\prod_{j=1}^{k}X_j^{P}\right |\textbf{1}_{\Omega_{i_0}}\right )\nonumber\\
&\le& \sum_{i=0}^\infty \frac{C_1\delta^A}{2^{(i-1)A}}\left (2^i\delta^{-c_1}\right )^{k} + \frac{C_1n^k\delta^A}{2^{(i_0-1)A}}\nonumber\\
&\le & C_1\delta^{A-kc_1}\sum_{i=1}^\infty \frac{1}{\left(2^{A-k}\right)^i} + \frac{C_1n^k\delta^A}{\left (n/(2L\log^{2}L)\right )^A}\le C\delta^{1/22}\nonumber.
\end{eqnarray}

\textbf{Case 2.} $\frac{1}{20n}\le \delta\le \frac{\log^{2}n}{n}$. Then we have $\P(\mathcal T^{c})\le C\gamma(\delta) = Cn^{-1/2}$ and $|z_j|\in [1-\frac{2\log^{2}n}{n}, 1+\frac{1}{n}]$ for all $1\le j\le k$.

Since $\xi_i$'s satisfy Condition 1 \eqref{cond1i}, there exist positive constants $d$ and $q$ such that $\P(|\xi_i|< d)\le q< 1$. Indeed, if for some $d>0$, $\P(|\xi_i|< d)> 1-d$, then by H\H{o}lder's inequality
\begin{equation}\label{dq}
1\le \E|\xi_i|^{2} = \E|\xi_i|^{2} \textbf{1}_{|\xi_i|< d} + \E|\xi_i|^{2} \textbf{1}_{|\xi_i|\ge d}\le d^{2} + d^{\ep/(2+\ep)}(\E|\xi_i|^{2+\ep})^{2/(2+\ep)}\le d^{2} + d^{\ep/(2+\ep)}\tau_2^{2/(2+\ep)}.
\end{equation}
Thus, one can choose $d$ small enough (depending on $\tau_2$ and $\ep$), and $q = 1-d$ to have $\P(|\xi_i|< d)\le q< 1$.

\textbf{Subcase 2.1.} We first consider the case when the random variables $\xi_i$'s are symmetric. In other words, $\xi_i$ and $-\xi_i$ have the same distribution.

Let 
\begin{equation}
 \mathcal{V} = \{\omega\in \mathcal{T}^{c}: |\xi_i|\ge d \mbox{ for some } i\in [N_0, n] \}.\label{def-V}
 \end{equation} 
Since $|X_j^{P}|\le n$, one has
\begin{equation}
 \E\left (\left|\prod_{j=1}^{k}X_j^{P}\right |\textbf{1}_{ \mathcal{T}^{c}\setminus \mathcal{V}}\right )\le n^{k}\P(|\xi_i|< d, \forall i\in [N_0, n])\le n^{k}q^{n-N_0}\le \frac{1}{20n}\le \delta,\label{exception-set1}
 \end{equation} 
when $n$ is sufficiently large. Thus, it suffices to show that
\begin{equation}
\E\left  (\left|\prod_{j=1}^{k}X_j^{P}\right |\textbf{1}_{\mathcal V}\right )\le C'\delta^{1/22}.\label{exceptionset-2}
\end{equation}
By H\H{o}lder's inequality, we have
\begin{eqnarray}
\E\left  (\left|\prod_{j=1}^{k}X_j^{P}\right |\textbf{1}_{\mathcal V}\right )&\le& \prod_{j=1}^{k}
\E\left  (\left |X_j^{P}\right |^{k}\textbf{1}_{\mathcal V}\right )^{1/k}.
\label{sun1}
\end{eqnarray}
And so, we reduce the problem to showing that 
\begin{equation}\label{y1}
\E\left |X_j^{P}\right |^{k}\textbf{1}_{\mathcal V}\le C\delta^{1/22}, \quad\forall 1\le j\le k.
\end{equation}
From \eqref{sat1} and the change of variables formula, we obtain 
\begin{equation}
|X_j^{P}| \le C L^{2}\int_{B( z_j, 10^{-5}\delta)}\left |\log | P(z)|H_j\left (\frac{z}{10^{-3}\delta}\right )\right |\text{d}z\le CL^{2}\int_{B( z_j, 10^{-5}\delta)}|\log | P(z)||\text{d}z.\label{sat3}
\end{equation}
And from H\H{o}lder's inequality, we have
\begin{eqnarray}
\E\left |X_j^{P}\right |^{k}\textbf{1}_{\mathcal V}&\le& CL^{2k}\int_{\mathcal V}\left (\int_{B( z_j, 10^{-5}\delta)}|\log | P(z)||\text{d}z\right )^{k}\text{d}\P\nonumber\\
&\le&C L^{2k}\int_{\mathcal V}\left (\int_{B( z_j, 10^{-5}\delta)}|\log | P(z)||^{k}\text{d}z\right )|B( z_j, 10^{-5}\delta)|^{k-1}\text{d}\P\nonumber,\\
&\le&C L^{2}\int_{\mathcal V}\int_{B( z_j, 10^{-5}\delta)}|\log | P(z)||^{k}\text{d}z\text{d}\P\nonumber,\\
&\le&C L^{2}\left(\int_{\mathcal V}\int_{B( z_j, 10^{-5}\delta)}|\log | P(z)||^{kp}\text{d}z\text{d}\P\right)^{1/p}\left(\int_{\mathcal V}\int_{B( z_j, 10^{-5}\delta)}1\text{d}z\text{d}\P\right)^{1/q}\nonumber,
\end{eqnarray}
where $p$ and $q$ are positive constants to be chosen later so that $\frac{1}{p}+\frac{1}{q}=1$.

From this and the observation that $B(z_j, 10^{-5}\delta)\subset A\left (0, 1-\frac{3\log^{2}n}{n}, 1+\frac{2}{n}\right )=:\mathcal D$, we obtain
\begin{eqnarray}
\E\left |X_j^{P}\right |^{k}\textbf{1}_{\mathcal V}&\le& CL^{2}\left(\int_{\mathcal V}\int_{\mathcal D}|\log | P(z)||^{kp}\text{d}z\text{d}\P\right)^{1/p}\left(\frac{1}{\sqrt{n}L^{2}}\right)^{1/q}\label{tt1}.
\end{eqnarray}
For each $N_0\le i\le n$, let $\mathcal V_i = \{\omega\in \Omega: |\xi_i|\ge d\}$. By \eqref{def-V}, $\mathcal V \subset \bigcup_{i=N_0}^{n} \mathcal{V}_i$. Note that this bound is very generous because the measure of $\mathcal V_i$ can be very big. Let $I_i = \int_{\mathcal V_i}\int_{\mathcal D}|\log | P(z)||^{kp}\text{d}z\text{d}\P$. Then 
\begin{equation}
\int_{\mathcal V}\int_{\mathcal D}|\log | P(z)||^{kp}\text{d}z\text{d}\P\le \sum_{i=N_0}^{n}I_i=: I.\label{tt1-1}
\end{equation}
Fix $N_0\le i_0\le n$. We will upper bound $I_{i_0}$.

Let $(\ep_m)_{m\in\Z}$ be independent Rademacher random variables (independent of all previous random variables). In  \cite[Corollary 2.2]{NNS}, Nazarov, Nishry, and Sodin showed that 

\begin{theorem}\label{nns}
There exists a constant $C_1$ such that for any $g(\theta) = \sum_{j\in \Z} a_j\epsilon_j e^{\sqrt{-1}2\pi j\theta}$ with deterministic coefficients $a_j$'s satisfying $\sum_{j\in \Z}|a_j|^{2} = 1$, and any $p_0\ge 1$, one has
\begin{equation}
\E \int_0^{1}|\log|g(\theta)||^{p_0} \text{d}\theta\le (C_{1}p_0)^{6p_0}.\label{nns1}
\end{equation}
\end{theorem}

As a consequence, for any complex numbers $a_0, \dots, a_n$, by Minkowski's inequality for $L^{p}((\Omega\times [0, 1), \P\times m))$, we have
\begin{equation}
\E \int_0^{1}|\log|\sum_{j=0}^{n} a_j\epsilon_j e^{\sqrt{-1}2\pi j\theta}||^{p_0} \text{d}\theta\le \left ((C_{1}p_0)^{6} + \frac{1}{2}\left |\log \sum_{j=0}^{n}|a_j|^{2}\right |\right )^{p_0}\le  2^{p_0}(C_{1}p_0)^{6p_0} + \left |\log \sum_{j=0}^{n}|a_j|^{2}\right |^{p_0}\label{nns2}.
\end{equation}
Let $\hat{\xi}_k = \epsilon_k\xi_k$. Since $\xi_k$ is symmetric, $\hat{\xi}_k$ has the same distribution as $\xi_k$. And so, the random variables $\hat \xi_0, \dots, \hat \xi_n$ have the same joint distribution as $\xi_0, \dots, \xi_n$. Thus, from the definition of $I_{i_0}$, we have
\begin{eqnarray}
I_{i_0}& =&\int_{|\hat \xi_{i_0}| \ge d}\int_{\mathcal D}\left | \log\left | \sum_{j=0}^{n}c_j\hat{\xi}_jz^{j}\right |\right |^{kp}\text{d}z\text{d}\P\nonumber\\
&=& 2\pi\int_{1-\frac{3\log^{2}n}{n}}^{1+2/n}r\int_{| \xi_{i_0}| \ge d}\int_0^1 \left |\log \left | \sum_{j=0}^{n}c_j\xi_j \ep_j r^{j}e^{\sqrt{-1}2\pi j\theta}\right |\right |^{kp}\text{d}\theta\text{d}\P\text{d}r\nonumber.
\end{eqnarray}
Conditioning on the event $| \xi_{i_0}| \ge d$ and fixing the $\xi_i$'s, from \eqref{nns2}, we obtain
\begin{eqnarray}
\E_{\ep_0, \dots, \ep_n}\int_0^1 \left |\log \left | \sum_{j=0}^{n}c_j\xi_j \ep_j r^{j}e^{\sqrt{-1}2\pi j\theta}\right |\right |^{kp}\text{d}\theta \le  (2C_{1}kp)^{6kp} + \left |\log \sum_{j=0}^{n}|c_j\xi_jr^{j}|^{2}\right |^{kp}.\nonumber
\end{eqnarray}
Undoing the conditioning, we get
\begin{eqnarray}
I_{i_{0}}&\le&  2\pi\int_{1-\frac{3\log^{2}n}{n}}^{1+2/n}r\left( (C p)^{6kp}+ \int_{| \xi_{i_0}| \ge d}\left|\log\sum_{j=0}^{n}|c_j j\xi_jr^{j}|^{2}\right|^{kp}\text{d}\P\right)\text{d}r\nonumber\\
&\le & C + C\int_{1-\frac{3\log^{2}n}{n}}^{1+2/n}r \int_{| \xi_{i_0}| \ge d}\left|\log\sum_{j=0}^{n}|c_jr^{j}\xi_j|^{2}\right|^{kp}\text{d}\P\text{d}r\label{om1}.
\end{eqnarray}
By \eqref{cond-c}, for every $N_0\le j\le n$ and $1-\frac{3\log^{2}n}{n}\le r\le 1+\frac{2}{n}$, we have $\frac{1}{Cn^{2|\rho|}}r^{2j}\le c_j^{2}r^{2j}\le Cn^{2|\rho|}$. And hence, on the event $|\xi_{i_0}|\ge d$,
\begin{eqnarray}
\frac{1}{Cn^{2|\rho|}}r^{2i_{0}}d^{2}\le \sum_{j=0}^{n}|c_jr^{j}\xi_j|^{2}\le Cn^{2|\rho|+1}\sum_{j=0}^{n}\xi_j^{2}.\nonumber
\end{eqnarray}
Hence, $\left|\log \sum_{j=0}^{n}|c_jr^{j}\xi_j|^{2}\right|\le \max\{\left|\log\left ( \frac{1}{Cn^{2|\rho|}}r^{2i_{0}}d^{2}\right )\right|, \left|\log \left (Cn^{2|\rho|+1}\sum_{j=0}^{n}\xi_j^{2}\right )\right|\}$. And so, 
\begin{eqnarray}
&&\E\textbf{1}_{| \xi_{i_0}| \ge d}\left |\log\sum_{j=0}^{n}|c_jr^{j}\xi_j|^{2}\right |^{kp}\le C\E\textbf{1}_{| \xi_{i_0}| \ge d}\left |\log\sum_{j=0}^{n}\xi_j^{2}\right |^{kp} + C\log^{kp} n + C\left |\log |r^{2i_{0}}d^{2}|\right |^{kp}\nonumber\\
&=&  C\E\textbf{1}_{| \xi_{i_0}| \ge d, \sum_{j=0}^{n}\xi_j^{2}\ge 1}\left |\log\sum_{j=0}^{n}\xi_j^{2}\right |^{kp} + C\E\textbf{1}_{| \xi_{i_0}| \ge d, \sum_{j=0}^{n}\xi_j^{2}< 1}\left |\log\sum_{j=0}^{n}\xi_j^{2}\right |^{kp}\nonumber\\
&&\quad +C\log^{kp} n  + C\left |\log |r^{2i_{0}}d^{2}|\right |^{kp}\nonumber\\
&\le& C\log^{kp}n+ C\left |\log d^{2}\right |^{kp}+ C\left |\log |r^{2i_{0}}d^{2}|\right |^{kp}.\nonumber
\end{eqnarray}
Notice that under Condition 1, the coefficients $c_i$ of $P$, thanks to their polynomial growth, only contribute the term $\log^{kp}n$ in the above estimate. The same applies for $Q$ because $\frac{C}{n^{|\rho|}}\le \left |\frac{d_i}{d_0}\right |\le C n^{|\rho|}$.

%
%
%
%
Plugging in \eqref{om1} gives that for all $N_0\le i_0\le n$, one has
%
%
\begin{equation}\label{integral}
 I_{i_{0}}\le C\log^{kp} n  + C\int_{1-\frac{3\log^{2}n}{n}}^{1+2/n}r \left |\log r^{2i_{0}}d^{2}\right |^{kp}\text{d}r \le C\log^{kp} n,
 \end{equation} and so
 \begin{equation}\label{integral2}
 \int_{\exists i\in [N_0, n]: | \xi_{i}| \ge d}\int_{\mathcal D}\left | \log\left | \sum_{j=0}^{n}c_j{\xi}_jz^{j}\right |\right |^{kp}\text{d}z \le I = \sum_{i=N_0}^{n}I_i \le C_2n\log^{kp}n.
 \end{equation}
We now use a rude bound which will be convenient for the next subcase.  
\begin{equation}\label{integral3}
 \int_{\exists i\in [N_0, n]: | \xi_{i}| \ge d}\int_{\mathcal D}\left | \log\left | \sum_{j=0}^{n}c_j{\xi}_jz^{j}\right |\right |^{kp}\text{d}z \le I\le C_2n^{4/3}\log^{kp}n.
 \end{equation}
Combining this with \eqref{tt1} and \eqref{tt1-1}, we obtain
\begin{eqnarray}
\E\left |X_j^{P}\right |^{k}\textbf{1}_{\mathcal V}&\le& CL^{2}I^{1/p}\left(C\frac{1}{\sqrt{n}L^{2}}\right)^{1/q}\le CL^{2}n^{4/3p}\log^{k} n\left(\frac{1}{\sqrt{n}L^{2}}\right)^{1/q}\nonumber\\
&\le&  CL^{2}L^{8/3p}(\log L^{2})^{k} \left(\frac{1}{L^{5/2}}\right)^{1/q}\mbox{ since $\sqrt n\le \frac{n}{\log^{2}n}\le L\le 20n$} \nonumber\\
&=& C\frac{\log^{k} L}{ L^{\frac{5}{2q}-2-\frac{8}{3p}}}= C\frac{\log^{k}  L}{ L^{\frac{1}{2}-\frac{31}{6p}}}\le  \frac{C}{ L^{1/22}}\quad \mbox{by choosing $p = 12$, $q = \frac{12}{11}$}.\nonumber
\end{eqnarray}
This together with \eqref{sun1} complete the proof of \eqref{exceptionset-2}.

\textbf{Subcase 2.2.} Now let us consider the case when the $\xi_i$'s are not symmetric. The trick is to reduce to the symmetric case. For clarity, we write $P_{\xi}(z) = \sum_{l=0}^{n}c_l\xi_lz^{l}$.

Recall that $d$ and $q$ are constants such that $\P(|\xi_i|< d)\le q< 1$. Let $\xi_0', \dots, \xi_n'$ be independent copies of $\xi_0, \dots, \xi_n$ correspondingly. For this subcase, instead of \eqref{def-V}, we set
\begin{equation}
 \mathcal{V} = \{\omega\in \mathcal{T}^{c}: |\xi_i|\ge d, |\xi_i'|\ge d \mbox{ for some } i\in [N_0, n] \}.\label{def-V-asym}
 \end{equation}   
Correspondingly, 
\begin{equation}
 \mathcal{V}_i = \{\omega\in \Omega: |\xi_i|\ge d, |\xi_i'|\ge d\}.
 \end{equation}   
Let $\bar\xi_l = \frac{\xi_l - \xi'_l}{\sqrt{2}}$. Then the $\bar \xi_l$'s are symmetric and satisfy Condition 1 \eqref{cond1i} (with a different $\tau_2$). Let $\bar d<1$ and $\bar q$ be positive constants such that $\P(|\bar \xi_1|< \bar d)\le \bar q< 1$ for all $i$.
 
In the following, we will show that   
\begin{equation}\label{x3}
I_i = \int_{\mathcal V_i}\int_{\mathcal D}|\log | P_{\xi}(z)||^{kp}\text{d}z\text{d}\P\le 3n^{1/3}\log ^{10kp}n=:3K_0\quad\mbox{for all $N_0\le i\le n$},
\end{equation}
where $p=12$ and then, one can use the same argument as in the symmetric case to complete the proof. 

Let 
\begin{equation}\label{def_j0}
 j_0=\left\lceil \frac{1}{\bar q^{(n+1)/(4kp+8)}}\right\rceil.
 \end{equation} 
We will first show that  
\begin{equation}\label{int2}
\int_{\mathcal V_i}\int_{\mathcal D}|\log | P_{\xi}(z)||^{kp}\textbf{1}_{B_\xi}\text{d}z\text{d}\P\le K_0\quad\mbox{for all $N_0\le i\le n$},
\end{equation}
where $B_\xi = \{(\omega, z)\in \mathcal V_i\times \mathcal D: |\log|P_{\xi}(z)||\ge j_0\}$.
Indeed, by H\H{o}lder's inequality, one has
\begin{eqnarray}\label{int21}
\int_{\mathcal V_i}\int_{\mathcal D}|\log | P_{\xi}(z)||^{kp}\textbf{1}_{B_\xi}\text{d}z\text{d}\P\le \left(\int_{\mathcal V_i}\int_{\mathcal D}|\log | P_{\xi}(z)||^{2kp}\text{d}z\text{d}\P\right)^{1/2}\left(\int_{\mathcal V_i}\int_{\mathcal D}\textbf{1}_{B_\xi}\text{d}z\text{d}\P\right)^{1/2}.
\end{eqnarray}

To bound the first integral on the right, let $\ep_0', \dots, \ep_n'$ be independent Rademacher variables defined on $\left(\{\pm 1\}^{n+1}, \nu\right)$ where $\nu$ is the uniform probability measure on $\{\pm 1\}^{n+1}$. Let $(\hat \Omega, \hat \mu) = (\Omega\times \{\pm 1\}^{n+1}, \P\times \nu)$, and define the random variables $\hat \xi_i (\omega_1, \omega_2) = \xi_i(\omega_1) \ep_i'(\omega_2)$ for all $\omega_1\in \Omega$ and $\omega_2\in \{\pm 1\}^{n+1}$.

Observe that $\hat \xi_i'$ is symmetric and equal to $\xi_i$ when $\ep_i'=1$. Let $s>1$ be any constant such that $2^{1/s}\le \frac{1}{\bar q^{kp/(4kp+8)}}$. We have
\begin{eqnarray}
&&\int_{\mathcal V_i}\int_{\mathcal D}|\log | P_{\xi}(z)||^{2kp}\text{d}z\text{d}\P = 2^{n+1}\int_{\{1\}^{n+1}}\int_{\mathcal V_i}\int_{\mathcal D}|\log | P_{\xi}(z)||^{2kp}\text{d}z\text{d}\P\text{d}\nu\nonumber\\
&=&2^{n+1}\int_{\mathcal V_i\times\{1\}^{n+1}}\int_{\mathcal D}|\log | P_{\hat\xi}(z)||^{2kp}\text{d}z\text{d}\hat \mu\quad\mbox{by Fubini's theorem}\nonumber\\
&\le&2^{n+1}\hat\mu(\mathcal V_i\times\{1\}^{n+1})^{1-1/s}m(\mathcal{D})^{1-1/s}\left(\int_{\mathcal V_i\times\{1\}^{n+1}}\int_{\mathcal D}|\log | P_{\hat\xi}(z)||^{2kps}\text{d}z\text{d}\hat \mu\right)^{1/s}\nonumber\\
&&\qquad\quad\mbox{by H\H{o}lder's inequality}\nonumber\\
&\le&2^{(n+1)/s}\left(\int_{\mathcal V_i\times\{1\}^{n+1}}\int_{\mathcal D}|\log | P_{\hat\xi}(z)||^{2kps}\text{d}z\text{d}\hat \mu\right)^{1/s}\quad\mbox{because $\hat \mu (\mathcal V_i\times\{1\}^{n+1})\le 2^{-n-1}$}\nonumber\\
&\le& C2^{(n+1)/s}\log ^{2kp}n\quad\mbox{by \eqref{integral} for $\hat\xi_i$}\nonumber.
\end{eqnarray}
A bound for the second integral on the right of \eqref{int21} can also be derived from the above bound.
\begin{eqnarray}
\int_{\mathcal V_i}\int_{\mathcal D}\textbf{1}_{B_\xi}\text{d}z\text{d}\P &\le& \frac{1}{j_0^{2kp}}\int_{\mathcal V_i}\int_{\mathcal D}|\log | P_{\xi}(z)||^{2kp}\text{d}z\text{d}\P\nonumber\\
&\le& C\frac{2^{n/s}}{j_0^{2kp}}\log ^{2kp}n\nonumber.
\end{eqnarray}

Plugging into \eqref{int21} gives
\begin{eqnarray}\label{int22}
\int_{\mathcal V_i}\int_{\mathcal D}|\log | P_{\xi}(z)||^{kp}\textbf{1}_{B_\xi}\text{d}z\text{d}\P\le C\frac{2^{(n+1)/s}}{j_0^{kp}}\log ^{2kp}n\le  C\log ^{2kp}n<K_0,
\end{eqnarray}
where the next to last inequality follows directly from the way we set $s$ and $j_0$. This proves \eqref{int2}.

Now, assume to the contrary that \eqref{x3} failed, i.e., $I_i>3K_0$ for some $i$. Thanks to \eqref{int2}, one then has
\begin{eqnarray}\label{int23}
\int_{\mathcal V_i}\int_{\mathcal D}|\log | P_{\xi}(z)||^{kp}\textbf{1}_{|\log | P_{\xi}(z)||< j_0}\text{d}z\text{d}\P>2K_0.
\end{eqnarray}



For each $z\in \mathcal{D}$ and $1\le j\le j_0$, set $\mu_z(j) = \P(\mathcal V_i\cap (j-1\le|\log|P_{\xi}(z)||<j))$.  

Since $$2K_0< I_i \le \sum_{j=1}^{j_0}\int_{\mathcal D}j^{kp}\mu_\xi(j)\text{d}\P\le \sum_{j=1}^{j_0} j^{kp} m\left(\mathcal D\cap \{\omega: \mu_{\xi}(j)\ge \frac{1}{j^{kp+2}}\}\right) + \sum_{j=1}^{\infty} \frac{1}{j^{2}},$$ and $\sum_{j=1}^{\infty}\frac{1}{j^{2}}\le 2$, there exists a number $j\le j_0$ such that 
\begin{equation}\label{x5}
1\ge m(\mathcal D)\ge m(\mathcal D_0)\ge \frac{K_0}{2j^{kp+2}}.
\end{equation}
where $\mathcal D_0 = \{z\in \mathcal D: \mu_{\xi}(j)\ge \frac{1}{j^{kp+2}}\}$.
Since $j^{kp+2}\ge \frac{K_0}{2}\ge \frac{\log^{10kp}n}{2}$, we have $j\ge \log^{5}n$. Observe that by Markov's inequality and Condition 1, for any $z\in \mathcal D$, 
\begin{eqnarray}
\P(\log|P(z)|\ge j-1)\le \frac{\E|P(z)|^{2}}{e^{2j-2}}\le\frac{1}{e^{2j-2}}\left(\sum_{i=0}^{n}|c_i||z|^{i}(\E|\xi_i|^{2})^{1/2}\right)^{2}\le  \frac{n^{2\rho+2}}{e^{2j-2}}\le\frac{1}{e^{j}}\le \frac{1}{2j^{kp+2}}\nonumber.
\end{eqnarray}
Thus, for every $z\in \mathcal D_0$, $p_z:= \P(\mathcal V_i\cap (-j<\log|P_{\xi}(z)|\le -j +1))\ge \frac{1}{2j^{kp+2}}$.
%
%


On the set $\mathcal D_0$, 
\begin{eqnarray}
&&\P(\omega\in \mathcal V_i: -j<\log|P_{\xi}(z)|, \log|P_{\xi'}(z)|\le -j +1 \mbox{ and } \exists i'\in [N_0, n]: |\bar{\xi}_{i'}|\ge \bar d)\nonumber\\
&\ge&\P(\omega\in \mathcal V_i: -j<\log|P_{\xi}(z)|, \log|P_{\xi'}(z)|\le -j +1 )-\P(|\bar{\xi}_{i'}|< \bar d, \forall i') \nonumber\\
&\ge&  p_z^{2}- \bar q^{n+1} .\nonumber
\end{eqnarray}
From the definition \eqref{def_j0} of $j_0$, we have
\begin{eqnarray}
\bar q^{n+1}\le \frac{1}{(j_0-1)^{4kp+8}}\le \frac{1}{2}p_z^{2},
\end{eqnarray}
and thus, on $\mathcal D_0$,
\begin{eqnarray}
&&\P\left(\omega\in \mathcal V_i: -j<\log|P_{\xi}(z)|, \log|P_{\xi'}(z)|\le -j +1 \mbox{ and } \exists i'\in [N_0, n]: |\bar{\xi}_i|\ge \bar d\right)\ge\frac{1}{2} p_z^{2}.\nonumber
\end{eqnarray}
Hence, 
\begin{eqnarray}
\P\times m\left((\omega, z)\in \bar{ \mathcal U}\times \mathcal D_0: -j<\log|P_{\xi}(z)|, \log|P_{\xi'}(z)|\le -j +1\right) \ge \int_{\mathcal D_0} \frac{1}{2}p_z^{2}\text{d}z\ge\frac{1}{2} \left(\frac{K_0}{16j^{2kp+4}}\right)^{3}\nonumber
\end{eqnarray}
where $\bar{\mathcal U} = \{\omega: \exists i'\in [N_0, n]: |\bar{\xi}_{i'}|\ge \bar d\}$.
Note that when $-j<\log|P_{\xi}(z)|, \log|P_{\xi'}(z)|\le -j +1$, we have $|P_{\bar \xi}(z)|\le \sqrt{2}e^{-j+1}$, so $\log|P_{\bar \xi}(z)|\le -\frac{j}{2}$. This implies
\begin{eqnarray}
\int_{\bar{\mathcal U}}\int_{\mathcal D}|\log|P_{\bar \xi}(z)||^{6kp+12}\ge \frac{1}{2}\left (\frac{j}{2}\right )^{6kp+12}\left(\frac{K_0}{16j^{2kp+4}}\right)^{3} = \frac{K_0^{3}}{2^{6kp+25}} = \frac{n\log^{30kp}n}{2^{6kp+25}}.\label{x8}
\end{eqnarray}

Now, since $\bar \xi_i$'s are symmetric and satisfy Condition 1 \eqref{cond1i}, \eqref{integral2} holds for $\bar\xi_i$ with $\bar d$ in place of $d$ and $6kp+12$ in place of $kp$ and gives 
\begin{eqnarray}\label{x8'}
\int_{\bar{\mathcal U}}\int_{\mathcal D}|\log|P_{\bar \xi}(z)|^{6kp+12}\le Cn\log^{6kp+12}n.
\end{eqnarray}
Now as $p=12$, the bounds \eqref{x8} and \eqref{x8'} provide a contradiction which then completes the proof of Lemma \ref{exceptionset-2.1}.
\end{proof}

\subsection{Finishing}\label{finishing}
Finally, we will combine the previous results and complete the proof of Theorem \ref{complex}.

Let $\varphi_0$ be a smooth function on $\mathbb C^{k}$ such that $\varphi_0(z_1, \dots, z_k) = z_1\dots z_k$ on $B(0, \delta^{-c_1})^{k}$, $ = 0$ outside of  $B(0, 2\delta^{-c_1})^{k}$, $|\varphi_0(z_1, \dots, z_k)| \le |z_1|\dots|z_k|$ for all $(z_1, \dots, z_k)\in \C^{k}$, and $\big|\triangledown ^a \varphi_0(\omega)\big| \le C\delta^{-kc_1}$
 for all $0\le a\le 3$. For example, $\varphi_0(z_1, \dots, z_k) = \prod_{i=1}^{k}z_i\phi \big(\frac{|z_i|}{\delta^{-c_1}}\big)$ for some smooth function $\phi$ such that $\phi$ is a smooth function such that $\text{supp}(\phi)\subset[-2,2]$, $0\le \phi\le 1$, and $\phi = 1$ on $[-1,1]$.

Since $X_j^{P}\le \delta^{-c_1}$ on $\mathcal T$, we have
\begin{eqnarray}
&&\E_{\xi}\left |\prod_{j=1}^{k}X_j^{P}-\varphi_0(X_1^{P}, \dots, X_k^{P})\right |=\E_{\xi}\left |\prod_{j=1}^{k}X_j^{P}-\varphi_0(X_1^{P}, \dots, X_k^{P})\right |\textbf{1}_{\mathcal{T}^{c}}\nonumber\\
&\le& 2\E_{_\xi} \left (\left |\prod_{j=1}^{k}X_j^{P}\right |\textbf{1}_{\mathcal{T}^{c}}\right )\le C'\delta^{1/22}\qquad\text{by \eqref{exceptionset}},\nonumber 
\end{eqnarray}
where by $\E_{\xi}$, we mean the expectation with respect to the random variables $\xi_0, \dots, \xi_n$.

From Proposition \ref{bigT} and \eqref{rc1}, we deduce that on the product space generated by the random variables $\xi_0, \dots, \xi_n$ and the random points $\check{w}_{j, i}$, the bound \eqref{rc1} holds with probability at least $1-\gamma_0-C\gamma(\delta)$. Thus
\begin{eqnarray}
&&\E_{\xi, \check w}\left|\varphi_0(X_1^{P}, \dots, X_k^{P}) - \varphi_0\left(\frac{\pi r_0^2}{m_0}\sum_{i=1}^{m_0} K_1^{P}(\check w_{1,i}), \dots, \frac{\pi r_0^2}{m_0}\sum_{i=1}^{m_0} K_k^{P}(\check w_{k,i})\right)\right|\nonumber\\
&\le& C \bigg(  \delta^{-kc_1}\frac{L^{4c_1}}{\sqrt {m_0\gamma_0}} +\delta^{-kc_1}\big(\gamma(\delta) +k\gamma_0\big)\bigg)\nonumber,
\end{eqnarray}
where $\E_{\xi, \check w}$ is the expectation on the product space. The first term bounds the contribution of the good event when \eqref{rc1} holds, and follows from the bound on the first derivative of $\varphi_0$. The second term bounds the contribution of the bad event when \eqref{rc1} fails and follows from the bound on the infinity norm of $\varphi_0$.

Note that $\gamma(\delta)\le 10\delta^{1/2}$ for all $\frac{1}{20n}\le \delta\le \frac{1}{C}$.

Let $c$ be any constant such that $0<c\le\min\{\frac{1}{22},\frac{\alpha_0}{2(3k+11)}, \frac{1}{2(k+1)}\}$ where $\alpha_0$ is the constant in Proposition \ref{logg-1}. Let $c_1 = c$, $m_0=\lfloor \delta^{-(3k+11)c}\rfloor$, and $\gamma_0 = \delta^{(k+1)c}$, then the above error term is $C\delta ^c$, and so
\begin{eqnarray}
\E_{\xi, \check w}\left |\prod_{j=1}^{k}X_j^{P} - \varphi_0\left(\frac{\pi r_0^2}{m_0}\sum_{i=1}^{m_0} K_1^{P}(\check w_{1,i}), \dots, \frac{\pi r_0^2}{m_0}\sum_{i=1}^{m_0} K_k^{P}(\check w_{k,i})\right)\right|&\le&C\delta^c\nonumber.
\end{eqnarray}

Now, applying Proposition \ref{logg-1} by first conditioning on the points $\check w_{j, i}$, we obtain
\begin{eqnarray}
&&\bigg |\E_{\xi, \check w} \varphi_0\left(\frac{\pi r_0^2}{m_0}\sum_{i=1}^{m_0} K_1^{P}(\check w_{1,i}), \dots, \frac{\pi r_0^2}{m_0}\sum_{i=1}^{m_0} K_k^{P}(\check w_{k,i})\right)\nonumber\\
&&\qquad-\E_{\tilde \xi, \check w} \varphi_0\left(\frac{\pi r_0^2}{m_0}\sum_{i=1}^{m_0} K_1^{\tilde P}(\check w_{1,i}), \dots, \frac{\pi r_0^2}{m_0}\sum_{i=1}^{m_0} K_k^{\tilde P}(\check w_{k,i})\right)\bigg |
\le C\delta^c.\nonumber
\end{eqnarray}

This completes the proof of Theorem \ref{complex}.

\section{Proof of real local universality}\label{proof-real}

In this section, we will prove Theorem \ref{real}. 

As before, we can assume without loss of generality that $\tilde \xi_i$ has Gaussian distribution for all $i$.

Let $r_0 = 10^{-2}/2$. In the following, we prove \eqref{h6}; the same proof works for $Q$ in place of $P$ unless otherwise noted.
As before, we reduce the problem to showing (\ref{h6}) for functions $G$ of the form 
\[G(y_1,\dots, y_k, w_1,\dots, w_l) = F_1(y_1)\dots F_k(y_k)G_1(w_1)\dots G_l(w_l),
\]
where $F_i:\mathbb{R}\to\mathbb{C}$ and $G_j:\mathbb{C}\to \mathbb{C}$ are smooth functions supported on $[-r_0, r_0]$ and $B(0, r_0)$ respectively, such that
 \[|{\triangledown^{a}F_i}(x)|, |{\triangledown^{a}G_j}(z)|\le 1
\]
for all $1\le i\le k, 1\le j\le l$, $x\in \R$, $z\in \C$, and $0\le a\le 3$.

Then, by the inclusion-exclusion argument and the symmetry of zeros of $P$ about the $x$-axis, we can further reduce the problem to showing that
\begin{eqnarray}
\ab{\E \left(\prod_{j=1}^{k}X_{\check{x}_i, F_i, \mathbb{R}}^{P}\right)\left(\prod_{j=1}^{l}X_{\check{z}_j, G_j, \mathbb{C}_+}^{P}\right)-\E \left(\prod_{j=1}^{k}X_{\check{x}_i, F_i, \mathbb{R}}^{\tilde P}\right)\left(\prod_{j=1}^{l}X_{\check{z}_j, G_j, \mathbb{C}_+}^{\tilde P}\right)}\le C\delta^{c},\label{du6}
\end{eqnarray}
where $X_{\check{x}_i, F_i, \mathbb{R}}^{P} = \sum_{\zeta_j^{\check P}\in\mathbb{R}}F_i(\zeta_j^{\check P}-\check x_i)$ and $X_{\check{z}_j, G_j, \mathbb{C}_+}^{P}= \sum_{\zeta_i^{\check P}\in\mathbb{C}_+}G_j(\zeta_i^{\check P}-\check z_j)$.

Since the proof of Theorem \ref{complex} (and in particular, \eqref{du5}) hardly changes if we replace $I(\delta)$ by $I(\delta) + (-10^{-6}\delta, 10^{-6}\delta)$, we conclude that there exists a positive constant $c$ for which
\begin{eqnarray}
\ab{\E \left(\prod_{j=1}^{m}X_{\check{w}_j, H_j}^{P}\right)-\E \left(\prod_{j=1}^{m}X_{\check{w}_j, H_j}^{P}\right)}\le C\delta^{c},\label{du2}
\end{eqnarray}
where $1\le m\le k+l$, $|w_j|\in I(\delta)+ (-10^{-4}\delta, 10^{-4}\delta)$, $H_j:\mathbb{C}\to \mathbb{C}$ is a smooth function supported in $B(0, 2r_0)$ and $|{\triangledown^aH_{j}}|\le 1, \forall 0\le a\le 3$, and 
$X_{\check{w}_j, H_j} = \sum_{i=1}^{n}H_j(\zeta_i^{\check P}-\check w_j)$. For the rest of the proof, we will write, for example, $X_{\check{w}_j, H_j}$ when it can be either $X_{\check{w}_j, H_j}^{P}$ or $X_{\check{w}_j, H_j}^{\tilde P}$.

We shall reduce \eqref{du6} to \eqref{du2} by first showing that the number of complex zeros near the real axis is small with high probability. This is the key lemma for this proof. We make use of a more classical tool, the Rouch\'e's theorem, together with some elegant arguments in \cite{HKPV1} and \cite{PV1}.

\begin{lemma}\label{sun6}
Let $c$ be as in \eqref{du2}. Let $\gamma = \delta^{c_2}$ where $c_2 = \min\{\frac{c}{100}, \frac{c}{3k+3l+1}, \frac{\rho+1/2}{4}\}$. There exists a constant $C$ such that for all $\frac{1}{20n}\le \delta\le \frac{1}{C}$, one has
$$\P  \left(N_{\check P}{B(\check x,\gamma)}\ge 2\right) \le C\gamma^{3/2},
\qquad\text{for all } x\in \R \mbox{ with } |x|\in  I(\delta) + (-10^{-4}\delta, 10^{-4}\delta).$$
When $\delta \ge \frac{1}{10n}$, the same statement holds for $Q$ in place of $P$.
\end{lemma}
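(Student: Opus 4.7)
The plan is to apply Rouch\'e's theorem to the linearization $g(z) := P(x) + (z-x)P'(x)$ of $P$ at the real point $x := 10^{-3}\delta\,\check x$. Since $g$ is a linear polynomial it has at most one zero in $\C$, so if we can verify $|P(z) - g(z)| < |g(z)|$ on $\partial B(x, r)$ for some $r \in [\gamma', 2\gamma']$ with $\gamma' := 10^{-3}\delta\gamma$, Rouch\'e yields at most one zero of $P$ in $B(x, r) \supset B(x, \gamma')$, ruling out two zeros in $B(\check x, \gamma)$. The upper bound $|P(z) - g(z)| \leq C r^2 M_P/R^2$ on $|z-x| = r \le R/2$, with $R := 10^{-4}\delta$ and $M_P := \max_{|w-x|=R}|P(w)|$, follows from Taylor's formula combined with Cauchy's integral representation of $P''$. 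For the lower bound $\min_{|z-x|=r}|g(z)| = \bigl||P(x)| - r|P'(x)|\bigr|$, a short case analysis on $|P(x)|/|P'(x)|$ shows that whichever of $r \in \{\gamma', 2\gamma'\}$ is farther from this ratio satisfies $\min|g(z)| \geq c_0 (|P(x)|\vee \gamma'|P'(x)|)$ for a universal $c_0>0$, so Rouch\'e succeeds for that choice of $r$ unless $|P(x)|$ and $|P'(x)|$ are both below the thresholds $C\gamma'^2 M_P/R^2$ and $C\gamma' M_P/R^2$ respectively.

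After converting to the $\check z$-scale via $|\check P'(\check x)| = 10^{-3}\delta\,|P'(x)|$, this produces the inclusion
\[
\{N_{\check P}(B(\check x,\gamma))\ge 2\} \ \subset\ \{M_P > L^B\}\ \cup\ \bigl(\{|P(x)| \leq C\gamma^2 L^B\} \cap \{|\check P'(\check x)| \leq C\gamma L^B\}\bigr)
\]
for a constant $B$ to be chosen. The first event will be controlled by Chebyshev's inequality applied to $\E|P(w)|^2 = V(|w|) \asymp L^{2\rho+1}$ combined with a standard covering of $\partial B(x,R)$ and the Lipschitz bound $|P'| \le C M_P/\delta$ on a slightly smaller circle, yielding $\P(M_P > L^B) \leq C\gamma^{3/2}$ for any $B$ slightly larger than $\rho+1/2$. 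For the joint small-ball event I use a finite-difference trick: by Taylor's theorem with Cauchy's estimate on $\check P''$, $\check P(\check x + \gamma) = \check P(\check x) + \gamma\,\check P'(\check x) + O(\gamma^2 M_P)$, so on the joint event we also have $|\check P(\check x + \gamma)| \leq C\gamma^2 L^B$. I thereby bound the joint event by the bivariate small-ball event $\{|\check P(\check x)|,\,|\check P(\check x + \gamma)| \leq C\gamma^2 L^B\}$, phrased purely in terms of values of $P$ (and not of $P'$), hence amenable to the log-comparability Theorem \ref{log-com16-1}.

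Applying Theorem \ref{log-com16-1} with a smooth bump function supported near $(-\infty, \log(C\gamma^2 L^B)]^2$ transfers this bivariate small-ball probability to the Gaussian case; the hypothesis \eqref{log-com-e1} at both points $x$ and $x + 10^{-3}\delta\gamma$ follows from a slight variant of Lemma \ref{log-comp-11} since both points still lie in $I(\delta) + O(\delta)$. In the Gaussian case, $(\tilde P(x), \tilde P(x + 10^{-3}\delta\gamma))$ is a centered bivariate Gaussian with variances $\asymp L^{2\rho+1}$ and the difference $\tilde P(x+10^{-3}\delta\gamma) - \tilde P(x) \approx 10^{-3}\delta\gamma\,\tilde P'(x)$ has variance $\asymp \gamma^2 L^{2\rho+1}$, so $\det\Sigma \asymp \gamma^2 L^{4\rho+2}$ and the joint density is bounded by $C/(\gamma L^{2\rho+1})$. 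This yields $\P \leq C(\gamma^2 L^B)^2/(\gamma L^{2\rho+1}) = C\gamma^3 L^{2B-2\rho-1}$, which is $\leq C\gamma^{3/2}$ provided $B \leq 3c_2/4 + \rho + 1/2$ -- compatible with the $M_P$ bound above by the hypothesis $c_2 \leq (\rho+1/2)/4$. The main obstacle will be executing the Gaussian reduction robustly: ensuring that the constants from Theorem \ref{log-com16-1}, the derivative norms of the smooth truncation, and the non-degeneracy of the bivariate Gaussian covariance (ultimately traceable to $\Gamma(2\rho+1)\Gamma(2\rho+3) > \Gamma(2\rho+2)^2$) all remain uniformly controlled as $\gamma$ and $\delta$ vary. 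The boundary regime $\delta \in [\tfrac{1}{20n}, \tfrac{\log^2 n}{n}]$ and the case of $Q$ in place of $P$ when $\delta \geq \tfrac{1}{10n}$ proceed by parallel arguments using the corresponding bounds already established in Sections \ref{non-clustering} and \ref{log-comparability}.
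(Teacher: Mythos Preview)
Your approach is sound and takes a genuinely different route from the paper. The paper first reduces the event $\{N_{\check P}(B(\check x,\gamma))\ge 2\}$ to the Gaussian case via the two-point correlation universality of Theorem~\ref{complex} (paying the price $C\delta^c\gamma^{-8}$ and invoking the tail-event estimate of Section~\ref{negligible-set}), and only then runs Rouch\'e in the Gaussian world: it bounds $\max_{|z-x|=\eta}|p(z)-\E p(z)|$ by sub-Gaussian concentration combined with Cauchy's integral formula, bounds $\min|g(z)|=\min|g(x\pm\eta)|$ via the one-dimensional Gaussian density, and disposes of $\E p(z)$ by a separate case analysis for Conditions~2(a) and 2(b). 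You instead apply Rouch\'e directly to the original $P$, use the two-radius trick $r\in\{\gamma',2\gamma'\}$ to convert failure of Rouch\'e into the joint smallness $\{|P(x)|,\,|P'(x)|\text{ small}\}\cup\{M_P\text{ large}\}$, and---most distinctively---replace the $P'$ constraint by a second value of $P$ through a finite-difference step, so that the remaining bivariate small-ball event is handled by the lighter log-comparability Theorem~\ref{log-com16-1} rather than by Theorem~\ref{complex}. Your non-degeneracy estimate $\det\Sigma\asymp\gamma^2 L^{4\rho+2}$ is correct: it is exactly $\mathcal S(x)=\mathcal P\mathcal Q-\mathcal R^2\asymp L^{4\rho+4}$, and the paper uses the same lower bound elsewhere (proof of Corollary~\ref{mean}). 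Your route is more elementary and sidesteps the machinery of Section~\ref{negligible-set}; the paper's route, by contrast, gives a cleaner treatment of the nonzero-mean regime (Condition~2(b) with $x>0$), which you gloss over. There the mean contribution makes $\E|P(w)|^2\asymp L^{2\rho+2}$, breaking your tight $M_P$ numerology $B=\rho+\tfrac12+\tfrac{3c_2}{4}$; the fix---observing that the large mean $m(x)\asymp L^{\rho+1}\gg \sqrt{\Var P(x)}$ forces the Gaussian small-ball probability to be $\exp(-cL^{c'})$, so one can afford a larger $B$---is exactly what the paper does in that case and would need to be added to your argument.
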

The power $3/2$ in the above lemma is not critical, we only need something strictly greater than 1.

\begin{proof} 
We will prove the Lemma for $P$. The same arguments also work for $Q$ unless otherwise noted. The strategy is using Theorem \ref{complex} to reduce to Gaussian case. Let $H$ be a non-negative smooth function supported on $B(0, 2\gamma)$, which equals 1 on $B(0, \gamma)$ and is at most 1 everywhere else, and  $|\triangledown ^a H|\le C\gamma^{-a}$ for all $0\le a\le 8$. In particular, one can take $H(z) = \phi\left(\frac{z}{\gamma}\right)$ where $\phi$ is any smooth function supported in $B(0, 2)$ and equals $1$ on $B(0,1)$.

By Theorem \ref{complex}, we have
\begin{eqnarray}
\P (N_{\check P}{B(\check x, \gamma)}\ge 2)&\le&\E \sum_{i\neq j} H(\check \zeta_i-\check x)H(\check \zeta_j-\check x)\nonumber\\
&\le&\E \sum_{i\neq j} H(\check {\tilde\zeta}_i-\check x)H(\check {\tilde\zeta}_j-\check x)+ C\delta^c\gamma^{-8}\nonumber\\
&\le&\E \sum_{i\neq j} H(\check {\tilde\zeta}_i-\check x)H(\check {\tilde\zeta}_j-\check x)\textbf{1}_{ k\ge \delta^{-c_3}} + \E k(k-1)\textbf{1}_{ k< \delta^{-c_3}}+ C\gamma^{3/2}\nonumber\\
&\le&\E \sum_{i\neq j} H(\check {\tilde\zeta}_i-\check x)H(\check {\tilde\zeta}_j-\check x)\textbf{1}_{ k\ge \delta^{-c_3}} + \delta^{-2c_3}\P (k\ge 2)+ C\gamma^{3/2}.\nonumber
\end{eqnarray}

where $k = N_{\check {\tilde P}}{B(\check x, 2\gamma)}=N_{\tilde{P}}{B(x,2.10^{-3}\gamma\delta)}=: N_{\tilde{P}}{B(x,\eta)}$, and $c_3=c_2/10$. By Proposition \ref{nonclustering-1}, $k\le \delta^{-c_3}$ with probability at least $1-C\gamma(\delta)$. 

Using the result from Section \ref{negligible-set}, we have
\begin{equation}
\E \sum_{i\neq j} H(\check {\tilde\zeta}_i-\check x)H(\check {\tilde\zeta}_j-\check x)\textbf{1}_{ k\ge \delta^{-c_3}}  \le \E \left (\left(\sum_{i = 1}^n H(\check {\tilde\zeta}_i-\check x)\right)^2\textbf{1}_{ k\ge \delta^{-c_3}}\right) \le C\delta^{1/22}\gamma^{-8}\le C\gamma^{3/2}.
\end{equation}

Thus, it remains to show that $\P (k\ge 2) = \P(N_{\tilde{P}}{B(x,\eta)}\ge 2)\le C\delta^{2c_3}\gamma^{3/2}$. Having reduced the task to the Gaussian case, we will adapt the proofs of similar results in \cite{HKPV1} and \cite{PV2} to show it.

Consider $ g(z) = \tilde P(x) +\tilde P'(x)(z - x)$ and put $v_{z} = (c_iz^{i})_{i = 0}^{n} $. Let $p(z) =\tilde P(z) - g(z)$. Notice that for this Gaussian case, $\P(\tilde P(x) = 0)=0$ when $n$ is sufficiently large. Since $g$ is linear, it has at most one zero in $B(x,\eta)$. And hence, when $k\ge 2$, $\tilde P$ has more zeros than $g$ in that ball. If $|g(z)|>|p(z)|$ for all $z\in \partial B(x, \eta)$, then by Rouch\'e's theorem, $\tilde P$ and $g$ have the same number of zeros. Thus, for all $t>0$, we have
\begin{equation}
\P(k\ge 2)\le \P\left (\min_{z\in \partial B(x, \eta)}|g(z)|\le \max_{z\in \partial B(x, \eta)}|p(z)|\right ).\nonumber
\end{equation}
Let $A_1 = \{\omega: \min_{z\in \partial B(x, \eta)}|g(z)|\le \max_{z\in \partial B(x, \eta)}|p(z)|\}$. We will show that $\P(A_1)\le C\delta^{2c_3}\gamma^{3/2}$.

We have $p(z) = (\tilde \xi_i)_{i=0}^{n}(v_z - v_x - v'(x)(z-x))$ and 
\begin{eqnarray}
|(v_z - v_x - v'(x)(z-x))_{i}| &\le&\sup_{0\le \theta\le 1} \frac{1}{2}|c_i||z-x|^{2}i(i-1)|x + \theta z|^{i-2} \label{boundp}.
\end{eqnarray}
If $\delta\ge \frac{1}{10n}$, then for all $z\in \partial B(x, \eta)$ and $\theta\in [0,1]$, $|x + \theta z|\le 1-\frac{\delta}{2}$, and so by Condition 1,
\begin{eqnarray}
\Var (p(z))=|v_z - v_x - v'(x)(z-x)|^{2}\le \sum_{i=0}^{n}\eta^{4}i^{4} c_i^{2}\left (1-\frac{\delta}{2}\right )^{2i-4}
\le CL^{2\rho+1-4c_2}.\nonumber
\end{eqnarray}

Similarly, if $\frac{1}{20n}\le\delta\le \frac{1}{10n}$, then for all $z\in \partial B(x, \eta)$ and $\theta\in [0,1]$, $|x + \theta z|\le 1+\frac{3}{n}$, and so
\begin{eqnarray}
\Var (p(z))&\le& \sum_{i=0}^{n}\eta^{4}i^{4} c_i^{2}\left (1+\frac{3}{n}\right )^{2i-4} \le C\sum_{i=0}^{n}\eta^{4}n^{4+2\rho} \left (1+\frac{3}{n}\right )^{2n}\le CL^{2\rho+1-4c_2}.\nonumber
\end{eqnarray}
Thus, in any case, 
\begin{equation}
\Var (p(z))\le CL^{2\rho+1-4c_2}\label{varp}.
\end{equation}

(When proving the Lemma for $\tilde Q$, there are two cases: if $\rho \ge 0$ then observe from Condition 1 that $\left |\frac{d_i}{d_0}\right |\le C = Ci^0$ for all $i$, and so, by the same argument as above, for the function $p(z) = \tilde Q(z) - \tilde Q(x) - (z-x)\tilde Q'(x)$, one has $\Var (p(z))\le CL^{(2)(0) + 1-4c_2}=CL^{ 1-4c_2}$ which is similar to the case $\rho =0$ for $P$. Now, if $-\frac 1 2 <\rho <0$  we similarly have
\begin{eqnarray*}
\Var[p(z)] 
\le C\eta^4 \sum_{0\le i \le n/2} i^4  e^{-\delta i}  + C\eta^4 \sum_{n/2<i\le n} i^4 \frac{(n-i)^{2\rho}}{n^{2\rho}} e^{-\delta i}  
\le C L^{1-4c_2}
\end{eqnarray*}
which again is similar to the case $\rho=0$ for $P$.   We note that in all computation it is very important that $2\rho+1>0$ to ensure that the harmonic sum $\sum_{j\le M} j^{2\rho}$ is dominated by $M^{2\rho+1}$.)

We use the above estimate to prove that for every $t>0$, 
\begin{equation}
\P(\max_{z\in \partial B(x, \eta)} |p(z)-\E p(z)|\ge t)\le Ce^{-t^{2}/(CL^{2\rho+1-4c_2})}.\label{concenp}
\end{equation}

Indeed, let $\bar p(z) = p(z)-\E p(z)$, then for every $z\in \partial B(x, \eta)$, by Cauchy's integral formula, 
\begin{eqnarray}
|\bar p(z)|\le \int_0^{2\pi}\frac{|\bar p(x + 2\eta e^{\sqrt{-1}\theta})|}{|z - x - 2\eta e^{\sqrt{-1}\theta}|}2\eta\frac{d\theta}{2\pi}
\le\sqrt{CL^{2\rho+1-4c_2}} \int_0^{2\pi}\frac{|\bar p(x + 2\eta e^{\sqrt{-1}\theta})|}{\sqrt{\Var (\bar p(x + 2\eta e^{\sqrt{-1}\theta}))}}\frac{d\theta}{2\pi}\nonumber.
\end{eqnarray}

Hence, by Markov's inequality,
\begin{eqnarray}
\P(\max_{z\in \partial B(x, \eta)} |\bar p(z)|\ge t)
\le \E\left (\exp\left (\int_0^{2\pi}\frac{|\bar p(x + 2\eta e^{\sqrt{-1}\theta})|}{10\sqrt{\Var (\bar p(x + 2\eta e^{\sqrt{-1}\theta}))}}\frac{d\theta}{2\pi}\right )^{2}\right )e^{-t^{2}/(10^{2}CL^{2\rho+1-4c_2})}.\nonumber
\end{eqnarray}

Applying Jensen's inequality for convex functions $x\to x^{2}$ and $x\to e^{x}$ and Fubini's theorem gives 
\begin{eqnarray}
\E\left (\exp\left (\int_0^{2\pi}\frac{|\bar p(x + 2\eta e^{\sqrt{-1}\theta})|}{10\sqrt{\Var (\bar p(x + 2\eta e^{\sqrt{-1}\theta}))}}\frac{d\theta}{2\pi}\right )^{2}\right )
\le \int_0^{2\pi}\E\exp\left (\frac{|\bar p(x + 2\eta e^{\sqrt{-1}\theta})|^{2}}{100 {\Var (\bar p(x + 2\eta e^{\sqrt{-1}\theta}))}}\right )\frac{d\theta}{2\pi}\nonumber.
\end{eqnarray}

Let $z = x + 2\eta e^{\sqrt{-1}\theta}$ then the real part and imaginary part of $\frac{\bar p(z)}{\sqrt{\Var(\bar p(z))}}=: X_z+\sqrt{-1} Y_z$ are normally distributed with mean 0 and variance at most 1. Hence, by Cauchy-Schwartz inequality,
\begin{eqnarray}
\E e^{10^{-2}|X_z+\sqrt{-1} Y_z|^{2}} = \E e^{10^{-2}X_z^{2}}e^{10^{-2}Y_z^{2}}\le\E e^{2.10^{-2}X_z^{2}} + \E e^{2.10^{-2}Y_z^{2}}\le C.\nonumber 
\end{eqnarray}

That proves \eqref{concenp}.

Set 
\begin{equation}
 t = L^{\rho + 1/2-2c_2 + c_3}, \label{valuet}
 \end{equation} then \eqref{concenp} becomes 
\begin{equation}
\P\left (\max_{z\in \partial B(x, \eta)} |p(z)-\E p(z)|\ge \frac{1}{2}t\right )\le Ce^{-t^{2}/(4CL^{2\rho+1-4c_2})}\le \delta^{2c_3}\gamma^{3/2}.\label{concenp1}
\end{equation}
(To prove Lemma \ref{sun6} for $Q$, we set $t = L^{1/2-2c_2 + c_3}$.)

Let $A_2 = \{\omega:\max_{z\in \partial B(x, \eta)} |p(z)-\E p(z)|\ge \frac{1}{2}t\}$.

Now, since $g$ is a linear function with real coefficients, $P(x)$ and $P'(x)$, one has
$$\min_{z\in \partial B(x, \eta)} |g(z)| = \min |g(x \pm \eta)|.$$ 
And so, 
\begin{equation}
\P(\min_{z\in \partial B(x, \eta)} |g(z)|\le t)\le \P(|g(x+\eta)|\le t) + \P(|g(x-\eta)|\le t).\nonumber
\end{equation}

Since $g(x\pm \eta)$ is normally distributed, 
\begin{equation}
\P\left (|g(x\pm \eta)|\le t\right )\le \P(|g(x\pm\eta)-\E g(x\pm\eta)|\le t)\le \frac{t}{\sqrt{\Var(g(x\pm\eta))}}=\frac{t}{|v_x \pm \eta v'_x|}.\label{conceng}
\end{equation}

Using Condition 1, we have
\begin{eqnarray}
\eta |v'_x|\le C\eta\sqrt{\sum_{i=0}^{n}i^{2\rho + 2}x^{2i}} \le C\eta L^{\rho+3/2} = CL^{\rho+1/2-c_2},\label{bound-varg}
\end{eqnarray}

and 
\begin{eqnarray}
|v_x|\ge \frac{1}{C}\sqrt{\sum_{i=L/40}^{L/20}i^{2\rho}x^{2i}} \ge \frac{1}{C} L^{\rho+1/2} ,\label{winter}
\end{eqnarray}
which together give $|v_x-\eta v'_x|\ge \frac{1}{C} L^{\rho+1/2}$.

(To prove Lemma \ref{sun6} for $Q$ observe that $\left |\frac{d_i}{d_0}\right |\ge \frac{1}{C}$ for all $i\le \frac{n}{2}$ and hence for all $i\le L/20$; therefore by the same argument as above, for the vector field $v_z = \left (\frac{d_i}{d_0}z^{i}\right )_{i=0}^{n}$, one has $|v_x|\ge \frac{1}{C}L^{1/2}$, which is again similar to the case $\rho = 0$ for $P$; now for $\eta|v'_x|$ we similarly have
\begin{eqnarray*}
\eta |v_x'| &\le& C \eta \sqrt{\sum_{i=0}^n i^2 [(n-i)/n]^{2\rho} x^{2i} } 
\le C\eta \sqrt{\sum_{i\le n/2} i^2 x^{2i} + n^{2-2\rho} x^{n/2}\sum_{i>n/2}(n-i)^{2\rho}}\\
&\le& C \eta \sqrt{L^3+n^3 x^{n/2}} \le C \eta L^{3/2} = C L^{1/2-c_2}
\end{eqnarray*}
which is similar to the case $\rho=0$ for $P$. ) 

And so, by \eqref{valuet}, the bound \eqref{conceng} becomes
\begin{equation}
\P(|g(x\pm \eta)|\le t)\le \P(|g(x\pm\eta)-\E g(x\pm\eta)|\le t)\le CL^{-2c_2 + c_3} \le CL^{-3/2c_2-2c_3} = C\delta^{2c_3}\gamma ^{3/2}.\label{conceng'}
\end{equation}

Hence, 
\begin{equation}
\P(\min_{z\in \partial B(x, \eta)} |g(z)|\le t)\le C\delta^{2c_3}\gamma ^{3/2}.\label{min_g}
\end{equation}

Let $A_3 = \{\omega: \min_{z\in \partial B(x, \eta)} |g(z)|\le t\}$, and $A_4 = A_1 \setminus (A_2\cup A_3)$

If Condition 2 \eqref{cond-real'-1} holds, i.e., $\E\xi_i = 0$ for all $N_0\le i\le n$, then by \eqref{boundp}, $|\E p(z)|\le \eta^{2}\sum_{i=0}^{N_0} |\E\xi_i||c_i|i^{2}(1+3/n)^{n}\le C\eta^{2}\le \frac{t}{2}$ for every $z\in \partial B(x, \eta)$. This together with \eqref{concenp1} give $\P\left (\max_{z\in \partial B(x, \eta)} |p(z)|\ge t\right )\le \delta^{2c_3}\gamma^{3/2}$. And so $\P(A_1)\le \P(A_3) + \P\left (\max_{z\in \partial B(x, \eta)} |p(z)|\ge t\right )\le \delta^{2c_3}\gamma^{3/2}$ as desired.

(Similarly, for $Q$, one has $|\E p(z)|\le \eta^{2}\sum_{i=0}^{N_0} |\E\xi_i|\frac{|c_i|}{|c_n|} (n-i)^{2}(1-\frac{1}{2L})^{n-i}\le C\eta^{2}n^{2-\rho}e^{-n/2L}\le C\eta^{2}L^{2-\rho} = C L^{-\rho-2c_2}\le \frac{t}{2}$ for every $z\in \partial B(x, \eta)$ because $\rho>-1/2$.)

Similarly, if Condition 2 \eqref{cond-real'-2} holds, and $x< 0$, i.e., $x$ is in $-I(\delta) + (-10^{-4}\delta, 10^{-4}\delta)$. Recall that $\rho \ge 0$ under Condition 2 \eqref{cond-real'-2}. Then for every $z\in \partial B(x, \eta)$, 
\begin{eqnarray}	
|\E p(z)| &\le& C\eta^{2} + |\mu|\left|  \sum_{i=N_0}^{n}c_iz^{i} - \sum_{i=N_0}^{n}c_ix^{i} - (z-x)\sum_{i=N_0}^{n}ic_i x^{i-1}\right|\nonumber\\
&\le& C\eta^{2} + C\eta^{2} \max_{z' \in \partial B(x, \eta)} \left|  \sum_{i=0}^{n}\mathfrak P(i) i(i-1) z'^{i-2}\right|\nonumber,
\end{eqnarray}
in which we used the fact that the contributions of the sums from $i=0$ to $i = N_0-1$ are just $O(\eta^{2})$ as showed in the case of Condition 2 \eqref{cond-real'-1}.
Observe that $\mathfrak P(i)i(i-1) = \sum_{j=0}^{\rho+2}e_ji(i-1)\dots (i-j+1)$ for some constants $e_j$, and for each $0\le j\le \rho+2$, 
$$\left |\sum_{i=0}^{n} i(i-1)\dots (i-j+1)z'^{i-j}\right | = \left |\left (\frac{1-z'^{n+1}}{1-z'}\right )^{(j)}\right |\le Cn^{j}|z'|^{n-j+1}\le CL^{j}\frac{n^{j}}{L^{j}}e^{-n/L}\le CL^{j}\le CL^{\rho+2},$$
where in the first inequality, we used the bounds $|1-z'|\ge |1 - x|-|x-z'|\ge 1$.

This shows that $|\E p(z)|\le CL^{\rho -2c_2}|\mu|\le \frac{t}{2}$. From this, the same proof as for Condition 2 \eqref{cond-real'-1} applies.

(Similarly, for $Q$, one has 
\begin{eqnarray}	
|\E p(z)| \le C\eta^{2}L^{2} + C\eta^{2} \max_{z' \in \partial B(x, \eta)} \left|  \sum_{i=0}^{n}\frac{\mathfrak P(n-i)}{\mathfrak P(n)} i(i-1) z'^{i-2}\right|
= O(\eta^{2}L^{2})=O(L^{-2c_2})\le \frac{t}{2},\nonumber
\end{eqnarray}
in which, again, we used the fact that the contribution of the sums from $i = n-N_0$ to $i = n$ is bounded by $C\eta^{2}L^{2} $ as showed in the case of Condition 2 \eqref{cond-real'-1}.)

Now, if Condition 2 \eqref{cond-real'-2} holds, and $x\ge 0$, i.e., $x$ is in $I(\delta) + (-10^{-4}\delta, 10^{-4}\delta)$. Without loss of generality, assume that $\mu\ge 0$ and $c_i>0$ for all $i$ sufficiently large, say $i\ge N_0$(by replacing $c_i$ by $-c_i$ and $\xi_i$ by $-\xi_i$ if needed).

We have
\begin{equation}
\P(A_1)\le \P(A_2) + \P(A_3) + \P(A_4)\le C\delta^{2c_3}\gamma^{3/2} + \P(A_4).
\end{equation}

If $|\E p(z)|\le \frac{t}{2}$ for every $z\in \partial B(x, \eta)$, then as in the above case we also have $\P(A_1)\le \delta^{2c_3}\gamma^{3/2}$.

Otherwise, assume that there exists $z_0\in B(x, \delta)$ such that $|\E p(z_0)|> \frac{t}{2}$. Without loss of generality, we choose $z_0$ that maximizes $|\E p(z_0)|$ in that (closed) ball. Let $m(z) = \E P(z) = \sum_{i=0}^{n}c_i\E\xi_i z^{i}$. Then 
\begin{eqnarray}
|\E p(z_0)| &=& |m(z_0)-m(x)-m'(x)(z_0-x)| \le \frac {|z_0-x|^{2}}{2}\max_{z\in \partial B(x, \eta)}|m''(z)|\nonumber\\
&\le& o(t) + \mu\eta^{2}\sum_{i=0}^{i=n} c_i i(i-1)(x+\eta)^{i-2}\nonumber.
\end{eqnarray}
(For $P$ the $o(t)$ is $C\eta^2$, and for $Q$ the $o(t)$ is $C\eta^2 L^2$.)

Observe by a similar bound as in \eqref{boundp} that
\begin{eqnarray}
\left |\sum_{i=n\wedge 2(4+\rho)L\log L}^{n} c_i i(i-1)(x+\theta z_0)^{i-2}\right |&\le& C  L^{\rho+3}\int_{2(4+\rho)\log L}^{\infty} e^{-x/2}dx\le 1.\nonumber
\end{eqnarray}

Hence, 

$ \frac{t}{2}<|\E p(z_0)|\le o(t) + \eta^{2}\sum_{i=0}^{i=n\wedge 2(4+\rho)L\log L} c_i i^{2}(x+\eta)^{i-2}\le 2\eta^{2}\sum_{i=0}^{i=n\wedge 2(4+\rho)L\log L} c_i i^{2}(x+\eta)^{i-2}$.

Now, 
\begin{eqnarray}
m(x) &\ge&\mu \sum_{i=0}^{i=n\wedge 2(4+\rho)L\log L} c_i x^{i} - o(t) \ge \frac{1}{C}\mu L^{-2}\log^{-2} L\sum_{i=0}^{i=n\wedge 2(4+\rho)L\log L} c_i i^{2}(x+\eta)^{i}-o(t)\nonumber\\
&\ge& \frac{1}{C}\frac{L^{2c_2}}{\log^{2} L}\eta^{2} \qquad \sum_{i=0}^{i=n\wedge 2(4+\rho)L\log L} c_i i^{2}(x+\eta)^{i-2}-o(t)\nonumber\\
&\ge& \frac{1}{C}\frac{L^{2c_2}}{\log^{2} L}|\E p(z_0)| \quad \ge \quad \frac{1}{C}\frac{L^{2c_2}}{\log^{2} L}t \quad =\quad \frac{1}{C}\frac{L^{\rho+1/2+c_3}}{\log^{2} L}.\label{boundm}
\end{eqnarray}

Similarly, 
\begin{eqnarray}
\eta  m'(x)\le C+\eta\sum_{i=0}^{i=n\wedge 2(4+\rho)L\log L} c_i ix^{i-1}\le C + C\eta L (\log L) m(x)\le C\frac{\log L}{L^{c_2}}m(x)\le \frac{m(x)}{2}.\nonumber
\end{eqnarray}

Thus, 
\begin{equation}
\E g(x\pm \eta) = m(x)\pm \eta m'(x)\ge \frac{m(x)}{2}.\label{bound-mg}
\end{equation}

By this and \eqref{bound-varg} and its analog for $v_x$ show that 
\begin{equation}
\sqrt {\Var g(x\pm \eta)}\le CL^{\rho + 1/2} \le \frac{\E g(x\pm \eta)}{L^{c_3/2}}.\label{bound-vmg}
\end{equation}

On the event $A_4$, we know that $\min |g(x\pm\eta)|\le \max_{z\in \partial B(x, \eta)} |p(z)|$. Choose any $z$ in the closed ball $cl(B(x, \eta))$ that maximizes $|p|$. Then, $\min |g(x\pm \eta)|\le |p(z)|$. Since $A_4\cap A_2 = \emptyset$, $|p(z)|\le |\E p(z_0)| + t/2\le 2|\E p(z_0)|$. Then, by \eqref{boundm} and \eqref{bound-mg}, 
\begin{equation}
\min |g(x\pm \eta)|\le |p(z)|\le C\frac{\log ^{2}L}{L^{2c_2}}\min |\E g(x\pm \eta)|\le \frac{1}{2}\min \E g(x\pm \eta).\label{dduu}
\end{equation}

Finally, by \eqref{bound-vmg}, we have
\begin{eqnarray}
\P\left (|g(x\pm \eta)|\le \frac{1}{2}\E g(x\pm \eta)\right )&\le& \P\left (\frac{|g(x\pm \eta) - \E g(x\pm \eta)|}{\sqrt{\Var (g(x\pm \eta))}}\ge \frac{\E g(x\pm \eta)}{2\sqrt{\Var (g(x\pm \eta))}}\right )\nonumber\\
&\le& \P\left (|N(0, 1)|\ge \frac{L^{c_3/2}}{2}\right )\le \delta^{2c_3}\gamma^{3/2}.\nonumber
\end{eqnarray}
This proves \eqref{dduu} and thus $\P(A_4)\le C\delta^{2c_3}\gamma^{3/2}$. So is $A_1$.
\end{proof}

Now, for every $1\le i\le k$, consider the strip $S_i = [\check x_i - r_0, \check x_i + r_0]\times [-\gamma/4, \gamma/4]$. We can cover $S$ by $O(\gamma^{-1})$ balls of the form $B(\check x, \gamma)$ where $x \in [\check x_i-r_0, \check x_i+r_0]$. Using Lemma \ref{sun6}, we obtain
\begin{eqnarray}
\P (\text{there is at least 1 (or equivalently 2) root in } S_i\backslash \mathbb R )
&=&  O(\gamma^{-1}\gamma^{3/2}) = O(\gamma^{1/2})\label{du4}.
\end{eqnarray}

Consider 
$\hat F_i(z) = F_i(Re(z))\phi \left (\frac{4Im(z)}{\gamma}\right )$, 
where $\phi$ is a bump function on $\mathbb R$ that is supported on $[-1,1]$ and is $1$ at $0$. Then $\hat F_i$ is a smooth function supported on $S_i - \check x_i$ and $|\hat F_i|\le 1$, and $\big|\triangledown ^a \hat F_i\big| = O(\gamma^{-a})$ for $0\le a\le 3$.

 Set 
 $ X_{\check x_i, \hat F_i} = \sum_{j=1}^n \hat F_i(\zeta_j^{\check P} - \check x_i)\quad\mbox{ and } D_{\check x_i, F_i} =  X_{\check x_i,\hat F_i} - X_{x_i, F_i, \R} =\sum_{\zeta_i^{\check P}\notin \R} \hat F_i(\zeta_i^{\check P}- \check x_i)$.

Observe that $|D_{\check x_i, F_i}| \le N_{\check P}{B(\check x_i, 2r_0)}$, and from \eqref{du4}, $D_{\check x_i, F_i} = 0$ with probability at least $1 - O(\gamma^{1/2})$.

Let $\phi_0$ be a bump function supported on $B(0, 4r_0)$ that equals 1 on $B(0, 2r_0)$ and $|\triangle^{a} \phi_0|\le C$ for all $0\le a\le 3$, then
\begin{equation}
\max\{|X_{\check x_i, \hat F_i}|, | X_{x_i, F_i, \R}|, |D_{\check x_i, F_i}|\}\le \sum_{j=1}^{n} \phi_0(\zeta_j^{\check P} - \check x_i)=: X_{\check x_i, \phi_0}.\nonumber
\end{equation}

Let $c_4 = \frac{c_2}{4(k+l)^{2}}$. By Proposition \ref{nonclustering-1}, $N_{\check P}{B(\check x_i, 2r_0)} =N_{ P}{B( x_i, 2r_010^{-3}\delta)} \le \delta^{-c_4}$ with probability at least $1-C\gamma(\delta)$. And from Section \ref{negligible-set}, we have $\E\left (|X_{\check x_i, \phi_0}|^{k+l}\textbf{1}_{ N_{\check P}{B(\check x_i, 2r_0)} > \delta^{-c_4}}\right )\le C\delta^{1/22}$.

Hence,
\begin{eqnarray}
\E \ab{{X}_{\check x_i,\hat F_i}-X_{\check x_i, F_i, \R}}^{k+l}&=& \E (|D_{\check x_i, F_i}|^{k+l}\textbf{1}_{N_{\check P}{B(\check x_i, 2r_0)}\le \delta^{-c_4}}) + \E (|D_{\check x_i, F_i}|^{k+l}\textbf{1}_{ N_{\check P}{B(\check x_0, 2r_0)} > \delta^{-c_4}})\nonumber\\
&\le& C\delta^{-c_4(k+l)}\gamma^{1/2} + \E\left (|X_{\check x_i, \phi_0}|^{k+l}\textbf{1}_{ N_{\check P}{B(\check x_i, 2r_0)} > \delta^{-c_4}}\right )\le C\delta^{c_4(k+l)^{2}}\nonumber.
\end{eqnarray}

Moreover, by another application of Proposition \ref{nonclustering-1} and Section \ref{negligible-set}, we obtain
\begin{eqnarray}
&&\max\{\E |{X}_{\check x_i,\hat F_i}|^{k+l}, \E |{X}_{\check x_i, F_i, \R}|^{k+l}\}\nonumber\\
&=& \E (|X_{\check x_i, \phi_0}|^{k+l}\textbf{1}_{N_{\check P}{B(\check x_i, 4r_0)}\le \delta^{-c_4}}) + \E (|X_{\check x_i, \phi_0}|^{k+l}\textbf{1}_{ N_{\check P}{B(\check x_0, 4r_0)}> \delta^{-c_4}})\nonumber\\
&\le& C\delta^{-c_4(k+l)} + C\delta^{1/22}\le C\delta^{-c_4(k+l)}\nonumber.
\end{eqnarray}

Similarly, for each $1\le j\le l$, let $\hat G_j(z) = G_j(z)\eta(\text{Im}(z+\check z_j)/\gamma)$ where $\eta$ is a bump function on $\R$ supported on $[1/2, \infty)$ and equal 1 on $[1, \infty)$. And let $X_{\check z_j, \hat G_j} = \sum_{i=1}^{n}\hat G_j(\zeta_i^{\check P} - \check z_j)$. Then $\E|X_{\check z_j, \hat G_j} - X_{\check z_j,  G_j, \C_{+}}|^{k+l}\le C\delta^{c_4(k+l)^{2}}$ and $\max\{\E |{X}_{\check z_j,\hat G_j}|^{k+l}, \E |{X}_{\check z_j, G_j, \C_{+}}|^{k+l}\}\le C\delta^{-c_4(k+l)}$.

By telescoping the difference and applying H\H{o}lder's inequality, we obtain
\begin{equation}
\E\left |(\prod_{i=1}^{k}X_{\check x_i, F_i, \R})(\prod_{j=1}^{l}X_{\check z_j, G_j, \C_{+}}) - (\prod_{i=1}^{k}X_{\check x_i, \hat F_i})(\prod_{j=1}^{l}X_{\check z_j, \hat G_j} )\right |\le C\delta^{c_4}.\nonumber 
\end{equation}

Combining this with \eqref{du2} with $H_j$'s being $\hat F_i/O(\gamma^{-3})$ and $\hat G_j/O(\gamma^{-3})$, respectively, we get the desired result.

\section{Proof of Lemma \ref{boundedness} and Corollary \ref{mean}}\label{proof-mean}


\begin{proof}[Proof of Lemma \ref{boundedness}] 
If suffices to show that
\begin{equation}\label{d1}
\E N_{P_n}\left ([- 1 + \frac{1}{C}, 1-\frac{1}{C}]\right )\le M(C)\mbox{, and }
\E N_{Q_n}\left ([ -1 + \frac{1}{C}, 1-\frac{1}{C}]\right )\le M(C),
\end{equation}
for some constant $M(C)$.

Again, the proof for the second inequality is the same as the first. We follow the approach in \cite{IM2}.
By \eqref{dq}, we showed that there exist constants $d$ and $q$ such that $\P(|\xi_i|\le d)\le q< 1$ for all $i$.

For $k\ge N_0$, let $B_k = \{\omega: \ab{\xi_{N_0}}\le d,\dots, \ab{\xi_{k-1}}\le d, \ab{\xi_k}>d\}$.
Then $\P (B_k)\le q^{k-N_0}$.

By mean value theorem and Jensen's inequality, we have  
\begin{eqnarray}
N_{P}{[-1+\frac{1}{C}, 1-\frac{1}{C}]}\le k +N_{ P^{(k)}}[-1+\frac{1}{C}, 1-\frac{1}{C}]\le  k +\frac{\log \frac{M}{|P^{(k)}(0)|}}{\log \frac{R}{r}}\nonumber
\end{eqnarray}
where $R = 1 - {\frac{1}{2C}}, r = 1-\frac{1}{C}$, and $M = \sup _{|z| = R} \ab{P^{(k)}(z)}$. On $B_k$, we have 
\begin{eqnarray}
N_{P}{[-1+\frac{1}{C}, 1-\frac{1}{C}]}
&\le&k +\frac{\log \frac{\sum_{j=k}^{n}c_{jk}\frac{|c_j|}{|c_k|}|\xi_j|}{d}}{\log \frac{R}{r}}\nonumber,
\end{eqnarray}
where $c_{jk} = j(j-1)\dots(j-k+1)R^{j-k}/k!$. And so,
\begin{eqnarray}
\E N_{P}{[-1+\frac{1}{C}, 1 - \frac{1}{C}]}&\le& \sum_{k=N_0}^{n+1} k\P(B_k) + \frac{1}{\log\frac{R}{r}} \sum_{k=N_0}^{n+1} \int_{B_k} \log\left(\sum_{j=k}^{n}c_{jk}\frac{|c_j|}{|c_k|}|\xi_j|\right)\text{d}\P + \frac{\log 1/d}{\log\frac{R}{r}}\sum_{k=N_0}^{n+1} \P(B_k).\nonumber
\end{eqnarray}

Thus, to show \eqref{d1}, it suffices to show that
\begin{equation}\label{c1}
\sum_{k=N_0}^{n+1} \int_{B_k} \log\left(R_k\right)\text{d}\P\le C',
\end{equation}
for some constant $C' = C'(C)$, where $R_k = (\rho + 1+k)^{-\rho-1}\sum_{j=k}^{n}c_{jk}\frac{|c_j|}{|c_k|}|\xi_j|$. Then 
\begin{eqnarray}\label{c2}
\E R_{k} \le  C'(\rho + 1+k)^{-\rho-1}\sum_{j=k}^{n}{j \choose k} \frac{|c_j|}{|c_k|} R^{j-k}
\le \frac{C' k^{-\rho}}{(1-R)^{k+\rho+1}}.\nonumber
\end{eqnarray} 
%
%
%
%
%
%
Let $B_{ki} = \{\omega\in B_k: e^{i}\E R_{k}< R_k\le e^{i+1}\E R_{k}\}$. Then $\P(B_{ki})\le e^{-i}$ by Markov's inequality. Let $i_0 = \lfloor-\log q^{k}\rfloor$, then
\begin{eqnarray}
&&\int_{B_k}\log R_k\text{d}\P \le \P(B_k)\log \left(e^{i_{0}}\E R_{k}\right)+\sum_{i = i_{0}}^{\infty} \int_{B_{ki}}\log R_k\text{d}\P\le C'q^{k}(k+2+\rho-k\log q -\rho \log k)\nonumber.
\end{eqnarray}
%
%
%
%
This proves (\ref{c1}) and completes the proof.
\end{proof}

\begin{proof}[Proof of Corollary \ref{mean}] Let $C$ be the constant in Theorem \ref{real} with $k=1$. As a consequence of the above lemma, we only need to concentrate on the domain $\R\cap A(0, 1-\frac{1}{C}, 1+\frac{1}{C})$.

Let $c$ be the constant in Theorem \ref{real}, and let $\alpha = c/7$. We will prove that for every $\frac{1}{20n}\le \delta\le \frac{1}{C}$ and real number $x_0$ such that $|x_0|\in I(\delta)$, we have 
\begin{equation}
\left |\E N_P (x_0-10^{-7}\delta, x_0 + 10^{-7}\delta) - \E N_{\tilde P} (x_0-10^{-7}\delta, x_0 + 10^{-7}\delta) \right | = O(\delta^{\alpha/2}).\label{dd1}
\end{equation}
and when $\frac{1}{10n}\le \delta\le \frac{1}{C}$,
\begin{equation}
\left |\E N_P (x_0-10^{-7}\delta, x_0 + 10^{-7}\delta) - \E N_{\tilde P} (x_0-10^{-7}\delta, x_0 + 10^{-7}\delta) \right | = O(\delta^{\alpha/2}).\label{dd1h}
\end{equation}

From \eqref{dd1}, we can conclude that $\left |\E N_P (\pm I(\delta)) - \E N_{\tilde P} (\pm I(\delta)) \right | = O(\delta^{\alpha/2})$ for all $\frac{1}{20n}\le \delta\le \frac{1}{C}$. Letting $\delta = \frac{1}{20n}, \frac{1}{10n}, \dots, \frac{2^{m}}{20n}$ where $\frac{2^{m-1}}{20n}< \frac{1}{C}\le \frac{2^{m}}{20n}$ and applying triangle inequality, we obtain $\left |\E N_P (\pm (1-\frac{2^{m+1}}{n}, 1+\frac{1}{n})) - \E N_{\tilde P} (\pm (1-\frac{2^{m+1}}{n}, 1+\frac{1}{n}))  \right | = O(1)$. This together with the analogue for $Q$ give the desired result. (By definition of $Q$ we have $\E N_Q[a,b]= \E N_P[1/b,1/a]$ if $0\le a<b \le  \infty$ or $-\infty\le a<b \le 0$.)

As for the proof of \eqref{dd1}, let $\frac{1}{20n}\le \delta\le \frac{1}{C}$ and let $x_0$ be a real number with $|x|\in I(\delta)$.

Let $G$ be a smooth function supported on $[-10^{-4}-\delta^\alpha, 10^{-4}+\delta^\alpha]$ such that $0\le G\le 1$, $G = 1$ on $[-10^{-4}, 10^{-4}]$, and $\norm{\triangledown^a G} \le C\delta^{-6\alpha}$ for all $0\le a \le 6$. We have
\begin{eqnarray}
\E N_{P}{[x_0-10^{-7}\delta, x_0+10^{-7}\delta]}& =& \E N_{\check P}{[\check x_0-10^{-4}, \check x_0+10^{-4}]} \le \E \sum_{\zeta_i^{\check P}\in\mathbb{R}} G(\zeta_i^{\check P} - \check x_0)\nonumber\\
&\le& \E \sum_{\zeta_i^{\check {\tilde P}}\in\mathbb{R}} G(\zeta_i^{\check {\tilde P}} - \check x_0) + C\delta^{c-6\alpha}\qquad\text{by Theorem \ref{real}}\nonumber\\
&\le& \E \sum_{i=1}^{n} \mathbf{1}_{[-\delta^\alpha-10^{-4}, \delta^\alpha+10^{-4}]}(\zeta_i^{\check {\tilde P}} - \check x_0) + C\delta^{c-6\alpha}\nonumber\\
&\le & \E N_{\tilde P}{[x_0-10^{-7}\delta, x_0+10^{-7}\delta]}+ \mathcal I_{\tilde{P}} + C\delta^{\alpha}\nonumber,
\end{eqnarray}

where 
$\mathcal I_{\tilde{P}} = \E \sum_{i=1}^{n} \mathbf{1}_{\pm[10^{-7}\delta,10^{-7}\delta+10^{-3}\delta^{\alpha+1}]}(\zeta_i^{ {\tilde P}}-x_0)
$. We will show later that $\mathcal I_{\tilde P} = O(\delta^{\alpha/2})$.

Thus, \[\E N_{P}{[x_0-10^{-7}\delta, x_0+10^{-7}\delta]}\le \E N_{\tilde P}{[x_0-10^{-7}\delta, x_0+10^{-7}\delta]} + C\delta^{\alpha/2}.\]
By similar arguments with the function $G$ being replaced by one supported on $[-10^{-4}, 10^{-4}]$ such that $0\le G\le 1$ and $G = 1$ on $[-10^{-4}+\delta^\alpha, 10^{-4}-\delta^\alpha]$, we have
 \[\E N_{P}{[x_0-10^{-7}\delta , x_0+10^{-7}\delta]}\ge \E N_{\tilde P}{[x_0-10^{-7}\delta, x_0+10^{-7}\delta]} - C\delta^{\alpha/2}.\]

This gives \eqref{dd1} for $P$. Hence, to finish, we only need to prove the stated bound on $\mathcal I_{\tilde{P}}$ and $\mathcal I_{\tilde{Q}}$. Let $[a, b] = x_0 \pm[10^{-7}\delta,10^{-7}\delta+10^{-3}\delta^{\alpha+1}]$. By a Kac-Rice type formula (see, for instance, \cite[Theorem 2.5]{F1}), one has 
\begin{eqnarray}\label{nh1}
\E N_{\tilde P}[a, b] &\le& \int_a^{b}\sqrt{\frac{\mathcal S}{\mathcal P^{2}}}dt +\int_a^{b}\frac{|m'|\mathcal P + |m|\mathcal R}{\mathcal P^{3/2}}e^{-\frac{1}{2}\left (\frac{m}{\sqrt{\mathcal P}}\right )^{2}}dt,
\end{eqnarray}
for any $a\le b$, where $m(t) = \E\tilde{P}(t), \mathcal P=\Var (\tilde P)=\sum_{i=0}^{n} c_i^{2}t^{2i}$, $\mathcal Q=\Var(\tilde P')=\sum_{i=0}^{n} c_i^{2}i^{2}t^{2i-2}$,  $\mathcal R = \textbf{Cov}(\tilde P, \tilde P')=\sum_{i=0}^{n} c_i^{2}it^{2i-1}$, and $\mathcal S = \mathcal P\mathcal Q - \mathcal R^{2}=\sum_{i<j}(j-i)^{2}c_i^{2}c_j^{2}t^{2i+2j-2}$.


First, we will bound the second integral. By similar bounds as in \eqref{varp} and \eqref{winter}, we have for every $t\in [a,b]$,
\begin{equation}
\mathcal P\ge \frac{1}{C}\delta^{-2\rho-1},\nonumber
\end{equation}
and	
\begin{equation}
\mathcal R \le C\delta^{-2\rho-2}\le C\delta^{-1}\mathcal P.\nonumber
\end{equation}

Additionally, by the same argument as in the proof of Theorem \ref{real} (more precisely Lemma~\ref{sun6} near the estimate \eqref{boundm}), one can show that under Condition 2 \eqref{cond-real'}, for every $t\in [a, b]$, $|m'(t)|\le C\delta^{-\rho-1} + C\delta^{-1}\log\frac{1}{\delta}|m(t)|$. Thus, 
\begin{eqnarray}
\frac{|m'|\mathcal P + |m|\mathcal R}{\mathcal P^{3/2}}e^{-\frac{1}{2}\left (\frac{m}{\sqrt{\mathcal P}}\right )^{2}}\le C\delta^{-1/2}+C\delta^{-1}\log\frac{1}{\delta}\frac{|m|}{\sqrt{\mathcal P}}e^{-\frac{1}{2}\left (\frac{m}{\sqrt{\mathcal P}}\right )^{2}}\le C\delta^{-1}\log\frac{1}{\delta}\nonumber
\end{eqnarray}
where in the last inequality, we used the boundedness of the function $x\to xe^{-x^{2}/2}$ on $\R$. Since the length of the interval $[a, b]$ is $C\delta^{\alpha+1}$, the second integral in \eqref{nh1} is of order $O(\delta^{\alpha/2)}$ as desired.

Hence, it remains to bound the first integral in \eqref{nh1}. By symmetry, we may assume that $a>0$. We first reduce to the hyperbolic polynomials for which that integral is easier to handle. Consider the corresponding hyperbolic polynomials with coefficients $c_i^{hyper}=\sqrt{\frac{(2\rho+1)\dots (2\rho+i)}{i!}}$. A routine estimation shows that $\frac{1}{C}i^{\rho}\le c_i^{hyper}\le Ci^{\rho}$ for some constant $C$. And thus, by condition \eqref{cond-c}, $\frac{1}{C'}c_i^{hyper}\le |c_i|\le C'c_i^{hyper}$ for all $N_0\le i$, and so when $|t|\ge \frac{1}{2}$, one has $\mathcal S(t)\le C'\mathcal S^{hyper}(t)$ and $\mathcal P(t)\ge \frac{1}{C'}\mathcal P^{hyper}(t)$. Thus, $\sqrt{\frac{\mathcal S}{\mathcal P^{2}}}\le C'\sqrt{\frac{\mathcal S^{hyper}}{(\mathcal P^{hyper})^{2}}}$. 

If $\frac{1}{2}\le t\le 1-\frac{(100\rho+100)\log n}{n}$,  one has 
$\mathcal P^{hyper}(t) = \frac{1}{(1-t^2)^{2\rho+1}} - \sum_{i=n+1}^{\infty}\frac{(2\rho+1)\dots(2\rho+i)}{i!}t^{2i}$, and the last term is bounded from above by $\sum_{i=1}^{\infty}\frac{(2\rho+1)\dots(2\rho+i)}{i!}t^{2i}A_i$ where $A_i = \frac{(2\rho+i+1)\dots(2\rho+i+n)}{(i+1)\dots (i+n)}t^{2n} \le \frac{(i+n+1)\dots(i+n+\lceil 2\rho\rceil)}{(i+1)\dots(i+\lceil 2\rho\rceil)}t^{2n}=o( n^{-100\rho-100})$. Thus, $\mathcal P^{hyper} = \frac{1}{(1-t^{2})^{2\rho+1}}(1 + o( n^{-100\rho-100}))$.
Similar calculations for $\mathcal Q$ and $\mathcal R$ reveal that $\mathcal Q = \left (\frac{(2\rho+1)(2\rho+2)t^{2}}{(1-t^{2})^{(2\rho+3)}} + \frac{2\rho+1}{(1-t^{2})^{2\rho+2}}\right )[1 + o( n^{-100\rho-100})]$ and $\mathcal R = \frac{(2\rho+1)t}{(1-t^{2})^{2\rho+2}}[1+o( n^{-100\rho-100})]$, therefore
$$\sqrt{\frac{\mathcal S^{hyper}}{(\mathcal P^{hyper})^{2}}} 
= \frac{\sqrt{2\rho+1}}{1-t^{2}}\left (1+O(n^{-12\rho-12})\right )$$ 
Plugging into \eqref{nh1} with $[a, b]=x_0\pm[10^{-7}\delta,10^{-7}\delta+10^{-3}\delta^{\alpha+1}]$ gives the desired bound for $\delta\ge (200\rho+200)n^{-1}\log n$.

Next, if $1+\frac{2}{n}\ge t\ge 1-\frac{(500\rho+500)\log n}{n}$, we will prove that 
\begin{equation}\label{d2}
\frac{\mathcal S}{\mathcal P^{2}}\le \frac{O(n)}{|1-t|}.
\end{equation}
This together with \eqref{nh1} will give the desired bound for $\delta\le \frac{(200\rho+200)\log n}{n}$.

To prove \eqref{d2}, observe that $\mathcal S\le 4\sum_{0\le i<j\le n}(j-i)^{2}c_i^{2}c_j^{2}t^{2i+2j}$. Set $M = \frac{1}{|1-t|}$. We have
\begin{equation}
\sum_{0\le i<j\le n\wedge (i+\sqrt{nM})}(j-i)^{2}c_i^{2}c_j^{2}t^{2i+2j}\le \frac{n}{|1-t|}\mathcal P^{2}(t).\label{d4}
\end{equation}
And so, we only need to work on the summands corresponding to $0\le i\le i + \sqrt{nM}<j\le n$. In particular, we can assume that $M< n$. If $1-2/n\le t\le 1+2/n$, then $\frac{1}{|1-t|}\ge \frac{n}{2}$ and so, \eqref{d2} follows by a similar argument to \eqref{d4}. Thus, we can further assume that $t<1 - \frac{2}{n}$.

For each $\sqrt{nM}<j\le n$, we have from \eqref{cond-c},
\begin{eqnarray}
\sum_{i=0}^{\lfloor j-\sqrt{nM}\rfloor}(j-i)^{2}c_i^{2}c_j^{2}t^{2i+2j} &=& O\left (j^{2}c_j^{2}c_{\lfloor j-\sqrt{nM}\rfloor}^{2}t^{2j}\sum_{i=0}^{\infty}t^{2i}\right )  =O\left (n^{2\rho+2}c_{\lfloor j-\sqrt{nM}\rfloor}^{2}Mt^{2j}\right )\label{d5}.
\end{eqnarray}

We will now show that 
\begin{equation}
n^{2\rho+2}c_{\lfloor j-\sqrt{nM}\rfloor}^{2}Mt^{2j}= \frac{O(n)}{1-t}\mathcal P(t)c_{\lfloor j-\sqrt{nM}\rfloor}^{2}t^{2\lfloor j-\sqrt{nM}\rfloor},\label{d3}
\end{equation}

which is equivalent to $n^{2\rho+1}= O(1) \mathcal P(t)(1-\frac{1}{M})^{-2\sqrt{nM}}$ for some constant $C_3$. This is true because the right-hand side is at least $(\sum_{i=\lceil M/2\rceil}^{M}c_i^{2}(1-\frac{1}{M})^{2i})e^{2\sqrt{n/M}} \gg M^{2\rho+1}e^{2\sqrt{n/M}}\gg n^{2\rho+1}$ because we assumed that $M = \frac{1}{1-t}<n$.

From \eqref{d4}, \eqref{d5}, and \eqref{d3}, we obtain \eqref{d2}.

The proof for \eqref{dd1h} follows nearly the same lines with $\mathcal I_{\tilde{Q}} = \E \sum_{i=1}^{n} \mathbf{1}_{\pm[10^{-7}\delta,10^{-7}\delta+10^{-3}\delta^{\alpha+1}]}(\zeta_i^{ {\tilde Q}}-x_0)$ and as in the proof of Lemma \ref{sun6}, the estimates for $\tilde Q$ will be similar to the case $\rho=0$ for $\tilde P$. In particular the handling of the corresponding second integral of \eqref{nh1} is similar (exploiting ingredients from the proof of Lemma~\ref{sun6}), and for the first integral we may upper bound it by that of Kac polynomials 
by comparing $c_j$ with the hyperbolic coefficients and using Lemma~\ref{l.density-pw-outer}. This completes the proof.

%
\end{proof}

\section{Proof of complex local universality for series}\label{proof-complex-series}

\begin{proof}[Proof of Theorems \ref{complex-series} and \ref{real-series}]
First, let us make some observations about the series $P_{PS}$ under Condition 1. 
\begin{enumerate}
\item $P_{PS}$ converges uniformly in every compact set in $\D$ a.e.

Indeed, let $\Omega_n = \{\omega: |\xi_i(\omega)|\le n+i^{n},\forall i\ge 0\}$. Then $\Omega_1\subset \Omega_2\dots\subset \Omega_n\dots$, 
and  $\Omega = \bigcup_{n=1}^{\infty}\Omega_n$. 
On each $\Omega_n$, $P_{PS}$ converges uniformly on compact sets in $\D$.

Moreover, $P_{PS}$ does not extend analytically to any domain larger than the unit disk (see, for instance, \cite[Lemma 2.3.3]{HKPV1}).

\item By the Lebesgue's dominated convergence theorem, $\Var(P_{PS}(z)) = \sum_{n=0}^{\infty} |c_n|^{2}|z|^{2n}$.


\item For every $0<\delta\le 1-\frac{1}{C}$, $z\in A(0, 1-2\delta, 1-\delta]$, $k\ge 1$, one has 
\begin{equation}
 \E [N_{P_{PS}}(B(z, \delta/10))]^{k}<\infty.\nonumber
 \end{equation} 
This follows from Proposition \ref{nonclustering-1} by setting $\lambda = 2^{n}$ with $n = 1, 2, 3, \dots$ and shows that the integrals in the statements of Theorems \ref{complex-series} and \ref{real-series} are well-defined.
\end{enumerate}

Now, the proofs of Theorems \ref{complex-series} and \ref{real-series} for any $0<\delta\le \frac{1}{C}$ follow exactly the same lines as the proofs of Theorems \ref{complex} and \ref{real} for the case $\frac{\log^{2}n}{n}\le \delta\le \frac{1}{C}$ with the $n$ in the latter proofs being replaced by $\infty$.
\end{proof}

\begin{proof}[Proof of Corollary \ref{series-rotation}]
The Corollary follows from Theorem \ref{complex} with the two sequences of random variables $(\xi_n)$ and $(\xi_ne^{\sqrt{-1}n\theta})$.
\end{proof}

\begin{proof}[Proof of Corollary \ref{series-isom}] 
Observe that by the change of variables formula, with respect to the rescaling formula \ref{rescale}, one has
\begin{equation}
\rho_{\check P}^{(k)}(w_1, \dots, w_k) = (10^{-3}\delta)^{2k}\rho_P^{(k)}(10^{-3}\delta w_1, \dots, 10^{-3}\delta w_k).\label{rescale-rho}
\end{equation}

Let $\tilde P_{PS}$ be the hyperbolic power series with $\xi$'s being iid standard complex Gaussian. By Theorem \ref{complex-series} and 
we have
\begin{eqnarray}\label{isom1}
\bigg|&&\int_{\mathbb{C}^{k}}G(w_1,\dots, w_k)(10^{-3}\delta_0)^{2k}\rho_{ P_{PS}}^{(k)}( z_1+10^{-3}\delta_0 w_1,\dots,  z_k+ 10^{-3}\delta_0 w_k)\text{d}w_1\dots\text{d}w_k\nonumber\\
&&-\int_{\mathbb{C}^{k}}G(w_1,\dots, w_k)(10^{-3}\delta_0)^{2k}\rho_{ {\tilde {P}}_{PS}}^{(k)}( z_1+10^{-3}\delta_0 w_1,\dots,  z_k+ 10^{-3}\delta_0 w_k)\text{d}w_1\dots\text{d}w_k\bigg|\le C'\delta_0^c.
\end{eqnarray}

As proven in Proposition 2.3.4 in \cite{HKPV1}, the zero set of $\tilde P_{PS}$ is invariant in distribution under the transformations $\phi$. 
Thus, \begin{eqnarray}\label{isom3}
&&\int_{\mathbb{C}^{k}}G(w_1,\dots, w_k)(10^{-3}\delta_0)^{2k}\rho_{ {\tilde {P}}_{PS}}^{(k)}( z_1+10^{-3}\delta_0 w_1,\dots,  z_k+ 10^{-3}\delta_0 w_k)\text{d}w_1\dots\text{d}w_k\nonumber\\
&=&\int_{\mathbb{C}^{k}}H(w_1,\dots, w_k)(10^{-3}\delta_1)^{2k}\rho_{ { \tilde {P}}_{PS}}^{(k)}( {t_1}+10^{-3}\delta_1 w_1,\dots,  t_k+10^{-3}\delta_1 w_k)\text{d}w_1\dots\text{d}w_k.
\end{eqnarray}

Thus, it remains to show that 
\begin{eqnarray}\label{isom2}
\bigg|&&\int_{\mathbb{C}^{k}}H(w_1,\dots, w_k)(10^{-3}\delta_1)^{2k}\rho_{ { {P}}_{PS}}^{(k)}( {t_1}+10^{-3}\delta_1 w_1,\dots,  t_k+10^{-3}\delta_1 w_k)\text{d}w_1\dots\text{d}w_k\nonumber\\
&&-\int_{\mathbb{C}^{k}}H(w_1,\dots, w_k)(10^{-3}\delta_1)^{2k}\rho_{ { \tilde {P}}_{PS}}^{(k)}( {t_1}+10^{-3}\delta_1 w_1,\dots,  t_k+10^{-3}\delta_1 w_k)\text{d}w_1\dots\text{d}w_k\bigg|\le C'\delta_1^c.
\end{eqnarray}

Recall that the hyperbolic area is defined by $Area(B) := \int_B \frac{dm(z)}{(1-|z|^{2)^{2}}}$ for every Borel set $B\subset \D$. By the change of variables formula, one can prove that if $\phi$ is a hyperbolic transformation then $\phi$ preserves the hyperbolic area, i.e., $Area(B) = Area(\phi(B))$. Moreover, $\phi$ maps circles in $\D$ into circles in $\D$ (see, for instance, \cite[Section 14.3]{Ru}).

Now, since $\phi$ maps $z_j$ to $t_j$ with $|z_j|\in [1-2\delta_0, 1-\delta_0]$ and $t_j \in [1-2\delta_1, 1-\delta_1]$, one has 
\begin{equation}
 \phi(\D(z_j, \delta_0/s))\subset \D(t_j, 10\delta_1/s)\label{hyp0}
 \end{equation} 
for every $s\ge 25$. Indeed, assume that $t_j\in \phi(\D(z_j, \delta_0/s)) = \D(t, r)$. Then $Area(\D(z_j, \delta_0/s)) = Area(\D(t, r))$. We have
\begin{eqnarray}
Area(\D(z_j, \delta_0/s)) = \int_{\D(z_j, \delta_0/s)}\frac{dm(z)}{(1-|z|^{2})^{2}}\le \frac{\pi\delta_0^{2}}{s^{2}}\frac{1}{(\delta_0-\delta_0/s)^{2}}\le\frac{\pi}{(s-1)^{2}}.\nonumber
\end{eqnarray}
The radius $r$ cannot be larger than $1/3$ because otherwise, there exits some $t'$ between $t$ and $t_j$ such that $|t' - t_j| = \delta_1/2$. And so $\D(t', \delta_1/2)\subset \D(t, r)$, but then
\begin{eqnarray}
Area(\D(t', \delta_1/2))\ge \frac{\pi\delta_1^{2}}{4}\frac{1}{(1 - (1-3\delta_1)^{2})^{2}}\ge\frac{\pi}{144}> \phi(\D(z_j, \delta_0/s))\label{hyp2}
\end{eqnarray}
which is impossible. So, $r\le 1/3$, and hence, for every $z\in \D(t, r)$, $|z|\ge |t_j| - 2r\ge 1 - 2\delta_1-2r>\frac{1}{3} - 2\delta_1>0$. 
Therefore, $Area(\D(t, r))\ge \frac{\pi r^{2}}{16(\delta_1 + r)^{2}}$. Comparing this with \eqref{hyp0}, we conclude that $r\le \frac{4\delta_1}{s-5}$. Hence, $\D(t, r)\subset \D(t_j, \frac{8\delta_1}{s-5})\subset \D(t_j, \frac{10\delta_1}{s})$, proving \eqref{hyp0}.

From this and the assumption that $G$ is supported in $B(0, 10^{-4})^{k}$, one can deduce that $H$ is supported in $B(0, 10^{-3})^{k}$. The inequality \eqref{isom2} will then follow from Theorem \ref{complex} if we can show that 
$|\triangledown ^{a} H(z)|\le C$ for all $0\le a\le 2k+4$ and $z\in \C^{k}$, which in turn follows from the bounds  
%
\begin{equation}\label{hyp3}
|(\phi^{-1})^{(n)}(z)|\le C_n\frac{\delta_0}{\delta_1^{n}},\quad \forall n\ge 0, \forall z\in \D(t_i, 10^{-6}\delta_1),
\end{equation}
where $C_n$ is a constant depending on $n$.


Hence, it remains to show \eqref{hyp3}. Since $\phi^{-1}(t_j) = z_j$, there exists some $\theta\in [0, 2\pi)$ such that $\phi^{-1}(z) = \varphi_{-z_j}(e^{\sqrt{-1}\theta}\varphi_{t_j}(z))$  for all $z\in \D$ where $\varphi_{\alpha} = \frac{z-\alpha}{1-z\bar \alpha}$ (see, for instance, \cite[Sections 12.4, 12.5]{Ru}). Since $e^{\sqrt{-1}\theta}$ does not change the magnitudes of the derivatives, we can assume without loss of generality that $\theta = 0$. Now, by direct computation, we have
\begin{equation}\label{hyp4}
|\varphi_{t_j}^{(m)}(z)| = \left |\frac{m!(1-|t_j|^{2}){\bar t_j}^{m-1}}{(1-\bar t_j z)^{m+1}}\right |\le \frac{C_m\delta_1}{\delta_1^{m+1}}=\frac{C_m}{\delta_1^{m}} \quad\forall m\ge 0, \forall z\in  \D(t_i, 10^{-6}\delta_1).
\end{equation}


For $z\in \D(t_i, 10^{-6}\delta_1)$, set $w= \varphi_{t_j}(z) = \frac{z - t_j}{1 - \bar t_j z}\in \D(0, 10^{-5})$. And 
\begin{equation}\label{hyp5}
|\varphi_{-z_j}^{(m)}(w)| = \left |\frac{m!(1-|z_j|^{2}){\bar z_j}^{m-1}}{(1-\bar z_j w)^{m+1}}\right |\le C_m\delta_0 \quad\forall m\ge 0, \forall w\in  \D(0, 10^{-5}).
\end{equation}

Combining \eqref{hyp4} and \eqref{hyp5}, we obtain \eqref{hyp3} and complete the proof.
\end{proof}

\section{Proof of Theorem~\ref{t.Gaussian}, part I: reduction to the case $M=m=1$}\label{s.Gaussian}

We begin the proof of Theorem~\ref{t.Gaussian} in this section.  In this section,  $\xi_k$'s are i.i.d. normalized Gaussian and  $(c_k)_{k\ge0}$ is a sequence of deterministic real numbers satisfying the following assumptions. 
For some $N_0\ge 0$ and $0<m\le M<\infty$, it holds  that
$$m\sqrt{h(k)} \le  |c_k|  \le M\sqrt{h(k)} \ \ , \ \ N_0\le k \le n $$
$$\max_{0\le k < N_0} c_k^2 \le C_1 M.$$


Below, we let $N_n(I)$ be the number of real zeros of  $\displaystyle P_n(t)=\sum_{k=0}^n c_k \xi_k t^k$ that are inside $I$   for any $I \subset \R$. For brevity, we will sometimes write $N_n(a,b) = N_n((a,b))$, $N_n[a,b]=N_n([a,b])$, etc.

   Our main goal of this section  is to reduce the theorem to the simpler case $M=m=1$.

By Edelman--Kostlan \cite{EK1}, the density function for the distribution of the real zeros for $P_n(x)$ is
$\displaystyle \rho_n(t) = \frac 1 \pi\|\gamma'_n(t)\|$
where  $\gamma_n(t)$ is the unit vector in the direction of $v_n(t):=(c_0, c_1t, \dots, c_n t^n)$. It was shown in  \cite{EK1} that
\begin{equation}\label{e.edelman-kostlan}
\|\gamma'_n(t)\|^2 = (\frac{\|v'_n(t)\|}{\|v_n(t)\|})^2 - (\frac{v_n(t)\cdot v'_n(t)}{\|v_n(t)\|^2})^2.
\end{equation}
From \eqref{e.edelman-kostlan}, it follows that $\rho_n$ is an even function of $t$.

By elementary computation,  for any $n\ge 0$ and any sequence $(x_k)$ and $(y_k)$ we have:
$$(\sum_{k=0}^n x_k^2)(\sum_{k=0}^n y_k^2) - (\sum_{k=0}^n x_k y_k)^2 = \sum_{k, m} (x_k y_m - x_m y_k)^2.$$
It follows that
$$\rho_n(t)^2 = \frac 1 {\pi^2}\frac{\|v'_n(t)\|^2 \|v_n(t)\|^2 - [v'_n(t)\cdot v_n(t)]^2}{\|v_n(t)\|^4}$$
\begin{equation*}
= \frac 1 {\pi^2}\frac{\sum_{0\le k, m\le n} (m-k)^2 c_k^2 c_m^2 t^{2m+2k-2}}{(\sum_{k=0}^n c_k^2 t^{2k})^2}.
\end{equation*}

Thus, for $|t|$ comparable to $1$ we have $\rho(t) =O(n)$, therefore $\E N_n(1-\frac c n, 1+\frac c n))=O(1)$ for any absolute constant $c>0$. Furthermore,  by scaling invariant one sees that

\begin{corollary} \label{c.compareseq} Suppose that for $0<m\le M<\infty$ we have $m|b_k| \le |a_k|\le M|b_k|$ for every $k=0,\dots, n$. Let $N_n$ and $\widetilde N_n$ respectively count the real zeros of random polynomials associated with $a_0,\dots, a_n$ and $b_0,\dots, b_n$. Then
$$\frac {m^2}{M^2} \E\widetilde N_n \le \E N_n \le \frac{M^2}{m^2} \E \widetilde N_n.$$
\end{corollary}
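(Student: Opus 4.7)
The plan is to obtain the comparison pointwise on the density functions and then integrate. Let $\rho_n$ and $\widetilde\rho_n$ denote the densities of real zeros for the $(a_k)$ and $(b_k)$ polynomials respectively. By \eqref{e.elementary} applied to each sequence,
\begin{equation*}
\rho_n(t)^2 = \frac{1}{\pi^2}\frac{\sum_{0\le k,m\le n}(m-k)^2 a_k^2 a_m^2 t^{2k+2m-2}}{\bigl(\sum_{k=0}^n a_k^2 t^{2k}\bigr)^2},
\end{equation*}
and similarly for $\widetilde\rho_n$ with $b_k$'s in place of $a_k$'s.

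The hypothesis $m|b_k|\le |a_k|\le M|b_k|$ gives $m^2 b_k^2 \le a_k^2\le M^2 b_k^2$ for every $k$. The first key step is to observe that each summand in the numerator of $\rho_n(t)^2$ is bounded between $m^4$ and $M^4$ times the corresponding summand for $\widetilde\rho_n(t)^2$, because the factor $(m-k)^2 t^{2k+2m-2}$ is nonnegative. Hence
\begin{equation*}
m^4 \sum_{k,m}(m-k)^2 b_k^2 b_m^2 t^{2k+2m-2} \le \sum_{k,m}(m-k)^2 a_k^2 a_m^2 t^{2k+2m-2} \le M^4 \sum_{k,m}(m-k)^2 b_k^2 b_m^2 t^{2k+2m-2}.
\end{equation*}
Similarly, termwise comparison of the denominator yields $m^4\bigl(\sum b_k^2 t^{2k}\bigr)^2\le\bigl(\sum a_k^2 t^{2k}\bigr)^2\le M^4\bigl(\sum b_k^2 t^{2k}\bigr)^2$. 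Combining the two gives
\begin{equation*}
\frac{m^4}{M^4}\,\widetilde\rho_n(t)^2 \;\le\; \rho_n(t)^2 \;\le\; \frac{M^4}{m^4}\,\widetilde\rho_n(t)^2,
\end{equation*}
and taking square roots yields the pointwise bound $\frac{m^2}{M^2}\widetilde\rho_n(t)\le \rho_n(t)\le \frac{M^2}{m^2}\widetilde\rho_n(t)$.

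The last step is to integrate this pointwise inequality over $\R$ and use $\E N_n = \int_\R \rho_n(t)\,dt$ (and the analogue for $\widetilde N_n$), which is justified since both densities are nonnegative. This yields exactly
\begin{equation*}
\frac{m^2}{M^2}\E\widetilde N_n \;\le\; \E N_n \;\le\; \frac{M^2}{m^2}\E\widetilde N_n,
\end{equation*}
as desired. There is no real obstacle here — the argument is a clean termwise comparison enabled by the sum-of-squares reformulation \eqref{e.elementary}, which crucially has only nonnegative coefficients (of $a_k^2 a_m^2$ in the numerator and $a_k^2$ in the denominator), so that the two-sided bound on the $a_k^2$'s transfers directly.
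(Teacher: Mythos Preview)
Your proof is correct and is exactly the argument the paper has in mind: the corollary is stated as an immediate consequence of the sum-of-squares reformulation \eqref{e.elementary}, and your termwise comparison of numerator and denominator followed by integration fills in precisely those details. There is nothing to add.
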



Thanks to Corollary~\ref{c.compareseq}, it suffices to prove Theorem~\ref{t.Gaussian}  for $m=M=1$. We will free the symbols $m$ and $M$ so that they could be used for unrelated purposes later.  

We now describe the high-level overview of the rest of the proof of Theorem~\ref{t.Gaussian}. Thanks to Lemma~\ref{boundedness}, it remains to count the number of real zeros  near the critical points $x=-1$ and $x=1$. By symmetry  it suffices to consider a small neighborhood of $1$, which we will discuss in the next two section: Section~\ref{s.frameworknear1} will discuss estimates for the denstiy function near $1$ and Section~\ref{s.kac-inside} will use these results to estimate the average number of real zeros near $1$.

\section{Proof of Theorem~\ref{t.Gaussian}, part II:  estimates for the density function near $\pm 1$}\label{s.frameworknear1}

In this section, we  prove some estimates for $\rho_n$ near $\pm1$

Below, for $x\ge 0$ let $f_n(x)= \sum_{0\le k\le n} c_n^2 x^n$, clearly $\Var[P_n(t)] =f_n(t^2)$ so our notational convention is to think of $x$ as $t^2$.   

Our general framework for the analysis in this section will be under the heuristics that $f_n(x)$ converges fairly rapidly to some $f_\infty(x)$  as $n\to\infty$. This convergence essentially leads to  the convergence of  $\rho_n$ to some limit $\rho_\infty$.  The local average number of real zeros of $P_n$ is essentially decided by the  local  behavior  of  $\rho_\infty$ and the rate of the convergence $f_n\to f_\infty$.  
For instance, if  $P_n(t)=\sum_{k= 0}^n c_k \xi_k t^k$ where $\xi_k$ are iid normalized Gaussian  and $c_k$ are independent of $n$ then the natural choice for $f_\infty$ would be $f_\infty(x) = \sum_{k=0}^\infty c_k^2 x^k$, and the convergence $f_n\to f_\infty$ holds for $x$ inside the radius of convergence of $f_\infty$.  On the other hand,  our approach  is applicable  even if  $c_k$  depend on $n$, and does not require the polynomially growing assumptions on $c_k$.

To motivate the definition of $\rho_\infty$, we let $g_n(x)=\log f_n(x)$, and note that

\begin{lemma}\label{l.density} For every $n$ it holds that
\begin{equation}\label{e.density-formula}
\rho_n(t) = \frac 1\pi\Big(g'_n(t^2) + t^2 g''_n(t^2)\Big)^{1/2}
\end{equation}
\end{lemma}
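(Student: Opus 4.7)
The plan is to verify the identity by a direct computation starting from the Edelman--Kostlan formula \eqref{e.edelman-kostlan} (equivalently \eqref{e.elementary}), and then re-express everything in terms of $f_n(x) = \sum_{k=0}^n c_k^2 x^k$ and its logarithm $g_n(x) = \log f_n(x)$. There is no analytic obstacle here; the argument amounts to matching two elementary differentiations.

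First I would record the three basic quantities: (i) $\|v_n(t)\|^2 = \sum_k c_k^2 t^{2k} = f_n(t^2)$; (ii) differentiating this in $t$ gives $2\,v_n(t)\cdot v'_n(t) = 2t f'_n(t^2)$, so $v_n(t)\cdot v'_n(t) = t f'_n(t^2)$; (iii) for $\|v'_n(t)\|^2 = \sum_k k^2 c_k^2 t^{2k-2}$, note that $\sum_k k^2 c_k^2 x^{k-1}$ is the derivative of $x f'_n(x) = \sum_k k c_k^2 x^k$, which equals $f'_n(x) + x f''_n(x)$. Evaluating at $x = t^2$ yields $\|v'_n(t)\|^2 = f'_n(t^2) + t^2 f''_n(t^2)$.

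Substituting these into the Edelman--Kostlan formula \eqref{e.edelman-kostlan} gives
\[
\pi^2 \rho_n(t)^2 \;=\; \frac{\bigl(f'_n(t^2)+t^2 f''_n(t^2)\bigr) f_n(t^2) \;-\; t^2 \bigl(f'_n(t^2)\bigr)^2}{f_n(t^2)^2} \;=\; \frac{f'_n(t^2)+t^2 f''_n(t^2)}{f_n(t^2)} \;-\; t^2\frac{\bigl(f'_n(t^2)\bigr)^2}{f_n(t^2)^2}.
\]
Finally, using $g'_n = f'_n/f_n$ and $g''_n = f''_n/f_n - (f'_n/f_n)^2$, the right-hand side collapses to $g'_n(t^2) + t^2 g''_n(t^2)$, which is exactly the expression inside the square root in \eqref{e.density-formula}. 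Taking positive square roots (valid since $\rho_n\ge 0$) completes the proof.

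The only mild subtlety is the non-negativity of $g'_n(t^2) + t^2 g''_n(t^2)$, but this is automatic: we just derived that it equals $\pi^2\rho_n(t)^2$, which is nonnegative by the Edelman--Kostlan identity (or directly as $\|v_n\|^{-4}$ times the Cauchy--Schwarz defect $\|v'_n\|^2\|v_n\|^2 - (v_n\cdot v'_n)^2 \ge 0$). So there is no obstacle in taking the square root, and the lemma follows.
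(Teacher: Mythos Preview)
Your proof is correct and follows essentially the same approach as the paper's: both compute $\|v_n(t)\|^2$, $v_n(t)\cdot v'_n(t)$, and $\|v'_n(t)\|^2$ in terms of $f_n$ and its derivatives, substitute into the Edelman--Kostlan formula, and then rewrite via $g_n=\log f_n$. Your added remark on non-negativity is a nice clarification but not needed for the argument.
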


\proof
Let $v_n(t)$ denote the vector $(c_0, c_1t,\dots, c_n t^n)$. Clearly,
\begin{eqnarray*}
\|v_n(t)\|^2 &=& \sum_{0\le k\le n} c_k^2 t^{2k} \qquad =\qquad  f_n(t^2)\\
v'_n(t)\cdot v_n(t) &=& \sum_{0\le k \le n} kc_k^2 t^{2k-1} =\frac 1 2 \frac{d}{dt}(\|v_n(t)\|^2) \qquad = 
\qquad tf'_n(t^2)\\
\|v'_n(t)\|^2 
&=& \sum_{0\le k \le n} k^2 c_k^2 t^{2k-2}  = \frac 1{4t} \frac{d}{dt}(t \frac{d}{dt}(\|v_n(t)\|^2)) \qquad = \qquad f'_n(t^2) + t^2 f''_n(t^2)
\end{eqnarray*}

The desired claim now follows from the Edelman--Kostlan formula \eqref{e.edelman-kostlan}
\begin{eqnarray*}
\pi^2 \rho_n(t)^2 
&=& (\frac{\|v'_n(t)\|}{\|v_n(t)\|})^2 - (\frac{v_n(t)\cdot v'_n(t)}{\|v_n(t)\|^2})^2 \quad = \quad \frac{f'_n(t^2)}{f_n(t^2)} + t^2\frac{f''_n(t^2)f_n(t^2) - [f'_n(t^2)]^2}{[f_n(t^2)]^2}\\
&=& g'_n(t^2) + t^2 g''_n(t^2)
\end{eqnarray*}
\endproof


Let $0\le \beta<\alpha<\infty$ such that for $x\in (\alpha^2,\beta^2)$ the limit $f_\infty(x):=\lim_{n\to\infty} f_n(x)$ exists and is continuously twice differentiable on this interval. Let $g_\infty(x) = \log f_\infty(x)$ and define
\begin{equation}\label{e.limiting-density}
\rho_\infty(t):= \frac 1\pi \sqrt{ g'_\infty(t^2) + t^2 g''_\infty(t^2)}
\end{equation}
Motivated by Lemma~\ref{l.density}, under some mild assumptions one expects that $\rho_n(t)$ converges to $\rho_\infty(t)$ for $\beta<|t|<\alpha$.  The precise estimates will be discussed below.

Note that   the current analysis is only directly applicable to count the number of real zeros   inside $(-\alpha,\alpha)$ near $\pm \alpha$. For $\R\setminus (-\alpha,\alpha)$,  we will pass to the reciprocal polynomial $\widetilde P_n(t) = \frac 1 {c_n} t^n P_n(\frac 1 t)$ and apply the  argument to $\widetilde P_n$, which is also a Gaussian random polynomial. 

\subsection{Convergence of $\rho_n$}

\begin{theorem}\label{t.density-compare}
Let $u_n(x) := \frac{f_n(x)}{f_\infty (x)}$. Assume that $I_n  \subset (\beta,\alpha)$ is an interval (whose endpoints may depend on $n$) such that $u_n(t^2) \ge c_0$ for $|t|\in I_n$ for some fixed constant $c_0>0$.  Then uniformly over $\{|t| \in I_n\}$ it holds that 
$$\rho_n(t) =  \rho_\infty(t) + O\Big(
|u'_n(t^2)|^{1/2}  + |u'_n(t^2)|  + |u''_n(t^2)|^{1/2}\Big).$$
\end{theorem}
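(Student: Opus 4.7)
The proof is a direct computation combining Lemma~\ref{l.density}, the definition \eqref{e.limiting-density}, and the elementary inequality $|\sqrt{a}-\sqrt{b}| \le \sqrt{|a-b|}$ valid for $a,b\ge 0$. The plan is to write $f_n = u_n\cdot f_\infty$ so that $g_n = \log u_n + g_\infty$, extract the error in the bracket $g'_n(t^2) + t^2 g''_n(t^2)$, bound it in terms of $u'_n,u''_n$ using the assumption $u_n \ge c_0$, and then pass from this $L^\infty$ bound on the squared density to a bound on the density itself via the $\sqrt{\cdot}$ inequality.

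More precisely, first I would differentiate $g_n = \log u_n + g_\infty$ to get
\begin{align*}
g'_n(x) - g'_\infty(x) &= \frac{u'_n(x)}{u_n(x)}, \\
g''_n(x) - g''_\infty(x) &= \frac{u''_n(x)}{u_n(x)} - \frac{u'_n(x)^2}{u_n(x)^2}.
\end{align*}
Combining with Lemma~\ref{l.density} and \eqref{e.limiting-density}, this gives
\[
\pi^2\bigl(\rho_n(t)^2 - \rho_\infty(t)^2\bigr) = \frac{u'_n(t^2)}{u_n(t^2)} + t^2\left[\frac{u''_n(t^2)}{u_n(t^2)} - \frac{u'_n(t^2)^2}{u_n(t^2)^2}\right].
\]
Since $|t|\in I_n \subset (\beta,\alpha)$ we have $t^2 = O(1)$, and by hypothesis $u_n(t^2) \ge c_0 > 0$, so the right-hand side is bounded in absolute value by $C\bigl(|u'_n(t^2)| + |u''_n(t^2)| + |u'_n(t^2)|^2\bigr)$ for some constant $C$ depending only on $\alpha$ and $c_0$.

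Finally, applying $|\sqrt{A} - \sqrt{B}| \le \sqrt{|A-B|}$ to $A = \pi^2\rho_n(t)^2$ and $B = \pi^2\rho_\infty(t)^2$ (both of which are nonnegative quantities, being squared densities) yields
\[
|\rho_n(t) - \rho_\infty(t)| \le \frac{1}{\pi}\sqrt{C\bigl(|u'_n(t^2)| + |u''_n(t^2)| + |u'_n(t^2)|^2\bigr)} = O\bigl(|u'_n(t^2)|^{1/2} + |u''_n(t^2)|^{1/2} + |u'_n(t^2)|\bigr),
\]
which is exactly the claimed bound. There is essentially no obstacle here; the statement is a clean bookkeeping consequence of the multiplicative decomposition $f_n = u_n \cdot f_\infty$ together with the $\sqrt{\cdot}$-Lipschitz trick, and its usefulness will be borne out in Section~\ref{s.kac-inside} when concrete estimates on $u'_n, u''_n$ (reflecting the rate of convergence $f_n \to f_\infty$) are inserted.
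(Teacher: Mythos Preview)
Your proof is correct and follows essentially the same route as the paper's: both write $g_n - g_\infty = \log u_n$, compute its first two derivatives as $u_n'/u_n$ and $u_n''/u_n - (u_n'/u_n)^2$, bound these using $u_n \ge c_0$, and then pass from the bound on $\rho_n^2 - \rho_\infty^2$ to a bound on $\rho_n - \rho_\infty$ via the inequality $|\sqrt{A}-\sqrt{B}|\le \sqrt{|A-B|}$. The only cosmetic difference is that the paper names the intermediate quantity $D_n = \log u_n$ and splits the square-root step into $|D_n'|^{1/2} + \alpha|D_n''|^{1/2}$ before substituting, whereas you bundle everything into a single estimate; the content is identical.
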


\proof Let $D_n(x)=\log f_n(x) -\log f_\infty(x)$. Using Lemma~\ref{l.density} we have
\begin{eqnarray*}
|\rho_n(t) - \rho_\infty(t)| 
&\le& |\rho_n(t)^2-\rho_\infty(t)^2|^{1/2} \\
&\le& |D_n'(t^2)|^{1/2} + \alpha |D''_n(t^2)|^{1/2} \ \ .
\end{eqnarray*}
On the other hand, let $x=t^2$ where $t\in I_n$, then $u_n(x) \ge c_0>0$, therefore
\begin{eqnarray*}
D'_n(x) &=& \frac{u_n'(x)}{u_n(x)} = O(u'_n(x)) \\
D''_n(x) &=& \frac{u''_n(x)}{u_n(x)} - (\frac {u'_n(x)}{u_n(x)})^2 \quad =\quad O(D'_n(x)^2 + |u_n''(x)|) 
\end{eqnarray*}
and the desired estimate immediately follows.
\endproof

We remark that the assumption $u_n(t^2) \ge c_0>0$ uniformly over $|t|\in I_n$  in Theorem~\ref{t.density-compare} is fairly mild, since one has  $u_n(x)\to 1$ as $n\to\infty$.

\subsection{Blowup nature of $\rho_\infty$}

It follows from Theorem~\ref{t.density-compare} that the leading asymptotics of $\rho_n$ on $I_n$ is determined by two factors: the size  of $u_n=f_n/f_\infty$ (and its first two derivatives), and the possible blowup  of $\rho_\infty$, which typically could happen near the endpoint of $I_n$. By \eqref{e.limiting-density}  depends on the blowup nature of $f_\infty$ there. For the polynomially growing setting of Theorem~\ref{t.Gaussian} (and with the normalization $M=m=1$) one expects that $f_\infty$ blows up  polynomially near the endpoints of its convergence interval. This will  lead to a simple pole for $\rho_\infty$, as proved in the following lemma.

\begin{lemma}\label{l.limiting-density} Let $0\le \beta<\alpha <\infty$ and $\gamma \ge 0$.  Assume that $\log f_\infty(x) +  \gamma \log |x-\alpha^2|$
has two uniformly  bounded derivatives for $x\in (\beta^2, \alpha^2)$. Then the following holds uniformly over $|t| \in (\beta,\alpha)$: 
$$\rho_\infty(t) =\frac{\alpha\sqrt{\gamma}}{\pi|t^2-\alpha^2|} + O(1).
$$
\end{lemma}

\proof Recall that $g_\infty = \log f_\infty$. For $|t| \in (\beta,\alpha)$, by the given assumption we have
$$g'_\infty(t^2) = -\frac{\gamma}{t^2-\alpha^2} + O(1) \qquad , \qquad g''_\infty(t^2) = \frac{\gamma}{(t^2-\alpha^2)^2} + O(1).$$
Using \eqref{e.limiting-density} we obtain
\begin{eqnarray*}
\rho_\infty(t)^2 
&=& \frac{1}{\pi^2} (g'_\infty(t^2) + t^2g''_\infty(t^2)) \\
&=&\frac{1}{\pi^2}(-\frac{\gamma}{t^2-\alpha^2} + \frac{t^2 \gamma}{(t^2-\alpha^2)^2}) + O(1)\\
&=& \frac{1}{\pi^2}\frac{\gamma \alpha^2}{(t^2-\alpha^2)^2}  + O(1)  \ \ .
\end{eqnarray*}
Since $\rho_\infty\ge 0$, the desired conclusion follows immediately.
\endproof

\section{Proof of Theorem~\ref{t.Gaussian}, part III: counting real zeros near $\pm 1$}\label{s.kac-inside}
Recall  that  $h(k)=\sum_{j=0}^d \alpha_j L_j(L_j+1)\dots(L_j+k-1)/k!$ with nonzero coefficients,   and for some fixed $N_0\ge 0$ the following holds:
\begin{itemize}
\item for every $N_0\le k\le n$ it holds that $|c_k| =\sqrt{h(k)}$. 
\item for some $C_1$ fixed we have $\max_{0\le k<N_0} |c_k| < C_1$.
\end{itemize}

Without loss of generality assume that $\alpha_d=1$.

To count the real zeros near $\pm 1$ of $P_n(t)=\sum_{k=0}^d c_k \xi_k t^k$,  we separate the treatment of the inside and outside into two results, Lemmas~\ref{l.kac-inside} and \ref{l.kac-outside} below. In the following two results, the implicit constants may depend on $N_0$, $C_1$, and $h$.

\begin{lemma}\label{l.kac-inside}
For some $\beta \in (0,1)$ that depends only on $h$, $N_0$, $C_1$, it holds that
$$\E N_n(\{\beta \le |t| \le 1\}) = \frac {\sqrt{\deg(h)+1}}{\pi}\log n + O(1)$$
\end{lemma}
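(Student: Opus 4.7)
The plan is to apply the density-comparison framework developed in Section \ref{s.frameworknear1} with $\alpha=1$. The first step is to identify the natural $f_\infty$: the generating function identity $\sum_{k\ge 0}\frac{L(L+1)\cdots(L+k-1)}{k!}x^k = (1-x)^{-L}$ implies that for $x\in(0,1)$,
$$f_\infty(x) \;=\; \lim_{n\to\infty} f_n(x) \;=\; \sum_{j=0}^d \alpha_j(1-x)^{-L_j} + \text{(a polynomial from the first $N_0$ terms)}.$$
Since $\alpha_d=1$ and $L_d>L_{d-1}>\cdots>L_0>0$, the dominant singular behavior at $x=1$ is $f_\infty(x)\sim(1-x)^{-L_d}$. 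I will choose $\beta\in(0,1)$ close enough to $1$ so that $f_\infty>0$ on $(\beta^2,1)$ and the hypotheses of Lemma \ref{l.limiting-density} hold. Lemma \ref{l.limiting-density} with $\gamma=L_d=\deg(h)+1$ then yields
$$\rho_\infty(t)=\frac{\sqrt{L_d}}{\pi|1-t^2|}+O(1)\qquad\text{for }|t|\in(\beta,1).$$

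Next I would split $\{\beta\le|t|\le 1\}$ into a main region $\{\beta\le|t|\le 1-C_0/n\}$ and a boundary strip $\{1-C_0/n\le |t|\le 1\}$, where $C_0$ is a large constant to be chosen. On the main region I would apply Theorem \ref{t.density-compare} with $u_n=f_n/f_\infty$. The crucial estimates are on $u_n$ and its first two derivatives. Writing $u_n=1-R_n/f_\infty$ with $R_n(x)=\sum_{k>n}h(k)x^k$ and using the asymptotic $h(k)\sim k^{L_d-1}/\Gamma(L_d)$, a straightforward integral comparison yields, for $\delta=1-x\ge C_0/n$,
$$\Big|\tfrac{d^j}{dx^j}u_n(x)\Big|\;\lesssim\;\frac{(n\delta)^{L_d-1+j}e^{-n\delta}}{\delta^j},\qquad j=0,1,2.$$
Taking $C_0$ large forces $u_n\ge 1/2$ on the main region, so Theorem \ref{t.density-compare} applies, and the change of variables $s=n\delta$ shows that each of the three error terms $\sqrt{|u_n'|},|u_n'|,\sqrt{|u_n''|}$ integrates to $O(1)$ in $t$ over this region.

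The main contribution then comes from $\rho_\infty$. A direct computation gives
$$2\int_\beta^{1-C_0/n}\frac{\sqrt{L_d}}{\pi(1-t^2)}\,dt=\frac{\sqrt{L_d}}{\pi}\log\frac{1}{C_0/n}+O(1)=\frac{\sqrt{L_d}}{\pi}\log n+O(1),$$
where the factor $2$ accounts for $t<0$ via the evenness of $\rho_n$. On the boundary strip, I would use the trivial pointwise bound $\rho_n(t)\le n/\pi$, immediate from the identity \eqref{e.elementary} since the numerator there is at most $n^2$ times the denominator. Multiplying by the length $2C_0/n$ gives $O(1)$. Summing yields $\frac{\sqrt{\deg(h)+1}}{\pi}\log n+O(1)$.

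The main technical obstacle is the derivative bound on $u_n$. One must track both the dominant singular term $(1-x)^{-L_d}$ of $f_\infty$ and the decay of the tail $R_n$ carefully; in particular, the subleading binomial pieces of $f_\infty$ (with exponents $L_j<L_d$) contribute genuinely lower-order corrections that nevertheless must be dominated pointwise in the ratio $R_n/f_\infty$ uniformly over the entire main region, in order to justify the asymptotic $|u_n^{(j)}|\lesssim (n\delta)^{L_d-1+j}e^{-n\delta}/\delta^j$ near both endpoints of the main region simultaneously.
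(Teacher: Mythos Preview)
Your approach mirrors the paper's: identify $f_\infty$, apply the comparison framework of Theorem~\ref{t.density-compare} on $\{\beta\le |t|\le 1-c/n\}$, and use the crude bound $\rho_n=O(n)$ on the remaining strip. There is one genuine gap, however. You invoke Lemma~\ref{l.limiting-density} directly to obtain $\rho_\infty(t)=\frac{\sqrt{L_d}}{\pi|1-t^2|}+O(1)$, but that lemma requires $\log f_\infty(x)+L_d\log(1-x)$ to have two \emph{uniformly} bounded derivatives on $(\beta^2,1)$. Since
\[
(1-x)^{L_d}f_\infty(x)=1+\sum_{m<d}\alpha_m(1-x)^{L_d-L_m}+g(x)(1-x)^{L_d},
\]
the $j$th derivative of its logarithm is of size $(1-x)^{c_0-j}$ with $c_0=\min\big(L_d,\min_j(L_j-L_{j-1})\big)$, which blows up at $x=1$ whenever $c_0<2$ (already for the Kac case $L_d=1$). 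Taking $\beta$ close to $1$ cannot repair this, since the obstruction is at the endpoint, not at $\beta$.

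The paper fixes this by rerunning the proof of Lemma~\ref{l.limiting-density} with the weaker input $O((1-x)^{c_0-j})$, obtaining
\[
\rho_\infty(t)=\frac{\sqrt{L_d}}{\pi(1-t^2)}+O\big((1-t^2)^{c_0/2-1}\big).
\]
Since $c_0>0$, this error is integrable on $[\beta,1]$ and still contributes $O(1)$, so your overall conclusion is unaffected once this step is patched. Your tail estimate $|u_n^{(j)}(x)|\lesssim (n\delta)^{L_d-1+j}e^{-n\delta}/\delta^j$ agrees (up to the harmless $1+\cdots$ prefactor) with the paper's Lemma~\ref{l.fnfinfty}; the paper then simplifies via $n^\alpha x^n\lesssim (1-x)^{-\alpha}$ to collapse the density error to $O\big(1/[n(1-|t|)^2]\big)$ before integrating, which is a slightly cleaner bookkeeping than your substitution $s=n\delta$ but leads to the same $O(1)$.
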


\begin{lemma}\label{l.kac-outside}
It holds that
$$\E N_n ([-2,-1]\cup [1,2]) = \frac {\log n}{\pi} + O(1)$$
\end{lemma}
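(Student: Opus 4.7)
The plan is to reduce the problem to the framework of Section~\ref{s.frameworknear1} via the reciprocal polynomial trick. Write $\widetilde P_n(t) = \sum_{k=0}^n (c_{n-k}/c_n)\xi_k t^k$; the change of variable $t\mapsto 1/t$ together with the symmetry $t\mapsto -t$ of the density gives
$$
\E N_n([-2,-1]\cup[1,2]) \;=\; \E\widetilde N_n([-1,-1/2]\cup[1/2,1]) \;=\; 2\,\E\widetilde N_n([1/2,1])\, ,
$$
so it is enough to prove $\E\widetilde N_n([1/2,1]) = \tfrac{1}{2\pi}\log n + O(1)$.

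The variance function of $\widetilde P_n$ is $\widetilde f_n(x)=\sum_{k=0}^n \widetilde c_k^2 x^k$ with $\widetilde c_k^2 = h(n-k)/h(n)$ for $0\le k\le n-N_0$. Because $h$ is a real generalized polynomial of positive leading coefficient with $\deg(h)=d\ge 0$, one has $h(n-k)/h(n)=(1-k/n)^d(1+O(1/n))$ uniformly for $k\le n-N_0$, hence
$$
\widetilde f_\infty(x):=\lim_{n\to\infty}\widetilde f_n(x) \;=\; \sum_{k=0}^\infty x^k \;=\; \frac{1}{1-x}\qquad (0<x<1)\, .
$$
Thus Lemma~\ref{l.limiting-density} applies with $\alpha=1$, $\gamma=1$, giving the limiting density
$$
\rho_\infty(t) \;=\; \frac{1}{\pi(1-t^2)} + O(1) \qquad (|t|<1).
$$

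Next I would split $[1/2,1]=I_1\cup I_2$ with $I_1=[1/2,1-C_0/n]$ and $I_2=[1-C_0/n,1]$ for some large absolute constant $C_0$. On $I_1$, the ratio $u_n(x)=(1-x)\widetilde f_n(x)$ can be expanded via summation by parts as
$$
u_n(x) \;=\; 1 + \sum_{k=1}^n \frac{h(n-k)-h(n-k+1)}{h(n)} x^k - \frac{h(0)}{h(n)} x^{n+1}\, ,
$$
where $|h(n-k+1)-h(n-k)|\lesssim (n-k)^{d-1}$ for $n-k\ge N_0$, so the differences are of size $O(n^{-1}(1-k/n)^{d-1})$. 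With these Riemann-sum type estimates one shows $u_n(t^2)\gtrsim 1$ and, crucially, derivative bounds of the integrable form $|u_n'(t^2)|+|u_n''(t^2)|^{1/2}\lesssim (1-t)^{-1}\cdot n^{-1/2}$ uniformly on $I_1$. Plugging into Theorem~\ref{t.density-compare} and integrating yields $\widetilde\rho_n = \rho_\infty$ up to an $O(1)$ cumulative error. An explicit evaluation then gives
$$
\int_{1/2}^{1-C_0/n}\rho_\infty(t)\,dt
\;=\; \frac{1}{2\pi}\bigl[\log\tfrac{1+t}{1-t}\bigr]_{1/2}^{1-C_0/n}
\;=\; \frac{\log n}{2\pi} + O(1)\, .
$$
On the short interval $I_2$, I would use the pointwise bound $\widetilde\rho_n(t) = O(n)$ (immediate from \eqref{e.elementary}) so that $\int_{I_2}\widetilde\rho_n\lesssim C_0=O(1)$.

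The main obstacle is the derivative analysis on $I_1$: a crude bound $|u_n'|=O(n)$ would yield an unusably large $O(\sqrt n)$ contribution from $|u_n'|^{1/2}$ in Theorem~\ref{t.density-compare}. The point is to exploit cancellation in the summation-by-parts formula: the differences $\widetilde c_k^2-\widetilde c_{k-1}^2$ are uniformly $O(1/n)$, so the factor $1/n$ beats the dyadic growth of $\sum k x^{k-1}$ near $x=1$ in an integrable sense. Making this precise -- i.e.\ extracting from Theorem~\ref{t.density-compare} that the integrated error on $I_1$ is $O(1)$ rather than merely pointwise -- will require a careful bookkeeping of the constants (possibly stated as a refined version of Theorem~\ref{t.density-compare} with an $L^1$-type error), and is the technical heart of the argument.
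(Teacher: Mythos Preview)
Your overall strategy matches the paper's exactly: pass to the reciprocal polynomial, identify $\widetilde f_\infty(x)=\frac{1}{1-x}$, split $[\frac12,1]$ into $[\frac12,1-\frac cn]$ and $[1-\frac cn,1]$, invoke Theorem~\ref{t.density-compare} and Lemma~\ref{l.limiting-density} on the first piece, and use the crude bound $\widetilde\rho_n=O(n)$ on the second. Your summation-by-parts formula for $u_n(x)=(1-x)\widetilde f_n(x)$ is in fact equivalent to the paper's recursive Lemma~\ref{l.recursive} (for the hyperbolic building blocks $f_{n,L}$ one has $b_{m,L}-b_{m-1,L}=b_{m,L-1}$, so summation by parts literally produces the recursion).

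There is, however, a concrete misstatement in your derivative estimate. The pointwise bound $|u_n'(t^2)|+|u_n''(t^2)|^{1/2}\lesssim (1-t)^{-1}n^{-1/2}$ is \emph{false} near the endpoint: at $x=t^2=1-\frac cn$ one has $|u_n'(x)|\asymp n/c^2$ while your bound gives only $\sqrt n$. The correct pointwise bounds are
\[
|u_n'(x)|=O\!\Big(\frac{1}{n(1-x)^2}\Big),\qquad |u_n''(x)|=O\!\Big(\frac{1}{n(1-x)^3}\Big),
\]
and these \emph{do} follow from your ``differences are $O(1/n)$'' heuristic (plus a separate estimate for the $O(N_0)$ tail indices $k>n-N_0$, where the $O(1/n)$ bound can fail when $\deg h<1$). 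These bounds, once plugged into Theorem~\ref{t.density-compare}, integrate over $[\frac12,1-\frac cn]$ to $O(1)$ directly, so no ``$L^1$-refined'' version of Theorem~\ref{t.density-compare} is needed---the pointwise statement already suffices. The paper obtains these same bounds via Corollary~\ref{c.finer} (iterating the recursion once or twice to expose the necessary cancellation), which is just the systematic version of what you are proposing.
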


Remark: It is clear that Theorem~\ref{t.Gaussian} follows from Lemma~\ref{boundedness}, Lemma~\ref{l.kac-inside}, Lemma~\ref{l.kac-outside}. Therefore this section completes the proof  of Theorem~\ref{t.Gaussian}.


For convenience of notation, in the rest of the section let 
\begin{eqnarray}\label{e.gdefn}
g(x) = \sum_{k=0}^{N_0-1} [c_k^2-h(k)]x^k\ \ . 
\end{eqnarray}
It follows that $f_n(x) = g(x) + \sum_{k=0}^n h(k) x^k$.
Furthermore $g(x)$ and its derivatives are uniformly bounded on any compact subset of $\R$ with bounds depending on $C_1$ and $N_0$ and $h$. This fact will be used implicitly below.

For any $L\in \R$ we also let
$$f_{n,L}(x)=\sum_{k=0}^n b_{k,L} x^k \ \ , \ \  b_{k,L_d}:=L_d\dots (L_d+k-1)/k! \ \ .$$

\subsection{Proof of Lemma~\ref{l.kac-inside}}
Clearly, $f_n(x)\to f_\infty(x)$ for $|x|<1$, and $f_\infty(x)=g(x) +   \sum_{k= 0}^\infty h(k)x^k$. 

Using the binomial expansion of $(1-x)^{-L}$ we obtain
$$f_\infty(x) =g(x)+ \sum_{m=0}^d \alpha_m (1-x)^{-L_m}$$ for every $x\in [-1,1)$. Since $\alpha_d>0$ and $L_d>\dots > L_0>0$, it follows that
$$\log f_\infty(x) + L_d \log (1-x)$$
is  bounded uniformly over $x\in [\beta^2,1)$ for some $\beta \in (0,1)$ depending only on $N_0$, $C_1$ and $h$. We furthermore choose $\beta \in (0,1)$ to be sufficiently close to $1$ such that $|(\frac{d}{dx})^{j}f_\infty(x)|\approx (1-x)^{-L_d-j}$\footnote{We say that $f\approx g$ if there exist constants $c, C$ such that $cf\le g\le Cf$.} uniformly over $x\in [\beta^2,1)$ where $j=0, 1, 2$.
Now, for $c_0=\min(L_d, \min_j (L_{j}-L_{j-1}))>0$ it is clear  that the $j$th derivative of $\log f_{\infty}(x) + L_d \log (1-x) = \log [(1-x)^{L_d} f_\infty(x)]$ is bounded above by $O((1-x)^{c_0-j})$. Using \eqref{e.limiting-density} and argue as in the proof of Lemma~\ref{l.limiting-density} it follows that
$$\rho_\infty(t) = \frac{\sqrt{L_d}}{\pi(1-t^2)} + O((1-t^2)^{\frac {c_0}2 -1})$$
uniformly over $|t|\in [\beta,1)$. Now, recall that $L_d\equiv \deg(h)+1$. By the symmetry of the real zeros distribution, we have
\begin{eqnarray}\label{e.ENnbeta1}
\qquad \qquad \E N_n (\{\beta \le |t| \le 1\}) = 2  \int_{\beta}^{1-\frac {c}n} \rho_n(t) dt + O(1) \ \ .
\end{eqnarray}
where $c>0$ is any fixed constant. We will use the above estimate for $\rho_\infty$ to show that

\begin{lemma}\label{l.density-pw-inner} For some fixed $c$ sufficiently large, it holds uniformly over $|t|\in (\beta,1-\frac c n)$ that
$$\rho_n(t) = \frac{\sqrt{\deg(h)+1}}{2\pi(1-|t|)} + O(1)+O((1-|t|)^{\frac {c_0}2-1}) + O\Big(\frac{[n(1-t^2)]^{(L_d +1)/2} |t|^{n} +  [n(1-t^2)]^{L_d} |t|^{2n}}{1-|t|} \Big)$$
\end{lemma}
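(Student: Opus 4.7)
The plan is to apply Theorem~\ref{t.density-compare} with the interval $I_n=(\beta,1-\tfrac{c}{n})$ for a sufficiently large constant $c$, combined with a refined asymptotic for $\rho_\infty$ and explicit tail estimates on $R_n(x):=\sum_{k>n}h(k)x^k$ and its first two derivatives. By evenness of $\rho_n$ it suffices to treat $t>0$; I write $\delta=1-t^2$, so $\delta\ge c/n$ and $1-t\asymp \delta/2$ uniformly on $I_n$.

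First I will refine the $\rho_\infty$ estimate from Lemma~\ref{l.limiting-density} to match the target. By the binomial identity already noted, $f_\infty(x)=g(x)+\sum_{m=0}^d\alpha_m(1-x)^{-L_m}$ with $\alpha_d>0$ and $L_d>\dots>L_0>0$. Since $c_0=\min(L_d,\min_j(L_j-L_{j-1}))$, the relation $(1-x)^{L_d}f_\infty(x)=\alpha_d+O((1-x)^{c_0})$ near $x=1$ implies that $G(x):=\log[(1-x)^{L_d}f_\infty(x)]$ has derivatives of size $O((1-x)^{c_0-1})$ and $O((1-x)^{c_0-2})$. Substituting $g_\infty=G-L_d\log(1-x)$ into \eqref{e.limiting-density}, the $L_d$--contributions combine algebraically to give
\begin{equation*}
\pi^2\rho_\infty(t)^2=\frac{L_d}{(1-t^2)^2}+O((1-t^2)^{c_0-2}),
\end{equation*}
and extracting the square root together with $1-t^2=(1-t)(1+t)$ yields $\rho_\infty(t)=\frac{\sqrt{L_d}}{2\pi(1-t)}+O(1)+O((1-t)^{c_0-1})$, which since $c_0-1\ge c_0/2-1$ contributes only to the first two error terms of the claim.

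The second and main step is to estimate $R_n,R'_n,R''_n$ in the regime $n\delta\gtrsim 1$. Since $h(k)\asymp k^{L_d-1}$ for large $k$, the summand $k^{L_d-1}x^k$ peaks near $k\asymp L_d/\delta$; once $c$ is chosen so that $n\delta>2L_d$, the tail lies in the decreasing regime and the substitution $k=n+j$ combined with $(1+j/n)^{L_d-1}x^j\le e^{(|L_d-1|/n-\delta)j}$ produces a geometric series of ratio $<1$, yielding
\begin{equation*}
R_n(x)\lesssim \frac{n^{L_d-1}x^n}{\delta},\qquad R'_n(x)\lesssim\frac{n^{L_d}x^n}{\delta},\qquad R''_n(x)\lesssim\frac{n^{L_d+1}x^n}{\delta}.
\end{equation*}
Combined with $f_\infty(x)\asymp\delta^{-L_d}$ on $I_n$, these give $|u_n(t^2)-1|\lesssim (n\delta)^{L_d-1}e^{-n\delta}<1/2$ once $c$ is large enough, verifying the hypothesis of Theorem~\ref{t.density-compare}. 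Differentiating $u_n=1-R_n/f_\infty$ and using $f'_\infty/f_\infty\asymp\delta^{-1}$ together with $f''_\infty/f_\infty,(f'_\infty/f_\infty)^2\asymp\delta^{-2}$, the $R'_n$ and $R''_n$ pieces dominate and yield $|u'_n(t^2)|\lesssim (n\delta)^{L_d}t^{2n}/\delta$ and $|u''_n(t^2)|\lesssim (n\delta)^{L_d+1}t^{2n}/\delta^2$.

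Assembling these pieces, Theorem~\ref{t.density-compare} gives
\begin{equation*}
\rho_n(t)-\rho_\infty(t)=O\bigl(|u'_n|^{1/2}+|u'_n|+|u''_n|^{1/2}\bigr)=O\!\left(\frac{(n\delta)^{(L_d+1)/2}t^n+(n\delta)^{L_d}t^{2n}}{\delta}\right),
\end{equation*}
in which the $|u'_n|^{1/2}$ contribution $(n\delta)^{L_d/2}t^n/\delta^{1/2}$ is absorbed by the $|u''_n|^{1/2}$ piece since $n\ge 1$. Using $\delta\asymp 2(1-t)$ and combining with the $\rho_\infty$ asymptotic above, the two displayed error terms match the claimed tail error, completing the Lemma. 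The main technical obstacle is the tail step: tracking the correct $1/\delta$ factor in the bounds on $R_n,R'_n,R''_n$ demands a careful separation between the increasing and decreasing regimes of $k^{L_d-1}x^k$, which is precisely where the hypothesis $\delta\ge c/n$ with $c$ sufficiently large enters.
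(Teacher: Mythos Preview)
Your proof is correct and follows essentially the same approach as the paper: refine the $\rho_\infty$ asymptotic to pick up the $O((1-|t|)^{c_0/2-1})$ term, bound $u_n-1$ and its first two derivatives via tail estimates on $f_\infty-f_n$, and then invoke Theorem~\ref{t.density-compare}. The only cosmetic difference is that the paper packages the tail bounds into a separate Lemma~\ref{l.fnfinfty} (proved by shifting indices and estimating ratios of generalized binomial coefficients), whereas you obtain the same bounds inline by a geometric-series argument after observing that the summand $k^{L_d-1}x^k$ is monotone decreasing for $k>n$ once $n\delta$ exceeds a fixed threshold.
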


We first show that this lemma implies the desired estimate for Lemma~\ref{l.kac-inside}. Indeed, notice that for every $\alpha >0$ we have $\frac{\alpha \dots (\alpha+k-1)}{k!}  \approx k^{\alpha-1}$ and $\sum_{k=1}^n k^{\alpha-1} \approx n^\alpha$, therefore we obtain the following uniform estimates (over $0\le x\le 1$):
$$n^\alpha x^n  \quad \le \quad C \sum_{k=0}^n \frac{\alpha \dots (\alpha+k-1)}{k!} x^k \le \frac C {(1-x)^{\alpha}} \ \ .$$
Combining this with Lemma~\ref{l.density-pw-inner}, we obtain
the uniform estimate 
\begin{eqnarray*}
\rho_n(t) 
&=& \frac {\sqrt{\deg(h)+1}}{2\pi(1-t)}  + O((1-|t|)^{\frac {c_0}2-1})  + O(\frac 1{n(1-|t|)^2})
\end{eqnarray*}
over $t\in [\beta, 1-\frac cn]$ where $c>0$ is any fixed large constant. Together with \eqref{e.ENnbeta1}, we obtain
\begin{eqnarray*}
\E N_n (\{\beta \le |t|\le 1\}) 
&=& \frac {\sqrt{\deg(h)+1}}{\pi}\log n + O(1) \ \ ,
\end{eqnarray*}
as stated in Lemma~\ref{l.kac-inside}.

We now prove Lemma~\ref{l.density-pw-inner}. The proof of this Lemma relies on the following estimates for $f_n$:

\begin{lemma}\label{l.fnfinfty} For each $j=0,1,2$, it holds uniformly over $x\in [\beta^{2}, 1)$ that
$$(\frac{d}{dx})^j  \Big(f_n(x)-f_\infty(x) \Big)= O(\frac{(1+[n(1-x)]^{L_d+j-1})x^{n+1}}{(1-x)^{L_d+j}})$$
and it holds uniformly over $x\in [-1,0]$ that $f_n(x) = O((1+x)^{-(L_d-1)})$.
\end{lemma}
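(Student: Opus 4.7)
The plan is to reduce both claims to uniform estimates on the tails $R_n^L(x) := \sum_{k > n} b_{k,L}\, x^k$ of the generating function $(1-x)^{-L}$. Since $h(k) = \sum_{m=0}^d \alpha_m b_{k, L_m}$, one has $f_n(x) - f_\infty(x) = -\sum_{m=0}^d \alpha_m R_n^{L_m}(x)$. For higher derivatives, differentiating the identity $\sum_{k\ge 0} b_{k,L} x^k = (1-x)^{-L}$ shows that $k b_{k, L} = L b_{k-1, L+1}$, and hence
$$(R_n^L)'(x) = L R_{n-1}^{L+1}(x), \qquad (R_n^L)^{(j)}(x) = L(L+1)\cdots(L+j-1)\, R_{n-j}^{L+j}(x).$$
Thus the $j$-th derivative estimate reduces to the $j=0$ estimate with $L$ shifted to $L+j$ and $n$ shifted to $n-j$ (which is comparable to $n$ for fixed $j$).

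For $x \in [\beta^2, 1)$, set $a := -\log x \asymp 1-x$. Using $b_{k,L} \asymp k^{L-1}/\Gamma(L)$ and comparing to an integral,
$$R_n^L(x) \;\asymp\; \int_n^\infty t^{L-1} e^{-at}\, dt \;=\; a^{-L}\int_{na}^\infty u^{L-1} e^{-u}\, du.$$
The standard incomplete-Gamma bound $\int_M^\infty u^{L-1} e^{-u}\, du = O\bigl((1 + M^{L-1})e^{-M}\bigr)$ (integration by parts for $M \ge 1$; $\le \Gamma(L)$ for $M \le 1$) then gives
$$R_n^L(x) \;=\; O\!\left(\frac{(1 + [n(1-x)]^{L-1})\, x^{n+1}}{(1-x)^L}\right),$$
uniformly in $x\in[\beta^2,1)$. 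Applying the derivative reduction replaces $(L,n)$ by $(L+j, n-j)$, and because the factor $(1-x)^{-(L+j)}$ dominates when $L$ is largest, the term with $L = L_d$ dominates the sum, producing the first claim.

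For the bound on $[-1,0]$, the partial sum $P_n^L(x) := \sum_{k=0}^n b_{k,L}\, x^k$ satisfies the first-order ODE
$$(1-x)(P_n^L)'(x) - L\, P_n^L(x) \;=\; -(n+L)\, b_{n,L}\, x^n,$$
which follows termwise from $(k+1)b_{k+1,L} = (L+k)b_{k,L}$ (the coefficients of $x^k$ for $1 \le k \le n-1$ cancel). Multiplying by the integrating factor $(1-x)^{L-1}$ and integrating from $0$ to $x$ gives
$$(1-x)^L P_n^L(x) \;=\; 1 \;-\; (n+L)\, b_{n,L}\int_0^x t^n (1-t)^{L-1}\, dt.$$
For $x \in [-1,0]$, one has $(1-t)^{L-1} = O_L(1)$ on $[x, 0]$, so the integral is bounded by $O_L(|x|^{n+1}/(n+1))$; together with $b_{n,L} \asymp n^{L-1}$ this yields $|P_n^L(x)| = O(1) + O(n^{L-1}|x|^{n+1})$. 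Writing $\delta = 1+x$ and using $|x|^{n+1} \le e^{-n\delta}$, a case split on $n\delta \lessgtr 1$ together with boundedness of $t^{L-1}e^{-t}$ on $[1,\infty)$ shows $n^{L-1}|x|^{n+1} = O(\delta^{1-L})$ when $L \ge 1$; the case $L < 1$ is absorbed into the $O(1)$ term. Summing the $R_n^{L_m}$-type contributions (the worst of which corresponds to $L = L_d$) and absorbing $g(x) = O(1)$ yields $f_n(x) = O((1+x)^{-(L_d - 1)})$.

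The principal technical point is maintaining uniformity of the tail bound across all four regimes $(na \lessgtr 1) \times (L \lessgtr 1)$ in Step 2, but the unified expression $(1 + (na)^{L-1})e^{-na}$ handles all of them cleanly thanks to the crossover in the ``$1$ versus $(na)^{L-1}$'' competition. A secondary subtlety in Step 4 is that the leading asymptotic $(1-x)^{-L}$ on the left of the ODE identity is bounded on $[-1,0]$, so the entire $(1+x)$-dependence of the bound must be extracted from the correction term $n^{L-1}|x|^{n+1}$, which requires the explicit interpolation $n^{L-1} = \delta^{1-L}(n\delta)^{L-1}$ together with the sub-Gaussian decay of $|x|^{n+1}$.
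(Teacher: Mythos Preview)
Your proof is correct, and the overall architecture (reduce to single-$L$ building blocks, reduce the $j$-derivative to $j=0$ via $(R_n^L)'=L R_{n-1}^{L+1}$, then estimate the basic object) coincides with the paper's. The technical cores, however, are genuinely different in both halves.

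For the tail bound on $[\beta^2,1)$, the paper shifts the summation index to write $R_n^L(x)=x^{n+1}\sum_{k\ge 0} b_{n+1+k,L}\,x^k$, bounds the shifted coefficient by $C\,b_{k,L}+(n+1)^{L-1}$, and then re-sums each piece using the binomial expansion again. Your route replaces the series by the incomplete Gamma integral via $b_{k,L}\asymp k^{L-1}$ and then appeals to the standard bound $\Gamma(L,M)=O\big((1+M^{L-1})e^{-M}\big)$. The paper's argument is purely algebraic and avoids the (minor) sum-to-integral comparison issue when $t^{L-1}e^{-at}$ is not monotone; your argument is more analytic and perhaps more transparent about where the two regimes $n(1-x)\lessgtr 1$ come from.

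For the estimate on $[-1,0]$, the paper uses the recursion $f_{n,L}=(1-x)^{-1}f_{n,L-1}-(1-x)^{-1}b_{n,L}x^{n+1}$ to push $L$ down into $(0,1]$, where the partial sum is an alternating series with decreasing terms and hence $O(1)$. Your ODE-plus-integrating-factor computation yields the closed formula $(1-x)^L P_n^L(x)=1-(n+L)b_{n,L}\int_0^x t^n(1-t)^{L-1}dt$ directly. Both routes arrive at $|P_n^L(x)|=O(1)+O(n^{L-1}|x|^{n+1})$ and finish with the same interpolation $n^{L-1}|x|^{n+1}=O((1+x)^{-(L-1)})$ for $L\ge 1$. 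Your integral representation is arguably cleaner and requires no inductive descent in $L$; the paper's recursion is more elementary but needs the alternating-series base case.

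Two small comments: in your final summation step you wrote ``$R_n^{L_m}$-type contributions'' where you mean $P_n^{L_m}$; and the statement $R_n^L\asymp\int_n^\infty$ is used only as an upper bound, so the $\asymp$ is stronger than what you actually need or justify.
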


We first prove Lemma~\ref{l.density-pw-inner} using Lemma~\ref{l.fnfinfty}. Let $u_n=f_n(x)/f_\infty(x)$. By Lemma~\ref{l.fnfinfty}, uniformly over $x\in (\beta^2, 1)$ and $j=0,1,2$ it holds that
\begin{eqnarray*}
(\frac d{dx})^j (u_n(x)-1)  &=& O((1-x)^{-j}(1+[n(1-x)]^{L_d+j-1})x^n)
\ \ .
\end{eqnarray*}
In particular for $c$ large and $\beta^{2}\le x \le 1-\frac cn$ we have $u_n(x) = 1+ O(x^{n/2}) = 1 + O(e^{-c/2})$ and thus $u_n(x) \in [\frac 1 2, \frac 32]$. Therefore Theorem~\ref{t.density-compare} is applicable, and we obtain the desired estimate of Lemma~\ref{l.density-pw-inner}.

\proof[Proof of Lemma~\ref{l.fnfinfty}] Consider $x\in [\beta^{2},1)$. It suffices to show that for every $L>0$ and each  $0\le j\le 2$ the following holds uniformly:
 \begin{eqnarray}\label{e.tailseries}
\qquad (\frac{d}{dx})^j \Big(- \frac{1}{(1-x)^{L}} + f_{n,L}(x) \Big) =  O_L(\frac{(1+[n(1-x)]^{L+j-1})x^{n+1}}{(1-x)^{L+j}})
\end{eqnarray}
Similarly,  for $x\in [-1,0]$ it suffices to show that for any $L> 0$
\begin{eqnarray}\label{e.tailseries2}
f_{n,L}(x)  =  O((1+x)^{L-1}).
\end{eqnarray}

Observe that
\begin{eqnarray*}
 \frac d{dx} \Big(- \frac{1}{(1-x)^{L}} + f_{n,L}(x) \Big) 
&=& L\Big(  - \frac{1}{(1-x)^{L+1}}  + f_{n-1,L+1}(x)\Big) \\
\end{eqnarray*}
therefore in \eqref{e.tailseries} we may assume that $j=0$.

 Now, for $0\le x<1$ we have
 \begin{eqnarray}
\nonumber - \frac{1}{(1-x)^{L}}   + \sum_{k=0}^n  \frac{L\dots (L+k-1)}{k!}  x^k 
&=&  \sum_{k=n+1}^\infty  \frac{L\dots (L+k-1)}{k!}  x^k\\
\label{e.shiftindex} &=&x^{n+1}\sum_{k=0}^\infty  \frac{L\dots (L+k+n)}{(n+1+k)!}  x^{k}
\end{eqnarray}
Now, we will use the standard asymptotic estimate for generalized binomial coefficients
$$\frac{L(L+1)\dots (L+k-1)}{k!} \approx C  k^{L-1}$$
as $k\to \infty$ where $C$ depends on $L$. It follows that
\begin{eqnarray*} 
\frac{L(L+1)\dots (L+k+n)}{(n+1+k)!}  
&\le& C \frac{L(L+1)\dots (L+k-1)}{k!}   (\frac{n+k+1}k)^{L-1} \\
&\le& C   \frac{L(L+1)\dots (L+k-1)}{k!} (1+ \frac{(n+1)^{L-1}}{k^{L-1}})\\
&\le& C \frac{L(L+1)\dots (L+k-1)}{k!} + (n+1)^{L-1}
\end{eqnarray*}
(in the last estimate we use the asymptotic for generalized binomial coefficients again). Using \eqref{e.shiftindex} and the binomial expansion, it follows that
\begin{eqnarray*} - \frac{1}{(1-x)^{L}}   + \sum_{k=0}^n  \frac{L(L+1)\dots (L+k-1)}{k!}  x^k 
&\le& x^{n+1}\Big[  (1-x)^{-L} + (n+1)^{L-1}  (1-x)^{-1}\Big] \\
&\le& C \Big((1+[n(1-x)]^{L-1}) (1-x)^{-L} x^{n+1}\Big)
\end{eqnarray*}
giving \eqref{e.tailseries}. 

For $x\in [-1,0]$ we will use the following recursive formulas. 
\begin{lemma}\label{l.recursive-f}
For any $x\ne 1$ it holds that
$$f_{n,L}(x) = \frac{f_{n,L-1}(x)}{1-x} - \frac{L\dots (L+n-1)}{n!}\frac{x^{n+1}}{1-x}.$$
\end{lemma}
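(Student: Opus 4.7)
The plan is to prove this identity by a direct one-line algebraic manipulation. Multiply both sides by $(1-x)$ so that the claim becomes
\[ (1-x)f_{n,L}(x) = f_{n,L-1}(x) - \frac{L(L+1)\cdots(L+n-1)}{n!}\, x^{n+1}, \]
and then verify this polynomial identity by comparing coefficients of $x^k$.

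The key computation will be the following Pascal-type identity for the coefficients $b_{k,L} = L(L+1)\cdots(L+k-1)/k!$: for every $k\ge 1$,
\[ b_{k,L} - b_{k-1,L} = \frac{L(L+1)\cdots(L+k-2)}{(k-1)!}\left(\frac{L+k-1}{k}-1\right) = \frac{(L-1)L(L+1)\cdots(L+k-2)}{k!} = b_{k,L-1}. \]
Once this is in hand, expanding
\[ (1-x)f_{n,L}(x) = \sum_{k=0}^n b_{k,L}x^k - \sum_{k=1}^{n+1} b_{k-1,L} x^k = b_{0,L} + \sum_{k=1}^n (b_{k,L}-b_{k-1,L})x^k - b_{n,L}\, x^{n+1} \]
and using $b_{0,L}=1=b_{0,L-1}$ together with the identity above yields $f_{n,L-1}(x) - b_{n,L}\, x^{n+1}$, which is exactly the right-hand side after dividing by $1-x$.

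There is essentially no obstacle here; the only thing to be mildly careful about is the boundary terms $k=0$ and $k=n+1$ (handled by the observations $b_{0,L}=b_{0,L-1}=1$ and by picking out the coefficient $b_{n,L}$ from the shifted sum), and the convention $L(L+1)\cdots(L+k-1)/k! \equiv 1$ when $k=0$ is used implicitly. The identity holds as a polynomial identity on $\mathbb R\setminus\{1\}$, which is all that is needed for the stated conclusion.
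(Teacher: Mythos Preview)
Your proof is correct and takes essentially the same approach as the paper: both multiply through by $1-x$ and compare coefficients, using the Pascal-type identity $b_{k,L}-b_{k-1,L}=b_{k,L-1}$ (the paper writes this out term-by-term rather than stating it as a separate identity, but the content is identical).
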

\proof We have
\begin{eqnarray*}
f_{n,L}(x) &=& 1+ Lx + \frac{L(L+1)}2x^2 + \dots + \frac{L(L+1)\dots (L+n-1)}{n!}x^n\\
xf_{n,L}(x) &=&   x + Lx^2 + \dots + \frac{L(L+1)\dots (L+n-2)}{(n-1)!}x^n + \frac{L\dots (L+n-1)}{n!}x^{n+1} \\
(1-x)f_{n,L}(x) &=&  1+ (L-1)x + \frac{L(L-1)}2x^2 + \dots + \frac{L(L+1)\dots (L+n-2)(L-1)}{n!}x^n\\
&& - \frac{L\dots (L+n-1)}{n!}x^{n+1} \\
&=& f_{n,L-1}(x) - \frac{L\dots (L+n-1)}{n!}x^{n+1}
\end{eqnarray*}
and the desired claim follows.
\endproof
For $x\in [-1,0]$ it is clear that $\frac{L\dots (L+n-1)}{n!}\frac{x^{n+1}}{1-x} = O(n^{L-1}|x|^n) = O(\frac 1 {(1+x)^{L-1}})$. Thus, without loss of generality we may assume that $0< L\le 1$. For this $L$, for $x\in [-1,0]$ it is clear that $f_{n,L}$ is an alternating sum whose terms have decreasing modulus,  and could be easily bounded  by $O(1)$ uniformly over $x\in [-1,0]$.
\endproof

\subsection{Proof of Lemma~\ref{l.kac-outside}}

Thanks to the symmetry of the distribution of the real zeros, we have 
\begin{eqnarray}\label{e.ENnouter}
\E N_n(\{1\le |t|\le 2\}) = 2 \E \widetilde N_n(\frac1 2, 1) = 2\int_{\frac 1 2}^{1-\frac cn} \widetilde \rho_n(t)dt + O(1)
\end{eqnarray}
where $\widetilde N_n$ and $\widetilde \rho_n$ are respectively  the number of real zeros and the density of the real zeros distribution for the normalized reciprocal polynomial $$\widetilde P_n(t) = \sum_{k=0}^n \frac{c_{n-k}}{c_n}\xi_k t^k \ \ .$$
We note that $|c_n| = \sqrt{h(n)}$ so $c_n \ne 0$ for $n$ sufficiently large, so $\widetilde P_n$ is well-defined.

 Let $\widetilde f_n(x)$ denote the corresponding variance function
$$\widetilde f_n(x) = \sum_{k=0}^n \frac{c_{n-k}^2}{c_n^2}x^k \equiv\frac{x^n f_n (1/x)}{c_n^2}$$
As we will see, for any $0\le x<1$ the sequence $\widetilde f_n(x)$ converges to $\widetilde f_\infty(x) := \frac 1{1-x}$, which suggests that $\widetilde \rho_n(t)$ is asymptotically $\frac 1{2\pi (1-t)}$ for $t\in [\frac 12, 1)$. In fact, we will show that

\begin{lemma} \label{l.density-pw-outer} Suppose that $c>0$ is a sufficiently large fixed constant. Then uniformly over $t\in [\frac 1 2, 1-\frac cn]$ it holds that
$$\widetilde \rho_n(t) = \frac 1{2\pi(1-t)} (1+O(n^{L_{d-1}-L_d}))+ O(1) + O(\frac 1{n(1-t)^2} + \frac{1}{\sqrt{n(1-t)^3}}).$$
\end{lemma}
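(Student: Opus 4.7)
The plan is to apply the general framework of Section~\ref{s.frameworknear1} to the reciprocal polynomial $\widetilde P_n$, in close analogy with the proof of Lemma~\ref{l.density-pw-inner}. The three main ingredients are: (i) identifying the correct limiting variance function $\widetilde f_\infty$ and the corresponding limiting density $\widetilde\rho_\infty$; (ii) obtaining quantitative estimates for the relative error $\widetilde u_n(x) := \widetilde f_n(x)/\widetilde f_\infty(x)$ and its first two derivatives; and (iii) plugging these into Theorem~\ref{t.density-compare}.

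For (i), since $c_{n-k}^2/c_n^2 = h(n-k)/h(n) \to 1$ as $n\to\infty$ for each fixed $k$, the natural candidate is $\widetilde f_\infty(x) := \frac{1}{1-x}$, for which $\log \widetilde f_\infty(x) = -\log(1-x)$ and hence $g'_\infty(x)=(1-x)^{-1}$, $g''_\infty(x)=(1-x)^{-2}$. Substituting these into \eqref{e.limiting-density} gives the clean closed form
\[
\widetilde\rho_\infty(t) \;=\; \frac{1}{\pi}\sqrt{\frac{1}{1-t^2}+\frac{t^2}{(1-t^2)^2}} \;=\; \frac{1}{\pi(1-t^2)} \;=\; \frac{1}{2\pi(1-t)} \cdot \frac{2}{1+t},
\]
so that for $t\in[1/2,1)$ we have $\widetilde\rho_\infty(t)=\frac{1}{2\pi(1-t)}+O(1)$, matching the leading term in the lemma.

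For (ii), I would split the sum $\widetilde f_n(x)=\sum_{k=0}^n (c_{n-k}^2/c_n^2)x^k$ into a boundary piece (the $N_0$ terms with $n-k<N_0$, where $c_{n-k}$ may not equal $\sqrt{h(n-k)}$) and a main piece where $c_{n-k}^2/c_n^2=h(n-k)/h(n)$. The boundary terms are each $O(h(n)^{-1})=O(n^{1-L_d})$ in size, and contribute at most $O(n^{1-L_d}/(1-x)^{j+1})$ to the $j$-th derivative of $\widetilde f_n -\widetilde f_\infty$. For the main piece, using the asymptotic expansion $h(k)=\alpha_d\binom{L_d+k-1}{k}(1+O(k^{L_{d-1}-L_d}))$ with $\alpha_d>0$, together with the explicit identity $h(n-k)/h(n)=\prod_{i=0}^{k-1}(1-(L_d-1)/(n+L_d-1-i))\cdot(1+O(n^{L_{d-1}-L_d}))$ derived from the ratio of generalized binomial coefficients, one can show after telescoping and accounting for the missing tail $\sum_{k>n}x^k=x^{n+1}/(1-x)$ that, uniformly on $x\in[1/4,1-c/n]$ and for $j=0,1,2$,
\[
\Bigl(\tfrac{d}{dx}\Bigr)^{\!j}\!\bigl(\widetilde u_n(x)-1\bigr) \;=\; O\!\left(\frac{n^{L_{d-1}-L_d}}{(1-x)^j}\right) + O\!\left(\frac{1}{n\,(1-x)^{j+1}}\right) + O\bigl(x^{n/2}\bigr).
\]
In particular $\widetilde u_n(x)$ stays in $[1/2,3/2]$ on this range for $c$ large, so Theorem~\ref{t.density-compare} applies.

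For (iii), substituting these bounds into Theorem~\ref{t.density-compare} yields
\[
\widetilde\rho_n(t) \;=\; \widetilde\rho_\infty(t) + O\!\left(|\widetilde u_n'(t^2)|^{1/2} + |\widetilde u_n'(t^2)| + |\widetilde u_n''(t^2)|^{1/2}\right),
\]
and a direct comparison with $\widetilde\rho_\infty(t)=\tfrac{1}{2\pi(1-t)}+O(1)$ produces the claimed estimate, with the factor $(1+O(n^{L_{d-1}-L_d}))$ arising from the multiplicative portion of the error in $\widetilde u_n$ and the $O(1/(n(1-t)^2))+O(1/\sqrt{n(1-t)^3})$ terms coming from the additive tail error. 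The main obstacle is step (ii): tracking the subleading $n^{L_{d-1}-L_d}$ correction carefully through the sum $\sum_k (h(n-k)/h(n))x^k$ and its derivatives, while controlling the different ranges of $k$ (bounded $k$, $k=o(n)$, $k$ comparable to $n$, and the $N_0$ boundary terms near $k=n$) and showing that all the sub-leading contributions combine into the two announced error terms rather than something larger.
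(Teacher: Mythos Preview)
Your outline matches the paper's framework: the identification $\widetilde f_\infty(x)=\frac{1}{1-x}$, the formula $\widetilde\rho_\infty(t)=\frac{1}{\pi(1-t^2)}=\frac{1}{2\pi(1-t)}+O(1)$, and the appeal to Theorem~\ref{t.density-compare} in step (iii) are exactly what the paper does.

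You are right that step (ii) is the obstacle, and this is where your sketch has a genuine gap. The phrase ``after telescoping and accounting for the missing tail'' hides the real difficulty: one cannot differentiate the zeroth-order asymptotic $\widetilde f_n(x)=\frac{1}{1-x}(1+O(\frac{1}{n(1-x)}))$ and expect control on the derivative of the $O(\cdot)$ remainder. A direct term-by-term estimation of $\widetilde u_n'(x)=\sum_k k(a_k-a_{k-1})x^{k-1}-(n{+}1)a_n x^n$ (with $a_k=h(n-k)/h(n)$) also does not obviously work: for $0<L_d<1$ the single boundary term $(n{+}1)a_n x^n\approx n^{2-L_d}x^n$ already exceeds the target $O(\frac{1}{n(1-x)^2})$ near $x=1-\frac{c}{n}$, so cancellation with the bulk of the sum is essential and must be made explicit. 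Even for $L_d\ge 1$, the leading $-\frac{1}{1-x}$ in $-\widetilde f_n(x)$ must be cancelled by $(1-x)\widetilde f_n'(x)$, and nothing in a one-term asymptotic forces that.

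The paper obtains the bounds \eqref{e.un-der}--\eqref{e.un-der2} via two identities your proposal does not invoke. First, a recursion (Lemma~\ref{l.recursive})
\[
\widetilde f_{n,L}(x)=\frac{1}{1-x}-\frac{x}{1-x}\cdot\frac{L-1}{L+n-1}\,\widetilde f_{n,L-1}(x)
\]
is iterated to produce a two- and then three-term expansion of $\widetilde f_{n,L}$ (Corollary~\ref{c.finer}) with explicit lower-order corrections. Second, the derivative is recast as a \emph{difference},
\[
\frac{d}{dx}\widetilde f_{n,L}(x)=\frac{n}{x}\bigl[\widetilde f_{n,L}(x)-\widetilde f_{n-1,L+1}(x)\bigr],
\]
so that substituting the multi-term expansion into $\widetilde u_n'=\frac{n(1-x)}{x}[\widetilde f_{n,L}-\widetilde f_{n-1,L+1}]-\widetilde f_{n,L}$ makes the leading $\frac{1}{1-x}$ contributions cancel by inspection, leaving exactly $O(\frac{1}{n(1-x)^2})$; the same device, applied once more, handles $\widetilde u_n''$. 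The reduction from general $h$ to the single hyperbolic term $f_{n,L_d}$ (absorbing the $n^{L_{d-1}-L_d}$ error) is carried out beforehand at the level of $\widetilde f_n$, not inside the density estimate. Your proposal would need something equivalent to these two identities to complete step (ii).
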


From the following computation, Lemma~\ref{l.density-pw-outer} and \eqref{e.ENnouter} imply the desired estimate for Lemma~\ref{l.kac-outside}:
\begin{eqnarray*}
\E N_n (\{1\le |t|\le 2\}) 
&=& 2 \int_{\frac 1 2}^{1-\frac cn} \frac{1}{2\pi (1-t)} dt + O(1) + O(\int_{\frac 1 2}^{1-\frac cn}  \frac 1{n(1-t)^2} + \frac 1{n^{1/2}(1-t)^{3/2}}dt) \\
&=& \frac {\log n}{\pi} + O(1) \ \ .
\end{eqnarray*}


To prove Lemma~\ref{l.density-pw-outer},  we reduce the problem to the hyperbolic setting. As we will see, $\widetilde f_n(x)$ converges to $\widetilde f_\infty(x)=\frac 1{1-x}$ for every $x\in [0,1)$ sufficiently close to $1$, say $x\in [1/2,1)$. Our proof will make use of the density comparison results developed in the previous section, Theorem~\ref{t.density-compare} and Lemma~\ref{l.limiting-density}, relying on various estimates for $\widetilde f_n(x)/\widetilde f_\infty(x)$ and its first two derivatives. It is clear that modulo the contribution of $g$ (defined in \eqref{e.gdefn}) which will be shown to be very small, $\widetilde f_n(x)$ is a linear combination of $\widetilde f_{n,L_j}$ where the linear coefficient for $\widetilde f_{n,L_d}$ is $1+O(n^{-c})$ and the linear coefficients of other terms are $O(n^{-c})$ where $c=L_d-L_{d-1}$. Thus it suffices to consider the setting when $f_n = g+f_{n,L_d}$, which we assume below.

We first establish some basic estimates for $f_{n,L}$.
\begin{lemma}\label{l.reciprocal-fractional} Let $L\in \R \setminus \{0,-1,-2,\dots\}$. Then uniformly over $0\le x <1$ it holds that
\begin{eqnarray}\label{e.reciprocal-fractional}
\widetilde f_{n,L}(x) = \frac {1}{1-x} \Big[1+ O(\frac{1}{n(1-x)})\Big]
\end{eqnarray}
the implicit constant  depends only on $L$. Furthermore, if $L\ge 1$ then uniformly over $x\in [-1,0]$ it holds that $\widetilde f_{n,L}(x)=O(1)$.
\end{lemma}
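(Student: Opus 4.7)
The key tool is a recursion derived from Lemma~\ref{l.recursive-f}: substituting $y=1/x$ into the identity $(1-y)f_{n,L}(y) = f_{n,L-1}(y) - b_{n,L}y^{n+1}$ and multiplying through by $x^n/b_{n,L}$ yields, after using $b_{n,L-1}/b_{n,L}=(L-1)/(L+n-1)$, the relation
\[
\widetilde f_{n,L}(x) \;=\; \frac{1}{1-x} \;-\; \frac{x(L-1)}{(L+n-1)(1-x)}\,\widetilde f_{n,L-1}(x),
\]
valid whenever both $L$ and $L-1$ avoid $\{0,-1,-2,\dots\}$.  From this recursion, the first claim of the Lemma reduces \emph{immediately} to the weaker a priori estimate $\widetilde f_{n,L-1}(x)=O(1/(1-x))$ uniformly in $n$ and $x\in[0,1)$, with implicit constant depending on $L$; the strategy is therefore to establish this a priori bound and feed it into the recursion.

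The a priori bound is trivial when $L\ge 1$, since then the coefficients $a_k(n,L):=b_{n-k,L}/b_{n,L}$ lie in $[0,1]$ and are monotone decreasing in $k$, giving $\widetilde f_{n,L}(x)\le\sum_{k=0}^n x^k\le 1/(1-x)$.  The only nontrivial regime is $L\in(0,1)$, where $a_k(n,L)$ instead \emph{grows} with $k$ up to $a_n(n,L)\approx \Gamma(L)n^{1-L}$, so a direct triangle-inequality bound is insufficient.  To handle this case I would use the integral representation
\[
\widetilde f_{n,L}(x) \;=\; \frac{x^{n+L}}{\Gamma(L+n)(1-x)^L}\int_0^\infty s^n e^{-s}\,\gamma^*(L,s(1-x))\,ds,\qquad \gamma^*(L,T):=\int_0^T w^{L-1}e^w\,dw,
\]
obtained from the Gamma-integral $b_{k,L}=\int_0^\infty s^{L+k-1}e^{-s}\,ds/(\Gamma(L)\,k!)$ after interchanging summation and integration.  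Combining the asymptotic $\gamma^*(L,T)\sim T^{L-1}e^T$ as $T\to\infty$ with Laplace's method for $s^n e^{-s}$ (sharply concentrated at $s=n$) produces both the a priori bound and, in fact, the sharp asymptotic.  For $L<0$ with $L\notin\{-1,-2,\dots\}$, the inverted recursion $\widetilde f_{n,L}(x)=\frac{L+n}{Lx}\bigl[1-(1-x)\widetilde f_{n,L+1}(x)\bigr]$ reduces the problem in finitely many steps to the case $L>0$.

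The base case $L=1$ of the induction is direct: $\widetilde f_{n,1}(x)=(1-x^{n+1})/(1-x)$, so the error is $-x^{n+1}/(1-x)$, which is $O(1/(n(1-x)^2))$ thanks to the elementary bound $\max_{t\in[0,1]} n\,t^{n+1}(1-t) = O(1)$.  For the second statement (uniform bound on $x\in[-1,0]$ for $L\ge 1$), I plan to use Abel summation: writing $S_k(x)=\sum_{j=0}^k x^j =(1-x^{k+1})/(1-x)$, which satisfies $|S_k(x)|\le 2$ on $[-1,0]$, together with the monotonicity $1=a_0\ge a_1\ge\cdots\ge a_n\ge 0$ for $L\ge 1$, one obtains
\[
|\widetilde f_{n,L}(x)|=\Bigl|a_n S_n(x)+\sum_{k=0}^{n-1}(a_k-a_{k+1})S_k(x)\Bigr|\le 2a_n+2(a_0-a_n)=2a_0=2.
\]

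The main technical obstacle is the a priori bound in the range $L\in(0,1)$: there $a_k(n,L)$ grows polynomially in $n$, and meaningful cancellation in the sum $\sum_k a_k(n,L)x^k$ must be exploited.  The integral representation above is the natural vehicle for this, but it requires uniform control of $\gamma^*(L,T)$ across all scales of the parameter $T=s(1-x)$, combined with a careful Laplace-type analysis of the integral of $s^n e^{-s}$ against the slowly varying factor $\gamma^*(L,s(1-x))$; this is where the bulk of the technical work would go.
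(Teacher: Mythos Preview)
Your approach differs substantially from the paper's and is much heavier. The paper rewrites the coefficients in product form,
\[
\frac{b_{n-k,L}}{b_{n,L}} \;=\; \prod_{m=n-k+1}^{n}\Big(1+\frac{L-1}{m}\Big)^{-1},
\]
splits the sum at $k=n/2$, and observes: for $k\le n/2$ each factor is $1+O(1/n)$, so the coefficient equals $1+O(k/n)$ by telescoping, giving $\sum_{k\le n/2}=\frac{1}{1-x}+O\bigl(\tfrac{1}{n(1-x)^2}\bigr)$; the tail $k>n/2$ is controlled via $x^k\le x^{n/2}$ together with the standard asymptotic $|b_{j,L}| \approx j^{L-1}$. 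No recursion, no integral representation, no Laplace method. Your recursion is the paper's own Lemma~\ref{l.recursive}, proved there \emph{after} the present lemma and used only for the refined expansions of Corollary~\ref{c.finer}. Using it here to bootstrap forces the integral-representation detour for $L\in(0,1)$; that is correct in principle but unnecessary --- the same $k\le n/2$ splitting already gives the a priori bound $\widetilde f_{n,L}(x)=O(1/(1-x))$ for every $L>0$ in two lines.

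Your inverted-recursion plan for $L<0$ has a genuine gap. One step of inversion, fed the sharp bound for $L+1$, yields only $\widetilde f_{n,L}(x)=-\tfrac{L+n}{Lx}\,E_{L+1}$ with $E_{L+1}=O\bigl(\tfrac{1}{n(1-x)}\bigr)$, hence $\widetilde f_{n,L}=O\bigl(\tfrac{1}{x(1-x)}\bigr)$: the main term $1/(1-x)$ is lost, and the bound is not uniform near $x=0$. Iterating is worse --- two steps give $\widetilde f_{n,L-1}=\tfrac{L-1+n}{(L-1)x}\bigl[1+O(1/x)\bigr]=O(n/x)$, not $O(1/(1-x))$. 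To obtain the sharp bound for $L$ via the \emph{forward} recursion you would need the a priori bound for $L-1$, which inversion would in turn derive from the sharp bound for $L$ itself; the scheme is circular. A direct argument for negative $L$ is unavoidable, and the paper's coefficient splitting is exactly that.

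Your Abel-summation argument for $x\in[-1,0]$, $L\ge 1$ is correct and is essentially the paper's alternating-sum observation from the product form.
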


\proof For every $x$ we have
\begin{eqnarray}
\nonumber \widetilde f_{n,L}(x) &=&  \sum_{k=0}^n \frac{L\dots (L+n-k-1)n!}{L\dots(L+n-1)(n-k)!}x^k \\
\nonumber &=&  \sum_{k=0}^n  \frac{(n-k+1)\dots n}{(L+n-k)\dots (L+n-1)}x^k \\
\label{e.monotoneform} &=&  \sum_{k=0}^n  \frac{x^k}{(1+\frac{L-1}{n-k+1})\dots (1+\frac {L-1}{n})}.
\end{eqnarray}
Now, it is clear that if $x\in [-1,0]$ and $L\ge 1$ then \eqref{e.monotoneform} is an alternating sum where the terms have decreasing modulus, thus is clearly bounded above by $O(1)$.

Now we consider $L\in \mathbb R \setminus \{0,-1,\dots,\}$ and $x\in [0,1)$. Notice that for $0\le k \le n/2$ (and $n$ large) it holds that $0< 1-\frac {2|L-1|}{n} \le 1+\frac{L-1}{n-k+1} \le 1+\frac{2|L-1|}{n}$. It follows that
 $(1+\frac{L-1}{n-k+1})\dots (1+\frac {L-1}{n}) \approx 1$,
therefore by a telescoping argument we obtain
$$\frac{1}{(1+\frac{L-1}{n-k+1})\dots (1+\frac {L-1}{n})} = 1 + O(\frac {k}{n})$$
(the implicit constant depends on $L$). Consequently the sum of the first $n/2$ terms of $\widetilde f_{n,L}$ satisfies
\begin{eqnarray*}
\sum_{0\le k\le n/2}  \frac{x^k}{(1+\frac{L-1}{n-k+1})\dots (1+\frac {L-1}{n})} 
&=& \sum_{0\le k \le n/2} x^k + \frac 1 n O(\sum_{k\ge 0} k x^k) \\
&=& \frac 1{1-x} + O(\frac 1 {n(1-x)^2}) \ \ .
\end{eqnarray*}
For the other terms, we use the classical estimate
$$C_0 k^{L-1} \le |\frac{L(L+1)\dots (L+k-1)}{k!}| \le C_2 k^{L-1}$$
for some $C_0, C_2>0$ depending only on $L$ (this estimate requires $L \not\in \{0,-1,-2\dots\}$. It follows that
\begin{eqnarray*}
|\sum_{n/2<k\le n} \frac{L(L+1)\dots (L+n-k-1)/(n-k)!}{L(L+1)\dots (L+n-1)/n!} x^k| 
&\le& C n^{1-L} x^{n/2} \sum_{n/2<k\le n} (n-k)^{L-1} \\
&\le& C n^{1-L} x^{n/2} n^{L}\\
&\le& C \frac 1{n(1-x)^2}.
\end{eqnarray*}
This completes the proof of the lemma.
\endproof

Now,  recall the definition of $g$ in \eqref{e.gdefn}, we obtain \begin{eqnarray}\label{e.hyperbolic-reduction}
\widetilde f_n(x) = \frac 1 {b_{n,L_d}} x^n g(\frac 1 x) +  \widetilde f_{n,L_d}(x)
\end{eqnarray}
and we have the crude estimate (which holds  uniformly over $x=O(1)$)
$$|\frac 1 {b_{n,L_d}} x^n g(1/x)| \le C n^{1-L_d} (x^n + x^{n-N_0}) \le C \frac{1}{n^{L_d+1}(1-x)^2}$$ 
therefore using  Lemma~\ref{l.reciprocal-fractional}, we obtain the following corollary: 
\begin{corollary}\label{c.reciprocal-genKac} Uniformly over $0\le x <1$ it holds that
\begin{eqnarray*}
\widetilde f_n(x) 
&=& \frac {1}{1-x} \Big[1+ O(\frac{1}{n(1-x)})\Big]  \ \ .
\end{eqnarray*}
\end{corollary}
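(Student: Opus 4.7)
The plan is to combine the decomposition \eqref{e.hyperbolic-reduction} with Lemma~\ref{l.reciprocal-fractional} and the crude polynomial bound already recorded just above the corollary statement. Recall that under the reduction to $f_n=g+f_{n,L_d}$, one has
$$\widetilde f_n(x)=\frac{x^n g(1/x)}{b_{n,L_d}}+\widetilde f_{n,L_d}(x).$$
My first step is to apply Lemma~\ref{l.reciprocal-fractional} with $L=L_d$ (note $L_d=\deg(h)+1>0$ and is not a nonpositive integer) to obtain the main asymptotic
$$\widetilde f_{n,L_d}(x)=\frac{1}{1-x}\Big[1+O\Big(\frac{1}{n(1-x)}\Big)\Big]$$
uniformly over $x\in[0,1)$.

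My second step is to verify that the leftover term $x^n g(1/x)/b_{n,L_d}$ is absorbed in this error. Since $g$ is a fixed polynomial of degree strictly less than $N_0$, the bound $|g(1/x)|\lesssim 1+x^{1-N_0}$ holds on $x\in(0,1]$, and $|b_{n,L_d}|\approx n^{L_d-1}$; thus the remainder is at most $Cn^{1-L_d}(x^n+x^{n-N_0})$ uniformly. To convert this into the stated error I would invoke the elementary fact that $n^2(1-x)^2 x^n=O(1)$ uniformly over $x\in[0,1)$, which follows from $x^n\le e^{-n(1-x)}$ together with the boundedness of $u\mapsto u^2 e^{-u}$ on $[0,\infty)$. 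Since $x^{-N_0}$ is bounded on any interval $[1/2,1)$ (and the range $x\in[0,1/2]$ is handled trivially, where both $\widetilde f_n(x)$ and $\frac{1}{1-x}$ are of order $1$ and the error from the remainder is exponentially small), this yields
$$\Big|\frac{x^n g(1/x)}{b_{n,L_d}}\Big|\lesssim \frac{1}{n^{L_d+1}(1-x)^2},$$
which, since $L_d>0$, is dominated by $C\,\tfrac{1}{1-x}\cdot\tfrac{1}{n(1-x)}$ and hence subsumed in the claimed error.

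Combining these two steps yields the corollary. The whole argument is essentially bookkeeping; neither step presents a genuine obstacle. The only mild subtlety is the uniformity as $x\to 1^{-}$, which is precisely the regime where the $\frac{1}{n(1-x)}$ factor in the error becomes active, and it is handled automatically once the two piecewise bounds above are in place. I also note that the separate preliminary reduction to $f_n=g+f_{n,L_d}$ (already recorded in the paragraph preceding \eqref{e.hyperbolic-reduction}) is itself an application of Lemma~\ref{l.reciprocal-fractional} to each $L=L_j$ with $j<d$, whose contributions come with coefficients $O(n^{L_{d-1}-L_d})$ and so are strictly smaller than the error term $O(\tfrac{1}{n(1-x)^2})$ above.
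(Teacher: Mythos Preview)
Your argument is correct and follows essentially the same route as the paper: decompose via \eqref{e.hyperbolic-reduction}, invoke Lemma~\ref{l.reciprocal-fractional} for the main term $\widetilde f_{n,L_d}$, and absorb the polynomial tail $x^n g(1/x)/b_{n,L_d}$ using the crude bound $Cn^{1-L_d}(x^n+x^{n-N_0})\le C/(n^{L_d+1}(1-x)^2)$. Your added justification for the last inequality (via $x^n\le e^{-n(1-x)}$ and boundedness of $u^2e^{-u}$) and your remark on the prior reduction to $f_n=g+f_{n,L_d}$ are both accurate and just make explicit what the paper leaves implicit.
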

Since $L_d>0$, it follows that for every fixed $x\in [0,1)$ we have $\lim_{n\to\infty} \widetilde f_n(x) = \frac 1 {1-x} \equiv \widetilde f_\infty(x)$
as claimed earlier. Furthermore, from Corollary~\ref{c.reciprocal-genKac}, it follows that for any fixed $c>0$, if $x\in [0,1-\frac cn]$ then
$$\widetilde u_n(x):=\frac{\widetilde f_n(x)}{\widetilde f_\infty(x)} = 1+O(\frac 1c)$$
for therefore by choosing $c>0$ sufficiently large we could ensure that $\widetilde u_n(x) \ge 1/2$ for every $x\in [0,1-\frac cn]$ and every $n$ sufficiently large. Thus, by Theorem~\ref{t.density-compare} and Lemma~\ref{l.limiting-density}, we obtain the following estimate, uniformly over $t\in [\frac 1 2, 1-\frac cn]$:
\begin{eqnarray*}
\widetilde \rho_n(t) = \frac 1{2\pi (1-t)} +O(1) + O(|\widetilde u_n'(t^2)|^{1/2} + |\widetilde u_n'(t^2)| + |\widetilde u''_n(t^2)|^{1/2}).
\end{eqnarray*}
Thus, to complete the proof of Lemma~\ref{l.density-pw-outer}, it remains to show the following estimates uniformly over $x\in [\frac 1 4, 1-\frac cn]$:
\begin{eqnarray}
\label{e.un-der}
\widetilde u'_n(x) &=& O(\frac 1{n(1-x)^2}) \\
\label{e.un-der2} 
\widetilde u''_n(x) &=& O(\frac{1}{n(1-x)^3}) \ \  . 
\end{eqnarray}

Recall from \eqref{e.hyperbolic-reduction} that
$$\widetilde f_n(x) = \frac 1 {b_{n,L_d}} x^n g(\frac 1 x) +  \widetilde f_{n,L_d}(x)$$
Using the definition \eqref{e.gdefn} for $g$ and using $L_d\ge 0$ it follows that the first term $g_1(x) := \frac 1{b_{n,L_d}} x^n g(\frac 1 x)$ satisfies
$$\frac d{dx} g_1(x) = O(n^{2-L_d} x^n) = O(\frac 1{n^{L_d+1} (1-x)^3}) = O(\frac 1{n(1-x)^2}\widetilde f_\infty(x)) \ \ ,$$
$$(\frac d {dx})^2 g_1(x) = O(n^{3- L_d} x^n) = O(\frac 1{n^{L_d+1}(1-x)^4}) = O(\frac 1{n(1-x)^3}\widetilde f_\infty(x)) \ \ ,$$
uniformly over $x\in [1/2,1)$.

Therefore, it suffices to show \eqref{e.un-der} and \eqref{e.un-der2} for $f_n=f_{n,L_d}$.  We will use the following analogue of Lemma~\ref{l.recursive-f}:

\begin{lemma}\label{l.recursive}
For $L\not\in  \{0,-1,-2,\dots\}$ It holds that
$$\widetilde f_{n,L}(x) = \frac{1}{1-x} -\frac x{1-x}\frac{L-1}{L+n-1}  \widetilde f_{n,L-1}(x).$$
\end{lemma}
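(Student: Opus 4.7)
The plan is to derive Lemma~\ref{l.recursive} as a direct algebraic consequence of Lemma~\ref{l.recursive-f}, which already gives the analogous recursion for the unreciprocated polynomial $f_{n,L}$. The key observation is that $\widetilde f_{n,L}(x)$ is (by its definition in the preceding discussion) the reciprocal polynomial
$$\widetilde f_{n,L}(x) = \frac{x^n f_{n,L}(1/x)}{b_{n,L}},\qquad b_{n,L} := \frac{L(L+1)\dots(L+n-1)}{n!}.$$
So the strategy is to substitute $y=1/x$ into Lemma~\ref{l.recursive-f}, multiply by $x^n$, and renormalize.

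Concretely, substituting $y=1/x$ into
$$f_{n,L}(y) = \frac{f_{n,L-1}(y)}{1-y} - b_{n,L}\,\frac{y^{n+1}}{1-y}$$
and multiplying through by $x^n$, the denominator $1-1/x=(x-1)/x$ gets cleared and yields
$$x^n f_{n,L}(1/x) = \frac{x\cdot x^n f_{n,L-1}(1/x) - b_{n,L}}{x-1}.$$
Next I divide both sides by $b_{n,L}$. On the left this produces $\widetilde f_{n,L}(x)$; on the right, writing $x^n f_{n,L-1}(1/x)=b_{n,L-1}\widetilde f_{n,L-1}(x)$ converts the first term into $x\cdot\frac{b_{n,L-1}}{b_{n,L}}\widetilde f_{n,L-1}(x)$, and the constant $-1$ survives. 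A direct computation with generalized binomial coefficients gives
$$\frac{b_{n,L-1}}{b_{n,L}} = \frac{(L-1)L(L+1)\dots(L+n-2)/n!}{L(L+1)\dots(L+n-1)/n!} = \frac{L-1}{L+n-1},$$
where the hypothesis $L\notin\{0,-1,-2,\dots\}$ guarantees that $b_{n,L}\ne 0$ so that the division is legitimate (and also that $L+n-1\ne 0$ for all $n$). Rearranging $\frac{A\cdot x - 1}{x-1} = \frac{1}{1-x} - \frac{x}{1-x}A$ with $A=\frac{L-1}{L+n-1}\widetilde f_{n,L-1}(x)$ produces the asserted identity.

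There is no substantive obstacle here: the computation is just arithmetic on generalized binomial coefficients once Lemma~\ref{l.recursive-f} is in hand. The only bookkeeping point worth being careful about is the sign flip when $1-1/x$ is cleared (so that $\frac{1}{1-1/x}$ becomes $\frac{x}{x-1}=-\frac{x}{1-x}$), and the verification that the telescoping ratio $b_{n,L-1}/b_{n,L}$ simplifies to $(L-1)/(L+n-1)$; both are immediate.
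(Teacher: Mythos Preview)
Your proof is correct and follows exactly the approach the paper indicates first: ``This follows from Lemma~\ref{l.recursive-f} using the definition of $\widetilde f$.'' The paper also records an alternative direct computation of $(1-x)\widetilde f_{n,L}(x)$ from the coefficient form \eqref{e.monotoneform}, but your route via the reciprocal substitution is the one the paper cites as primary, and your bookkeeping (the sign when clearing $1-1/x$ and the ratio $b_{n,L-1}/b_{n,L}=(L-1)/(L+n-1)$) is accurate.
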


\proof This follows from Lemma~\ref{l.recursive-f} using the definition of $\widetilde f$.
Alternatively, we could directly compute
\begin{eqnarray*}
(1-x)\widetilde f_{n,L}(x) 
&=& 1 + \sum_{k=1}^n \frac{b_{n-k,L}-b_{n-k+1,L}}{b_{n,L}} x^k - \frac 1{b_{n,L}}x^{n+1} \\
&=& 1 - \sum_{k=1}^n \frac{b_{n-k+1,L-1}}{b_{n,L}} x^k - \frac 1{b_{n,L}}x^{n+1} \\
&=& 1 - \frac{L-1}{L+n-1}\sum_{k=1}^n \frac{b_{n-k+1,L-1}}{b_{n,L-1}} x^k - \frac{L-1}{L+n-1}\frac 1{b_{n,L-1}}x^{n+1} \\
&=& 1 -\frac{L-1}{L+n-1} x\widetilde f_{n,L-1}(x)
\end{eqnarray*}
giving the desired claim.
\endproof

Using Lemma~\ref{l.reciprocal-fractional} and Lemma~\ref{l.recursive} it follows that if $L\not\in \{1,0,-1,\dots\}$ then
\begin{eqnarray}\label{e.finer}
\widetilde f_{n,L}(x)
=\frac 1 {1-x} - \frac {L-1}{L+n-1}\frac{x}{(1-x)^2} + O(\frac 1{n^2(1-x)^3}). 
\end{eqnarray}
On the other hand, if $L=1$ then this estimate holds trivially via explicit computation from $\widetilde f_{n,1}=(1-x^{n+1})/(1-x)$. Thus \eqref{e.finer} holds for any $L\not\in \{0,-1,-2,\dots\}$.

Now, using \eqref{e.finer} and Lemma~\ref{l.recursive} again we obtain the following corollary:
\begin{corollary}\label{c.finer}
For $x\in [1/2, 1-c/n]$, if $L\not\in \{0,-1,-2\dots\}$ then
\begin{eqnarray*}
\widetilde f_{n,L}(x)
&=& \frac 1 {1-x} - \frac {L-1}{L+n-1}\frac{x}{(1-x)^2} + O(\frac 1{n^2(1-x)^3}) \\
&=& \frac 1 {1-x} - \frac {(L-1)}{(L+n-1)}\frac{x}{(1-x)^2} + \frac{(L-1)(L-2)}{(L+n-1)(L+n-2)} \frac{x^2}{(1-x)^3} + O(\frac 1{n^3(1-x)^4}).
\end{eqnarray*}
\end{corollary}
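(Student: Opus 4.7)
The first displayed equation of Corollary~\ref{c.finer} is simply the restriction of \eqref{e.finer} to $x\in[1/2,1-c/n]$, which was already established in the paragraph immediately preceding the corollary (with a separate treatment of the edge case $L=1$). So the real content to prove is the second, finer expansion, and my plan is to obtain it by iterating the recursion in Lemma~\ref{l.recursive} one extra time and plugging in \eqref{e.finer} for the new $\widetilde{f}_{n,L-1}$ term.

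Concretely, Lemma~\ref{l.recursive} gives
$$\widetilde{f}_{n,L}(x) = \frac{1}{1-x} - \frac{(L-1)x}{(L+n-1)(1-x)}\widetilde{f}_{n,L-1}(x).$$
Assuming $L\notin\{1,0,-1,-2,\dots\}$ so that $L-1\notin\{0,-1,-2,\dots\}$, I would then substitute the expansion \eqref{e.finer} at shift $L-1$:
$$\widetilde{f}_{n,L-1}(x) = \frac{1}{1-x} - \frac{(L-2)x}{(L+n-2)(1-x)^2} + O\!\Bigl(\tfrac{1}{n^{2}(1-x)^{3}}\Bigr).$$
Multiplying out and collecting terms produces exactly the three explicit summands claimed in the corollary, plus a remainder equal to $\frac{(L-1)x}{(L+n-1)(1-x)}$ times $O(\frac{1}{n^2(1-x)^3})$. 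Since the prefactor is $O(\frac{1}{n(1-x)})$ for $x=O(1)$, the remainder collapses to $O(\frac{1}{n^3(1-x)^4})$, which is the claimed error.

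It remains to cover the degenerate case $L=1$, in which both correction coefficients $(L-1)$ and $(L-1)(L-2)$ vanish, so the claim reduces to $\widetilde{f}_{n,1}(x) = \frac{1}{1-x}+O(\frac{1}{n^{3}(1-x)^{4}})$. This is immediate from the closed form $\widetilde{f}_{n,1}(x)=(1-x^{n+1})/(1-x)$, since on $x\le 1-c/n$ the error $x^{n+1}/(1-x)$ is exponentially small in $n$ and therefore dominated by any polynomial error in $\frac{1}{n(1-x)}$. I do not anticipate any real obstacle: the argument is a single additional round of the recursive relation, and the only bookkeeping is to check that the prefactor $\frac{(L-1)x}{(L+n-1)(1-x)}$ absorbs one factor of $\frac{1}{n(1-x)}$ into the remainder so that the error gains the needed power.
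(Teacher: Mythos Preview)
Your proposal is correct and follows essentially the same approach as the paper: the paper also derives the second expansion by applying Lemma~\ref{l.recursive} once more and substituting \eqref{e.finer} for $\widetilde f_{n,L-1}$, and it likewise treats the case $L=1$ separately via the explicit formula $\widetilde f_{n,1}(x)=(1-x^{n+1})/(1-x)$. One small remark: your phrase ``exponentially small in $n$'' is slightly imprecise, since at $x=1-c/n$ the term $x^{n+1}$ is only $O(e^{-c})$; the correct uniform bound is $(n(1-x))^3 x^{n+1}=O(1)$ on $[1/2,1-c/n]$ (via $x^{n+1}\le e^{-n(1-x)}$ and the boundedness of $u^3 e^{-u}$), which is what you need and which your argument in fact delivers.
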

(Again, the case $L=1$ of the second estimate in Corollary~\ref{c.finer} does not follow from \eqref{e.finer} and Lemma~\ref{l.recursive} and one checks this case  separately  using explicit computation.)

Now, we show the desired estimate \eqref{e.un-der} for $\widetilde u'_n$. As remarked earlier it suffices to assume $f_n=f_{n,L}$ for some  $L>1$. We have
\begin{eqnarray*}
\widetilde u'_n(x) &=&  (\widetilde f_{n,L}(x) (1-x))' = (1-x) \frac d{dx}\widetilde f_{n,L}(x)  -\widetilde f_{n,L}(x)  \ \ ,\\
\frac{d}{dx}\widetilde f_{n,L}(x) &=& \frac{1}{b_{n,L}}\Big[nx^{n-1}f_{n,L}(\frac 1 x ) -  x^{n-2}f'_{n,L}(\frac 1 x) \Big].  
\end{eqnarray*}
It is clear that 
$$f'_{n,L}(x)=L \sum_{k=0}^{n-1} \frac{(L+1)\dots (L+k)}{k!} x^k = L f_{n-1,L+1}(x)$$
therefore  
\begin{eqnarray}\label{e.ftilde-der}
\frac{d}{dx}\widetilde f_{n,L}(x) = \frac{n}{x}[\widetilde f_{n,L}(x) -  \widetilde f_{n-1, L+1}(x)]. 
\end{eqnarray}

Recall that $L_d > 0$.
Thus, by Corollary~\ref{c.finer},  we have
\begin{eqnarray*}
\widetilde  u'_n(x) &=& \frac{n(1-x)}x [\widetilde f_{n,L_d}(x) - \widetilde f_{n-1, L_d+1}(x)]  - \widetilde f_{n,L_d}(x) \\
&=&   \frac{n(1-x)}{x}\Big[(\frac 1 {1-x}-\frac {L_d-1}{L_d+n-1} \frac{x}{(1-x)^2})  - ( \frac 1 {1-x} - \frac{L_d}{L_d+n-1} \frac{x}{(1-x)^2})\Big] +\\
&&+ (-\frac 1 {1-x})  + O(\frac 1{n(1-x)^2})  \\
&=&O(\frac 1{n(1-x)^2}) 
\end{eqnarray*}
uniform over $x\in [1/2, 1-c/n]$, thus proving \eqref{e.un-der}. 

For \eqref{e.un-der2}, we use \eqref{e.ftilde-der} to obtain
\begin{eqnarray*}
\widetilde  u''_n(x) &=&  \frac{d}{dx}\Big[\frac{n(1-x)}{x}(\widetilde f_{n,L}(x) - \widetilde f_{n-1,L+1}(x))\Big]  - \frac{d}{dx}\widetilde f_{n,L}(x) \\
&=& \frac{n(1-x)}x \Big[\frac n x (\widetilde f_{n,L}(x) - \widetilde f_{n-1, L+1}(x)) - \frac{n-1}x (\widetilde f_{n-1, L+1}(x) - \widetilde f_{n-2, L+2}(x))\Big] + \\
&& + (-\frac n{x^2} - \frac n x)[\widetilde f_{n,L}(x) - \widetilde f_{n-1,L+1}(x)]. 
\end{eqnarray*}

Using Corollary~\ref{c.finer} again, we have
\begin{eqnarray*}
\widetilde f_{n,L}(x) - \widetilde f_{n-1,L+1}(x) 
&=& \frac {x}{(L+n-1)(1-x)^2} + \frac{-2(L-1)}{(L+n-1)(L+n-2)} \frac{x^2}{(1-x)^3} + O(\frac 1 {n^3(1-x)^4}) .
\end{eqnarray*}
Therefore
\begin{eqnarray*}
\widetilde  u''_n(x) &=&    \frac{n(1-x)}{x}\Big[\frac {1}{(L_d+n-1)(1-x)^2}] + \\
&&+ \frac{n(1-x)}{x}\Big[\frac {2L_d(n-1)- 2(L_d-1)n}{(L_d+n-1)(L_d+n-2)} \frac{x}{(1-x)^3}\Big] +\\
&&+ (-1)\frac{n+nx}{x(L_d+n-1)(1-x)^2}   + O(\frac 1{n(1-x)^3})\\
&=& \frac{n(1-x)}{x}\Big[\frac {1}{n(1-x)^2} + O(\frac{1}{n^2(1-x)^2})\Big] + \\
&&+ \frac{n(1-x)}{x}\Big[\frac {2L_d n- 2(L_d-1)n}{n^2} \frac{x}{(1-x)^3} + O(\frac{1}{n^2(1-x)^3})\Big] +\\
&&+ (-1)\frac{n+nx}{nx(1-x)^2}   + O(\frac 1{n(1-x)^2}) + O(\frac 1{n(1-x)^3})\\
&=& \frac {1}{x(1-x)} + \frac {2n}{n(1-x)^2} - \frac{n+nx}{x(1-x)^2} + O(\frac 1{n(1-x)^3}) \\
&=& O(\frac 1{n(1-x)^3}) 
\end{eqnarray*}
thus proving \eqref{e.un-der2}. This completes the proof of Lemma~\ref{l.kac-outside}.

\section{Proof of Theorem~\ref{t.Gaussian2}}\label{s.Gaussian2}
In this section we count the average number of real zeros for $P_n(t)=\sum_{j=0}^n c_j \xi_j t^j$ where for $j\ge N_0$ two conditions hold: $c_j=\mathfrak P(j)$ for some fixed classical polynomial $\mathfrak P$ of degree $\rho$ when $j\ge N_0$ and is bounded when $j\le N_0$, and $\xi_j$ are independent Gaussian with mean $\mu \ne 0$ and variance $1$. Without loss of generality we may assume that the leading coefficient of $\mathfrak P$ is $1$, i.e. $c_j = j^\rho + \dots$.

Thanks to Lemma~\ref{boundedness} the average number of real zeros outside    $[-1-b_1,-1+b_1]$ and $[1-b_1, 1+b_1]$  (for any fixed  $b_1>0$) is bounded.

We will show that on average there are a bounded number of real zeros in  $[1-b_1,1+b_1]$ and $\frac {1+\sqrt{2\rho+1}}{2\pi} \log n + O(1)$ real zeros in $[-1-b_1, -1+b_1]$.

As in the proof of Corollary~\ref{mean}, let $m(t)=\E P(t)$, $\mathcal P = \Var[P_n] = \sum_{j=0}^n c_j^2 t^{2j}$, $\mathcal Q = \Var[P'_n(t)] = \sum_{j=0}^n c_j j^2 t^{2j-2}$, and $\mathcal R = \textbf{Cov}[P_n, P'_n] = \sum_{j=0}^n j c_j^2 t^{2j-1}$, and $\mathcal S = \mathcal P \mathcal Q - \mathcal R^2$.

We will use the following generalization of the Kac-Rice formula in \cite[Corollary 2.1]{F1}, which gives
\begin{eqnarray*}
\E N_n[a,b] &=& I_1(a,b) + I_2 (a,b) \ \ , \\
I_1 (a,b) &:=& \int_{a}^b  \frac{\mathcal S^{1/2}}{\pi \mathcal P}  \exp(-\frac{m^2 \mathcal Q + m'^2 \mathcal P - 2 mm' \mathcal R}{2\mathcal S}) dt  \ \ , \\
I_2(a,b) &:=& \int_{a}^b \frac{\sqrt 2 |m' \mathcal P  - m\mathcal R|}{\pi \mathcal P^{3/2}}  \exp(-\frac {m^2}{2\mathcal P}) erf(\frac{|m'\mathcal P - m \mathcal R|}{\sqrt {2\mathcal P} \mathcal S}) dt, \\
erf(x) &:=& \int_0^x e^{-t^2}dt.
\end{eqnarray*}
We note that in $I_1$ the first factor $\mathcal S^{1/2}/(\pi \mathcal P)$ is exactly the density of the real roots for $P_n$ in the mean zero case, namely $\rho_n$ in the notation of  Lemma~\ref{l.density}, and there is an extra exponential factor in $I_1$. Our plan is, essentially, to show that near $1$ the exponential decay of   the extra factor in $I_1$ will cancel out  the pole singularity of $\rho_n$ and near $-1$ the extra factor in $I_1$ is essentially $1$. This would lead to $I_1(a,b) = O(1)$ if $a,b$ are close to $1$ and $I_1(a,b)=\int_a^b \rho_n(t)dt + O(1)$ if $a,b,$ are close to $-1$, thus  allowing us to reduce the proof to the mean zero case. For $I_2$ we will show that  $I_2(a,b)=O(1)$ for both cases.

We now separate the neighborhood into four intervals: $[1-b_1,1]$, $[-1, -1+b_1]$, $[1,1+b_1]$, and $[-1-b_1, -1]$,  where $b_1>0$ is a sufficiently small fixed constant.

\underline{The interval $[1-b_1,1]$.} We wil show that this interval contributes $O(1)$ to $\E N_n$. Using \eqref{e.tailseries}, for $1-b_1\le t <1$ we have
\begin{eqnarray*}
m(t) &=& \mu \sum_{j=0}^n c_j t^j =  \sum_{j< N_0} \mu [c_j -\mathfrak P(j)]t^j + \frac{\mu \rho!}{(1-t)^{\rho+1}} \Big(1+ O([1+n(1-t)]^{\rho} t^{n+1})\Big)\\
&=& O(1) + \frac{\mu \rho!}{(1-t)^{\rho+1}} \Big(1+ O([1+n(1-t)]^{\rho} t^{n+1})\Big)\\
&=& O(1) + \frac{\mu \rho!}{(1-t)^{\rho+1}} \Big(1+ O(t^{n/2})\Big)
\end{eqnarray*}
here we have used the fact that $n^L s^n =O((1-s)^{-L})$, applied to $s=t^{1/2}$. Similarly,
\begin{eqnarray*}
m'(t) &=& \mu \sum_{j=0}^{n-1} c_{j+1} (j+1) t^{j} = O(1) + \frac{\mu (\rho+1)!}{(1-t)^{\rho+2}}\Big(1+ O(t^{n/2})\Big)\\
\mathcal P &=& O(1) + \frac{(2\rho)!}{(1-t^2)^{2\rho+1}} \Big(1+ O(t^{n})\Big)\\
\mathcal Q &=&  O(1) + \frac{(2\rho+2)!}{(1-t^2)^{2\rho+3}} \Big(1+ O(t^{n})\Big)\\
\mathcal R &=&  O(1) + \frac{(2\rho+1)! t}{(1-t^2)^{2\rho+2}} \Big(1+ O (t^{n})\Big).
\end{eqnarray*}
Note that by choosing $c>0$ sufficiently large we could ensure that $ t^{n/2} \ll 1$ for $|t| \le 1- \frac c n$, and by choosing $b_1>0$ sufficiently small we could ensure that $\frac 1 {1-t^2} \gg 1$ for $t\in [1-b_1, 1)$. It follows that  
\begin{eqnarray*}
m^2 \mathcal Q + m'^2 \mathcal P - 2mm' \mathcal R & \ge& C_\rho \frac 1 {(1-t)^{4\rho+5}}   \ge C'_\rho \frac{\mathcal P^2}{(1-t)^3} 
\end{eqnarray*}
for some positive constants $C'_\rho, C_\rho$ depending only on $\rho$ and $\mu$. On the other hand, by Lemma~\ref{l.density-pw-inner} we have 
\begin{eqnarray*}
\frac{\mathcal S^{1/2}}{\pi \mathcal P} = \rho_n \sim \frac 1 {1-|t|} \ \ . 
\end{eqnarray*}
Consequently, uniformly over $t\in [1-b_1,1]$ we have
\begin{eqnarray*}
\frac{m^2 \mathcal Q + m'^2 \mathcal P - 2mm' \mathcal R}{2\mathcal S} & \ge& C''_\rho \frac 1 {1-t} 
\end{eqnarray*}
therefore
\begin{eqnarray*} 
I_1 (1-b_1, 1-\frac c n) &=& O\Big(\int_{1-b_1}^{1} \frac 1 {1-t} \exp(-\frac {C''_\rho}{1-t}) dt\Big) = O(1) \\
I_1(1-\frac cn, 1) &\le& \int_{1-\frac c n}^1 \rho_n(t) dt = O(1).
\end{eqnarray*}

Now, for $I_2$ we similarly have, for $t\in [1-b_1, 1-\frac cn]$,
\begin{eqnarray*}
\frac {m^2}{2\mathcal P} &\ge& C_\rho \frac 1 {1-t}  \ \  ,\\
|m'\mathcal P - m\mathcal R| &=& O(\frac 1 {(1-t)^{3\rho+3}}) = O(\frac {\mathcal P^{3/2}}{(1-t)^{3/2}})
\end{eqnarray*}
therefore
\begin{eqnarray*}
I_2(1-b_1, 1-\frac c n)  &=& O\Big(\int_{1-b_1}^{1-\frac cn} \frac 1 {(1-t)^{3/2}} \exp(-C_\rho\frac 1 {1-t})dt\Big) = O(1)  \ \ .
\end{eqnarray*}
On the other hand, the integrand of $I_2$ is   bounded above by $O(n)$ for $t\in [1-\frac cn, 1]$, for any fixed $c>0$. To see this, first note that for some absolute constant $n_0$ the coefficients $c_j$ are of the same sign  and $|c_j| \ge j^\rho$ for $j\ge n_0$. It follows that the main contribution to $m$ and $m'$ comes from the tail $j\ge n_0$. For instance,
\begin{eqnarray*}
|m(t)| &=& O(1) + |\sum_{n_0\le j \le n} c_j t^j| \ \ , \\ 
|\sum_{n_0\le j \le n} c_j t^j| &\ge& \frac 1 C \sum_{n_0\le j \le n} j^\rho  \ge  \frac 1 {C'} n^{\rho+1} \gg 1 \ \ ,
\end{eqnarray*}
and for $m'$ we could argue similarly. Since $c_j$ are of the same sign for $j\ge n_0$, it follows immediately that $|m'(t)| =O(n |m(t)|)$, and consequently
\begin{eqnarray*}
\frac{|m' \mathcal P|}{\mathcal P^{3/2}} \exp(-\frac {m^2}{2\mathcal P}) &=&   nO\Big(\frac{|m|}{\mathcal P^{1/2}} \exp(-\frac {m^2}{2\mathcal P})\Big)= O(n).
\end{eqnarray*}
using the boundedness of $xe^{-x^2}$. We also have 
\begin{eqnarray*}
\frac{|m\mathcal R|}{\mathcal P^{3/2}}  \exp(-\frac{m^2}{2\mathcal P})  &=& O(\frac{\mathcal R}{\mathcal P}) = O(n). 
\end{eqnarray*}
It follows that $I_2(1-\frac cn, 1) = O(1)$, so $I_2(1-b_1, 1)=O(1)$.

\underline{The interval $[-1, -1+b_1]$.} We will show that this interval contributes $(\sqrt{2\rho+1}\log n)/(2\pi) + O(1)$ to $\E N_n$. The analysis of this interval is fairly similar to the analysis of $[1-b_1, 1]$, the main difference is that $m(t)$ and $m'(t)$ are less singular near $-1$, in fact they are bounded by $O((1+t)^{-\rho})$ and $O((1+t)^{-(\rho+1)})$ respectively (by using \eqref{e.tailseries2} for $L=1,2,\dots, \rho$ and expanding the polynomial defining $c_j$ into
the linear basis of binomial polynomials). It follows that
\begin{eqnarray*}
m^2 \mathcal Q + m'^2 \mathcal P - 2mm'\mathcal R = O(\frac 1 {(1+t)^{4\rho+3} }) = O(\frac{\mathcal P^2}{1+t}) = O((1+t)\mathcal S)
\end{eqnarray*}
therefore for $c>0$ sufficiently large and $b_1>0$ sufficiently small
\begin{eqnarray*}
I_1(-1+\frac cn, -1+b_1) &=& \int_{-1+\frac cn}^{-1+b_1} \rho_n(t)dt + O\Big(\int_{-1}^{-1+b_1} \rho_n(t)(1+t)dt\Big) = \frac {\sqrt{2\rho+1}}{2\pi}\log n + O(1)\\
I_1(-1,-1+\frac cn)  &\le& \int_{-1}^{-1+\frac cn}\rho_n(t)dt = O(1).
\end{eqnarray*}
For $I_2$, similarly we only need to show that $I_2(-1+\frac cn, -1+b_1)=O(1)$. This follows from
\begin{eqnarray*}
\frac{|m'\mathcal P- m\mathcal R|}{\mathcal P^{3/2}} = O(\frac{(1+t)^{-(2\rho+2)} } {(1+t)^{-3\rho+\frac 32}}) = O((1+t)^{\rho-\frac 1 2}) = O((1+t)^{-\frac 1 2}).
\end{eqnarray*}

\underline{The interval $[1,1+b_1]$.} We will show that this interval contributes $O(1)$ to $\E N_n$. To analyze this interval, we will consider the reciprocal polynomial $\widetilde P_n(t) = \sum_{j=0}^n \widetilde c_j \xi_{j} t^j$ where $\widetilde c_j = c_{n-j}/c_n$. For convenience of notation, let $\widetilde \rho_n$, $\widetilde I_1$, $\widetilde I_2$, $\widetilde {\mathcal P}$, $\widetilde {\mathcal Q}$, $\widetilde {\mathcal R}$,  $\widetilde {\mathcal S}$, $\widetilde m$, and $\widetilde m'$ be the corresponding quantities, and similarly it suffices to show that $\widetilde I_1(1-b_1,1-\frac cn),\widetilde I_2(1-b_1,1 -\frac cn) =O(1)$ where $c>0$ is a fixed large constant. 

Let $\widetilde f_n(t)= \widetilde{\mathcal P}(t)$, in other words $\widetilde f_n(t^2) =\Var[\widetilde P_n(t)]$ as in the proof of Theorem~\ref{t.Gaussian}. Recall from the proof of Lemma~\ref{l.density} that $\widetilde {\mathcal Q} = \sum_{j=0}^n j^2 \widetilde c_j^2 t^{2j-2}= \widetilde f_n'(t^2) + t^2 \widetilde f''_n(t^2)$, and $\widetilde {\mathcal R} = \sum_{j=0}^n j \widetilde c_j^2 t^{2j-1} = t \widetilde f_n'(t^2)$.

Recall that $\widetilde u_n(x)= \widetilde f_n(x) (1-x)$. From Corollary~\ref{c.reciprocal-genKac}, \eqref{e.un-der}, and \eqref{e.un-der2}, for $x\in [1-b_1, 1-\frac cn]$ with $c>0$  sufficiently large we have
\begin{eqnarray*}
\widetilde f_n(x) &=& \frac 1{1-x} [1 + O(\frac 1{n(1-x)})]   \ \ ,\\
\widetilde f'_n(x) &=& \frac{\widetilde u'_n(x)  +  \widetilde f_n(x)}{1-x} = \frac{\widetilde f_n(x)}{1-x} + O(\frac 1 {n(1-x)^3})  = \frac 1{(1-x)^2} + O(\frac 1 {n(1-x)^3}) \ \ ,  \\
\widetilde f''_n(x) &=& \frac{\widetilde u''_n(x) + 2\widetilde f'_n(x)}{1-x} = \frac 2{(1-x)^3} + O(\frac 1{n(1-x)^4}). 
\end{eqnarray*}
It follows that for $t\in [1-b_1,1-\frac cn]$ we have
\begin{eqnarray*}
\widetilde{\mathcal P}(t) &=&   \frac{1}{1-t^2} +O(\frac 1{n(1-t^2)^2}) \  \  , \\ \widetilde{\mathcal Q}(t) &=&    \frac {2}{(1-t^2)^3} + O(\frac{1}{n(1-t^2)^4}) + O(\frac 1{(1-t^2)^2})\ \  , \\ 
\widetilde{\mathcal R}(t) &=&   \frac{t}{(1-t^2)^2} + O(\frac{1}{n(1-t^2)^3})
\end{eqnarray*}
and using Lemma~\ref{l.density-pw-outer} and the Edelman--Kostlan formula we have
\begin{eqnarray*}
\widetilde{\mathcal S} &=& \widetilde \rho_n (t)^2  \pi^2 \widetilde {\mathcal P}^2(t) = O(  \frac 1 {(1-t^2)^2} \widetilde{\mathcal P}^2 ).
\end{eqnarray*}
On the other hand, for $t\in [1-b_1,1]$,   using  Corollary~\ref{c.reciprocal-genKac} we have
\begin{eqnarray*}
\widetilde m(t) &=& \mu\sum_{j=0}^n \widetilde c_{j} t^j = \frac {\mu}{1-t}[1+O(\frac 1{n(1-t)})] \ \ .
\end{eqnarray*}
Let $d_j=c_j j$ which is a polynomial of $j$ (for $j\ge N_0$) of degree $\rho+1$. Then for $j\le n - N_0$ we have $\widetilde d_j = \frac{d_{n-j}}{d_n} = \widetilde c_j - \frac j n \widetilde c_j$. 
We obtain
$$\widetilde m'(t) = \mu\sum_{j=0}^n \widetilde c_{j}jt^{j-1} =  n\mu \sum_{j=0}^n \widetilde c_j t^{j-1}  - n\mu \sum_{j=0}^n \widetilde d_j t^{j-1} \ \ .$$
To evaluate $ \sum_{j=0}^n \widetilde d_j t^{j-1}$ and $ \sum_{j=0}^n \widetilde c_j t^{j-1}$, we use Corollary~\ref{c.finer} together with an expansion of the polynomials defining $c_j$ and $d_j$ into the linear basis of binomial polynomials $\frac{L\dots (L+j-1)}{j!}$ with $L= 1,2, \dots$ (as in the proof of Corollary~\ref{c.reciprocal-genKac}). It follows that
\begin{eqnarray*}
\widetilde m'(t) &=& \mu\sum_{j=0}^n \widetilde c_{j}jt^{j-1} =  n\mu \sum_{j=0}^n \widetilde c_j t^{j-1}  - n\mu \sum_{j=0}^n \widetilde d_j t^{j-1} \\
&=& n\mu \Big[\frac 1{1-t} (1+O(\frac 1 n))- \frac{\rho}{\rho+n} \frac{t}{(1-t)^2} + O(\frac 1 {n^2 (1-t)^2})] \\
&& \quad -\quad  n\mu \Big[\frac 1{1-t} (1+O(\frac 1 n)) - \frac{\rho+1}{\rho+1+n} \frac{t}{(1-t)^2} + O(\frac 1 {n^2 (1-t)^2})] \\
&=& \frac{\mu}{(1-t)^2} +  O(\frac{1}{1-t}). 
\end{eqnarray*}
Note that by choosing $b_1$ small and $c$ large we know that $1-t \ll 1$ and $\frac 1 {n(1-t)} \ll 1$. Thus, 
\begin{eqnarray*}
\widetilde m^2 \widetilde {\mathcal Q}+ \widetilde m'^2 \widetilde {\mathcal P} - 2\widetilde m \widetilde m' \widetilde{\mathcal R}  &\ge&   C^{-1} \frac{1}{(1-t)^5} \ge \widetilde {\mathcal P}^2 (1-t)^{-3} \ge  C^{-1} \widetilde {\mathcal S} (1-t)^{-1}
\end{eqnarray*}
and the rest of the proof is similar to the prior treatment for (the case $\rho=0$ of) $\E N_n[1-b_1, 1]$. In particular, to show that $\widetilde m'(t) = O(n|\widetilde m(t)|)$ for $t\in [1-\frac cn, 1]$ (in the treatment of $\widetilde I_2(1-\frac cn,1)$) we similarly observe that the main contributions to $|\widetilde m|$ and $|\widetilde m'|$ come from $0\le j\le n-n_0$, and for these indices we have  $\widetilde c_j>0$.

\underline{The interval $[-1-b_1, -1]$.} We will show that this interval contributes $(\log n)/(2\pi)+O(1)$ to $\E N_n$. As before we also consider the reciprocal polynomial $\widetilde P_n$ and count the number of real roots in $[-1, -1+b_1]$ for this polynomial. The analysis is similar to the treatment for the interval $[1,1+b_1]$;  the only modification is in the estimate for $\widetilde m$ and $\widetilde m'$ near $-1$, and unlike the last interval here   these two terms are bounded above by $O(1)$ (via applications of Lemma~\ref{l.reciprocal-fractional} together with an expansion of the polynomial defining $c_j$ into the linear basis of binomial polynomials). The rest of the proof is entirely similar to the prior treatment for (the case $\rho=0$ of)   $\E N_n [-1,-1+b_1]$.

\section{Appendix}
In this section, we provide the proof of Theorem \ref{log-com16-1}. For the proof of Theorem \ref{complex-series}, we need an analog of this theorem when $P$ is a power series of the form \eqref{series}. A proof for series in fact runs along the same line with the following proof for polynomials, except some minor modifications that we shall notify the reader.  

We first prove the following lemma.
\begin{lemma}\label{log-com33} Let $P$ be the random polynomial of the form \eqref{poly} where the $\xi_i$ are independent random variables with variance 1 and $\sup_{i\ge 0} \E|\xi_i|^{2+\ep}\le \tau_2$ for some constant $\tau_2$. And let $\tilde P = \sum_{i=0}^{\infty}c_i\tilde \xi_i z^{i}$ be the corresponding polynomial with Gaussian random variables $\tilde \xi_i$. Assume that $\tilde \xi_i$ matches moments to second order with $\xi_i$ for every $i\in \{0, \dots, n\}\setminus I_0$ for some subset $I_0$ (may depend on $n$) of size bounded by some constant $N_0$ and that $\sup_{i\ge 0} \E|\tilde \xi_i|^{2+\ep}\le \tau_2$. 

Then there exists a constant $C_2$ such that the following holds true. Let $\alpha_1\ge C_2 \alpha_0>0$ and $C>0$ be any constants. Let $\delta \in (0, 1)$ and $m\le \delta^{-\alpha_0}$ and $z_1,\dots, z_m\in \mathbb{C}$ be complex numbers such that 
\begin{equation}\label{log-com-e11}
\frac{|c_i||z_j|^i}{\sqrt{V(z_j)}}\le C\delta^{\alpha_1}, \forall i=0,\dots, n, j=1, \dots, m,
\end{equation} 
where $V(z_j) = \sum _{i=\{0, \dots, n\}\setminus I_0}|c_i|^{2}|z_j|^{2i}$.
Let $H:\mathbb{C}^m\to \mathbb{C}$ be any smooth function such that $\norm{\triangledown^a H} \le \delta^{-\alpha_0}$ for all $0\le a\le 3$, then 
\[\left|\E H\left(\frac{P(z_1)}{\sqrt{V(z_1)}}, \dots, \frac{P(z_m)}{\sqrt{V(z_m)}}\right)-\E H\left(\frac{\tilde P(z_1)}{\sqrt{V(z_1))}}, \dots, \frac{\tilde P(z_m)}{\sqrt{V(z_m)}}\right) \right|\le \tilde{C}\delta^{\alpha_0},
 \]
 where $\tilde{C}$ is a constant depending only on $\alpha_0, \alpha_1, C, N_0, \tau_2$ and not on $\delta$. 
\end{lemma}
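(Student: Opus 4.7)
The plan is a Lindeberg exchange argument in which the indices $i \in I_0$ are handled by a Lipschitz bound (since moment matching fails there, but $|I_0|\le N_0$ is small) and the indices $i \notin I_0$ are swapped one at a time using the matched-moment hypothesis and a truncation-interpolation trick to compensate for the fact that we only have $(2+\epsilon)$-moments.

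First I would set up the geometry. Define the vectors
$$\vec u_i := \Big(\frac{c_i z_1^i}{\sqrt{V(z_1)}},\dots,\frac{c_i z_m^i}{\sqrt{V(z_m)}}\Big) \in \C^m,\qquad 0\le i\le n,$$
so that the random vectors to be compared read $\vec W = \sum_i \xi_i \vec u_i$ and $\vec{\tilde W}=\sum_i \tilde\xi_i \vec u_i$. Two geometric facts will drive the final arithmetic: by the very definition of $V$, the Parseval-type identity $\sum_{i\notin I_0}\|\vec u_i\|^2 = m$ holds, while \eqref{log-com-e11} forces $\|\vec u_i\|_\infty \le C\delta^{\alpha_1}$, hence $\|\vec u_i\| \le C\sqrt m\,\delta^{\alpha_1}$. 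Next, split $\vec W = A+B$ with $A := \sum_{i\in I_0}\xi_i \vec u_i$ and $B :=\sum_{i\notin I_0}\xi_i \vec u_i$ (and $\vec{\tilde W} = \tilde A + \tilde B$ analogously). From $\|\triangledown H\|\le \delta^{-\alpha_0}$, Cauchy-Schwarz, and the crude bound $\E\|A\|^2 \lesssim N_0\sum_{i\in I_0}\|\vec u_i\|^2 \lesssim N_0^2 m\delta^{2\alpha_1}$ (using $\E|\xi_i|^2\le \tau_2^{2/(2+\epsilon)}$), I get $\E|H(A+B)-H(B)|\lesssim \delta^{-\alpha_0}\sqrt m\,\delta^{\alpha_1}\lesssim \delta^{\alpha_1-3\alpha_0/2}$, and similarly for $\tilde A$. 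Choosing $C_2$ at least $5/2$ makes this $O(\delta^{\alpha_0})$, so the task reduces to estimating $|\E H(B)-\E H(\tilde B)|$.

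Enumerating $\{0,\dots,n\}\setminus I_0 = \{i_1<\dots<i_M\}$ and telescoping, $\E H(B) - \E H(\tilde B) = \sum_k [\E H(B^{(k)}+\xi_{i_k}\vec u_{i_k}) - \E H(B^{(k)}+\tilde\xi_{i_k}\vec u_{i_k})]$ where $B^{(k)}$ collects already-swapped and not-yet-swapped coordinates. For each $k$, set $F_k(x) := H(B^{(k)}+x\vec u_{i_k})$, regarded as a smooth function of $(\Re x,\Im x)\in\R^2$, and Taylor-expand to second order about $0$. Matching to second order together with independence of $\xi_{i_k},\tilde\xi_{i_k}$ from $B^{(k)}$ makes the constant, linear, and quadratic contributions cancel in expectation, leaving only the remainders $R_k$. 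Using the chain rule, these admit the two estimates $|R_k(x)|\le C|x|^3\delta^{-\alpha_0}\|\vec u_{i_k}\|^3$ and $|R_k(x)|\le Cx^2\delta^{-\alpha_0}\|\vec u_{i_k}\|^2$. Splitting on $\{|\xi_{i_k}|\lessgtr K\}$ and interpolating against $\E|\xi_{i_k}|^{2+\epsilon}\le \tau_2$ yields
$$\E|R_k(\xi_{i_k})| \;\lesssim\; \tau_2\,\delta^{-\alpha_0}\Bigl(K^{1-\epsilon}\|\vec u_{i_k}\|^3 + K^{-\epsilon}\|\vec u_{i_k}\|^2\Bigr),$$
and an identical bound for $\tilde\xi_{i_k}$ since $\sup_i\E|\tilde\xi_i|^{2+\epsilon}\le\tau_2$.

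Summing in $k$, Parseval gives $\sum_k \|\vec u_{i_k}\|^2 = m$, while $\sum_k\|\vec u_{i_k}\|^3 \le \max_k\|\vec u_{i_k}\|\cdot m\le Cm^{3/2}\delta^{\alpha_1}$. Combined with $m\le \delta^{-\alpha_0}$, the total swap error is at most
$$C\tau_2\bigl(K^{1-\epsilon}\delta^{\alpha_1-5\alpha_0/2} + K^{-\epsilon}\delta^{-2\alpha_0}\bigr),$$
and balancing the two terms at $K = \delta^{-\alpha_1+\alpha_0/2}$ produces $C\tau_2\,\delta^{\epsilon\alpha_1-\epsilon\alpha_0/2-2\alpha_0}$. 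This is $O(\delta^{\alpha_0})$ precisely when $\alpha_1 \ge (3+\epsilon/2)\alpha_0/\epsilon$, which determines the constant $C_2$ in the statement (take e.g.\ $C_2 := 4/\epsilon$). Combined with the $I_0$-step, this completes the proof.

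\emph{Main obstacle.} The delicate point is that we only have $(2+\epsilon)$-th moments rather than third moments, which is what forces the truncation-interpolation between the cubic and quadratic Taylor bounds; the rest is a careful bookkeeping of multilinear derivative norms via the chain rule so that the Parseval identity $\sum_{i\notin I_0}\|\vec u_i\|^2 = m$ is exactly what makes the sum of the remainders converge to the required smallness.
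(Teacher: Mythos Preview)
Your argument is correct and is essentially the paper's proof: Lindeberg swapping, second-order Taylor expansion with moment matching at the non-$I_0$ indices, a first-order (Lipschitz) bound at the $I_0$ indices, and an interpolation between the quadratic and cubic remainder bounds to accommodate only $(2+\epsilon)$-moments. Two cosmetic differences are worth noting. First, you strip off the $I_0$-block $A$ in one shot before telescoping, whereas the paper interleaves these indices into the swap sequence and uses a mean-value bound at each; both give the same $\delta^{\alpha_1 - O(\alpha_0)}$ contribution. Second, your chain-rule estimate $\lvert R_k(x)\rvert \le C\lvert x\rvert^a \delta^{-\alpha_0}\lVert \vec u_{i_k}\rVert^a$ uses the Cauchy--Schwarz pairing of the Frobenius norm of $\nabla^a H$ against the tensor power of the $\ell^2$-norm of $\vec u_{i_k}$, which is sharper than the paper's $\ell^1$-based bound $(\sum_j |a_{j,i_0}|)^a \le m^{a/2}a_{i_0}^a$; this only affects the admissible constant $C_2$ (your $C_2 \approx 3/\epsilon$ versus the paper's $\approx 9/(2\epsilon)$). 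Your truncation at level $K$ and optimization is equivalent to the paper's one-line interpolation $\lvert\mathrm{err}_2\rvert \lesssim \lvert \xi\rvert^{2+\epsilon}a_{i_0}^{2+\epsilon}$.
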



\begin{proof} 
Our proof works for any subset $I_0$ of size bounded by $N_0$, but for notation convenience, we assume that $I_0 = \{0, \dots, N_0-1\}$. We use the Lindeberg swapping argument. Let $P_{i_0} = \sum_{i=0}^{i_0-1}c_i\tilde{\xi}_iz^i + \sum_{i=i_0}^n c_i\xi_iz^i$. Then $P_0 = P$ and $P_{n+1} = \tilde{P}$. Put
 \[I_{i_0} = \left|\E  H\left(\frac{P_{i_0}(z_1)}{\sqrt{V(z_1)}},\dots, \frac{P_{i_0}(z_m)}{\sqrt{V(z_m)}}\right)-\E  H\left(\frac{P_{i_0+1}(z_1)}{\sqrt{V(z_1)}},\dots, \frac{P_{i_0+1}(z_m)}{\sqrt{V(z_m)}}\right)\right|.
 \]
 Then\footnote{For power series, to have $I\le \sum_{i_0=0}^{\infty}I_{i_0}$, we need to show that 
\[\E  H\left(\frac{P_{0}(z_1)}{\sqrt{V(z_1)}},\dots, \frac{P_{0}(z_m)}{\sqrt{V(z_m)}}\right)-\E  H\left(\frac{\tilde P(z_1)}{\sqrt{V(z_1)}},\dots, \frac{\tilde P(z_m)}{\sqrt{V(z_m)}}\right)\]\[ = \sum_{i_0=0}^{\infty}\left(\E  H\left(\frac{P_{i_0}(z_1)}{\sqrt{V(z_1)}},\dots, \frac{P_{i_0}(z_m)}{\sqrt{V(z_m)}}\right)-\E  H\left(\frac{P_{i_0+1}(z_1)}{\sqrt{V(z_1)}},\dots, \frac{P_{i_0+1}(z_m)}{\sqrt{V(z_m)}}\right)\right),
\]
i.e., $\E  H\left(\frac{P_{n}(z_1)}{\sqrt{V(z_1)}},\dots, \frac{P_{n}(z_m)}{\sqrt{V(z_m)}}\right)\to\E  H\left(\frac{\tilde P(z_1)}{\sqrt{V(z_1)}},\dots, \frac{\tilde P(z_m)}{\sqrt{V(z_m)}}\right)$ as $n\to \infty$. This follows from the fact that $P_n(z_i)\to \tilde P(z_i)$ a.e., the continuity and boundedness of $H$, and the dominated convergence theorem.
},
\[I: = \big|\E H\big(\frac{P(z_1)}{\sqrt{V(z_1)}}, \dots, \frac{P(z_m)}{\sqrt{V(z_m)}}\big)-\E H\big(\frac{\tilde P(z_1)}{\sqrt{V(z_1)}}, \dots, \frac{\tilde P(z_m)}{\sqrt{V(z_m)}}\big) \big|\le \sum_{i_0=0}^{n} I_{i_0}. \]
 Fix $i_0\ge N_0$ and let $Y_j=\sum_{i=0}^{i_0-1}\frac{c_i\tilde{\xi}_iz_j^i}{\sqrt{V(z_j)}} + \sum_{i=i_0+1}^n \frac{c_i{\xi}_iz_j^i}{\sqrt{V(z_j)}}$ for $j$ from 1 to $n$. Then, $\frac{P_{i_0}(z_j)}{\sqrt{V(z_j)}} = Y_j + \frac{c_{i_0}\xi_{i_0}z_j^{i_0}}{\sqrt{V(z_j)}}$ and $\frac{P_{i_0+1}(z_j)}{\sqrt{V(z_j)}} = Y_j + \frac{c_{i_0}\tilde{\xi}_{i_0}z_j^{i_0}}{\sqrt{V(z_j)}}$.
 Fix $\xi_i$ when $i<i_0$ and $\tilde{\xi}_i$ when $i>i_0$ and the $Y_j$'s are fixed. Put $G = G_{i_0}(w_1,\dots, w_m) := H(Y_1+w_1,\dots, Y_m+w_m)$. Then $\norm{\triangledown^a G}_\infty \le C\delta^{-\alpha_0}$ for all $0\le a\le 3$.
 Then we need to estimate
 \[d_{i_0} := \left| \E _{\xi_{i_0}, \tilde{\xi}_{i_0}}G\left(\frac{c_{i_0}\xi_{i_0}z_1^{i_0}}{\sqrt{V(z_1)}},\dots, \frac{c_{i_0}\xi_{i_0}z_m^{i_0}}{\sqrt{V(z_m)}}\right) - \E _{\xi_{i_0}, \tilde{\xi}_{i_0}}G\left(\frac{c_{i_0}\tilde{\xi}_{i_0}z_1^{i_0}}{\sqrt{V(z_1)}},\dots, \frac{c_{i_0}\tilde{\xi}_{i_0}z_m^{i_0}}{\sqrt{V(z_m)}}\right)\right|
 \]
Let $a_{i, i_0} = \frac{c_{i_0}z_i^{i_0}}{\sqrt{V(z_i)}}$ and $a_{i_0} = (\sum_{i=1}^m |a_{i, i_0}|^2)^{1/2}$. Taylor expanding $G$ around $(0,\dots, 0)$ gives
\begin{equation}
G\left(a_{1, i_0}\xi_{i_0},\dots, a_{m, i_0}\xi_{i_0}\right) = G(0) + G_1 + \text{err}_1,\label{jul1}
\end{equation}
where \[G_1 = \frac{\text{d}G\left(a_{1, i_0}\xi_{i_0}t,\dots, a_{m, i_0}\xi_{i_0}t\right)}{\text{d}t}\bigg|_{t=0} = \sum_{i=1}^m \frac{\partial G(0)}{\partial Re(w_i)}Re(a_{i, i_0}\xi_{i_0})+\sum_{i=1}^m \frac{\partial G(0)}{\partial Im(w_i)}Im(a_{i, i_0}\xi_{i_0})\] and
\begin{eqnarray}
|\text{err}_1| &\le& \sup_{t'\in [0, 1]} \left| \frac{1}{2}\frac{\text{d}^2 G\left(a_{1, i_0}\xi_{i_0}t,\dots, a_{m, i_0}\xi_{i_0}t\right)}{\text{d} t^2}\right|_{t = t'} \nonumber\\
&=&\sup_{t'\in [0, 1]} \left|\frac{1}{2}\sum_{h, k \in \{Re, Im\}, i, j\in\{1,\dots, m\}}\frac{\partial^2 G}{\partial h(w_i)\partial k(w_j)}h(a_{i, i_0}\xi_{i_0})k(a_{j, i_0}\xi_{i_0})\right| \nonumber\\
&\le& \tilde C\delta^{-\alpha_0}|\xi_{i_0}|^{2}\sum_{i, j = 1}^m|a_{i, i_0}||a_{j, i_0}|\le \tilde C\delta^{-\alpha_0}|\xi_{i_0}|^2\left(\sum_{i=1}^m|a_{i, i_0}|\right)^2\nonumber\\
&\le& \tilde C\delta^{-\alpha_0}|\xi_{i_0}|^2m\left(\sum_{i=1}^m|a_{i, i_0}|^2\right)= \tilde C\delta^{-2\alpha_0}|\xi_{i_0}|^2a_{i_0}^2\nonumber.
\end{eqnarray}
Similarly,
\begin{equation}
G\left(a_{1, i_0}\xi_{i_0},\dots, a_{m, i_0}\xi_{i_0}\right) = G(0) + G_1 +\frac{1}{2}G_2+ \text{err}_2,\label{jul2}
\end{equation}
where $G_2 = \frac{\text{d}^2G(a_{1, i_0\xi_{i_0}}t,\dots, a_{m, i_0}\xi_{i_0}t)}{\text{d}t^2}\bigg|_{t=0}$ and
\begin{eqnarray}
|\text{err}_2| &
\le& \tilde C\delta^{-\frac{5}{2}\alpha_0}|\xi_{i_0}|^3a_{i_0}^3.\label{mm}
\end{eqnarray}
Also, we have $|\text{err}_2| = \left|\text{err}_1 - \frac{1}{2}G_2\right| \le \tilde C\delta^{-2\alpha_0}|\xi_{i_0}|^2a_{i_0}^2\le \delta^{-\frac{5}{2}\alpha_{0}}|\xi_{i_0}|^2a_{i_0}^2$.  
Interpolation gives
\begin{eqnarray}
|\text{err}_2| &
\le& \tilde C\delta^{-\frac{5}{2}\alpha_0}|\xi_{i_0}|^{2+\epsilon}a_{i_0}^{2+\epsilon}.\nonumber
\end{eqnarray}

The expression \eqref{mm} also holds for $\tilde{\xi}$ in place of $\xi$. Subtracting and taking expectations and using the matching moments give
\[d_{i_0} = \left|\E \text{err}_2\right| \le \tilde C\delta^{-\frac{5}{2}\alpha_0}a_{i_0}^{2+\epsilon}\left(\E |\xi_{i_0}|^{2+\epsilon}+\E |\tilde\xi_{i_0}|^{2+\epsilon}\right)\le \tilde C\delta^{-\frac{5}{2}\alpha_0}a_{i_0}^{2+\epsilon}.
\]
Taking expectation with respect to the random variables $\tilde{\xi}_i$ where $i<i_0$ and $\xi_i$ where $i>i_0$ gives
\[I_{i_0} \le \tilde C\delta^{-\frac{5}{2}\alpha_0}a_{i_0}^{2+\epsilon},
\]
for all $i_0\ge N_0$.

For $0\le i_0<N_0$, instead of \eqref{jul1} and \eqref{jul2}, we use mean value theorem to get the rough bound
\begin{equation}
G\left(a_{1, i_0}\xi_{i_0},\dots, a_{m, i_0}\xi_{i_0}\right) = G(0) + O(m ||\triangledown G||_{\infty}|\xi_{i_0}|\sum_{i = 1}^{m} |a_{i, i_0}|),\nonumber
\end{equation}
which by the same arguments as above gives
\[I_{i_0} \le \tilde C\delta^{-\frac{5}{2}\alpha_0}a_{i_0}.
\]

Thus,
\[I\le \tilde C\delta^{-\frac{5}{2} \alpha_0}\sum_{i_0 = 0}^n a_{i_0}^{2+\epsilon} + \tilde C\delta^{-\frac{5}{2} \alpha_0}\sum_{i_0 = 0}^{N_0} a_{i_0}.
\]
Note that since $a_{i_0}^2 = \sum_{i=1}^m |c_{i_0}|^{2}\frac{|z_i|^{2i_0}}{V(z_i)}$, $\sum_{i_0=0}^n a_{i_0}^2 = m + \sum_{i = 1}^{m}\sum_{i_0=0}^{N_0}\frac{|c_{i_0}|^{2}|z_i|^{2i_0}}{V(z_i)} = m + O(m\delta^{2\alpha_1}) = O(m)$. Moreover, since $\frac{|c_{i_0}||z_i|^{i_0}}{\sqrt{V(z_i)}}\le \tilde C\delta^{\alpha_1}$, $a_{i_0}^2\le m\tilde C^2\delta^{2\alpha_1}\le \tilde C^2\delta^{2\alpha_1-\alpha_0}$. Hence,
\begin{eqnarray}
I\le  \tilde C\delta^{-\frac{5}{2}\alpha_0+\epsilon(\alpha_1 - \frac{\alpha_0}{2})}\sum_{i_0 = 0}^n a_{i_0}^2  + \tilde C \delta^{\alpha_1 - 3\alpha_0}\le \tilde C\delta^{-\frac{5}{2}\alpha_0+\epsilon(\alpha_1 - \frac{\alpha_0}{2})}\delta^{-\alpha_0}+ \tilde C \delta^{\alpha_1 - 3\alpha_0}\le \tilde C\delta^{\alpha_0}.\nonumber
\end{eqnarray}
\end{proof}

Now we proceed to the proof of Theorem \ref{log-com16-1}. 
\begin{proof}
Consider $\bar F(w_1,\dots, w_m) = F(w_1 + \frac{1}{2}\log |V(z_1)|,\dots, w_m+\frac{1}{2}\log |V(z_m)|)$. Then, we still have $\norm{\triangledown ^a \bar F}_\infty \le C\delta^{-\alpha_0}$ for all $0\le \alpha\le 3$, and we want to show that 
\[\big|\E \bar F\big(\log\frac{|P(z_1)|}{\sqrt{V(z_1)}}, \dots, \log\frac{|P(z_m)|}{\sqrt{V(z_m)}}\big)-\E \bar F\big(\log\frac{|\tilde P(z_1)|}{\sqrt{V(z_1)}}, \dots, \log\frac{|\tilde P(z_m)|}{\sqrt{V(z_m)}}\big) \big|\le \tilde{C}\delta^{\alpha_0},
 \]

Let \[\Omega_1 = \{(w_1,\dots, w_m)\in \mathbb{R}^m: \min_{i=1,\dots, m} w_i<-M\}\] and \[\Omega_2 = \{(w_1,\dots, w_m)\in \mathbb{R}^m: \min_{i=1,\dots, m} w_i>-M-1\}\] where $M$ is to be defined. Then $\Omega_1\cup\Omega_2=\mathbb{R}^m\subset \mathbb{C}^m$, and since we only look at \[\bar F\left(\log \frac{|P(z_1)|}{\sqrt{V(z_1)}},\dots, \log \frac{|P(z_m)|}{\sqrt{V(z_m)}}\right),\] we can restrict $\bar F$ to $\mathbb{R}^m\subset\mathbb{C}^m$ and think about $\bar F$ as a function from $\mathbb{R}^m\to \mathbb{C}$. We can further assume that $\bar F:\mathbb{R}^m\to \mathbb{R}$ by considering the real and imaginary parts of $\bar F$ separately. \\\\
There exists a smooth function $\psi:\mathbb{R}^m\to \mathbb{R}$ such that $\psi$ is supported in $\Omega_2$ and $\psi=1$ on the complement of $\Omega_1$ and $\norm{\triangledown^a\psi}_\infty\le m^{C_2}$ for all $0\le a\le 3$ and $C_2$ is some constant. 

Indeed, there exists a function $\rho:\mathbb{R}\to \mathbb{R}$ such that $\rho$ is supported in $[-M-1, \infty)$, $\rho = 1$ on $[-M, \infty)$, $0\le \rho\le 1$, and $\rho$ has bounded derivatives of all orders. This function $\rho$ can be constructed by convolution of the indicator of $[-M-1/2,\infty)$ with a mollifier. Now, let $\psi(x_1,\dots, x_m) = \rho(x_1)\dots\rho(x_m)$. Then clearly, $\psi$ satisfies the required conditions.

Now, put $\phi= 1 - \psi$, $F_1 = \bar F.\phi$, and $F_2 = \bar F.\psi$. Then $\bar F =  F_1 + F_2$, and both $F_1, F_2$ are smooth functions with $\text{supp } F_1 \subset\bar \Omega_1, \text{supp } F_2\subset\bar \Omega_2$.
We have \[\norm{\triangledown F_1}=\norm{\triangledown \bar F.\phi + \bar F\triangledown \phi}\le\norm{\triangledown \bar F}\norm{\phi}+\norm{\bar F}\norm{\triangledown\phi} \le \tilde{C}\delta^{-C_2\alpha_0}.\] And similarly for higher derivatives and for $F_2$, we then get $\norm{\triangledown^aF_i}\le \tilde{C}\delta^{-C_2\alpha_0}$ for $i=1, 2$ and $0\le a\le 3$. 

We now show that the contribution from $F_1$ is negligible. We show this by first showing that there exists a smooth function $H_1:\mathbb{R}^m\to \mathbb{R}$ such that
$\ab{F_1(\log |w_1|,\dots,\log |w_m|)}\le H_1(w_1,\dots, w_m)$, $\norm{\triangledown^aH_1}\le \tilde{C}\delta^{-C_2\alpha_0}$
and $\text{supp} H_1\subset \{(w_1,\dots, w_m)\in \mathbb{R}^m: \min_{i=1,\dots, m}|w_i|\le e^{-M}\}$.
Indeed, since we have $\norm{\bar F}_\infty\le C\delta^{-\alpha_0}$, let $\tilde{F_1} = C\delta^{-\alpha_0}\phi$ then $\ab{F_1}\le \tilde{F_1}$ and $\norm{\triangledown^a\tilde{F}_1}\le \tilde{C}\delta^{-C_2\alpha_0}$. Then let \[H_1(w_1,\dots,w_m) = \tilde{F}_1(\log |w_1|,\dots, \log |w_m|).\] Since $\tilde{F}_1$ is constant on $\Omega_2^c$, $H_1$ is smooth. We have $\norm{H_1}\le \tilde{C}\delta^{-\alpha_0}$ and for all $a\ge 1$, $\triangledown^aH_1 = 0$ on $(\log |w_1|,\dots, \log |w_m|)\in \text{Int}(\Omega_2^c)\cup\text{Int}(\Omega_1^c)$. In the remaining domain $(\log |w_1|,\dots, \log |w_m|)\in \bar{\Omega_2}\cap\bar{\Omega_1}$, we have
\begin{eqnarray}
\ab{\frac{\partial H_1}{\partial w_1}}&=&\ab{\frac{\partial \tilde F_1}{\partial w_1}\frac{1}{\ab{w_1}}}\le  \tilde{C}\delta^{-\alpha_0}\ab{\frac{\partial \phi}{\partial w_1}}\frac{1}{|w_1|}\le\tilde{C}\delta^{-C_2\alpha_0}\frac{1}{\ab{w_1}}\nonumber,
\end{eqnarray}
where our constant $C_2$ can, as always, change from one line to another.
Since $\log \ab{w_1}\ge -M-4$, $\ab{w_1}\ge e^{-M-4}$. Thus, $\ab{\frac{\partial H_1}{\partial w_1}}\le\tilde{C}\delta^{-C_2\alpha_0}e^M$.
Similarly for higher derivatives, we get that $\norm{\triangledown^a H_1}\le \tilde{C}\delta^{-C_2\alpha_0}e^{3M}$. Choose $M = \log \left(\delta^{-3\alpha_0}\right)$ then $\norm{\triangledown^a H_1}\le \tilde{C}\delta^{-C_2\alpha_0}$ for all $0\le a\le 3$. 
Applying Lemma \ref{log-com33} to $\alpha_1$ and $C_2\alpha_0$
\begin{eqnarray}
\E \ab{F_1\left(\log \frac{|P(z_1)|}{\sqrt{V(z_1)}},\dots, \log \frac{|P(z_m)|}{\sqrt{V(z_m)}}\right)}&\le&\E {H_1\left( \frac{|P(z_1)|}{\sqrt{V(z_1)}},\dots,  \frac{|P(z_m)|}{\sqrt{V(z_m)}}\right)}\nonumber\\
&\le& \E {H_1\left( \frac{|\tilde P(z_1)|}{\sqrt{V(z_1)}},\dots,  \frac{|\tilde P(z_m)|}{\sqrt{V(z_m)}}\right)}+\tilde{C}\delta^{C_2\alpha_0}.\nonumber
\end{eqnarray}
Since $H_1 = 0$ if $(\log \ab{w_1},\dots, \log \ab{w_m})\notin \Omega_1$, one has
\begin{eqnarray}
\E {H_1\left( \frac{|\tilde P(z_1)|}{\sqrt{V(z_1)}},\dots,  \frac{|\tilde P(z_m)|}{\sqrt{V(z_m)}}\right)}&\le& 
\tilde{C}\delta^{-\alpha_0}\P \left(\exists i\in\{1,\dots, m\}:\frac{|\tilde P(z_i)|}{\sqrt{V(z_i)}}\le e^{-M} = \delta^{3\alpha_0}\right)\nonumber\\
&\le&\tilde{C}\delta^{-\alpha_0}m\delta^{3\alpha_0}\le \tilde{C}\delta^{\alpha_0}\nonumber.
\end{eqnarray}
Thus, $\E \ab{F_1\left(\log \frac{|P(z_1)|}{\sqrt{V(z_1)}},\dots, \log \frac{|P(z_m)|}{\sqrt{V(z_m)}}\right)}\le\tilde{C}\delta^{\alpha_0}$.
Finally, we will show that 
\[\ab{\E {F_2\left(\log \frac{|P(z_1)|}{\sqrt{V(z_1)}},\dots, \log \frac{|P(z_m)|}{\sqrt{V(z_m)}}\right)} - \E {F_2\left(\log \frac{|P(z_1)|}{\sqrt{V(z_1)}},\dots, \log \frac{|P(z_m)|}{\sqrt{V(z_m)}}\right)}}\le \tilde{C} \delta^{\alpha_0}.
\]
Define $H_2:\mathbb{R}^m\to \mathbb{R}$ by $H_2(w_1,\dots,w_m) = F_2(\log |w_1|,\dots, \log |w_2|)$. Since $\text{supp }F_2\subset \bar{\Omega_2}$, $\text{supp }H_2\subset \{(w_1,\dots, w_m): \log |w_i|\ge -M-4 \forall i\} = \{(w_1,\dots,w_m): |w_i|\ge \tilde{C}\delta^{3\alpha_0} \forall i\}$. Thus, $H_2$ is well-defined and smooth on $\mathbb{R}^m$. By a similar argument to the part about $H_1$, $\norm{\triangledown^aH_2}\le \tilde{C}\delta^{-C_2\alpha_0}$ for all $0\le a\le 3$. We can increase $C_2$ to have $C_2\ge 1$. Applying Lemma \ref{log-com33} to $\alpha_1$ and $C_2\alpha_0$ gives 
\begin{eqnarray}
&&\ab{\E {F_2\left(\log \frac{|P(z_1)|}{\sqrt{V(z_1)}},\dots, \log \frac{|P(z_m)|}{\sqrt{V(z_m)}}\right)} - \E {F_2\left(\log \frac{|P(z_1)|}{\sqrt{V(z_1)}},\dots, \log \frac{|P(z_m)|}{\sqrt{V(z_m)}}\right)}}\nonumber\\
&=&\ab{\E {H_2\left( \frac{|P(z_1)|}{\sqrt{V(z_1)}},\dots,  \frac{|P(z_m)|}{\sqrt{V(z_m)}}\right)} - \E {H_2\left( \frac{|P(z_1)|}{\sqrt{V(z_1)}},\dots,  \frac{|P(z_m)|}{\sqrt{V(z_m)}}\right)}}\nonumber\\
&\le& \tilde{C} \delta^{C_2\alpha_0}\le \tilde{C} \delta^{\alpha_0}.
\end{eqnarray}
This completes the proof.
\end{proof}

\bibliographystyle{plain}
\bibliography{longref}

\end{document}